\numberwithin{equation}{section}
\newtheoremstyle{my}{1.5em}{0.5em}{\em}{}{\sc}{.}{0.5em}{}
\newtheoremstyle{your}{1.5em}{0.5em}{}{}{\sc}{.}{0.5em}{}
\theoremstyle{my}
\theoremstyle{my}
\newtheorem{thm}{Theorem}[section]
\newtheorem{Theorem}[thm]{Theorem}
\newtheorem*{Theorem*}{Theorem}
\newtheorem{Corollary}[thm]{Corollary}
\newtheorem*{corollary*}{Corollary}
\newtheorem{Lemma}[thm]{Lemma}
\newtheorem{Proposition}[thm]{Proposition}
\newtheorem*{conjecture*}{Conjecture}
\newtheorem*{question*}{Question}
\newtheorem*{definitions*}{Definitions}
\newtheorem*{rem*}{Remark}
\newtheorem*{remark*}{Remark}
\newtheorem*{remarks*}{Remarks}
\newtheorem*{example*}{Example}
\newtheorem*{examples*}{Examples}
\newtheorem*{convention*}{Convention}
\newtheorem*{conventions*}{Conventions}
\newtheorem*{Note*}{Note}
\newtheorem*{exercise*}{Exercise}
\newtheorem*{bibliographical-note*}{Bibliographical note}
\theoremstyle{your}
\newtheorem{Remark}[thm]{Remark}
\newtheorem{Definition}[thm]{Definition}
\newtheorem{Example}[thm]{Example}
\newcommand{\Acknowledgements}{{\em Acknowledgements.} }
\newcommand{\R}{\mathbb{R}}
\newcommand{\Z}{\mathbb{Z}}
\newcommand{\Q}{\mathbb{Q}}
\newcommand{\C}{\mathbb{C}}
\newcommand{\ev}{\operatorname{ev}}
\newcommand{\pr}{\operatorname{pr}}
\newcommand{\pa}{\partial}
\newcommand{\Ordo}{\mathcal{O}}
\newcommand{\bR}{\mathbb{R}}
\newcommand{\bZ}{\mathbb{Z}}
\newcommand{\bC}{\mathbb{C}}
\newcommand{\bP}{\mathbb{P}}
\newcommand{\id}{\mathrm{id}}
\newcommand{\ind}{\mathrm{ind}}
\renewcommand{\ker}{\mathrm{ker}}
\newcommand{\coker}{\mathrm{coker}}
\newcommand{\Hom}{\mathrm{Hom}}
\newcommand{\Diff}{\textrm{Diff}}
\renewcommand{\index}{\textrm{index}}
\newcommand{\sblv}{\mathcal{H}}
\newcommand{\cfig}{\mathcal{X}}
\newcommand{\tcfig}{\mathcal{Y}}
\newcommand{\scrB}{\mathcal{B}}
\newcommand{\scrJ}{\mathcal{J}}
\newcommand{\MM}{\mathcal{M}}
\newcommand{\FF}{\mathcal{F}}
\newcommand{\BB}{\mathcal{B}}
\newcommand{\barFF}{\overline{\mathcal{F}}}
\newcommand{\JJ}{\mathcal{J}}
\newcommand{\cc}{\boldsymbol{c}}
\newcommand{\EE}{\mathcal{E}}
\newcommand{\XX}{\mathcal{X}}
\newcommand{\YY}{\mathcal{Y}}
\newcommand{\NN}{\mathcal{N}}
\newcommand{\TT}{\mathcal{T}}
\renewcommand{\SS}{\mathcal{S}}
\newcommand{\DD}{\mathcal{D}}
\newcommand{\LL}{\mathcal{L}}
\newcommand{\zz}{\boldsymbol{\zeta}}
\newcommand{\diam}{\operatorname{diam}}
\newcommand{\Exp}{\operatorname{Exp}}
\newcommand{\Pre}{\operatorname{PG}}
\newcommand{\Fl}{{\,\mathrm{F}}}
\title{Exact Lagrangian immersions with a single double point}
\author{Tobias Ekholm and Ivan Smith}
\date{November 2011 (v1); revised June 2014 (v2), September 2014 (v3)}
\begin{document}
\thispagestyle{empty}

\begin{abstract}
We show that if a closed orientable $2k$-manifold $K$, $k>2$, with Euler characteristic $\chi(K)\ne -2$ admits an exact Lagrangian immersion into $\C^{2k}$ with one transverse double point and no other self intersections,  then $K$ is diffeomorphic to the sphere. The proof combines Floer homological arguments with a detailed study of moduli spaces of holomorphic disks with boundary in a monotone Lagrangian submanifold obtained by Lagrange surgery on $K$.  
\end{abstract}
\maketitle



\section{Introduction}
\label{Sec:Intro}

\subsection{The main result} \label{Sec:Formulation}
Consider $\C^{n}$ with coordinates $z=(x_1+iy_1,\dots,x_n+iy_n)$, Liouville form $\theta=\sum_{j=1}^{n}y_jdx_j$, and symplectic form $\omega=-d\theta$. An immersion $f\colon K\to\C^{n}$ of an $n$-manifold $K$ is \emph{Lagrangian} if $f^{\ast}\omega=0$, and \emph{exact Lagrangian} if the closed form $f^{\ast}\theta$ is also exact, i.e.~$f^{\ast}\theta=dz$ for some function $z\colon K\to\R$.   More explicitly still,  a smooth immersion $f: K \rightarrow \C^n$ is Lagrangian if the orthogonal complement $df(T_xK)^{\perp}$, taken with respect to the standard Euclidean metric, is the subspace $i(df(T_xK))$ given by multiplication by $i = \sqrt{-1}$, for every $x\in K$. Any Lagrangian immersion of a simply-connected manifold is exact.   Our main result gives a symplectic topological characterisation of the standard sphere in even dimensions  $>4$.

\begin{Theorem}\label{Thm:Main}
Let $K$ be a closed orientable $2k$-manifold, $k>2$, with Euler characteristic $\chi(K)\ne -2$. If $K$ admits an exact Lagrangian immersion $f\colon K\to\C^{2k}$ with one transverse double point and no other self intersections, then $K$ is diffeomorphic to the standard sphere $S^{2k}$.
\end{Theorem}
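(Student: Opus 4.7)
My plan is to convert the immersed problem into an embedded, monotone one via Polterovich's Lagrange surgery at the unique double point of $f$, producing a closed embedded Lagrangian $L\subset\C^{2k}$. Smoothly $L$ is obtained from $K$ by attaching a single orientable $1$-handle, so $L\cong K\#(S^1\times S^{2k-1})$, and the surgery parameter can be tuned so that $L$ is monotone with minimal Maslov number $N_L=2$. Exactness of $f$ is essential here, as it controls the primitive of $\theta|_L$ in a neighbourhood of the handle and hence the action--area relation required for monotonicity.

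Since $L$ sits inside $\C^{2k}$, a Hamiltonian translation displaces it off itself, so in the monotone setting the curved self-obstruction $\mathfrak{m}_0\in CF(L,L)$ is either nontrivial (and curved Floer cohomology is undefined) or vanishes and gives $HF^\ast(L,L)=0$. Either way, the Biran--Cornea pearl complex on $H^\ast(L)$, whose differentials are driven by Maslov~$2$ pseudoholomorphic disks with boundary on $L$, must collapse to zero. This translates into quantitative constraints on the algebraic count $w(L)$ of Maslov~$2$ disks through a generic point of $L$ and on the associated pearl/quantum operations.

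The heart of the proof is then a careful enumeration of these Maslov~$2$ disks, via SFT-style neck stretching across the Polterovich handle. Disks split into (i) \emph{local} disks confined to the surgery model $T^\ast S^{2k-1}$, contributing a universal $\pm 1$ per point on the handle circle, and (ii) \emph{global} disks that record information about the original immersion $f(K)$. Contributions of type (ii) are computed by bordism and Euler-class arguments on $K$ and ultimately carry a factor involving $\chi(K)+2$. Setting this against the Floer vanishing from the previous step yields a rigid identity in which the hypothesis $\chi(K)\ne -2$ is precisely the non-degeneracy condition; under it, the only topologies of $K$ compatible with the disk count are those of a homology sphere.

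Once the integral homology of $K$ is pinned down to that of $S^{2k}$ and $\pi_1(K)=1$ is established, the h-cobordism theorem, combined with the explicit handle decomposition of $L$ supplied by the surgery itself, upgrades the homotopy equivalence $K\htp S^{2k}$ to a diffeomorphism in dimension $2k\ge 6$. I expect the main obstacle to lie in the disk count: establishing transversality and coherent orientations for both the ambient and the broken moduli, controlling the SFT compactification of Maslov~$2$ disks as the neck is stretched, and performing the Euler-class bookkeeping that produces the factor $\chi(K)+2$. This is where holomorphic-curve analysis must be fused with the smooth topology of the surgery, and where the numerical hypothesis of the theorem acquires its geometric meaning.
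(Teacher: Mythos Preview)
Your proposal has a fundamental gap in the final step. Once you know $K$ is a simply connected homology sphere, hence a homotopy sphere, the h-cobordism theorem gives only that $K$ is \emph{homeomorphic} to $S^{2k}$ (equivalently, that it admits a Morse function with two critical points); it does \emph{not} give a diffeomorphism. Exotic spheres exist in the relevant dimensions, and each one is a homotopy sphere with such a Morse function, so no ``handle decomposition of $L$ supplied by the surgery'' can bridge this gap: the attached $1$-handle carries no information about the smooth structure on $K$ beyond what you started with. The paper is explicit that excluding exotic spheres is the entire substance of the theorem, a phenomenon that ``goes beyond Morse theory, and presumably beyond its Floer homological counterparts.''

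What the paper actually does, after reaching the homotopy-sphere conclusion (obtained via the Legendrian contact homology of the lift of $f$ together with Damian's lifted Floer homology, not via a pearl complex; the hypothesis $\chi(K)\neq -2$ enters only to force the unique Reeb chord to have even grading, making the DGA linearizable), is to construct a compact \emph{parallelizable} $(2k{+}1)$-manifold $\mathcal{B}$ with $\partial\mathcal{B}\cong K$; Kervaire--Milnor then forces $K\cong S^{2k}$. This uses the \emph{other} Lagrange surgery $L_+$, of minimal Maslov number $2k$ rather than $2$, and builds $\mathcal{B}$ from a parametrized moduli space of Floer holomorphic disks on $L_+$, compactified via a $C^1$ gluing theorem and capped off by a fibered product over $L_+$. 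Stable triviality of $T\mathcal{B}$ is established through an index-bundle analysis over the free loop space of $S^1\times S^{2k-1}$ and a framework of \emph{coherent trivializations} that matches framings across the Gromov--Floer boundary. None of this structure is accessible from a Maslov-$2$ disk count on $L_-$.
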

\begin{Example}
The \emph{Whitney sphere} is the exact Lagrangian immersion $w\colon S^n \to\C^n$ where
\begin{equation} \label{Eqn:Whitney}
S^n = \left\{ (x,y) \in \bR^n \times \bR \ \big | \,  \ |x|^2 + y^2 = 1\right\} \quad\text{and}\quad w(x,y)=(1+iy)x \in \C^n.
\end{equation}
It has exactly one transverse double point, $w(0,1)=w(0,-1)$.
\end{Example}

It is relatively straightforward to show that $K$ is a homotopy sphere. (The numbers of smooth homotopy spheres in the lowest relevant dimensions are: 1 in dimension 6, 2 in dimension 8, 6 in dimension 10, 1 in dimension 12, and 2 in dimension 14. For general dimension $2k$, the number is finite but can be arbitrarily large \cite{KM}.) 
The heart of the proof involves showing that $K$ bounds a parallelizable manifold.  The construction of such a manifold is broadly inspired by Donaldson's proof of the diagonalizability theorem for the intersection forms of definite four-manifolds \cite{Donaldson}, in the sense that we build a cobordism with boundary $K \# (S^1 \times S^{2k-1})$ from moduli spaces of solutions to a partial differential equation (a perturbed Cauchy-Riemann equation), where the boundary component $K \# (S^1 \times S^{2k-1})$ appears as a space of constant solutions.

If $f: K \rightarrow \C^{2k}$ is a smooth immersion of an oriented $2k$-manifold with normal bundle $\nu$, then the signed count of double points of $f$ is minus one half of the Euler number of $\nu$.    For Lagrangian immersions the normal and tangent bundles are isomorphic, so in the situation of Theorem \ref{Thm:Main},  necessarily $\chi(K) = \pm 2$. 

Note that  \emph{inexact} Lagrangian immersions with a single double point are easy to come by (take any Lagrangian immersion and remove all but one of the double points by surgery).

\subsection{Context} \label{Sec:Context} A basic question in symplectic topology is to understand the borderline between flexibility and rigidity phenomena (Gromov's famous ``soft-hard" dichotomy).  Consider for definiteness the following model question: given a closed orientable $n$-manifold $K$, does $K$ admit an exact Lagrangian immersion  into $\C^n$?

Gromov proved that no closed manifold $K^n$ admits an exact Lagrangian embedding in $\bC^n$ (rigidity), whilst every $K$ satisfying the homotopy-theoretic constraint required for a totally real immersion, namely that $TK \otimes \C$ is a trivial bundle, admits some exact Lagrangian immersion (flexibility).   It was natural in light of this result to expect that exact Lagrangian immersions with a \emph{fixed} number of double points display interesting rigidity phenomena, with Gromov's non-existence theorem an extreme example corresponding to the case of no double points at all.  A version of Arnol'd's chord conjecture asserts that the number of isolated double points of an exact Lagrangian immersion should be at least $(1/2)\,\sum_j b_j(K)$, half the sum of the Betti numbers of $K$.  This was established for Lagrangian immersions  $K$ under the technical hypothesis that the associated Legendrian contact homology algebra admits an augmentation \cite{Ekholm, EESori}, which however can be a rather difficult condition to check in practice.  Note that any exotic sphere admits Morse functions with exactly two critical points \cite{SmalePoincare}, so Theorem \ref{Thm:Main} detects a phenomenon that goes beyond Morse theory, and presumably beyond its Floer homological counterparts.

Recent developments (postdating the first version of this paper) reveal that the augmentation hypothesis is not  technical, but that the borderline between rigidity and flexibility is harder to locate than had been anticipated, with rigidity phenomena being more scarce.   Isolated double points of Lagrangian immersions have a mod 2 grading \cite{EES1}, which is the sign of the self-intersection if $K$ is oriented and even-dimensional.  In the presence of self-intersections of  ``negative" sign,  the papers \cite{EliashbergMurphy,YETI} prove that exact Lagrangian immersions may be flexible even if one contrains the number of double points.  In particular, \cite{YETI}  shows that any orientable $2k$-manifold $K$ with $TK\otimes \C$ trivial and Euler characteristic $\chi(K) = -2$ admits an exact Lagrangian immersion into $\C^{2k}$ with exactly one double point.  Furthermore, any odd-dimensional homotopy sphere admits an exact Lagrangian immersion into Euclidean space with one double point, whilst any even-dimensional homotopy sphere admits an exact  immersion with precisely three double points. Thus, Theorem \ref{Thm:Main} is in a sense sharp.




\subsection{About the proof of Theorem \ref{Thm:Main}}

 The choice of primitive $f^{\ast}\theta = dz$ defines an embedding (well-defined up to translation)  $f\times z\colon K \rightarrow \bC^n \times \bR$ which is Legendrian with respect to the contact form $dz-\theta$.  The assumptions one double point and $\chi(K)\ne -2$ ensure that the Legendrian homology of this lift  can be linearized, which in turn implies that $f$ satisfies a version of the Morse inequalities for double points of exact Lagrangian immersions originally conjectured by Arnol'd, see \cite{Ekholm, EESori}. 
The Morse inequalities, in combination with results of Damian \cite{Damian}, show that $K$ is a homotopy sphere, $K\approx\Sigma$. As mentioned previously, the substance of Theorem \ref{Thm:Main} is the construction  of a parallelizable manifold  with boundary $\Sigma$, which in even dimensions is sufficient to exclude all the exotic spheres \cite{KM}.  Theorem \ref{Thm:Main} accordingly gives non-trivial information in any dimension where there are exotic $2k$-spheres. 

The bounding manifold is constructed in two steps.  First,  an argument going back to Gromov \cite{Gromov} and Oh \cite{Oh} yields a non-compact cobordism with boundary a given Lagrangian submanifold of $\C^n$; this cobordism is built  from solutions to perturbed Cauchy-Riemann problems. Second, via techniques introduced by Fukaya, Oh, Ohta and Ono \cite{FO3},  such a cobordism can be compactified and capped off using fibered products of moduli spaces and abstract chains bounding such spaces. Such a strategy was spectacularly implemented in a similar context by Abouzaid \cite{Abouzaid}, following suggestions of Seidel. 

A more precise description of the argument is as follows. Given $f\colon K\to \C^{2k}$ as in Theorem \ref{Thm:Main},  we apply Lagrange surgery to create a monotone embedded smooth Lagrangian submanifold $L \subset \bC^{2k}$ of minimal Maslov number $2k$. By \cite{Damian}, $L$ then fibers over $S^{1}$ with fiber a homotopy $(2k-1)$-sphere. We study a $1$-parameter family of Floer equations (perturbed Cauchy-Riemann equations) with Lagrangian boundary condition $L$. The corresponding moduli space $\FF(0\beta)$ of Floer holomorphic disks in the trivial relative homotopy class
is used to explicitly construct a bounding manifold $\BB$ with $\pa \BB=L$ and with stably trivial tangent bundle. This construction is somewhat involved (the subsequent notation is that which appears in the body of the paper): $\FF(0\beta)$ is non-compact because of bubbling and has a compactification with boundary consisting of broken curves. In the case we study there is only one possible bubbling configuration, and the boundary $\NN$ is a fibered product of two other moduli spaces, $\NN=\FF^*(-\beta) \times_L \MM^*(\beta)$. Employing gluing analysis, we find a neighborhood of the boundary in $\FF(0\beta)$, the complement of which is a compact $C^{1}$-smooth manifold $\FF_{\rho_0}(0\beta)$ with $\pa\FF_{\rho_0}(0\beta)=L\cup \NN$, as well as an explicit collar neighborhood of $\NN\subset\FF_{\rho_0}(0\beta)$. Using the fibered product with one factor a manifold $\DD$ filling $\FF^{\ast}(-\beta)$ and the other $\MM^{\ast}(\beta)$,  we are able to fill the boundary $ \NN \subset \pa \FF_{\rho_0}(0\beta)$ and thereby create the cobordism $\BB=\FF_{\rho_0}(0\beta)\cup (\DD\times_{L}\MM^{\ast}(\beta))$. 

We next analyze the stable tangent bundle of $\BB$. The stable tangent bundles of $\FF(0\beta)$, $\FF^{\ast}(-\beta)$, and $\MM^{\ast}(\beta)$ are all restrictions of index bundles over the product of the space of smooth maps $(D,\pa D)\to(\C^{2k},L)$, where $D$ is the $2$-disk, and a half-line.  Here the Fredholm problem at a point $(u,r)$ is a Cauchy-Riemann operator with Lagrangian boundary condition given by the Lagrangian tangent planes of $L$ along $u|_{\pa D}$.    We view $L$ as a piecewise linear ({\sc pl}) embedding, which allows us to study these index bundles very explicitly. First, $L$ is {\sc pl}-homeomorphic to the manifold $W'$ obtained by Lagrange surgery on the Whitney sphere and, being embedded in double dimension, there is an ambient {\sc pl} isotopy taking $L$ to $W'$. Second, the isotopy can be covered by a homotopy of (stable) Lagrangian Gauss maps into the Lagrangian Grassmannian. Consequently, if an index bundle associated to $W'$ is stably trivial,  then so is the corresponding bundle associated to $L$. 

Restriction to the boundary gives a homotopy equivalence from the space of maps $(D,\pa D)\to (\C^{2k},W')$ to the free loop space of $W'\approx S^{1}\times S^{2k-1}$. Using the energy functional of the standard metric on $S^{2k-1}$, we get a $(6k-7)$-skeleton for the free loop space, over which we give an explicit trivialization of the index bundles. This shows that the tangent bundles of $\FF(0\beta)$, $\FF^{\ast}(-\beta)$, and $\MM^{\ast}(\beta)$ are all stably trivial. In order to relate trivializations of these three bundles near the boundary of $\FF(0\beta)$ we introduce ``coherent trivializations'', generalizing the more familiar idea of coherent orientations, and prove existence in the case under study. This allows us to reduce the question of extension of the stable trivialization over $\FF_{\rho_0}(0\beta)$ to the cap $\DD\times_{L}\MM^{\ast}(\beta)$ to a problem about spin structures. We prove the spin problem is not obstructed, and conclude that $T\BB$ is stably trivial. Finally, adding a $2$-handle to $\BB$, we produce a parallelizable filling for the homotopy sphere $K$.

The filling argument just outlined has much in common with the argument used by Abouzaid in \cite{Abouzaid}, where he started from an embedding $T^*S^{4k+1} \to \C\bP^{2k} \times \C^{2k+1}$, and a Hamiltonian isotopy displacing the image. The perturbed family of Cauchy-Riemann problems underlying Theorem \ref{Thm:Main} is analogously obtained from a Hamiltonian isotopy which displaces $L$ from itself. In Abouzaid's case, there were two bubble configurations, leading to additional complications.  His filling was accordingly built from a smooth structure constructed on a CW-approximation of the actual moduli space of Floer disks. Our argument is more direct, and gives a  $C^{1}$-structure on the compactified moduli space itself.  

\subsection{Lagrangian embeddings}
Because most of the argument for Theorem \ref{Thm:Main} is carried out on the embedded Lagrange surgery, the method of proof also gives results for Lagrangian embeddings, provided a homotopy obstruction arising from the stable Lagrangian Gauss map vanishes. If the dimension is divisible by 8 then the homotopy group in which the obstruction lies is zero. 

\begin{Corollary} \label{Cor:First}
If $k\ge 1$, then a monotone Lagrangian submanifold $L\subset \bC^{8k}$ of minimal Maslov number $8k$ bounds a parallelizable manifold.
\end{Corollary}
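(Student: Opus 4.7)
My plan is to rerun the construction of the cobordism $\BB$ from the proof of Theorem \ref{Thm:Main} directly with the given monotone Lagrangian $L$ playing the role of the Lagrange surgery, and to isolate the single step where the specific origin of $L$ (as a surgery on an immersion) was actually used.

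First, I would invoke Damian's theorem to recover the topological structure of $L$: since $L\subset\bC^{8k}$ is monotone with minimal Maslov number equal to $\dim L$, it fibers over $S^{1}$ with fiber a homotopy $(8k-1)$-sphere, and thus has essentially the homotopy type of $W'\approx S^{1}\times S^{8k-1}$. All of the Floer-theoretic ingredients used in Theorem \ref{Thm:Main}---the existence, transversality and compactness of $\FF(0\beta)$, $\FF^{\ast}(-\beta)$ and $\MM^{\ast}(\beta)$, the classification of the unique bubble configuration and the resulting boundary $\NN=\FF^\ast(-\beta)\times_L\MM^\ast(\beta)$, the gluing that gives the collar of $\NN\subset\FF_{\rho_{0}}(0\beta)$, and the assembly $\BB=\FF_{\rho_{0}}(0\beta)\cup(\DD\times_{L}\MM^{\ast}(\beta))$---depend only on these monotonicity and Maslov-number properties of $L$, so they carry over without change and produce a compact cobordism with $\pa\BB=L$.

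Second, I would revisit the stable triviality of $T\BB$. This is the single place in Theorem \ref{Thm:Main} where the PL isotopy from $L$ to $W'$ was genuinely used: it allowed the explicit trivializations of the three index bundles, constructed over a skeleton of the free-loop-space model of the mapping space $(D,\pa D)\to(\bC^{8k},W')$, to be transported to the corresponding bundles for $L$. For a general monotone $L$ there is no such isotopy, and the stable Lagrangian Gauss maps $\gamma_{L}$ and $\gamma_{W'}$ into $U/O$ need not agree after a topological identification; their discrepancy is the homotopy obstruction advertised in the corollary.

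The hard part is showing this obstruction vanishes, which is exactly where the mod-$8$ hypothesis enters. The discrepancy lives in a specific homotopy group of $U/O$ whose degree is determined by the dimensions of the relevant skeleton and of $L$; by Bott periodicity this group vanishes when $\dim L\equiv 0\pmod 8$. Under that hypothesis the $W'$-trivializations transfer to $L$, the three index bundles are stably trivial, and the coherent-trivialization step together with the unobstructed spin-structure argument from the proof of Theorem \ref{Thm:Main} run unchanged to produce a stable trivialization of $T\BB$. Hence $\BB$ is a parallelizable manifold bounding $L$, which is the conclusion of the corollary.
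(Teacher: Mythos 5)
Your overall strategy is exactly the paper's: the construction of $\BB$ only uses that $L$ is monotone with $\pi_1(L)=\Z$ and minimal Maslov number equal to $\dim L$ (supplied by Damian's theorem), and the single new input for the corollary is the comparison of the stable Gauss map of $L$ with that of $W'$, which the paper isolates in Lemma \ref{Lem:forCor} before quoting the proof of Theorem \ref{Thm:Main} verbatim.

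Two points in your key step are, however, off as stated. First, the ambient {\sc pl} isotopy from $L$ to $W'$ \emph{does} exist for every such monotone $L$: by Damian, $L$ fibers over $S^{1}$ with homotopy-sphere fiber, hence is {\sc pl}-homeomorphic to $S^{1}\times S^{8k-1}$, and the codimension-$\geq 3$ unknotting argument of Lemma \ref{Lem:GaussInterpolate} (Hudson, Hudson--Zeeman) applies unchanged; what needs the dimension hypothesis is the homotopy of stable Gauss maps covering this identification, not the identification itself. Second, that homotopy is not controlled by a single vanishing homotopy group of $U/O$. Building it cell by cell over $S^{1}\times S^{8k-1}$, the obstruction over the circle factor lies in $\pi_1(U/O)\cong\Z$, which does not vanish and is killed only because both Lagrangians have Maslov number $8k$; the obstruction over $\{pt\}\times S^{8k-1}$ lies in $\pi_{8k-1}(U/O)=0$; and for the top cell the paper lifts the map to $U$ (using stable triviality of $TL$) and uses $\pi_{8k}(U)=0$. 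Your appeal to Bott periodicity is the right reason the higher groups vanish precisely when $\dim L\equiv 0\pmod 8$, but as literally written the ``one obstruction group, and it vanishes'' claim is false, and the Maslov-number hypothesis must be invoked explicitly to handle the degree-one obstruction. Once this lemma is in place, the remainder of your argument coincides with the paper's proof.
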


Any such manifold is known \cite[Theorem 1.7]{Damian} to be a fiber bundle over $S^1$ with fiber a homotopy $(8k-1)$-sphere $Q$. For general homotopy spheres $Q$, the mapping class group $\pi_0 \Diff(Q)$ is unknown. However, if the fiber is the standard sphere, $Q \approx S^{8k-1}$, then the mapping torus (being orientable) is diffeomorphic to a connected sum $(S^1 \times S^{8k-1}) \# \Sigma^{8k}$ for some homotopy $8k$-sphere $\Sigma$, cf.~Remark \ref{Rem:Cerf}, and such a manifold framed bounds only if $\Sigma$ is diffeomorphic to $S^{8k}$.  Thus, Corollary \ref{Cor:First} implies that such a monotone $L$ is either the obvious product of standard spheres, or an exotic sphere bundle.

To state a further result, let $P=S^{1}\times S^{8k-1}$, $k\geq 1$, and recall \cite[Theorem 1(c)]{SchultzA}  that $P\#\Sigma$ is diffeomorphic to $P$ if and only if $\Sigma$ is diffeomorphic to $S^{8k}$.  (The connect sums $P \# \Sigma$ do not exhaust smooth structures on the topological  manifold $P$; the general such is realised by $(S^1 \times \Sigma^{8k-1}) \# \Sigma^{8k}$, with $\Sigma^j$ a homotopy sphere of  dimension $j$, \cite[Theorem A]{Schultz}.)

\begin{Corollary} \label{Cor:Main}
If $k\geq 1$ and $\Sigma$ is a homotopy $8k$-sphere, then the cotangent bundles $T^{\ast}P$ and $T^{\ast}(P\#\Sigma)$ are symplectomorphic if and only if $\Sigma \approx S^{8k}$ is the sphere. \end{Corollary}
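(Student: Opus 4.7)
For the ``if'' direction, if $\Sigma\approx S^{8k}$ then the Schultz result cited just before the corollary gives $P\#\Sigma\approx P$, so the cotangent bundles are diffeomorphic and hence symplectomorphic. For the converse, suppose $\Phi\colon T^\ast P\to T^\ast(P\#\Sigma)$ is a symplectomorphism. The plan is to extract from $\Phi$ a monotone embedded Lagrangian $L\subset\bC^{8k}$ of minimal Maslov number $8k$, diffeomorphic to $P\#\Sigma$, and then apply Corollary~\ref{Cor:First}; the parallelizable filling it provides, combined with the classification discussion immediately afterwards (Damian's fibration theorem together with Remark~\ref{Rem:Cerf} and the Schultz results), will force $\Sigma\approx S^{8k}$. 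After composing $\Phi$ with a fibrewise translation $\tau_v$ by a closed $1$-form $v$ on $P\#\Sigma$ of suitable cohomology class, I may assume $\Phi$ is exact: such a class for $v$ exists because the induced map $H^1(P\#\Sigma;\bR)\to H^1(T^\ast P;\bR)$ is an isomorphism. Then $L_0:=\Phi^{-1}(\text{zero section of }T^\ast(P\#\Sigma))$ is an embedded exact Lagrangian in $T^\ast P$ diffeomorphic to $P\#\Sigma$.

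Next, let $W'\subset\bC^{8k}$ be the monotone Lagrange surgery of the Whitney sphere (diffeomorphic to $P$, minimal Maslov $8k$), and let $\psi\colon D_r T^\ast P\hookrightarrow\bC^{8k}$ be a Weinstein neighborhood identifying the zero section with $W'$. The conformally symplectic fibrewise rescaling $\rho_\lambda(q,p)=(q,\lambda p)$ preserves Lagrangians, and for $\lambda>0$ sufficiently small the Lagrangian $L_\lambda:=\rho_\lambda(L_0)$ lies in $D_r T^\ast P$. Setting $L:=\psi(L_\lambda)$ yields an embedded Lagrangian in $\bC^{8k}$ diffeomorphic to $P\#\Sigma$.

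To verify the hypotheses of Corollary~\ref{Cor:First}, I decompose the Maslov and area classes of $L$ under $\psi$. Writing $\psi^\ast\theta_{\bC^{8k}}=\theta_{T^\ast P}+\beta$ for a closed $1$-form $\beta$ on $D_r T^\ast P$, one has $\theta_{T^\ast P}|_{L_\lambda}=\lambda\cdot\theta_{T^\ast P}|_{L_0}$ under the identification by $\rho_\lambda$; exactness of $L_0$ kills this term in cohomology, so the area class of $L$ in $\bC^{8k}$ is $[\omega]_L=[\beta|_{L_\lambda}]=(\pi|_L)^\ast[\omega]_{W'}$, where $\pi$ denotes the cotangent projection. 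As $\lambda\to 0$ the tangent planes of $L_\lambda$ converge to the horizontal distribution, which is transverse to the vertical fibres of $T^\ast P$, so discreteness forces the Maslov class of $L_\lambda$ in $T^\ast P$ to vanish; comparing the two natural Lagrangian reference frames along $W'$ then gives $\mu_L^{\bC^{8k}}=(\pi|_L)^\ast\mu_{W'}^{\bC^{8k}}$. The map $\pi|_L$ is, up to the diffeomorphism $L\approx L_0$ from $\psi\circ\rho_\lambda$, the composition $\pi\circ\Phi^{-1}|_{\text{zero section}}$, which induces an isomorphism on $H^1$ because $\Phi$ is a diffeomorphism and the inclusions of zero sections into cotangent bundles are homotopy equivalences. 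Hence both the monotonicity constant of $W'$ and its minimal Maslov value $8k$ transfer to $L$.

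Corollary~\ref{Cor:First} now applies and furnishes a parallelizable filling of $L\approx P\#\Sigma$, and the classification discussion following it yields $\Sigma\approx S^{8k}$. The main technical obstacle in the plan is the Maslov and area decomposition above: one must verify that no extra shift arises from the comparison of Lagrangian frames under $\psi$ beyond what is already captured in the Maslov class of $W'$, so that the transferred class really realizes the minimal positive Maslov value $8k$ rather than some larger multiple.
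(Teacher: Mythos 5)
Your overall strategy coincides with the paper's: make the symplectomorphism exact, pull back the zero section to get a closed exact Lagrangian $L_0\subset T^*P$ diffeomorphic to $P\#\Sigma$, shrink it into a Weinstein neighbourhood of $W'$ in $\bC^{8k}$, check that the resulting Lagrangian is monotone of minimal Maslov number $8k$, and invoke Corollary \ref{Cor:First}. Your variant of the first step (composing with a fibrewise translation by a closed $1$-form on $P\#\Sigma$, using that $H^1(P\#\Sigma;\R)\to H^1(T^*P;\R)$ is an isomorphism) is a legitimate substitute for the paper's translation in the $T^*S^1$-factor plus flux argument, and your decomposition of the area class through the Weinstein embedding is essentially the computation implicit in the paper.

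However, there is a genuine gap at the step where you claim that the Maslov class of $L_\lambda$ in $T^*P$ (relative to the vertical distribution) vanishes because ``as $\lambda\to 0$ the tangent planes of $L_\lambda$ converge to the horizontal distribution.'' The rescaling $\rho_\lambda$ preserves the vertical subbundle, so at any point where $T_xL_0$ meets the fibre direction nontrivially this tangency persists for every $\lambda$, and the limiting planes are \emph{not} transverse to the fibres there. Such points must exist: if $\pi|_{L_0}\colon P\#\Sigma\to P$ were an immersion it would be a covering of closed manifolds, hence (being of degree one) a diffeomorphism, contradicting $\Sigma$ being exotic. More fundamentally, no soft degeneration argument of this kind can work, since it would prove Maslov vanishing without using exactness at all, and would trivialize what is in fact a hard theorem. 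The paper fills exactly this step by citing the Abouzaid--Kragh result that any closed exact Lagrangian in a cotangent bundle has vanishing Maslov class \cite[Proof of Theorem E.2]{Kragh}; with that citation in place of your limiting argument the rest of your proof goes through. Note also that the ``main technical obstacle'' you flag at the end (the comparison of Lagrangian frames under $\psi$) is not where the difficulty lies — that part is routine once the vertical Maslov class of $L_0$ is known to vanish.
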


We remark that $T^*(P\# \Sigma)$ and $T^*P$ \emph{are} diffeomorphic, cf.~Remark \ref{Rem:Diffeomorphic}.  Corollary \ref{Cor:Main} is analogous to Abouzaid's Theorem \cite{Abouzaid} which says that if $\Sigma$ is a homotopy $(4k+1)$-sphere which does not bound a parallelizable manifold then $T^*\Sigma$ is not symplectomorphic to $T^*S^{4k+1}$. It contributes to recent progress on Arnold's ``nearby Lagrangian submanifold" conjecture \cite{Arnold:firststeps}, which asserts in particular that $T^*M$ and $T^*N$ are symplectomorphic only when $M$ and $N$ are diffeomorphic.  

\subsection{Gluing results and the odd-dimensional case}

The second half of the paper contains the gluing analysis needed to obtain $C^1$-smooth structures (and hence tangent bundles) on the compactified moduli space and on $\BB$. Constructions of such $C^1$-structures are complicated by the fact that holomorphic disks bubbling off have automorphisms. We overcome these complications by using intersections with ambient hypersurfaces to stabilize the domains of the bubbles. In order to control these intersection points as the disks vary in the moduli space, we are led to study moduli spaces with jet conditions. To facilitate that study we make the Lagrangian $L$ real analytic and take the almost complex structure standard in a neighborhood of $L$. This provides standard local solutions at boundary points with arbitrary $m$-jet for any $m\ge 0$, which allow us to deal with jet-conditions using essentially finite dimensional techniques. 
Because we work in $\C^n$, and the holomorphic bubble disks all realize primitive relative homotopy classes, our analytical arguments are rather explicit. In particular, geometric arguments suffice for transversality, there is no need for abstract perturbations, and there are no orbifold complications.

The gluing analysis applies to a Lagrangian embedding $L \to \C^n$ of minimal Maslov number $n$ and with $\pi_1(L)=\bZ$, with no assumption on the parity of $n$. If $n$ is odd, however, the topological side of the story is complicated at various places. The original Lagrange immersion $f\colon K \to \C^n$ may have a unique double point of Legendrian homology grading (Maslov index) $1$,  rather than $n$, in which case the Legendrian homology of the resulting  Lagrange surgery is not  linearizable. Even when the Lagrangian $L$ obtained by surgery has Maslov index $n$, the bounding manifold obtained from spaces of holomorphic disks cannot be parallelizable since $L$ is not orientable; the boundary of the orientation double cover of the filling could contain two canceling copies of an exotic sphere with opposite orientations.

\subsection{Recent developments.}  Since the first version of this paper was submitted, a number of relevant developments have taken place, some of which were mentioned in Section \ref{Sec:Context}.

\begin{enumerate}
\item  In \cite{EkholmSmith-Sequel}, we revisit the constructions of this paper, but studying moduli spaces of holomorphic disks with boundary directly on the image of the Lagrangian immersion, rather than on its monotone surgery.  Aside from bringing some technical simplifications, this circumvents the non-orientability issue mentioned in the previous paragraph, and leads to a result valid in all dimensions:  \emph{if a closed oriented $n$-manifold $K$ admits a Lagrangian immersion into $\C^n$ with a unique  double point, which is moreover ``positive"   (i.e. of even Maslov grading), then  $K$ is a homotopy sphere which bounds a parallelizable manifold}.  In contrast, Corollary \ref{Cor:Main} relies essentially on the appearance of the Lagrange surgery, and does not seem amenable to the strategy of \cite{EkholmSmith-Sequel}.

\item Murphy's theory of loose Legendrian embeddings, and the resulting Eliashberg-Murphy theory of Lagrangian caps \cite{EliashbergMurphy}, led to flexibility results for Lagrangian immersions \cite{YETI} which in particular demonstrate the necessity of the Euler characteristic hypothesis in Theorem \ref{Thm:Main}, which we had previously presumed was technical. 
\end{enumerate}

The Appendix to  \cite{EkholmSmith-Sequel} gives a detailed and entirely self-contained proof of the existence of the $C^1$-smooth structure on the compactified moduli space which arises in that argument (that, however,  is intrinsically simpler than the moduli space occuring here, having boundary a product and not a fibred product of other moduli spaces). Some readers may find that a helpful precursor to the later sections of this paper.  

\subsection{Organization of the paper}
Section \ref{Sec:Top+FH} recalls the Floer homological arguments which imply, in the situation of Theorem \ref{Thm:Main}, that $K \approx \Sigma$ is a homotopy sphere. 
Section \ref{Sec:bounding} constructs a parallelizable bounding manifold for $\Sigma$, subject to establishing a suitable  analytical framework for the relevant moduli spaces, and finishes with the proofs of Theorem \ref{Thm:Main} and its corollaries.  The underlying analysis is deferred to Sections \ref{Sec:basicsetup} - \ref{Sec:gluing2}.  Whilst much of this is encompassed by standard pseudo-holomorphic curve theory, we include enough background to give an essentially complete proof of the notable exception, Theorem \ref{Thm:gluing}, which constructs a $C^1$-structure on a certain compactified moduli space of Floer holomorphic disks. Finally, Section \ref{Sec:IndexBundle} discusses index bundles over the loop space of $S^{1}\times S^{2k-1}$, and constructs coherent trivializations. 

\smallskip

\Acknowledgements T.E.~is partially supported by the Knut and Alice Wallenberg Foundation, as a Wallenberg Scholar. I.S.~was partially supported by European Research Council grant ERC-2007-StG-205349. The authors are indebted to Mohammed Abouzaid and Paul Seidel for helpful conversations.

\section{Topological restrictions from Floer homology}
\label{Sec:Top+FH}
In this section we discuss topological constraints on exact Lagrangian immersions with a single double point that can be derived from Floer homological arguments. 

\subsection{Legendrian homology}
Let $f\colon K\to \C^{n}$ be an exact Lagrangian immersion of an orientable $2k$-manifold with a single transverse double point.  As explained in Section \ref{Sec:Intro} there is a Legendrian lift $\tilde f=f\times z\colon K\to\C^{n}\times\R$. In this case $\tilde f$ is necessarily (and not just generically) an embedding, see Remark \ref{Rem:EmbeddingForFree}. 

Since $K$ is orientable the Maslov class of $K$ is even and we can define a $\Z_2$-grading on the Reeb chords of $\tilde f$, see \cite{EES1}. If $a$ is a Reeb chord then we write $|a|_2\in\Z_2=\{0,1\}$ for this grading. Furthermore, in case the Maslov class of $K$ vanishes then we get a well defined $\Z$-grading $|a|$ of Reeb chords $a$.

Assume now that $n=2k$. Then we have the following relation between Reeb chord gradings and Euler characteristic.

\begin{Lemma}
Let $g\colon M\to\C^{2k}$ be a self transverse exact Lagrangian immersion of an orientable closed $2k$-manifold such that the Legendrian lift $\tilde g$ of $g$ is an embedding. Let $\mathcal{Q}$ denote the set of Reeb chords of $\tilde g$. Then the Euler characteristic $\chi(M)$ of $M$ satisfies
\[
\chi(M)=2\sum_{c\in\mathcal{Q}}(-1)^{|c|_2}.
\]
In particular, if $f\colon K\to\C^{2k}$ is an exact Lagrangian immersion with exactly one double point then $\chi(K)=\pm 2$, where the sign is positive if the unique Reeb chord $a$ of $\tilde f$ satisfies $|a|_2=0$ and negative if $|a|_2=1$.   
\end{Lemma}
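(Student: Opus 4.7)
The plan is to combine the classical self-intersection formula for immersions of oriented $2k$-manifolds into $4k$-space with the definition of the $\Z_2$-grading on Reeb chords.

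First, for any self-transverse immersion $g\colon M^{2k}\to\C^{2k}$ of a closed orientable $2k$-manifold one has $2\,d(g) = -e(\nu_g)\cdot[M]$, where $d(g)$ is the algebraic count of double points and $\nu_g$ is the normal bundle. Since $g$ is Lagrangian, multiplication by $i$ gives an isomorphism of oriented real vector bundles $\nu_g \iso TM$, so $e(\nu_g)\cdot[M] = \chi(M)$, whence
\[
\chi(M) \;=\; -2\,d(g) \;=\; -2\sum_{p}\epsilon(p),
\]
with $p$ ranging over the double points of $g$ and $\epsilon(p)\in\{\pm 1\}$ the sign of the self-intersection at $p$.

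Second, because $\tilde g$ is an embedding, each Reeb chord $c\in\mathcal{Q}$ projects to a unique double point $p(c)$ of $g$, giving a bijection $\mathcal{Q} \leftrightarrow \{\text{double points of } g\}$. The key step is then to identify signs: at $p=p(c)$ the two sheets carry transverse Lagrangian tangent planes $\Lambda^\pm \subset T_p\C^{2k}$, labelled by the Reeb direction of the lift. On the one hand $\epsilon(p)$ is the sign of the oriented isomorphism $\Lambda^- \oplus \Lambda^+ \to T_p\C^{2k}$; on the other, $|c|_2$ is defined in \cite{EES1} as the mod-$2$ Maslov index of a capping path from $\Lambda^-$ to $\Lambda^+$ in the Lagrangian Grassmannian. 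A short symplectic linear algebra computation identifies these two mod-$2$ invariants up to an overall sign, which I would calibrate on the Whitney sphere: there $\chi(S^{2k})=2$, the unique Reeb chord $a$ has $|a|_2=0$, and the unique double point has $\epsilon=-1$. This fixes the identification as $\epsilon(p(c)) = -(-1)^{|c|_2}$ for every $c$, and substituting yields $\chi(M) = 2\sum_{c\in\mathcal{Q}}(-1)^{|c|_2}$. The ``in particular'' statement then follows by setting $|\mathcal{Q}|=1$.

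The main obstacle I anticipate is the sign bookkeeping at the step identifying $\epsilon(p(c))$ with $\pm(-1)^{|c|_2}$. The grading conventions of \cite{EES1} involve several offsets depending on orientations, the choice of capping path, and the dimension modulo $4$, and one must carefully verify that the resulting mod-$2$ class agrees with the Whitney intersection sign. Modulo this combinatorial verification, the argument is a direct assembly of the self-intersection formula with the chord/double-point bijection.
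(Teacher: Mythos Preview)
Your approach is correct and is essentially what the paper's citation \cite[Equation~(3.2) and Proposition~3.2(2)]{EES2} contains: the Whitney self-intersection formula $2d(g)=-e(\nu_g)[M]$, the Lagrangian identification $\nu_g\cong TM$, and the local sign comparison between the intersection sign $\epsilon(p)$ and the $\Z_2$-grading $(-1)^{|c|_2}$. The paper simply defers to that reference rather than spelling this out.

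One small remark on your calibration step: to use the Whitney sphere to fix the universal sign, you must compute $|a|_2=0$ for its Reeb chord \emph{independently} of the formula you are proving, directly from the definition of the grading (e.g.\ via the explicit phase function along the Whitney immersion). This is straightforward from \eqref{Eqn:Whitney}, but should be stated. Alternatively, and slightly more robustly, you can verify $\epsilon(p(c))=-(-1)^{|c|_2}$ directly on the standard local model $\R^{2k}\cup i\R^{2k}\subset\C^{2k}$, which is what \cite{EES2} effectively does; this avoids any appearance of circularity.
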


\begin{proof}
The Euler characteristic formula is straightforward, see \cite[Equation (3.2) and Proposition 3.2 (2)]{EES2}, and the last statement is an immediate consequence of it.
\end{proof}

Consider now a self transverse exact Lagrangian immersion $f\colon K\to\C^{2k}$ with a single double point and corresponding unique Reeb chord $a$ of the Legendrian lift $\tilde f$.  The Legendrian homology algebra is a differential graded algebra (DGA)  generated by Reeb chords of $\tilde{f}$, with the empty chord corresponding to the unit element $1$.   If $|a|_2=1$ then, in principle, one could have the relation $\pa a =1$ in the Legendrian DGA. This would then not be linearizable, i.e. it would admit no degree zero chain map to the DGA given by the ground field (in degree zero) with trivial differential.  In the non-linearizable setting it seems hard to use Legendrian homology to draw conclusions about the topology of $K$ (this situation corresponds to the Lagrangian Floer homology of the immersed Lagrangian $f(K)$ being \emph{obstructed}). If on the other hand $|a|_2=0$ then we have the following.  
 
\begin{Lemma}\label{Lem:Maslov=0}
Let $f\colon K\to\C^{2k}$ be an exact Lagrangian immersion such that the unique Reeb chord $a$ of the Legendrian lift $\tilde f$ satisfies $|a|_2=0$. Then $K$ is a $\Z$-homology sphere. Consequently, its Maslov class vanishes, and moreover $|a|=2k$. 
\end{Lemma}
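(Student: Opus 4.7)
My plan is to use the parity $|a|_2=0$ to linearize the Legendrian DGA of $\tilde f$, then invoke the Arnol'd--Morse inequalities for exact Lagrangian immersions of \cite{Ekholm, EESori}.

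The first step is to observe that $\partial a=0$ for degree reasons: the DGA is the tensor algebra on the single generator $a$, every monomial $a^j$ lies in $\Z_2$-grading $j\cdot|a|_2=0$, and $\partial$ lowers $\Z_2$-grading by one. So $\partial a$ lies in an empty odd-degree subspace and must vanish, the zero map $a\mapsto 0$ is an augmentation, and the linearized Legendrian cohomology of $\tilde f$ is one-dimensional, concentrated in degree $|a|$.

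With the linearization in hand I would apply the Morse inequalities of \cite{Ekholm, EESori}: an exact Lagrangian immersion with linearizable Legendrian DGA and $d$ transverse double points satisfies $\dim H_\ast(K;\mathbb{F})\leq 2d$ for every coefficient field $\mathbb{F}$. Specializing to $d=1$ and using that $K$ is closed orientable of dimension $2k$ (so that $[K]$ and $[\mathrm{pt}]$ are already independent), $K$ is an $\mathbb{F}$-homology sphere for every $\mathbb{F}$; varying $\mathbb{F}$ over $\Q$ and each $\Z_p$ and invoking universal coefficients, $K$ is a $\Z$-homology sphere. In particular $H^1(K;\Z)=0$, the Maslov class of $\tilde f$ vanishes, and the $\Z_2$-grading of the DGA promotes to a $\Z$-grading, so $|a|\in\Z$ is well-defined. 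A degree-refined form of the same Morse inequality then pins down $|a|=2k$: the two intersection generators attached to the unique double point sit in degrees $|a|$ and $2k-|a|$ and together with the Morse generators of $K$ must account for $H^\ast(K)$ degree by degree; as $H^\ast(K)$ is concentrated in degrees $0$ and $2k$ and $a$ is a Reeb chord of strictly positive action, $|a|\ne 0$, forcing $|a|=2k$.

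The main point where care is needed is matching grading conventions between the Legendrian DGA and the Floer-theoretic statement of the Arnol'd--Morse inequality, both to ensure the rank bound is available over arbitrary coefficient fields (to upgrade from a rational to an integral homology sphere) and to extract $|a|=2k$ rather than $|a|=0$ from the graded inequality.
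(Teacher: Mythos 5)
Your overall route is the same as the paper's: $|a|_2=0$ forces $\pa a=0$ (the whole algebra sits in even $\Z_2$-degree, and the differential has odd degree), so the DGA is linearizable with the trivial augmentation; the Morse inequalities for linearized Legendrian homology of \cite{EESa, Ekholm, EESori} then bound $\dim H_\ast(K;\mathbb{F})$ by twice the number of Reeb chords, giving a homology sphere, hence vanishing Maslov class and a $\Z$-grading, and finally $|a|=2k$ from the graded duality statement (the paper simply cites \cite[Theorem 5.5]{EESa} here, which is in effect the degree-refined inequality you sketch).

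The genuine gap is the sentence asserting that the rank bound holds ``for every coefficient field $\mathbb{F}$.'' For coefficients other than $\Z_2$, the Legendrian DGA -- and hence its linearization and the Morse inequalities you invoke -- is only defined once coherent orientations exist, which in the framework of \cite{EESori} requires $K$ to be spin; this is not a matter of matching grading conventions. The paper closes this by a bootstrap that your argument omits: run the inequality first with $\Z_2$-coefficients to conclude $H_j(K;\Z_2)=0$ for $j\ne 0,2k$, in particular $H^1(K;\Z_2)=H^2(K;\Z_2)=0$, so the (orientable) manifold $K$ is spin and the DGA is defined over arbitrary field coefficients; only then repeat the argument with $\Q$ (to kill the Maslov class and obtain the $\Z$-grading) and with $\Z_p$ for all primes $p$ (to upgrade to a $\Z$-homology sphere and then extract $|a|=2k$). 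As written, your simultaneous appeal to all coefficient fields is unjustified; inserting the $\Z_2$-first spin step repairs the argument and recovers exactly the paper's proof.
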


\begin{proof}
Consider first $\Z_{2}$-coefficients; $|a|_2=0$ implies that the Legendrian DGA of $\tilde f$ is linearizable. It then follows from the $\Z_2$-graded Morse inequalities for linearized Legendrian homology over $\Z_{2}$, see \cite[Theorem 1.2]{EESa} that $\dim(H_\ast(K;\Z_2))\le 2$. On the other hand $\dim(H_0(K;\Z_2))=\dim(H_{2k}(K;\Z_2))=1$ and we conclude that $H_{j}(K;\Z_2)=0$ for $j\ne 0,2k$. In particular, $H^{1}(K;\Z_2)=0=H^{2}(K;\Z_2)$, so $K$ is spin and we can define the Legendrian homology DGA with arbitrary field coefficients. 

Repeating the above argument with $\Q$-coefficients shows that $H^{1}(K;\Q)=0$. Hence the Maslov class of $f$ vanishes and there is a $\Z$-grading on the linearized Legendrian homology. Repeating the argument again with $\Z_p$ coefficients, $p$ prime shows that $H_j(K;\Z)=0$, $j\ne 0,2k$, and then \cite[Theorem 5.5]{EESa} implies that $|a|=2k$.
\end{proof}

\begin{Remark} \label{Rem:EmbeddingForFree}
Similar general properties of Legendrian contact homology justify our earlier claim that $\tilde{f}$ is necessarily an embedding.   If $\tilde{f}$ was an immersion, there could be no holomorphic disk with boundary on $K$ (its area would be given by a positive multiple of the length of the Reeb chord of the Legendrian lift, which vanishes if the lift still has a double point). In that situation, the Legendrian homology of $K$, or of two parallel copies $K\sqcup K$ of $K$, can necessarily be linearized. The linearized Legendrian homology of $K \sqcup K$ would equal  $H_\ast(K)$ by \cite{EESa}, but this is impossible: $K$ is displaceable so the linearized homology must vanish.
\end{Remark}

\subsection{Lagrange surgery}
Let $f\colon K \to \C^{2k}$ be a Lagrangian immersion with a unique transverse double point and Legendrian lift $\tilde f=f\times z$, and suppose the conditions of Lemma \ref{Lem:Maslov=0} hold. We write $\{p_{+},p_{-}\}\subset K$ for the preimage of the double point, $f(p_+)=f(p_-)$, and choose notation so that $z(p_+)>z(p_-)$. There is a surgery procedure which resolves the double point \cite{Polterovich}. It gives a Lagrangian embedding of $K$ with a $1$-handle attached, $K\# (S^{2k-1}\times S^{1})$. In fact, there are two ways of adding the handle, which lead to Lagrangian non-isotopic submanifolds. 

To simplify our discussion of this surgery procedure we first note that after Hamiltonian isotopy we may assume that the double point is located at $0\in\C^{2k}$ and that near $0$ the two sheets of $f(K)$ agree with
\begin{equation}\label{eq:rightangles}
\Gamma \ = \ \bR^{2k} \cup i\bR^{2k} \ \subset \ \bC^{2k}.
\end{equation}

Since the Maslov class of $f$ vanishes we can define a \emph{phase function} $\phi\colon K\to \R$ which measures local contributions to the Maslov index and is unique up to additive constant, see \cite{Seidel:graded} or \cite[p.6]{BEE}. In terms of this function we have
\begin{equation}\label{Eq:phasegrading}
|a|=\phi(p_-)-\phi(p_+)+k-1.
\end{equation}  

Lagrange surgery is defined locally. Consider $\Gamma$ as above and let $x+iy$ be the standard coordinate on $\C$. Write
\[
Q_{\pm}^{\ast}=\{x+iy\colon x\ge 0,\; \pm y\ge 0,\; x+iy\ne 0\}
\]
Fix smooth embedded paths $\gamma_\pm\colon\R\to Q_{\pm}^{\ast}$ which satisfy 
\[
\gamma_\pm(t) =
\begin{cases}
 - t &\text{for }t < -\epsilon,\\
 \pm it &\text{for }t > \epsilon.
\end{cases}
\]
Thinking of $S^{2k-1}$ as the unit sphere in $\R^{2k}$ we define the two \emph{Lagrange handles} $H_{\pm}$ as follows:  	  
\begin{equation}\label{eq:Lhandle}
H_\pm = \bigcup_{t\in\R} \gamma_\pm(t)\cdot S^{2k-1} \subset \C^{2k},
\end{equation} 
where $\zeta\cdot$ denotes multiplication by the complex scalar $\zeta$. Then $H_{\pm}$ are embedded Lagrangian submanifolds diffeomorphic to $S^{2k-1} \times \bR$ and co-inciding with $\Gamma$ outside of a ball of radius $2\epsilon$ around the origin. As above there is a phase function $\phi\colon H_{\pm}\to\R$ which is unique up to additive constant. 

\begin{Lemma}\label{Lem:LhandleMaslov}
Let $\phi\colon H_{\pm}\to\R$ be the phase function which equals $0$ on $H_{\pm}\cap\R^{2k}$. Then
\begin{enumerate}
\item on $H_+$, $\phi(\eta)=k-1$ for $\eta\in i\R^{n}\cap H_+$, and
\item on $H_-$, $\phi(\eta)=-(k-1)$ for $\eta\in i\R^{n}\cap H_-$.
\end{enumerate}
\end{Lemma}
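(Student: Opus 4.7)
The plan is to compute the phase $\phi$ directly from the Gauss map, using the convention that a grading is a real-valued lift satisfying $e^{2\pi i\phi(p)} = \det^2_\C(T_pL)$.

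First I would identify the tangent space of $H_\pm$ at a typical point. At $p=\gamma_\pm(t)\,v$ with $v\in S^{2k-1}\subset\R^{2k}$ one has
\[
T_p H_\pm \;=\; \R\langle\gamma_\pm'(t)v\rangle \;\oplus\; \gamma_\pm(t)\cdot v^\perp,
\]
since $H_\pm$ is a union of the scaled spheres $\gamma_\pm(t)\cdot S^{2k-1}$. Fixing any real orthonormal basis $v,v_1,\dots,v_{2k-1}$ of $\R^{2k}$, the vectors $(\gamma_\pm'(t)/|\gamma_\pm'(t)|)\,v$ together with $(\gamma_\pm(t)/|\gamma_\pm(t)|)\,v_j$ form a unitary basis of $T_pH_\pm$ that is diagonal in the complex basis $v,v_1,\dots,v_{2k-1}$ of $\C^{2k}$. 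Hence
\[
\det{}^2_\C(T_pH_\pm) \;=\; \left(\frac{\gamma_\pm'(t)}{|\gamma_\pm'(t)|}\right)^{\!2}\cdot\left(\frac{\gamma_\pm(t)}{|\gamma_\pm(t)|}\right)^{\!2(2k-1)}.
\]

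Next I would track $\arg\det^2_\C$ along a path in $H_\pm$ from the starting sheet $H_\pm\cap\R^{2k}$ ($t\to-\infty$, where $\phi=0$) to $H_\pm\cap i\R^{2k}$ ($t\to+\infty$). Writing $\gamma_\pm(t)=R(t)e^{i\alpha_\pm(t)}$, the angle $\alpha_\pm$ runs from $0$ to $\pm\pi/2$. The tangent angle $\arg\gamma_\pm'$ starts at $\pi$ (since $\gamma_\pm'=-1$ for $t<-\epsilon$) and ends at $\pm\pi/2$. Because $\gamma_\pm$ is an embedded path in the closed quadrant $Q_\pm^*$, the tangent performs a monotone quarter turn, clockwise for $H_+$ and counter-clockwise for $H_-$, so $\Delta\arg\gamma_\pm' = \mp\pi/2$. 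Combining,
\[
\Delta\arg\det{}^2_\C \;=\; 2\,\Delta\arg\gamma_\pm' \;+\; 2(2k-1)\,\Delta\alpha_\pm \;=\; \pm\bigl[-\pi+(2k-1)\pi\bigr] \;=\; \pm\,2(k-1)\pi,
\]
and dividing by $2\pi$ yields $\phi = \pm(k-1)$ on $i\R^{2k}\cap H_\pm$.

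The one point that needs care is the claim that $\arg\gamma_\pm'$ really does turn by exactly $\mp\pi/2$ and not $\pm 3\pi/2$. I would justify this by isotoping $\gamma_+$ (through embedded paths in $Q_+^*$ with the prescribed behaviour outside a neighbourhood of the origin) to an explicit model, namely two straight rays joined by a quarter-arc of the circle $|z|=\epsilon$; along this model the tangent manifestly sweeps through angles monotonically from $\pi$ to $\pi/2$, and the total rotation is preserved by the isotopy since embeddedness in $Q_+^*$ prevents extra full turns. The symmetric argument in $Q_-^*$ handles $H_-$. The starting phase $\phi = 0$ on $H_\pm\cap\R^{2k}$ follows from $\det^2_\C(\R^{2k})=1$, completing the computation.
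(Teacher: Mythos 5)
Your proof is correct and follows essentially the same route as the paper: it splits $T_pH_\pm$ into the curve direction $\gamma_\pm'(t)v$ and the sphere directions $\gamma_\pm(t)\cdot v^{\perp}$, yielding the contributions $\mp\tfrac12$ and $\pm\tfrac{2k-1}{2}$ to the phase, exactly as in the paper's argument. The only difference is presentational: you spell out the $\det^2_\C$ computation and justify that $\arg\gamma_\pm'$ turns by exactly $\mp\pi/2$ (a point the paper asserts without comment), which is a harmless refinement rather than a different method.
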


\begin{proof}
Consider the tangent planes along the path $\gamma_{\pm}(t)\cdot v$, for $v\in S^{2k-1}$. On the subspace of the tangent space perpendicular to $v$ this is just multiplication by $\gamma_{\pm}(t)$ which contributes $\pm\frac{2k-1}{2}$ to the phase function on $H_{\pm}$. The tangent vector $\dot\gamma_{\pm}(t)\cdot v$ makes a quarter rotation of sign $\mp$ on $H_{\pm}$, which contributes $\mp\frac{1}{2}$ to the phase function. 
\end{proof}

We next discuss Lagrange surgery on $f\colon K\to\C^{2k}$. Choose coordinates so that $p_-$ lies in the $\R^{2k}$-sheet of $f$ and $p_-$ in the $i\R^{2k}$-sheet at the double point $0$ of $f$. Removing $3\epsilon$-disks in $\R^{2k}$ and $i\R^{2k}$ around $0$ and replacing them by the compact part of $H_{\pm}$ bound by the spheres of radius $3\epsilon$ in $\R^{2k}$ and $i\R^{2k}$, we construct two embedded Lagrangian submanifolds $L_{\pm}$. Topologically, $L_{\pm}$ is $K$ with a $1$-handle attached. In particular, $H_1(L_{\pm};\Z)\cong\Z$.
If $L\subset \C^{n}$ is a Lagrangian submanifold,  let $\mu_{L}\in\Z_{\ge 0}$ denote its minimal Maslov number. 

\begin{Lemma}\label{Lem:Maslovnumber}
Let $f\colon K\to\C^{2k}$ be a Lagrangian immersion as in Lemma \ref{Lem:Maslov=0} and let $L_{\pm}$ be constructed by Lagrange surgery on $f$, as described above. Then
\[
\mu_{L_+}=2k\quad\text{ and }\quad \mu_{L_-}=2.
\] 
\end{Lemma}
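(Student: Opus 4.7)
The plan is to identify $H_1(L_{\pm};\bZ)$, pick an explicit generator, and compute its Maslov index by summing local phase changes.

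First, I would establish that $H_1(L_{\pm};\bZ)\cong \bZ$. By Lemma \ref{Lem:Maslov=0}, $K$ is a homotopy $2k$-sphere, so deleting the two small disks around $p_\pm$ (whose images are the $3\epsilon$-disks in $\bR^{2k}$ and $i\bR^{2k}$) yields a simply connected manifold with two boundary spheres (using $2k\geq 4$). Gluing in the Lagrange handle $H_\pm\approx S^{2k-1}\times \bR$ along these boundary spheres contributes a single new generator, realized by a loop $\gamma$ that starts on the $\bR^{2k}$-face of $H_\pm$, crosses the handle to the $i\bR^{2k}$-face (arriving near $f(p_+)$), and then travels back inside $K$ from $p_+$ to $p_-$ to close up. Since $H_1(L_\pm;\bZ)\cong \bZ$, the minimal Maslov number $\mu_{L_\pm}$ is exactly $|\mu(\gamma)|$.

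Next, I would compute $\mu(\gamma)$ as a sum of two phase changes. Because the Maslov class of $K$ vanishes (Lemma \ref{Lem:Maslov=0}), the phase function $\phi$ on $K$ is single valued, and the contribution from the $K$-segment of $\gamma$ is $\phi(p_-)-\phi(p_+)$. Combining \eqref{Eq:phasegrading} with $|a|=2k$ (Lemma \ref{Lem:Maslov=0}) gives
\[
\phi(p_-)-\phi(p_+) \;=\; |a|-(k-1) \;=\; k+1.
\]
For the handle segment, Lemma \ref{Lem:LhandleMaslov} shows that, with the normalization $\phi=0$ on $H_\pm\cap\bR^{2k}$, the phase on $H_\pm\cap i\bR^{2k}$ equals $\pm(k-1)$. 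So the phase change across $H_+$ from the $\bR^{2k}$-face to the $i\bR^{2k}$-face is $+(k-1)$, while across $H_-$ it is $-(k-1)$. Adding,
\[
\mu(\gamma)=(k+1)\pm(k-1)=\begin{cases}2k,&\text{on }L_+,\\ 2,&\text{on }L_-.\end{cases}
\]

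The one point requiring care is making sure the additive constants in the two phase functions (one on $K$, one on $H_\pm$) are chosen compatibly, so that the local contributions really do sum to the Maslov index of $\gamma$. This is done by observing that, on the $\bR^{2k}$-face of the handle, the surgery identifies a neighborhood of the boundary of $H_\pm$ with a neighborhood of the deleted disk around $p_-$ in $K$; since both Lagrangian pieces are open subsets of the flat $\bR^{2k}$ there, I can normalize both phase functions to agree (both equal to $0$) on this overlap. The two contributions then paste to the continuous phase function along $\gamma$, so their sum is $\mu(\gamma)$, yielding $\mu_{L_+}=2k$ and $\mu_{L_-}=2$.
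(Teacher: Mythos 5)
Your proposal is correct and follows essentially the same route as the paper: it computes the Maslov index of the generating loop of $H_1(L_\pm;\Z)\cong\Z$ by adding the phase change through $K$ (which equals $|a|-(k-1)=k+1$ by \eqref{Eq:phasegrading} and Lemma \ref{Lem:Maslov=0}) to the phase change $\pm(k-1)$ across the handle from Lemma \ref{Lem:LhandleMaslov}. The extra details you supply (that the loop crossing the handle generates $H_1(L_\pm;\Z)$, and that the two phase functions paste consistently near the flat overlap) are correct elaborations of points the paper leaves implicit.
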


\begin{Remark}
Note that $L_{\pm}$ is orientable if and only if $\mu_{L_{\pm}}$ is even. Hence, $L_{\pm}$ is orientable. 
\end{Remark}

\begin{proof}
Observe that the Maslov index of the loop which starts in the $i\R^{2k}$-sheet of $K$ at $p'_+$ in the $3\epsilon$-sphere around $p_+$, follows $K$ to $p'_-$ in the $3\epsilon$-sphere around $p_-$ in the $\R^{2k}$-sheet, and then connects $p'_-$ to $p'_+$ across the added handle $H_{\pm}$ equals
\[
\phi_{K}(p'_-)-\phi_{K}(p'_+)-\phi_{H_{\pm}}(p'_-)+\phi_{H_{\pm}}(p'_+),
\] 
where $\phi_{L}$ denotes a phase function of the Lagrangian $L$. The lemma then follows from \eqref{Eq:phasegrading} and Lemma \ref{Lem:LhandleMaslov}.
\end{proof}

\begin{Example}\label{Ex:LefschetzWhitney}
Let $\pi\colon \C^{2k} \to \C$ be the model Lefschetz fibration 
\[
\pi(z_1,\ldots, z_{2k})= \sum_{j=1}^{2k} z_j^2.
\]
Let $\gamma$ be a smooth embedded closed curve in $\C$ passing through the origin.  The union of the vanishing cycles of $\pi$ along $\gamma$ defines an immersed Lagrangian sphere $W\subset\C^n$ with a single transverse double point, which is one model for the Whitney immersion, see Section \ref{Sec:Intro}.  For instance, if $\gamma \cap B_0(\delta) = (-\delta, \delta) \subset \R$, then the vanishing cycles $V_t = \sqrt{t}S^{2k-1} \subset \pi^{-1}(t)$ along $\gamma$ are locally given by
\[
\bigcup_{t \in [0,\delta)} V_t \ = \ \R^{2k}; \quad \bigcup_{t\in (-\delta,0]} V_t = i\R^{2k}
\]
which meet transversely. The two surgeries of $W$ are given by perturbing $\gamma$ to an embedded curve in $\C^*$ which either does or does not enclose the origin. The Maslov $n$  surgery $W_+ = W'$ is Lagrangian isotopic to the Lagrangian $S^{1}\times S^{2k-1}$ in the model Lefschetz fibration,
\[
W' \ = \ \bigcup_{t\in S^1} \sqrt{t} S^{2k-1},
\]
which is also the mapping torus of the antipodal map on $S^{2k-1}$.
\end{Example}

\subsection{Homotopy type}

In \cite{Damian}, Damian defined the ``lifted Floer homology" of monotone Lagrangian embeddings, which  is essentially the Floer homology of the given embedding equipped with a local system defined by pushing forward the trivial local system from the universal cover. He used it to derive constraints on the possible fundamental groups of monotone Lagrangian submanifolds of $\bC^n$ of large Maslov number. For the case under consideration here, his work implies the following.

\begin{Lemma}\label{Lem:Damian}
With notation as in Lemma \ref{Lem:Maslovnumber}, if $k\ge 3$ then $L_+$ is a smooth manifold which fibers smoothly over the  circle with fiber a homotopy $(2k-1)$-sphere.
\end{Lemma}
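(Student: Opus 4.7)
The plan is to invoke Damian's fibration theorem \cite[Theorem 1.7]{Damian}, which directly delivers the conclusion of the lemma once its hypotheses are checked. The bulk of the task is thus to confirm that $L_+$ is a closed, smooth, orientable, monotone Lagrangian embedding into $\C^{2k}$ with minimal Maslov number equal to the complex dimension of the ambient space.

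First I would note that smoothness and embeddedness of $L_+$ are built into the Lagrange surgery construction of Section 2.2, and that orientability follows from the Remark after Lemma \ref{Lem:Maslovnumber} since $\mu_{L_+}=2k$ is even. Next I would verify monotonicity: since $K$ is a $\Z$-homology sphere by Lemma \ref{Lem:Maslov=0} and attaching a Lagrange $1$-handle produces $K\#(S^1\times S^{2k-1})$, a Mayer--Vietoris computation gives $H_1(L_+;\Z)\cong\Z$, generated by a loop $\gamma$ crossing the Lagrange handle once. The Maslov homomorphism sends $\gamma$ to $2k$ by Lemma \ref{Lem:Maslovnumber}, and the symplectic area homomorphism pairs positively with $\gamma$ because the local model of $H_+$ bounds a positive-area holomorphic disk in $\C^{2k}$. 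Since $H_1(L_+;\R)$ is one-dimensional, the two homomorphisms are positively proportional, which is exactly the definition of monotonicity, with minimal Maslov number $2k$.

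With these hypotheses in place, I would invoke Damian's lifted-Floer-homology machinery. The key input is that any closed Lagrangian in $\C^{2k}$ is Hamiltonian displaceable, so its lifted Floer homology must vanish; the large minimal Maslov number $2k = \dim_{\C}\C^{2k}$ then forces enough constraints on the universal cover of $L_+$ that both the smooth fibration over $S^1$ and the identification of the fiber as a homotopy $(2k-1)$-sphere follow. The hypothesis $k\ge 3$ enters at this stage: it makes $\dim L_+ \ge 6$ and ensures the Maslov number is large enough for the relevant pearl-type spectral sequence on the universal cover to collapse in a way that leaves room for the topological conclusion. The hard part will be exactly this last step, which is a black-box appeal to \cite{Damian}: one must check that our precise Maslov/monotonicity relation matches the hypotheses under which Damian derives the fibration statement, and that both the fibration itself and the homotopy-sphere identification of the fiber come out of his argument in our setting.
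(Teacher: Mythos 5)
Your proposal is correct and takes essentially the same route as the paper: establish that $L_+$ is a monotone Lagrangian submanifold of $\C^{2k}$ with minimal Maslov number $2k$ (the content of Lemma \ref{Lem:Maslovnumber}), note that every compact subset of $\C^{2k}$ is displaceable by Hamiltonian isotopy, and invoke Damian's fibration theorem. The paper only adds a parenthetical point, which your black-box appeal subsumes, that \emph{smoothness} of the fibration comes from exactness of the Morse--Novikov inequalities (vanishing Novikov homology gives a non-singular closed $1$-form, via Pazhitnov), and it cites the displaceability-based statement in Damian's paper directly rather than re-deriving the vanishing of lifted Floer homology.
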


\begin{proof}
Lemma \ref{Lem:Maslovnumber} implies that $L_+$ is a monotone Lagrangian submanifold of $\C^{2k}$ with minimal Maslov number $2k$. Since every compact subset of $\C^{2k}$ is displaceable by Hamiltonian isotopy, \cite[Theorem 7, last two lines]{Damian} gives the result.  (Note that existence of a \emph{smooth} fibration is a consequence of the exactness of the Morse-Novikov inequalities, whereby vanishing of Novikov homology implies existence of a non-singular closed 1-form, see \cite{Pazhitnov}.)
\end{proof}

\begin{Corollary}\label{Cor:LandKdiffeo}
The manifold $K$ is diffeomorphic to a homotopy sphere $\Sigma$ of dimension $2k$ and the manifold $L_+$ is diffeomorphic to $(S^{1}\times S^{2k-1})\#\Sigma$.  
\end{Corollary}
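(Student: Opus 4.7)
The plan is two-step: first establish that $K$ is a homotopy sphere $\Sigma$, and then deduce the diffeomorphism type of $L_+$ as an immediate consequence of the topological description of Lagrange surgery.

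The preliminary observation I would put front and centre is the following surgery-theoretic identification. Topologically, the Lagrange surgery constructing $L_+$ from $K$ removes two small open disks in $K$ around $p_+,p_-$ and glues in the compact part of the handle $H_+$, which by \eqref{eq:Lhandle} is diffeomorphic to $S^{2k-1}\times D^1$. This is precisely a $0$-surgery along $S^0=\{p_+,p_-\}\subset K$. Since $K$ is connected and $2k\ge 6$, a standard fact from surgery theory identifies the result as $L_+\approx K\#(S^1\times S^{2k-1})$.

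Given this, I would compute $\pi_1(L_+)$ in two ways. On the one hand, Lemma \ref{Lem:Damian} presents $L_+$ as a smooth fiber bundle over $S^1$ with simply-connected fiber $Q$, so the long exact sequence of the fibration gives $\pi_1(L_+)\cong \Z$. On the other hand, Van Kampen's theorem applied to the connected sum (the connecting sphere $S^{2k-1}$ is simply connected since $k\ge 1$) yields $\pi_1(L_+)\cong \pi_1(K)\ast\Z$. Comparing the two, Grushko's theorem on the ranks of free products forces $\pi_1(K)$ to have rank $0$, hence to be trivial. Lemma \ref{Lem:Maslov=0} already identifies $K$ as an integral homology sphere, so Hurewicz together with Whitehead upgrade this to: $K$ is a homotopy $2k$-sphere $\Sigma$.

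For the second claim, substituting $K\approx\Sigma$ into the diffeomorphism $L_+\approx K\#(S^1\times S^{2k-1})$ established in the preliminary step immediately gives $L_+\approx (S^1\times S^{2k-1})\#\Sigma$. The only step that is not purely formal is the surgery-theoretic identification of $L_+$; one either verifies it directly by inspecting the handle $H_+$, or invokes the general principle that attaching a $1$-handle to a closed connected $n$-manifold $(n\ge 3)$ is equivalent to connected sum with $S^1\times S^{n-1}$. Once this is granted, the rest of the argument is routine bookkeeping with fundamental groups.
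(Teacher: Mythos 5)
Your argument is correct and shares the paper's two main inputs -- the description of Lagrange surgery as a $1$-handle attachment ($0$-surgery on $\{p_+,p_-\}$) and Damian's fibration result (Lemma \ref{Lem:Damian}) -- but it reaches the homotopy-sphere conclusion by a different route. The paper's own proof is terser: it notes that $K$ is recovered from $L_+$ by a canceling $2$-handle and deduces directly from the fibration that $K$ is a homotopy sphere, never invoking Lemma \ref{Lem:Maslov=0}; indeed the Wang sequence of the fibration (with orientation-preserving monodromy, since $L_+$ is orientable) gives $H_*(L_+)\cong H_*(S^{1}\times S^{2k-1})$, so both $\pi_1(K)=1$ and $H_*(K)\cong H_*(S^{2k})$ can be read off from $L_+\approx K\#(S^{1}\times S^{2k-1})$ with no Floer-homological input beyond Damian's theorem. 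You instead get simple-connectivity from van Kampen plus Grushko (fine, and even simpler: $\pi_1(K)\ast\Z$ abelian already forces $\pi_1(K)=1$) and import the $\Z$-homology-sphere statement from Lemma \ref{Lem:Maslov=0}, finishing with Hurewicz--Whitehead; this is legitimate, since that lemma is among the standing hypotheses of the section, but it makes the corollary appear to depend on more than it does. One detail you should make explicit: the ``general principle'' that attaching a $1$-handle is connected sum with $S^{1}\times S^{n-1}$ holds only for the orientation-compatible gluing -- the other gluing yields the twisted $S^{n-1}$-bundle over $S^{1}$ -- so to obtain the untwisted product asserted in the corollary you should observe that $L_+$ is orientable, which the paper records via the evenness of the Maslov number $\mu_{L_+}=2k$ (Remark following Lemma \ref{Lem:Maslovnumber}).
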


\begin{proof}
Smoothly, the manifold $L_+$ is obtained by adding a $1$-handle to $K$. To obtain $K$ from $L_+$ we add a $2$-handle that cancels the original $1$-handle. Since $L_+$ fibers over the circle with fiber a homotopy sphere it follows that $K$ is a homotopy sphere. Finally, it is clear that the manifold that results from adding a $1$-handle to $\Sigma$ is $(S^{1}\times S^{2k-1})\#\Sigma$.    
\end{proof}

\begin{Remark} \label{Rem:Cerf}
Damian's result \ref{Lem:Damian} does not itself exclude exotic spheres from admitting Lagrangian immersions with a single double point.  For $n\geq 6$, Cerf's pseudo-isotopy theorem \cite{Cerf} implies that the group $\Theta_n$ of $h$-cobordism classes of exotic spheres corresponds bijectively to the (orientation-preserving) mapping class group $\pi_0 \Diff (S^{n-1})$ of the $(n-1)$-sphere, via the construction of homotopy spheres from clutching functions.  One can correspondingly re-interpret the connect sum $\Sigma' = \Sigma \# (S^{1}\times S^{2k-1})$ as the total space of a fiber bundle over the circle, with fiber $S^{2k-1}$ and monodromy the diffeomorphism corresponding to $\Sigma$.
\end{Remark}

\begin{Remark} \label{Rem:Diffeomorphic}
Let $P$ be the manifold $S^1 \times S^{2k-1}$. For any exotic sphere $\Sigma$, the manifolds $T^*P$ and $T^*(P \#\Sigma)$ \emph{are} diffeomorphic.  
Indeed, both $P$ and $P \# \Sigma$ admit Morse functions with exactly 4 critical points. The difference in their differentiable structures comes from the attaching maps of their top-dimensional handles. A handle decomposition for a cotangent bundle is obtained from a handle decomposition of its base by thickening all handles. In particular, for the cotangent bundles considered here, the attaching $(2k-1)$-spheres for top-dimensional handles have codimension $2k$ in the boundary and are thus smoothly isotopic.
\end{Remark}


\section{Cobordisms from Cauchy-Riemann problems}\label{Sec:bounding}
In this section we construct a parallelizable manifold bounding any homotopy $2k$-sphere $\Sigma$ which admits an exact Lagrangian immersion into $\C^{2k}$ with only one double point,  thereby proving Theorem \ref{Thm:Main}. The proof uses moduli spaces of holomorphic disks. In order not to obscure the steps of the construction, we defer the functional analytic arguments needed to establish transversality and gluing results for these moduli spaces  to Sections \ref{Sec:basicsetup} -- \ref{Sec:gluing2}.

We will use the following notation below. Consider standard coordinates $x+iy=(x_1+iy_1,\dots,x_n+iy_n)$ on $\C^{n}$. We write
\begin{equation}\label{Eq:standardnotation}
\omega_0=\sum_{j=1}^{n} dx_j\wedge dy_j\quad\text{ and }\quad
J_0\colon T\C^{n}\to T\C^{n}
\end{equation}
for the standard symplectic structure and the standard complex structure (which corresponds to multiplication by the complex unit $i$) on $\C^{n}$, respectively.
If $(M, \omega)$ is a symplectic manifold and $H\colon M\times[0,1]\to\R$ is a smooth time-dependent Hamiltonian function then we write $X_H\colon M\times[0,1]\to TM$ for the time-dependent Hamiltonian vector field of $H$, defined by the equation $\omega(X_H, \cdot) = dH_t$, where $H_t\colon M\to M$ denotes the restriction $H|_{M\times\{t\}}$.

\subsection{Cauchy-Riemann equations and Hamiltonian displacement}\label{ssec:ham}
Let $L \subset \C^n$ be a Lagrangian submanifold with $\pi_1(L)=\Z$ and of minimal Maslov number $n$. We assume that $L$ is real analytic, see Lemma \ref{Lem:reanbdry}. Consider a compactly supported time-dependent Hamiltonian function
$H\colon \C^n \times [0,1] \to \R$ and suppose that the following hold.
\begin{itemize}
\item There exists $\epsilon_0>0$ such that $H_t$ is constant for $t\in [0,\epsilon_0]\cup[1-\epsilon_0,1]$. 
\item The time $1$ flow $\phi^1$ of the Hamiltonian vector field $X_{H}$ of $H$ displaces $L$ from itself: $\phi^1 (L) \cap L = \varnothing$.
\end{itemize}
Consider the strip $\R\times[0,1]\subset\C$ with coordinates $s+it$. Fix a smooth family of functions $\alpha_r\colon \R\to[0,1]$, where $r\in[0,\infty)$, with the following properties:
\begin{itemize}
\item $\alpha_{r}=1$ for $|s|\le r$ and $\alpha_{r}=0$ for $|s|\ge r+1$.
\item $\frac{d\alpha_{r}}{ds}\ge 0$ for $s\le 0$ and  $\frac{d\alpha_{r}}{ds}\le 0$ for $s\ge 0$.
\end{itemize}
We also fix a non-decreasing smooth function $\beta\colon [0,\infty)\to[0,1]$ such that $\beta(r)=0$ near $r=0$ and $\beta(r)=1$ for $r\ge 1$.  If $D$ denotes the unit disk in $\C$, then there is a unique conformal map 
\begin{equation} \label{Eqn:xi}
\xi\colon \R\times[0,1]\to D \quad  \text{with} \quad \xi(\pm \infty)=\pm 1 \ \text{and} \ \xi(0)=-i,
\end{equation}
with inverse 
\begin{equation}\label{Eqn:xi-1}
\xi^{-1}=s+it\colon D\to\R\times[0,1].
\end{equation}
Write $\gamma_r$ for the $1$-form on $D$ such that 
\[
\xi^{\ast}\gamma_{r}=\beta(r)\alpha_{r}(s)\,dt.
\]  

Fix a small $\delta>0$ and let $\JJ_L$ denote the space of almost complex structures on $\C^{n}$ which agrees with $J_0$ in a $\delta$-neighborhood of $L$. Associated to $J\in \JJ_L$ and $r\in [0,\infty)$ is a Floer equation (which is a perturbed Cauchy-Riemann equation):
\begin{equation} \label{Eqn:CR1}
(du + \gamma_r \otimes X_{H})^{0,1} = 0 \quad \text{for} \ u\in C^{\infty}\left((D^2, \pa D),(\C^n, L)\right).
\end{equation}
Here the vector field $X_{H}$ depends on $z\in D$, and is given by $X_{H}(u(z),t(z))$, see \eqref{Eqn:xi-1}, and
$A^{0,1}$ denotes the $(J, i)$ complex anti-linear part of the linear map $A\colon TD\to T\C^{n}$. Since $H_t$ is constant near $t=0,1$ and since $\gamma_r$ has compact support, it follows that for each $r_0$ there exists $\delta_0$ such that $\gamma_r\otimes X_H=0$ in a $\delta_0$-neighborhood of $\pa D$. Since in addition $J=J_0$ near $L$,   \eqref{Eqn:CR1} reduces to the ordinary $\bar\pa$-equation $\bar\pa_{J_0} u=du^{0,1}=0$ in this $\delta_0$-neighborhood.  

Via the identification $\xi$, the equation in local co-ordinates $s+it\in\R\times[0,1]$ is 
\begin{equation} \label{Eqn:CR-2}
\partial_s u + J \left( \partial_t u - \beta(r)\alpha_r(s) X_H(u(\xi(s,t)), t)\right) = 0,
\end{equation}
with boundary conditions $u(s,0)\in L$ and $u(s,1)\in L$.  Removal of singularities implies that any solution of \eqref{Eqn:CR-2} with finite energy
\[
\int_{\R \times [0,1]} (\left| \partial_s u \right|^2+\left| \partial_t u \right|^2)\; ds\wedge dt \ < \infty
\]
extends smoothly to the disk, hence gives a solution of the Floer equation \eqref{Eqn:CR1}. We call solutions of the Floer equation \emph{Floer holomorphic disks}.

Recall that $\pi_1(L)=\Z$ and fix the generator $\beta\in\pi_1(L)$ of Maslov number $+n$.   Restriction to the boundary gives an isomorphism $\pi_2(\C^{n},L)\to\pi_1(L)$ and thus homotopy classes of disks $u\colon (D,\pa D)\to(\C^{n},L)$ are indexed by the integers. In particular, we have moduli spaces of Floer holomorphic disks in classes $j\beta$ for any $j\in\Z$.   Fix any $k\geq 2$.

\begin{Definition}
Let $\FF(j\beta)$ denote the space of pairs $(u,r)\in C^{m}((D,\pa D),(\C^{n},L))\times[0,\infty)$, $m>0$ of solutions to \eqref{Eqn:CR1} in homotopy class $j\beta$.  
The fiber of the canonical map $\FF(j\beta) \rightarrow [0,\infty)$,  $(u,r) \mapsto r$, will be denoted $\FF^r(j\beta)$.
\end{Definition}
 
 Elliptic regularity implies that all solutions to \eqref{Eqn:CR1} are actually smooth. Here we write $C^{m}$ rather than $C^{\infty}$ in order to indicate that $\FF(j\beta)$ inherits its topology from a Banach space, see Remark \ref{Rem:normsame}. In Section \ref{Sec:basicsetup} we set up the functional analytic framework for studying moduli spaces of Floer holomorphic disks that we actually use to define $C^{1}$-structures and we refer to there for a more precise treatment. In Lemma \ref{Lem:tvmdli} we recall the well-known proof of the following result:

\begin{Proposition} \label{Prop:Fismfld}
When $r=0$, every solution to \eqref{Eqn:CR1} is constant, $\FF^{0}(0\beta)$ is transversally cut out and canonically $C^{1}$-diffeomorphic to $L$, and when $r\gg0$, the equation has no solutions. Furthermore, for generic $J\in \JJ_{L}$ and Hamiltonian function $H\colon \C^{n}\times[0,1]\to\R$:
\begin{enumerate}
\item $\FF(0\beta)$ is a $C^{1}$-smooth $(n+1)$-manifold, with boundary $\partial \FF(0\beta) = \FF^0(0\beta)= L$. 
\item $\FF(-\beta)$ is a $C^{1}$-smooth closed 1-manifold and $\FF(j\beta)$ is empty for $j\le -2$.
\end{enumerate}
\end{Proposition}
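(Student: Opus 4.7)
The plan is to treat the assertions in three pieces, corresponding to the $r=0$ boundary, the $r\gg 0$ region, and the generic smoothness of the interior; closedness of $\FF(-\beta)$ then follows by a compactness argument.

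Near $r=0$ the function $\beta$ vanishes identically, so $\gamma_{r}\equiv 0$ and \eqref{Eqn:CR1} reduces to the unperturbed $\bar\partial_{J}$-equation with Lagrangian boundary on $L$. For a disk in the trivial class $0\beta$ the symplectic area $\int u^{\ast}\omega$ vanishes, hence the energy vanishes and $u$ is constant; evaluation $u\mapsto u(1)$ identifies $\FF^{0}(0\beta)$ canonically with $L$. For transversality at a constant map $u\equiv p$, the linearized operator is the standard Cauchy--Riemann operator on $(D,\partial D)$ with values in $(T_{p}\C^{n},T_{p}L)$, which is surjective with $n$-dimensional kernel $T_{p}L$; this is the classical model case, independent of $J$ and $H$. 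Including the $r$-parameter, the Fredholm index becomes $n+1$, so $\FF(0\beta)$ is $C^{1}$-smooth of dimension $n+1$ near the $r=0$ slice, with that slice realized as the manifold boundary $L$.

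For $r\gg 0$, non-existence follows from the displacement hypothesis by a Gromov--Floer compactness argument. A sequence $(u_{n},r_{n})$ with $r_{n}\to\infty$ has geometric energy bounded uniformly by a constant determined by $H$ and the topological class, so after rescaling the $s$-coordinate to spread out the support of $\alpha_{r_{n}}$ one may extract a subsequential Gromov limit modeled on a Floer strip on $\R\times[0,1]$ with boundary lying on $L$ and on $\phi^{1}(L)$. Finite energy then forces the strip to be asymptotic to Hamiltonian chords between $L$ and $\phi^{1}(L)$, but $L\cap\phi^{1}(L)=\varnothing$ produces no such chords, a contradiction. Hence $\FF^{r}(j\beta)=\varnothing$ once $r$ is large.

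For generic $(J,H)$, a standard Sard--Smale argument---using that variations of $J$ on $\C^{n}\setminus L$, combined with variations of $H$, already suffice to surject the linearization at every non-constant Floer solution---upgrades $\FF(j\beta)$ to a $C^{1}$-manifold of dimension equal to the Fredholm index $n(j+1)+1$: this gives $n+1$ for $j=0$, $1$ for $j=-1$, and at most $1-n$ for $j\leq-2$, the latter generically empty since $n\geq 4$. Finally, to see that $\FF(-\beta)$ is closed, observe that it is empty for $r$ in a neighborhood of $0$ (the class $-\beta$ has negative $\omega$-area by monotonicity, so admits no genuine $J$-holomorphic disks) and is empty for $r\gg 0$ by the displacement argument, so it lies over a compact $r$-interval. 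There are no sphere bubbles (exactness of $\omega$), and a disk bubble in some class $c\beta$ with $c\geq 1$ would leave a main Floer component in the class $(-1-c)\beta$, lying in a moduli space of virtual dimension $1-nc\leq 1-n<0$ and therefore generically empty. Thus $\FF(-\beta)$ is itself compact, a closed $1$-manifold. The main technical step I expect is the $r\to\infty$ compactness argument, where one must control domain geometry, the parameter $r$ and the energy simultaneously.
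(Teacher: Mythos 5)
Your proposal is correct and takes essentially the same route as the paper, whose own (terse) proof is given in Lemma \ref{Lem:tvmdli}: constancy and automatic transversality of solutions at $r=0$ (zero area in class $0\beta$), the Gromov--Oh displacement/compactness argument for non-existence when $r\gg 0$, standard Sard--Smale transversality giving the index-dimension count $n(1+j)+1$ (hence generic emptiness for $j\le -2$), and compactness of $\FF(-\beta)$ by noting that a disk bubble must lie in class $c\beta$, $c\ge 1$, leaving a Floer component in the empty spaces $\FF(l\beta)$, $l\le -2$. The only cosmetic difference is that the paper achieves transversality by perturbing the Hamiltonian alone, while you also invoke perturbations of $J$ away from $L$.
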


\begin{Remark}\label{Rem:dimhmtpy}
The space of solutions to \eqref{Eqn:CR1} in the relative homotopy class $j\beta \in \pi_2(\C^n, L)$ for \emph{fixed} $r$ has formal dimension
\begin{equation} \label{Eqn:Dimension}
\dim\left(\FF^r(j\beta)\right)= n + \mu(j\beta) = n(1+j),
\end{equation}
by the Riemann-Roch theorem, where $\mu$ denotes the Maslov index, and adding $+1$ for the parameter $r\in[0,\infty)$ gives the dimensions $\dim(\FF(j\beta))=n(1+j)+1$ for the moduli spaces appearing in Proposition \ref{Prop:Fismfld}. 
\end{Remark}

\begin{Remark}\label{Rem:normsame}
The $C^{1}$-smooth structure on the moduli spaces, which is inherited from the ambient configuration space (a Banach manifold), is actually independent of the choice of that configuration space: by elliptic regularity, all Sobolev norms in question are equivalent to the $C^{0}$-norm on the space of solutions. 
\end{Remark}

\subsection{Broken disks}
The manifold $\FF(0\beta)$ of Floer holomorphic disks is necessarily non-compact since the $\Z_{2}$-degree of the boundary evaluation map into $L$, restricted to $\pa \FF(0\beta)=\FF^{0}(0\beta)$, equals $1$. However, it admits a canonical compactification, due to Gromov \cite{Gromov} and Floer \cite{Floer:lagrangian}, which we shall use to obtain a bounding manifold for $L$.  The Gromov-Floer compactification 
\[
\barFF(0\beta) \ = \ \FF(0\beta) \cup \FF^{\mathrm{bd}}(0\beta)
\]
is obtained by adding broken disks as solutions to the equations (\ref{Eqn:CR1}). To describe a broken disk we first introduce the notion of a \emph{holomorphic tree} (sometimes called a bubble tree), which is a finite rooted tree $\Gamma$ such that:  to each vertex $v\in\Gamma$ corresponds a $J_0$-holomorphic disk $u_{v}\colon (D,\pa D)\to(\C^{n},L)$; to the edges $e\in\Gamma$ adjacent to $v$ correspond pairwise distinct boundary marked points $\zeta_{e}\in\pa D$ in the domain of $u_v$; and there is one additional marked point on the boundary of the source disk $D$ of the root (distinct from all other marked points on $\pa D$), which we call the \emph{root point}. We require that each holomorphic disk is stable (i.e.~either non-constant or constant with at least three distinct boundary marked points), and, furthermore, that if $v$ and $w$ are vertices of $\Gamma$ connected by an edge $e$ then $u_{v}(\zeta_v)=u_{w}(\zeta_w)$. We define the homotopy class of a holomorphic tree as the sum of the homotopy classes of all its vertex disks.        
A broken disk in $\FF^{\mathrm{bd}}(0\beta)$ then comprises  
\begin{itemize}
\item a solution $u_0$ to the equations \eqref{Eqn:CR1} in a homotopy class $\beta_0$ with pairwise distinct marked points $\zeta_1,\dots,\zeta_\nu$; and
\item a collection of holomorphic trees $\Gamma_i$, $1\leq i\leq \nu$, such that $u_0(\zeta_i)=\ev_{\mathrm{root}}(\Gamma_i)$, where $\ev_{\mathrm{root}}$ denotes evaluation at the root point, in homotopy classes $\beta_i$.
\end{itemize}
Here the homotopy classes are subject to the constraint that
\begin{equation}\label{Eqn:BubbleSum}
\beta_0 + \sum_{j=1}^{\nu} \beta_j \ = \ 0.
\end{equation}

Critically, in our situation there is a unique possible configuration of such broken solutions. To give a precise statement, we introduce a further notation. Note that if $H_R\equiv 0$, the equations \eqref{Eqn:CR1} reduces to the usual Cauchy-Riemann equation
\begin{equation} \label{Eqn:UsualCR}
du^{0,1} = 0 \quad \text{for } \ u\colon (D,\pa D) \to (\C^n, L)
\end{equation}
for the given almost complex structure $J\in\scrJ_L$.

\begin{Definition}
Let $\MM(j\beta)$ denote the quotient space of solutions $u$ to \eqref{Eqn:UsualCR} in class $j\beta$, modulo the group $G=PSL_2(\bC)$ of conformal  automorphisms of the disk.  Furthermore, we define
\begin{enumerate}
\item $\FF^*(j\beta)$ as the space of solutions to \eqref{Eqn:CR1} where the disk has a single boundary marked point; 
\item $\MM^*(j\beta)$ as the space of solutions to \eqref{Eqn:UsualCR} where the disk has a single boundary marked point.
\end{enumerate}
\end{Definition}

The space $\FF^*(k\beta) \cong \FF(k\beta) \times \partial D$ is diffeomorphic to a product (see Remark \ref{Rem:dimhmtpy}  for the induced topology), whilst there is a natural forgetful map $\MM^*(j\beta) \rightarrow \MM(j\beta)$ which is a fiber bundle with fiber $G/G_1 \cong S^1$ isomorphic to the group of rotations, where $G_{1}$ is the subgroup of automorphisms that fix the point $1\in\pa D$.  Again, Section \ref{Sec:basicsetup} recalls the standard analytic framework for studying these moduli spaces.

\begin{Proposition} \label{Prop:Mismfld}
After arbitrarily small perturbation of $J\in \scrJ_{L}$ or arbitrarily small real analytic Hamiltonian isotopy of $L$, one has:
\begin{enumerate}
\item $\MM(j\beta)$ is empty for $j<0$; 
\item $\MM(0\beta)$ is the set of constant maps;
\item $\MM(\beta)$ is a transversely cut out closed manifold of dimension $2n-3$.
\end{enumerate}
\end{Proposition}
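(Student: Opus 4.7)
The plan is to prove the three assertions separately, with monotonicity of $L$ handling (1) and (2) and the standard transversality--compactness package handling (3).

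For (1) and (2), the key input is that $L \subset \C^{2k}$ has minimal Maslov number $n=2k$ and so is monotone with a positive proportionality constant $\lambda$ between $\omega$ and $\mu$ on $\pi_2(\C^n,L)$. Every non-constant $J$-holomorphic disk has strictly positive symplectic area, and a disk in class $j\beta$ would have area $jn\lambda$, which is negative for $j<0$ and zero for $j=0$. This immediately yields $\MM(j\beta)=\varnothing$ for $j<0$ and forces every disk in class $0\beta$ to be constant.

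For (3), I would proceed in three steps. \emph{Somewhere injectivity.} Since $\beta$ generates $\pi_1(L)\cong\Z$, it is primitive; a $J$-holomorphic disk in class $\beta$ therefore cannot be a non-trivial multiple cover of any disk of smaller Maslov class (and no such disk exists anyway by (1)), and the standard structure theorem for $J$-holomorphic curves with totally real boundary conditions implies somewhere injectivity. \emph{Transversality.} Applying the universal moduli space argument, for a Baire-generic choice of $J\in\scrJ_L$ the linearized Cauchy--Riemann operator is surjective at every somewhere injective solution. The restriction that $J=J_0$ in a $\delta$-neighborhood of $L$ does not obstruct this, because every somewhere injective disk possesses an interior injective point at which $du$ has rank two, and $J$-perturbations supported near the image of such a point span the cokernel. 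Equivalently, one can achieve genericity by perturbing $L$ through small real analytic Hamiltonian isotopies (cf.~Lemma~\ref{Lem:reanbdry}). \emph{Compactness.} By Gromov compactness, any limit of a sequence in $\MM(\beta)$ is a stable bubble tree. Sphere bubbling is impossible since $\pi_2(\C^n)=0$. A disk bubble would decompose $\beta=\beta_0+\beta_1$ with each $\beta_i$ represented by a non-constant disk of positive area; by monotonicity each $\beta_i=j_i\beta$ with $j_i\ge 1$, contradicting $j_0+j_1=1$. Hence no bubbling occurs and $\MM(\beta)$ is closed. The dimension $2n-3$ then follows from Riemann--Roch: the formal dimension of unquotiented solutions is $n+\mu(\beta)=2n$, from which one subtracts the $3$-dimensional group of biholomorphisms of the disk.

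The main obstacle is the transversality step under the constraint $J\in\scrJ_L$: if disks were trapped in the $\delta$-neighborhood of $L$ where $J$ is frozen to $J_0$, no perturbation of $J$ could help. However, the positive area coming from monotonicity, combined with primitivity of $\beta$ and standard a~priori mass estimates, forces every disk in $\MM(\beta)$ to have an injective interior point well outside this neighborhood, where $J$ is free to vary. The parallel route via Hamiltonian isotopy of $L$ provides a convenient alternative and keeps $L$ real analytic, as needed later for the jet-based gluing arguments.
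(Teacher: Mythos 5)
Your proposal is correct, and parts (1), (2), the no-bubbling compactness argument, and the index count $n+\mu(\beta)-3=2n-3$ coincide with what the paper does in Lemma~\ref{Lem:tvholdisks}. The genuine difference is the transversality step. The paper does not run the perturb-$J$ universal moduli space argument at all: it simply invokes Oh's theorem \cite{Oh:Perturb} that for disks in a \emph{primitive} class one can achieve surjectivity of $D(\bar\pa)$ by an arbitrarily small perturbation of the boundary condition $L$ itself, with $J=J_0$ left untouched -- which is exactly what the later analysis wants, since Sections \ref{Sec:basicsetup}--\ref{Sec:gluing2} need $J$ standard near a real analytic $L$ (hence the phrasing ``real analytic Hamiltonian isotopy'', cf.\ Lemma~\ref{Lem:reanbdry}). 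Your primary route -- somewhere injectivity of class-$\beta$ disks plus perturbations of $J$ supported away from the frozen $\delta$-neighborhood -- is a legitimate, more self-contained alternative, but two of your justifications deserve sharpening: simplicity of class-$\beta$ disks and density of their injective points is the Lazzarini/Kwon--Oh structure theory \cite{Lazzarini,Kwon-Oh} (``not a multiple cover'' alone is not quite the statement for disks), and the reason a class-$\beta$ disk must leave the $\delta$-neighborhood is most cleanly topological rather than via mass estimates: a disk contained in a tubular neighborhood of $L$ has boundary null-homotopic in $L$ (equivalently, zero symplectic area), contradicting $[\,u|_{\pa D}\,]=\beta\neq 0$; density of injective points then places an injective point in the region where $J$ may vary. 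In short, the paper's proof is a two-line citation-based argument tailored to keeping $J$ standard near $L$, while yours trades the citation for a standard but longer geometric transversality argument; both reach the same conclusion.
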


That one can achieve transversality by perturbing $L$ but leaving $J=J_0$ fixed for disks in the primitive homology class $\beta$ is a theorem of Oh  \cite{Oh:Perturb}.  The other statements are standard, cf.~Lemma \ref{Lem:tvmdli}.  The smooth structure on $\MM(\beta)$ is inherited from the ambient Banach configuration space as follows. Fix a parameterization $u\colon D\to\C^{n}$ of an element in $\MM(\beta)$. Since $u|_{\pa D}$ is non-constant the derivative is non-zero at some point in the boundary. Fixing three disjoint real codimension one hyperplanes in $L \subset \bC^n$ near this point gives three marked points on the boundary of any disk in a neighborhood of $u$ that we use to fix parameterization and thereby construct local $C^{1}$-chart on $\MM(\beta)$ near $u$. Since $\MM(\beta)$ is compact it has a finite cover of charts as just described. It is straightforward to check that transition functions between charts are smooth, thus giving a $C^{1}$-structure on $\MM(\beta)$,  and that any two finite covers give rise to the same $C^{1}$-structure; see Section \ref{Sec:BasicStructure} for details.    

The moduli spaces $\MM^*(\beta)$ and $\FF^*(k\beta)$ come with canonical evaluation maps to $L$, which we denote by $\ev$.   Again, the following is standard, cf.~Section \ref{Sec:basicsetup} and Lemma \ref{Lem:fiberprod1}:

\begin{Proposition} \label{Prop:FibreProduct}
For generic $J\in\JJ_L$ and $H\in C^{\infty}(\C^{n}\times[0,1],\R)$, the product of the evaluation maps 
\[
\ev\colon\MM^*(\beta) \rightarrow L \quad \text{and} \quad \ev\colon \FF^*(-\beta) \rightarrow L,
\]
$\ev\times\ev$, is transverse to the diagonal $\Delta_L\subset L\times L$. Hence, the Gromov-Floer boundary $\FF^{\mathrm{bd}}(0\beta)$ is a closed $n$-manifold $C^1$-diffeomorphic to the fiber product of these maps:
\[
\FF^{\mathrm{bd}}(0\beta) \ = \ \FF^*(-\beta) \times_{L} \MM^*(\beta) \ = \ (\ev\times\ev)^{-1}(\Delta_{L}).
\]
\end{Proposition}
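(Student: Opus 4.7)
The plan is to (i) enumerate the broken configurations in $\FF^{\mathrm{bd}}(0\beta)$ using Maslov-index constraints, (ii) verify transversality of the combined evaluation map, and (iii) upgrade the set-theoretic description to a $C^{1}$-diffeomorphism via gluing. The first two steps are essentially standard given Propositions \ref{Prop:Fismfld} and \ref{Prop:Mismfld}; the third is the main obstacle and relies on the gluing apparatus developed later in the paper.

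For step (i), I would classify broken configurations by the homotopy classes of their components. A broken configuration consists of a Floer disk $u_{0}$ of class $j_{0}\beta$ with boundary marked points, together with holomorphic trees of total class $j_{i}\beta$ at each marked point, subject to \eqref{Eqn:BubbleSum}: $j_{0}+\sum_{i}j_{i}=0$. Proposition \ref{Prop:Mismfld} gives $j_{i}\geq 0$, with every nonconstant vertex of a tree contributing Maslov index at least $n$, and Proposition \ref{Prop:Fismfld} restricts $j_{0}\geq -1$. Hence $j_{0}\in\{-1,0\}$. The case $j_{0}=0$ forces each $j_{i}=0$, i.e.\ no nonconstant bubbles, and so contributes only to the interior of $\FF(0\beta)$. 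A genuine broken configuration therefore has $j_{0}=-1$ and total bubble class $+\beta$, which by the Maslov bound is realized by a single nonconstant disk in class $\beta$ attached at exactly one marked point of $u_{0}$. Thus each boundary point of $\barFF(0\beta)$ is uniquely described by a pair $(u_{0},u_{1})\in\FF^{*}(-\beta)\times\MM^{*}(\beta)$ matched at the marked and root points, i.e.~an element of the fiber product.

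For step (ii), Proposition \ref{Prop:Fismfld}(2) gives transversality of $\FF^{*}(-\beta)$ for generic $(J,H)$, and Proposition \ref{Prop:Mismfld}(3) together with Oh's theorem gives transversality of $\MM^{*}(\beta)$ after a small perturbation of $L$. To make the product evaluation transverse to $\Delta_{L}\subset L\times L$, I would invoke the standard Sard-Smale argument on the universal moduli space: compactly supported variations of $J$ (or $H$) near an interior point of either disk move the corresponding boundary evaluation freely in $TL$, so the universal evaluation is submersive, and a generic choice cuts out the preimage of $\Delta_{L}$ transversely. The expected dimension of the fiber product is $\dim \FF^{*}(-\beta)+\dim\MM^{*}(\beta)-\dim L = 2+(2n-2)-n = n$, and its compactness follows from Gromov-Floer compactness of each factor, using $\mu_{L}=n$ together with Proposition \ref{Prop:Mismfld} to preclude further bubbling, and continuity of the evaluations.

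The main obstacle is step (iii): endowing the fiber product with a $C^{1}$-smooth structure for which the bijection with $\FF^{\mathrm{bd}}(0\beta)$ is a $C^{1}$-diffeomorphism. The subtlety is that the bubble $u_{1}\in\MM^{*}(\beta)$ is defined only modulo the subgroup $G_{1}\subset G$ fixing the root point, and the gluing parameter (neck length) interacts nontrivially with this residual reparametrization freedom and with the marked point on $u_{0}$. The remedy, developed in Sections \ref{Sec:basicsetup}--\ref{Sec:gluing2} and culminating in Theorem \ref{Thm:gluing}, is to stabilize the domain of the bubble by imposing jet conditions at intersections with ambient real-analytic hypersurfaces, reducing the parametrization of a neighborhood of each broken configuration to an essentially finite-dimensional gluing problem whose output is manifestly $C^{1}$-compatible with the smooth structure on the fiber product.
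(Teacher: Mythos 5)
Your proposal follows essentially the same route as the paper: your steps (i) and (ii) are precisely Lemma \ref{Lem:bubbles} (Maslov/bubbling enumeration forcing a single class-$\beta$ bubble on a class-$(-\beta)$ Floer disk) and Lemma \ref{Lem:fiberprod1} (Sard--Smale transversality, which the paper packages as one Fredholm map $\mathbf{F}$ on the product of configuration spaces cut out by the $0$-sections and $\Delta_L$, then quotiented by $G_1$ via hypersurface slices), with the same dimension count $2+(2n-2)-n=n$. The only difference is bookkeeping: the paper treats this Proposition as exactly the ``standard'' part---set-theoretic identification plus transverse fiber product carrying its own $C^1$-structure---and defers your step (iii), the $C^1$-compatibility with the compactified moduli space via gluing and domain stabilization, to the strengthening stated immediately afterwards and proved in Theorem \ref{Thm:gluing}, so invoking that machinery here is harmless but not required for this statement.
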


Most of Sections \ref{Sec:theboundary} and \ref{Sec:gluing2}, culminating in Theorem \ref{Thm:gluing}, is devoted to proving the following strengthening of the preceding result.

\begin{Theorem}
For generic $J\in\JJ_L$ and $H\in C^{\infty}(\C^{n}\times[0,1],\R)$, there is a $C^{1}$-embedding 
\[
\Psi\colon \FF^{\mathrm{bd}}(0\beta)\times[0,\infty)\to\FF(0\beta)
\]
with image in the interior of $\FF(0\beta)$ and
such that $\FF(0\beta)\backslash\Psi\left(\FF^{\mathrm{bd}}(0\beta)\times(0,\infty)\right)$ is a compact manifold with boundary $L\sqcup \Psi(\FF^{\mathrm{bd}}(0\beta)\times\{0\})$. 
Consequently, the compactified moduli space
\[
\barFF(0\beta) \ = \ \FF(0\beta) \cup \FF^{\mathrm{bd}}(0\beta)
\]
admits the structure of a $C^1$-smooth manifold with boundary, whose boundary is diffeomorphic to the disjoint union
\[
\partial \, \barFF(0\beta) \ = \  L \sqcup \FF^{\mathrm{bd}}(0\beta).
\]
\end{Theorem}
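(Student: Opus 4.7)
The plan is to construct $\Psi$ via pseudo-holomorphic gluing. Reparametrize the coordinate $\rho\in[0,\infty)$ in the theorem as a gluing length $R=\rho+R_{0}$ for some large $R_{0}$, so that $\rho=0$ corresponds to a finite neck and $\rho\to\infty$ corresponds to the broken configurations of $\FF^{\mathrm{bd}}(0\beta)$. Given a pair $((u_{0},\zeta_{0}),[u_{1},\zeta_{1}])\in \FF^{*}(-\beta)\times_{L}\MM^{*}(\beta)$ meeting at $p=u_{0}(\zeta_{0})=u_{1}(\zeta_{1})\in L$, and a large length $R$, I first produce an approximate solution $u_{R}$ by pre-gluing: choose half-strip coordinates near $\zeta_{0}$ and $\zeta_{1}$ in which each map converges exponentially to $p$ (working in Darboux charts centered at $p$), cut off both maps at distance $R$ along the strip, and paste them on an overlap of length $O(1)$. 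Since near each boundary marked point the Hamiltonian perturbation $\gamma_{r}\otimes X_{H}$ in \eqref{Eqn:CR1} vanishes by the boundary cut-off of Section \ref{ssec:ham}, the neck region of $u_{R}$ genuinely solves $\bar\partial_{J}u=0$, and $u_{R}$ satisfies \eqref{Eqn:CR1} up to an error of size $O(e^{-\delta R})$ supported in the overlap.

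By the transversality statements of Propositions \ref{Prop:Fismfld}, \ref{Prop:Mismfld}, and especially the fiber-product transversality of Proposition \ref{Prop:FibreProduct}, the linearized Floer operator $D_{u_{R}}$ with Lagrangian boundary condition from $TL$ admits a right inverse $Q_{R}$, bounded uniformly in $R$ in suitable weighted Sobolev spaces, obtained by splicing right inverses at $u_{0}$ and $u_{1}$. A quantitative implicit function theorem then yields a unique correction $\xi_{R}$ with $\|\xi_{R}\|=O(e^{-\delta R})$ such that $\exp_{u_{R}}(\xi_{R})$ is an honest Floer disk, and I set $\Psi((u_{0},u_{1}),\rho)=\exp_{u_{R}}(\xi_{R})$.

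Three technical issues dominate. First, the bubble class $[u_{1},\zeta_{1}]$ is a $PSL_{2}(\C)$-orbit, so gluing requires a genuine parametrization of the bubble domain; following the strategy outlined in Section \ref{Sec:Intro}, I intersect each bubble transversely with a finite family of ambient real analytic hypersurfaces, which furnishes auxiliary interior marked points stabilizing the domain. The real analyticity of $L$ together with $J=J_{0}$ near $L$ reduces the management of these jet conditions to essentially finite-dimensional problems. Second, injectivity of $\Psi$ follows from the uniqueness of $\xi_{R}$ in the implicit function theorem, while surjectivity onto a neighborhood of the Gromov-Floer boundary is a Gromov-compactness argument: any sequence in $\FF(0\beta)$ escaping every compact set converges modulo bubbling to a configuration whose only allowed form is given by Proposition \ref{Prop:FibreProduct}, and the uniqueness statement then forces all but finitely many sequence elements into the image of $\Psi$. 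Third, and most delicate, the step I expect to be the main obstacle is upgrading the output of the implicit function theorem from continuous to $C^{1}$ in all parameters simultaneously --- the neck length $\rho$, the $n$-dimensional moduli of the broken configuration, and the stabilization data --- which requires differentiating the pre-gluing construction with respect to each parameter and bounding the derivatives of $Q_{R}$ in operator norm uniformly in $R$. Here the equivalence of relevant Sobolev and $C^{0}$ norms on solutions (Remark \ref{Rem:normsame}) is what makes the resulting $C^{1}$-structure independent of the ambient Banach framework.

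Granted the collar $\Psi$, compose with an increasing diffeomorphism $[0,\infty)\to[0,1)$ to obtain $\tilde\Psi\colon\FF^{\mathrm{bd}}(0\beta)\times[0,1)\to\FF(0\beta)$, and extend to $[0,1]$ by sending $\FF^{\mathrm{bd}}(0\beta)\times\{1\}$ identically to $\FF^{\mathrm{bd}}(0\beta)\subset\barFF(0\beta)$. Declaring $\tilde\Psi$ a $C^{1}$-smooth collar chart and keeping the existing $C^{1}$-structure on $\FF(0\beta)$ elsewhere produces a $C^{1}$-manifold with boundary $L\sqcup\FF^{\mathrm{bd}}(0\beta)$, and surjectivity of $\Psi$ ensures that the underlying topology agrees with the Gromov-Floer compactification.
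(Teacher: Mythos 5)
Your proposal follows essentially the same route as the paper: pre-gluing along an exponentially small neck, a spliced uniformly bounded right inverse feeding a parametrized Newton--Picard iteration (the paper's Lemma \ref{Lem:FloerPicard}), stabilization of the bubble domain by intersections with auxiliary hypersurfaces, injectivity from the contraction estimates and surjectivity from Gromov--Floer compactness plus weighted-norm control on the neck, exactly as in Sections \ref{Sec:theboundary}--\ref{Sec:gluing2}. The only substantive difference is in the gauge-fixing bookkeeping: the paper uses boundary marked points on hypersurfaces Poincar\'e dual to $\beta$ together with $(\epsilon,(u_1,\zeta))$-points near the node, and must interpolate between two parametrization schemes near the locus $\NN_0$ where the bubble's derivative vanishes at the attaching point (Lemmas \ref{Lem:singNN} and \ref{Lem:interswnbhd}), a degeneration your sketch glosses over but which affects only the implementation, not the strategy.
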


\subsection{Capping the outer boundary}
\label{ssec:capping}
To obtain a compact manifold with unique boundary component $L$, we will fill the outer boundary of $\barFF(0\beta)$.  We adapt an ingenious trick from \cite[Section 2c]{Abouzaid}.  The outer boundary is diffeomorphic, via a gluing map, to a fiber product
\[
\NN = \FF^{\mathrm{bd}}(0\beta) \, = \, \FF^*(-\beta) \times_L \MM^*(\beta).
\]
Whilst the second factor $\MM^*(\beta)$ is an unknown orientable $(2n-2)$-manifold, the first factor is a finite union of 2-tori. Let $\DD$ be a finite union of solid tori with boundary $\pa\DD=\FF^{\ast}(-\beta)$. Since $H_1(L;\Z)\approx \Z$ and $H_2(L;\bZ) = 0$, the map $\ev\colon\pa \DD\to L$ extends to a map 
\[
\overline{\ev}\colon\DD\to L.
\]

\begin{Lemma}
After arbitrarily small perturbation of $J\in\scrJ_L$ and Hamiltonian $H$, the fiber product
\[
\TT \ = \ \DD \times_L \MM^*(\beta) \ = \ (\overline{\ev}\times\ev)^{-1}(\Delta_L) 
\]
is transverse, and $\TT$ is a compact $C^1$-smooth manifold with boundary diffeomorphic to $\NN$.
\end{Lemma}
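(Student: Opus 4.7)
The plan is to split the verification into three steps: transverse cutting-out, compactness, and identification of the boundary. Preliminarily, Proposition \ref{Prop:Mismfld} gives $\MM(\beta)$ as a closed $C^1$ manifold of dimension $2n-3$, so after accounting for the $S^1$-fiber of the forgetful map $\MM^*(\beta)\to \MM(\beta)$ the moduli space $\MM^*(\beta)$ is itself a closed $C^1$ manifold of dimension $2n-2$ with a $C^1$-smooth evaluation $\ev\colon \MM^*(\beta)\to L$. Likewise Proposition \ref{Prop:Fismfld}(2) makes $\FF(-\beta)$ a closed $1$-manifold, so $\FF^*(-\beta)\cong \FF(-\beta)\times \partial D$ is a disjoint union of $2$-tori and by hypothesis $\DD$ is the corresponding disjoint union of $3$-dimensional solid tori. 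A dimension count predicts $\dim \TT = 3+(2n-2)-n = n+1$, matching the expectation that $\partial\TT = \NN$ is the closed $n$-manifold of Proposition \ref{Prop:FibreProduct}.

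The main step is transversality of $\overline{\ev}\times \ev\colon \DD \times \MM^*(\beta) \to L\times L$ to the diagonal $\Delta_L$. On $\partial \DD \times \MM^*(\beta)$ the restriction coincides with $\ev\times \ev$ from Proposition \ref{Prop:FibreProduct}, which is already transverse for the generic $J\in \scrJ_L$ and $H$ used there. In the interior of $\DD$ the obstruction to transversality depends only on the smooth map $\overline{\ev}$ and not on any PDE, so I would invoke Thom's rel-boundary transversality theorem: inside any $C^1$-neighbourhood of the original extension, a generic rel-boundary perturbation of $\overline{\ev}$ renders $\overline{\ev}\times \ev$ transverse to $\Delta_L$ on all of $\DD\times \MM^*(\beta)$, while leaving its boundary behaviour untouched. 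The implicit function theorem then identifies $\TT = (\overline{\ev}\times \ev)^{-1}(\Delta_L)$ as a $C^1$-smooth submanifold of the correct dimension.

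Compactness is then automatic: $\DD$ is compact by construction, and $\MM^*(\beta)$ is compact by Gromov compactness in the minimal Maslov class $\beta$, where any non-trivial bubble tree in class $\beta$ would force the appearance of a disk in a class of non-positive Maslov index, ruled out as in Proposition \ref{Prop:Fismfld}(2). Hence $\TT$ is closed in the compact product $\DD\times \MM^*(\beta)$. Finally, since $\partial \MM^*(\beta) = \varnothing$ and $\partial \DD = \FF^*(-\beta)$,
\[
\partial \TT \;=\; \partial \DD \times_L \MM^*(\beta) \;=\; \FF^*(-\beta)\times_L \MM^*(\beta) \;=\; \NN,
\]
with $C^1$-structure agreeing with that of Proposition \ref{Prop:FibreProduct}. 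The only genuine obstacle I anticipate is the rel-boundary transversality step in the second paragraph; the remaining points are either formal or directly imported from results already in place.
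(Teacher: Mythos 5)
Your argument is correct, but it is organized differently from the paper's. The paper proves this statement in Lemma \ref{Lem:fiberprod2} by working one level up, on $\DD\times\cfig_{\delta}(\beta;L)$: it forms the combined map $\mathbf{F}(u_1,u_2)=(\bar\pa_\beta(u_2),\overline{\ev}(u_1),\ev(u_2))$, a Fredholm map of index $3$, achieves transversality to the $0$-section times $\Delta_L$ by perturbing $H$, $J$ \emph{and} the extension simultaneously, and sets $\TT=\mathbf{F}^{-1}/G_1$; because $J$ and $H$ are re-perturbed there, the paper must then add the remark that $\pa\TT$ is only an arbitrarily small deformation of the originally given $\NN$. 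You instead freeze the generic $J$, $H$ of Lemma \ref{Lem:fiberprod1}/Proposition \ref{Prop:FibreProduct}, so that $\MM^{\ast}(\beta)$, its evaluation map, and $\NN$ are literally unchanged, and you obtain transversality of $\overline{\ev}\times\ev$ purely by a finite-dimensional rel-boundary perturbation of the extension $\overline{\ev}$, using openness of transversality near the boundary stratum (where Proposition \ref{Prop:FibreProduct} already gives it, and the collar-constancy of $\overline{\ev}$ makes boundary transversality equivalent to transversality of the full map there). This buys a cleaner statement — the boundary of $\TT$ is the original $\NN$ on the nose, which is convenient when gluing $\TT$ to $\barFF(0\beta)$ — and it replaces a Banach-space Fredholm setup by elementary differential topology; note that the freedom to choose the extension generically is also present in the paper's formulation (``for a generic extension $\ev\colon\DD\to L$''). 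The one point to flag is the Sard/Thom step: the fixed factor $\ev\colon\MM^{\ast}(\beta)\to L$ is only $C^1$ in the structure the paper provides, while the expected fiber product has dimension $n+1>1$, so a literal application of the parametric transversality theorem requires more regularity than $C^1$; this caveat is, however, exactly the one hidden in the paper's own appeal to ``standard arguments'' for $\mathbf{F}$, so your proof sits at the same level of rigor rather than containing a new gap. Your compactness and boundary-identification steps (compactness of $\DD$ and of $\MM^{\ast}(\beta)$ via minimality of $\beta$ as in Lemma \ref{Lem:tvholdisks}, and $\pa\TT=\pa\DD\times_L\MM^{\ast}(\beta)=\NN$) agree with the paper.
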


A proof is given in  Lemma \ref{Lem:fiberprod2}.  
Note that, since we perturb $J \in \scrJ_L$, the boundary $\pa\TT$ is not strictly equal to the initially given $\NN$: since the moduli spaces are slightly deformed the boundary which is the fibered product is deformed as well. However, the perturbation can be made arbitrarily small and hence there is an arbitrarily small diffeomorphism from the initial $\NN$ to the version of $\NN$ after perturbation. This gives in particular the following.

\begin{Corollary}
The space $\scrB = \barFF(0\beta) \cup_{\NN} \TT$ is a compact $C^1$-smooth manifold with $\pa\BB=L$.
\end{Corollary}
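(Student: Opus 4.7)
The plan is to assemble $\BB$ from the two compact $C^1$-pieces $\barFF(0\beta)$ and $\TT$ by identifying their $\NN$-boundaries, and to show that the identification can be made in a collared fashion so that the result inherits a $C^1$-structure with boundary $L$. The strategy is essentially the standard one for gluing two manifolds-with-boundary along a common boundary component; the only wrinkles are (i) one must use collars that are actually produced in the paper, and (ii) the two copies of $\NN$ agree only up to the small diffeomorphism induced by the perturbation used to put $\TT$ in general position.

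First, I would record the two collar structures near $\NN$. On the $\barFF(0\beta)$-side, Theorem~\ref{Thm:gluing} (applied through the embedding $\Psi$) provides an explicit $C^1$-collar $\Psi\colon \NN\times[0,\infty)\to\FF(0\beta)$ with $\Psi(\cdot,0)$ identifying $\NN=\FF^{\mathrm{bd}}(0\beta)$ with the outer boundary component of $\barFF(0\beta)$, and such that the complement of the open collar is a compact $C^1$-manifold with boundary $L\sqcup\NN$. On the $\TT$-side, $\TT$ is a compact $C^1$-manifold with boundary, so the standard $C^1$ collar-neighborhood theorem (which applies verbatim to the $C^1$ category) yields a $C^1$-embedding $\Phi\colon \pa\TT\times[0,\infty)\to\TT$ with $\Phi(\cdot,0)=\id_{\pa\TT}$.

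Next, I would deal with the perturbation issue raised just before the corollary. The fiber product $\NN=\FF^*(-\beta)\times_L\MM^*(\beta)$ defining the outer boundary of $\barFF(0\beta)$ uses the \emph{unperturbed} $(J,H)$-data, whereas $\pa\TT$ uses the slightly perturbed data arising in the Lemma. As noted in the paragraph preceding the corollary, the perturbation can be made arbitrarily small, and hence there is an arbitrarily small $C^1$-diffeomorphism $\varphi\colon\pa\TT\to\NN$, obtained by following the perturbation of the almost complex structure and Hamiltonian through the (transverse) fibered product construction. I would then define $\BB$ as the quotient of the disjoint union $\barFF(0\beta)\sqcup\TT$ by the equivalence relation $x\sim\varphi(x)$ for $x\in\pa\TT$. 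The underlying topological space is compact since both pieces are, and its boundary is $L$ since gluing cancels the $\NN$-component on each side.

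To upgrade this to a $C^1$-manifold, I would use the two collars $\Psi$ and $\Phi$ to build bicollar charts at the gluing locus: near a point $p\in\NN$, the map
\[
(q,s)\longmapsto\begin{cases}\Psi(q,s)&s\ge 0,\\ \Phi(\varphi^{-1}(q),-s)&s\le 0,\end{cases}
\]
for $q$ in a $C^1$-chart of $\NN$ and $s\in(-\epsilon,\epsilon)$, provides a $C^1$-homeomorphism from $\NN\times(-\epsilon,\epsilon)$ into $\BB$. Since $\varphi$ is a $C^1$-diffeomorphism and both $\Psi$ and $\Phi$ are $C^1$, these bicollar charts are compatible with the ambient $C^1$-atlases on $\barFF(0\beta)$ and $\TT$, yielding a global $C^1$-atlas on $\BB$. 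The compact boundary is then $L\sqcup\varnothing=L$, as required.

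The step most in need of care is the first: one must verify that $\Psi$ really is a collar in the $C^1$ sense and that its image exhausts a neighborhood of the outer boundary. This is precisely the content of Theorem~\ref{Thm:gluing}, on which the entire compactification rests; granted that, the remaining gluing is routine differential topology in the $C^1$ category and produces the desired $\scrB$ with $\pa\scrB=L$.
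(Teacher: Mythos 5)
Your proposal is correct and follows the same route the paper (implicitly) takes: the corollary is stated there as an immediate consequence of Theorem~\ref{Thm:gluing}, which supplies the $C^{1}$-collar of the outer boundary of $\barFF(0\beta)$, and of the lemma identifying $\TT$ as a compact $C^{1}$-manifold with boundary $\NN$ up to an arbitrarily small diffeomorphism coming from the perturbation; gluing along $\NN$ via collars is then routine. Your write-up just makes explicit the standard bicollar argument (and the $C^{1}$ collar on the $\TT$-side) that the paper leaves unstated.
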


\subsection{Tangent and index bundles of moduli spaces}\label{ssec:tangentinfo}
For spaces of (Floer) holomorphic disks which are transversely cut out, the tangent bundle is described by index theory. In Section \ref{sec:closeddisks} we introduce a configuration space $\cfig$ of disks with boundary on $L$ modeled on a Sobolev space of maps $u: D\to\C^{n}$ with two derivatives in $L^{2}$ and which satisfy $(du)^{0,1}|_{\partial D} = 0$, and view the Floer operator as a map
\[
\bar\pa_{\rm F}\colon\XX\times[0,\infty)\to\YY,
\]  
where $\YY$ is the subspace of the Sobolev space of complex anti-linear maps $TD\to\C^{n}$ with one derivative in $L^{2}$ comprising elements that vanish along the boundary. The coordinate in $[0,\infty)$ parameterizes the Hamiltonian term of the Floer operator. We write $\XX(j\beta)$ for the component of $\XX$ containing maps that represent the homotopy class $j\beta$ when restricted to the boundary. Let $D(\bar\pa_{\rm F})\colon T\cfig\to\tcfig$ denote the linearization of $\bar\pa_{\rm F}$ and write
\[
\left[ T\XX(j\beta) \xrightarrow{D(\bar\pa_{\rm F})}\YY \right]
\]
for the index bundle of $D(\bar\pa_{\rm F})$ over $\cfig(j\beta)$. Similarly, we write $\bar\pa$ for the Cauchy-Riemann operator without Hamiltonian term and $D(\bar\pa)$ for its linearization.

\begin{Lemma} \label{Lem:Index}
For a generic $J\in\scrJ_{L}$ and Hamiltonian $H$, there are the following identities:
\begin{enumerate}
\item  in $KO(\FF(0\beta))$,
\[ 
T\FF(0\beta)\ \simeq \ \left.\left[ T\XX(0\beta) \xrightarrow{D(\bar\pa_{\rm F})}\YY \right] \right|_{\FF(0\beta)},
\]
\item in $KO(\FF^{\ast}(-\beta))$,
\[ 
T\FF^{\ast}(-\beta)\ \simeq \ \pi^{\ast}\left.\left[ T\XX(-\beta) \xrightarrow{D(\bar\pa_{\rm F})}\YY \right] \right|_{\FF(-\beta)},
\]
where $\pi\colon\FF^{\ast}(-\beta)\to\FF(-\beta)$ is the projection that forgets the marked point.
\item in $KO(\MM^{\ast}(\beta))$,
\[ 
T\MM^{\ast}(\beta)\ \simeq \ \left.\left[ T\XX(\beta) \xrightarrow{D(\bar\pa)}\YY \right] \right|_{\MM^{\ast}(\beta)},
\]
where we embed $\MM^{\ast}(\beta)\subset \XX(\beta)$ by choosing a smooth section of the bundle over $\MM^{\ast}(\beta)$ whose fiber is the (contractible) automorphism group $G_1$ of the given holomorphic disk. \end{enumerate}
\end{Lemma}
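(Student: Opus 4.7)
The plan is to invoke the general principle that if a moduli space $M$ is cut out transversely as the zero locus of a Fredholm section, then $TM$ is isomorphic, up to trivial summands, to the $KO$-theoretic index class of the linearized operator: transversality forces vanishing of the cokernel, so $TM$ coincides pointwise with the kernel of the linearization, and this represents the index class. The transversality results already established (Propositions \ref{Prop:Fismfld}, \ref{Prop:Mismfld}, and \ref{Prop:FibreProduct}) put each of the three moduli spaces into this framework. The remaining work is to account for trivial factors (the parameter direction $\pa_r$, the marked-point direction in $\pa D$, and the Lie algebra of residual automorphisms), all of which vanish in $KO$.

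For part (1), $\FF(0\beta)$ is the transverse zero locus of $\bar\pa_{\rm F}$ in $\XX(0\beta)\times[0,\infty)$. The index of the full parametric linearization $T_u\XX\oplus\R\to\YY$ exceeds that of the partial linearization on $T_u\XX$ by one, and the corresponding index bundles differ by a trivial line, which vanishes in $KO$. Thus $T\FF(0\beta)$, which is pointwise the kernel of the parametric operator, coincides in $KO(\FF(0\beta))$ with the restriction of $[T\XX(0\beta)\xrightarrow{D\bar\pa_{\rm F}}\YY]$. Part (2) follows from the same argument applied to $\FF(-\beta)$, together with the product decomposition $\FF^{\ast}(-\beta)\cong\FF(-\beta)\times\pa D$, whose second tangent factor $T\pa D$ is trivial and so invisible in $KO$, so that the statement reduces to pulling back via $\pi$.

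For part (3), one first realizes $\MM^{\ast}(\beta)$ as a smooth submanifold of $\XX(\beta)$ by choosing a smooth section of the $G_1$-principal bundle of parametrized representatives. Since $G_1 \subset PSL_2(\R)$ (the stabilizer of a boundary point of $D$) is diffeomorphic to $\R\times\R^{>0}$ and hence contractible, such a global section exists. Along this section, transversality of $\MM(\beta)$ implies $\coker(D\bar\pa)=0$ and that $\ker(D\bar\pa)$ decomposes pointwise as $T\MM^{\ast}(\beta) \oplus \mathrm{Lie}(G_1)$, the second summand being the image of the infinitesimal reparametrization action (well-defined and injective because $G_1$ acts freely on non-constant stable disks). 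The subbundle $\mathrm{Lie}(G_1)$ is canonically trivial of rank $2$, so it vanishes in $KO(\MM^{\ast}(\beta))$, yielding $T\MM^{\ast}(\beta)\simeq\ker(D\bar\pa) = [T\XX(\beta)\xrightarrow{D\bar\pa}\YY]$.

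The main delicacy in executing the plan is to ensure that the pointwise identifications of kernels with tangent spaces assemble into $C^1$-isomorphisms of topological bundles rather than merely a pointwise dimension count. For parts (1) and (2) this follows from the implicit function theorem applied to the relevant Fredholm problems, producing smooth local trivializations of the kernel bundle near each solution. For part (3) it depends additionally on the smoothness of the chosen section $\MM^{\ast}(\beta)\hookrightarrow\XX(\beta)$, and on the fact that the $G_1$-action varies smoothly, so that $\mathrm{Lie}(G_1)$ is a globally defined trivialized $C^1$-subbundle of $\ker(D\bar\pa)$. This last bookkeeping -- verifying that the gauge-fixing section and the induced trivialization of $\mathrm{Lie}(G_1)$ behave well under the ambient Banach-manifold structures set up in Section \ref{Sec:basicsetup} -- is where the most care is required.
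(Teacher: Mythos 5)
Your proposal is correct and follows essentially the same route as the paper: the paper's proof is a one-line appeal to the general elliptic-theory fact that for transversely cut out moduli spaces the ($K$-theoretic) tangent bundle is the restriction of the index bundle of the linearized operator (citing McDuff), and your argument is exactly this principle, with the bookkeeping of the trivial summands (the $[0,\infty)$-parameter, the $\pa D$-factor, and $\mathrm{Lie}(G_1)$ along a section of the contractible $G_1$-bundle) spelled out explicitly. The only cosmetic caveat is that these trivial summands vanish in $\widetilde{KO}$ rather than in $KO$ on the nose, i.e.\ the stated identities are stable isomorphisms, which is how the paper uses them.
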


\begin{proof}
The identification of the $K$-theory class of the tangent bundle with the restriction of the corresponding linearized operator from the ambient space of smooth maps is a general feature of elliptic problems, cf.~for instance McDuff \cite[Proposition 4.3]{McDuff:K-class} for the Cauchy-Riemann case.
\end{proof}

The moduli spaces we work with all consists of smooth maps and we will therefore often consider the restriction of index bundles from the configuration space to the space $\cfig_{\rm sm}$ of $C^{m}$ maps for some large integer $m$.

\subsection{Index bundles and a piecewise linear isoptopy}
Assume that the dimension $n=\dim(L)$ is even, $n=2k>4$.
As a first step toward proving that the tangent bundle of $\scrB$ is trivial we show
that the index bundles in Lemma \ref{Lem:Index} can be understood through the corresponding bundles for the Lagrangian $S^{1}\times S^{2k-1}$, $W'\subset\C^{2k}$ which results from Lagrange surgery on the Whitney sphere $W$.

\begin{Lemma} \label{Lem:StablyTrivial}
The tangent bundle $TL$ of $L$ is stably trivial.
\end{Lemma}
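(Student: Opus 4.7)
The plan is to use the topological description of $L$ from Corollary \ref{Cor:LandKdiffeo}: $L$ is diffeomorphic to $P\#\Sigma$, where $P=S^{1}\times S^{2k-1}$ and $\Sigma$ is a homotopy $2k$-sphere. The strategy is then to establish stable parallelizability for each summand, and invoke the standard fact that a connected sum of stably parallelizable closed $n$-manifolds (for $n\geq 2$) is stably parallelizable.

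First, $P$ is stably parallelizable: every sphere $S^{m}$ is stably parallelizable via its standard embedding in $\R^{m+1}$ (the outward normal provides a trivializing section of $TS^{m}\oplus\epsilon^{1}$), so $TP=\pi_{1}^{\ast}TS^{1}\oplus\pi_{2}^{\ast}TS^{2k-1}$ is a sum of stably trivial bundles and hence itself stably trivial. Second, $\Sigma$ is stably parallelizable by the theorem of Kervaire--Milnor \cite{KM}: every homotopy sphere bounds a parallelizable manifold, and the restriction of a trivialization of the tangent bundle of such a filling, combined with the outward normal, trivializes $T\Sigma\oplus\epsilon^{1}$.

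The remaining step is to observe that if $M_{1}$ and $M_{2}$ are stably parallelizable closed $n$-manifolds, then so is $M_{1}\#M_{2}$. The essential point is that over $M_{i}\setminus D^{n}$, which has the homotopy type of an $(n-1)$-dimensional CW complex, a stably trivial bundle of rank $n$ is already genuinely trivial by the standard obstruction-theoretic argument (dimension of base strictly less than rank). One can thus choose actual trivializations of $TM_{i}|_{M_{i}\setminus D^{n}}$; these glue across the connecting $S^{n-1}$ to give a trivialization of $T(M_{1}\#M_{2})$ away from a single point, with any discrepancy absorbed by a clutching modification that becomes null-homotopic after one stabilization. Applying this to $L=P\#\Sigma$ yields the desired stable triviality of $TL$.

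The only subtle point is the gluing step in the connect sum argument, where one must ensure that the local trivializations agree across the connecting sphere after stabilization; this is where the dimension assumption $2k>4$ is comfortably in the stable range, so no further hypothesis is needed. Everything else reduces to citation and bundle bookkeeping.
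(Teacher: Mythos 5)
Your overall route is the same as the paper's: write $L\cong (S^{1}\times S^{2k-1})\#\Sigma$ via Corollary \ref{Cor:LandKdiffeo}, observe that each summand is stably parallelizable, and use that a connected sum of stably parallelizable closed manifolds is stably parallelizable. The product factor and the connected-sum step are handled correctly (the paper simply cites \cite{KM} for this bookkeeping, while you spell out the obstruction-theoretic gluing; that part is fine).

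However, your justification that $\Sigma$ is stably parallelizable contains a genuinely false step. You assert that ``every homotopy sphere bounds a parallelizable manifold'' and then restrict a trivialization from the filling. This is not true: the subgroup $bP_{n+1}\subset\Theta_n$ of homotopy $n$-spheres bounding parallelizable manifolds is in general proper, and for $n=2k$ even one has $bP_{2k+1}=0$, so a homotopy $2k$-sphere bounds a parallelizable manifold \emph{only if} it is the standard sphere. Since exotic spheres exist in even dimensions (e.g.\ dimensions $8$, $10$, $14$), your claim is false; worse, if it were true it would make Theorem \ref{Thm:Main} of this paper vacuous, since constructing exactly such a bounding manifold for $K$ is the entire content of the proof. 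The correct input is Kervaire--Milnor's \cite[Theorem 3.1]{KM}: every homotopy sphere is $s$-parallelizable, proved by showing the single obstruction in $\pi_{n-1}(SO)$ vanishes (via the $J$-homomorphism and Adams' theorem), not by exhibiting a parallelizable filling. Replacing your justification by that citation repairs the argument, after which your proof coincides with the paper's.
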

 
\begin{proof}
The manifold $L$ is the connect sum of a homotopy $2k$-sphere and the product $S^{1}\times S^{2k-1}$, since each of the factors is stably parallelizable, see \cite[Theorem 3.1]{KM}, so is their connected sum. 
 \end{proof}

Any Lagrangian immersion $f\colon L \rightarrow \bC^n$ defines a Gauss map to the Lagrangian Grassmannian $U(n)/O(n)$ that takes $q\in L$ to $df(T_qL)\in U(n)/O(n)$. Stabilizing, one obtains a stable Gauss map 
\begin{equation} \label{Eqn:Gauss}
G_f\colon L \rightarrow U/O,
\end{equation}
if $f$ is the inclusion map we sometimes write $G_L$ instead of $G_f$.
  
\begin{Lemma} \label{Lem:GaussInterpolate}
There is a one-parameter family of $C^0$-homeomorphisms $\phi_t\colon \bC^{2k} \rightarrow \bC^{2k}$ and a 1-parameter family of continuous maps $\psi_t\colon S^1 \times S^{2k-1}  \rightarrow U/O$, with the properties that
\begin{enumerate}
\item $\phi_0$ is the identity;
\item $\phi_1(L) = W'$;
\item $\psi_0=G_L$ and $\psi_1 = G_{W'} \circ \phi_1$.
\end{enumerate}
\end{Lemma}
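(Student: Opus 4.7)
My plan is to follow the three-step strategy outlined in Section \ref{Sec:Intro}: establish a {\sc pl}-homeomorphism $L\cong W'$, realize it by an ambient {\sc pl}-isotopy of $\C^{2k}$, and cover this isotopy by a continuous homotopy of stable Lagrangian Gauss maps. The first step is essentially topological, the second invokes {\sc pl} unknotting in the double-dimension regime, and the third exploits the fact that the stable Gauss map is a homotopy-theoretic invariant of the tangent bundle decorated with a Lagrangian structure.

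For Step 1, I would use Corollary \ref{Cor:LandKdiffeo}, which identifies $L$ with $(S^{1}\times S^{2k-1})\#\Sigma$ for some homotopy $2k$-sphere $\Sigma$. Since $2k\geq 6$, Smale's {\sc pl} Poincar\'e theorem \cite{SmalePoincare} gives a {\sc pl}-homeomorphism $\Sigma\to S^{2k}$, hence a {\sc pl}-homeomorphism $h\colon L\to W'$. For Step 2, both $L$ and $W'$ are {\sc pl} submanifolds of $\C^{2k}$ in codimension equal to dimension. I would appeal to the double-dimension {\sc pl} unknotting theorem of Hudson, with refinements by Zeeman, Haefliger, and Weber. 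The theorem applies here because the ambient $\C^{2k}$ is contractible (so any two {\sc pl} embeddings are homotopic) and the relevant connectivity hypotheses are met. The output is an ambient {\sc pl}-isotopy $\{\phi_t\}$ with $\phi_0=\id$ and $\phi_1(L)=W'$; this is in particular a $C^{0}$-family of homeomorphisms.

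For Step 3, I would approximate the {\sc pl} isotopy by a continuous family of smooth embeddings $f_t\colon L\to \C^{2k}$, with $f_0$ the given inclusion and $f_1$ homotopic to the composition of $h$ with the inclusion $W'\hookrightarrow\C^{2k}$. Such smoothings exist because {\sc pl} and smooth structures on closed manifolds of dimension $\geq 5$ differ only up to an obstruction classified by $BPL/BO$, and the needed obstructions vanish in our setting by stable parallelizability (Lemma \ref{Lem:StablyTrivial}). The stable Gauss maps of the smoothings assemble into a continuous family $\psi_t\colon L\to U/O$. A final small homotopy arranges $\psi_1=G_{W'}\circ\phi_1$ on the nose, using that $G_{f_1}$ is canonically homotopic to $G_{W'}\circ\phi_1$ via the smoothing, completing the construction.

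The principal obstacle will be Step 2: {\sc pl}-unknotting results in exact double dimension are borderline, with potentially non-trivial isotopy obstructions. The contractibility of the ambient space $\C^{2k}$ and the simple topological type of $L$ will be needed to eliminate these. A secondary technical point in Step 3 is that the smoothings $f_t$ must vary continuously in $t$ in a manner compatible with the endpoint conditions on $G_L$ and $G_{W'}$; this is where one uses that {\sc pl} and smooth structures agree stably in our range, together with the stabilized nature of the Lagrangian Gauss map.
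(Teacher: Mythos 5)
Your proposal has two genuine gaps, both at the points where the paper's proof takes a different and essential detour. First, in your Step 2 you apply {\sc pl} unknotting directly to the embedded submanifolds $L, W'\subset\C^{2k}$. In this range ($m=2k$, $q=4k$, so $2m-q=0$) the Zeeman--Hudson--Irwin isotopy theorems require the submanifold to be $(2m-q+1)$-connected, i.e.\ simply connected, and $L\approx (S^1\times S^{2k-1})\#\Sigma$ has $\pi_1=\Z$; so the "relevant connectivity hypotheses" are in fact \emph{not} met, and you defer rather than resolve exactly this obstacle. The paper avoids it by working not with $L$ and $W'$ but with the immersed homotopy spheres $f\colon K\to\C^{2k}$ and the Whitney sphere $w$: after making them agree near the double point, one removes a ball $B$ around the double point and applies Hudson's theorem rel boundary to the embeddings of $S^{2k}-\tilde B$ (a sphere minus two disks, which is $(2k-2)$-connected) into $\C^{2k}-B$ (which is $(4k-2)$-connected); the Hudson--Zeeman isotopy extension theorem then produces the ambient {\sc pl} family $\phi_t$, and since Lagrange surgery is local near the double point the isotopy carries $L$ to $W'$. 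Your Step 1 identification of $L$ with $W'$ up to {\sc pl} homeomorphism is fine but is not where the difficulty lies.

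Second, your Step 3 does not produce the homotopy $\psi_t$ into $U/O$: the intermediate smooth embeddings $f_t$ obtained by smoothing the {\sc pl} isotopy are not Lagrangian, so their tangent planes are not Lagrangian planes and they have no (stable) Lagrangian Gauss maps; the family "$G_{f_t}$" you invoke is not defined. (A Lagrangian isotopy from $L$ to $W'$ cannot be assumed -- ruling out such rigidity is the point of the whole paper.) The homotopy must instead be produced homotopy-theoretically, as in the paper: one compares $G_f$ and $G_{w}\circ\phi_1=G_{\phi_1\circ f}$ as maps of the homotopy $2k$-sphere $K$ into $U/O$, uses the product Lagrangian immersion $f\times\iota\colon K\times\R\to\C^{2k}\times\C$ together with stable triviality of $TK$ to lift both maps to the stable unitary group $U$, and then uses $\pi_{2k}(U)=0$ (Bott periodicity in even degrees) to conclude the two Gauss maps are homotopic; locality of the surgery again transfers the homotopy to $L$ and $W'$. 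Your proposal contains no substitute for this obstruction-theoretic step.
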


\begin{proof}
Any homotopy $n$-sphere $\Sigma$, $n> 4$, admits a Morse function with exactly two critical points \cite{KM}, and hence admits the structure of a {\sc pl}-manifold {\sc pl}-homeomorphic to the standard sphere $S^n$.  Consider the Lagrangian immersion $f\colon K\to\C^{2k}$ and the Whitney sphere $w\colon S^{2k}\to\C^{2k}$ as {\sc pl}-immersions. After an initial Hamiltonian isotopy of $\C^{2k}$ we may assume that $f$ and $w$ agree near their unique double points. Let $B\subset\C^{n}$ be a small ball around the double point and let $\tilde B\subset S^{2k}$ denote $f^{-1}(B)=w^{-1}(B)$.

Since the codimension of the immersions is $2k>3$ and $\C^{2k}-B$ is $4k-2$  connected, it follows from \cite{Hudson} that the {\sc pl}-embeddings $f,w\colon (S^{2k}-\tilde B,\pa \tilde B)\to (\C^{2k}-B,\pa B)$ are isotopic rel boundary through {\sc pl}-embeddings. The combinatorial isotopy extension theorem of  \cite{HudsonZeeman} then implies that there is a global continuous $1$-parameter family of {\sc pl}-homeomorphism $\phi_t\colon\C^{2k} \rightarrow \C^{2k}$, $0\le t\le 1$, with $\phi_0=\id$ and $\phi_1\circ f = w$.  

Consider now the stable Gauss maps $G_{f}$ and $G_{\phi_1\circ f}$, which map a $2k$-sphere into $U/O$.  Fix a stable framing of the standard sphere.  A choice of stable framing $\eta$ of the homotopy sphere $K$ induces a ``difference element" $\delta(\eta) \in \pi_{2k}(U)$ as follows.  By hypothesis, $\phi_1\circ f$ and $f$ agree near the double point. Since $f$ and $w$ have equal Maslov classes, the Gauss maps $G_{\phi_1 \circ f}$ and $G_f$ may further be homotoped (relative a neighbourhood of the double point) to agree on a neighbourhood of a path joining the two double point preimages.  The complement of such a path in either $S^{2k}$ or $K$ is a $2k$-disk.  Since the tangent bundles are stably framed, the Gauss maps naturally lift to $U$, and agree on the boundaries of these $2k$-disks, hence patch to define the element $\delta(\eta) \in \pi_{2k}(U)$.  Since the stable homotopy groups of $U$ vanish in all even dimensions,  we conclude that $G_{f}$ and $G_{\phi_1\circ f}$ are homotopic.


Since $L$ and $W'$ are obtained by Lagrange surgery on $f$ and $w$, respectively, which is a local operation near the double point the lemma clearly follows from the above.
\end{proof}

Lemma \ref{Lem:GaussInterpolate} shows that to understand the index bundle over the space $\XX_{\rm sm}(L)$ of smooth disks with boundary on $L$, it will be sufficient to understand the corresponding index bundle over  $\XX_{\rm sm}(W')$. In Section \ref{Sec:IndexBundle} we study this bundle explicitly using the fact that $\XX_{\rm sm}(W')$ is homotopy equivalent to the free loop space of $W'$, and working with a skeleton of the loop space obtained from the Morse-Bott energy functional of the round metric on $S^{2k-1}$. Our main results are summarized in the following lemma:

\begin{Lemma}\label{Lem:summarytrivindex}
The index bundle
\[
\left[ T\XX(j\beta) \xrightarrow{D(\bar\pa)}\YY \right]
\]
is trivial over the $(6k-7)$-skeleton of $\XX_{\rm sm}(W')$. 
\end{Lemma}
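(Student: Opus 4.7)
The plan is to translate the question into one about the free loop space of $W'=S^{1}\times S^{2k-1}$ and then to trivialize the index bundle piece by piece over a Morse--Bott skeleton of that loop space.

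First, since the target $\C^{2k}$ is convex, restriction to the boundary $u\mapsto u|_{\pa D}$ is a homotopy equivalence from each component of $\XX_{\rm sm}(W')$ onto the corresponding component of the free loop space $\mathcal{L}(W')$, so the index bundle descends to a virtual $KO$-class on $\mathcal{L}(W')$. The product splitting $W'=S^{1}\times S^{2k-1}$ yields $\mathcal{L}(W')\simeq \mathcal{L}(S^{1})\times \mathcal{L}(S^{2k-1})$; the winding-$j$ component of $\mathcal{L}(S^{1})$ is homotopy equivalent to $S^{1}$ and so contributes only a single extra dimension. It therefore suffices to trivialize the pulled-back index bundle over the $(6k-7)$-skeleton of $S^{1}\times\mathcal{L}(S^{2k-1})$.

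Next, I would apply Morse--Bott theory to the energy functional $E(\gamma)=\tfrac{1}{2}\int|\dot\gamma|^{2}$ for the round metric on $S^{2k-1}$. Its critical manifolds are the submanifold of constant loops, a copy of $S^{2k-1}$ of index $0$, and, for each $n\geq 1$, the parametrized $n$-fold covers of great circles, forming a copy of the Stiefel manifold $V_{2}(\R^{2k})\simeq STS^{2k-1}$ of dimension $4k-3$ and Morse index $(2n-1)(2k-2)$. Passing through the $n$-th critical manifold attaches cells whose dimensions are at least $(2n-1)(2k-2)$; for $n\geq 2$ this is at least $3(2k-2)=6k-6>6k-7$. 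Hence the $(6k-7)$-skeleton of $\mathcal{L}(S^{2k-1})$ lies inside the union of the constant-loop submanifold $S^{2k-1}$ and the cells attached from the $n=1$ critical manifold alone.

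It remains to trivialize on these two pieces. Over the constants the index bundle splits stably as $TW'\oplus\epsilon$, with the trivial summand $\epsilon$ coming from the kernel of $\bar\pa$ on constant disks, and is therefore stably trivial by Lemma \ref{Lem:StablyTrivial}. Over the $n=1$ Morse--Bott manifold I would use the Lefschetz model of Example \ref{Ex:LefschetzWhitney}: write $W'=\bigcup_{t\in S^{1}}\sqrt{t}\,S^{2k-1}$, and note that great circles on the vanishing cycle $\sqrt{t}\,S^{2k-1}$ arise as boundaries of the explicit vanishing-cycle half-disks $\sqrt{\gamma(s)}\,S^{2k-1}$ for arcs $\gamma$ between antipodal points of the unit circle in $\C^{\ast}$. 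Along this $V_{2}(\R^{2k})$-family the linearized $\bar\pa$-operator can be written down in closed form using the product structure transverse to $\gamma$, and its index can be trivialized using the stable parallelizability of the Stiefel manifold. The main obstacle will be to carry out this explicit index-bundle computation and the accompanying Maslov-class computation in the Lagrangian Grassmannian carefully enough to match the trivializations on the constant piece and on the $n=1$ piece across the attaching region of the negative normal disk bundle of $V_{2}(\R^{2k})$; that matching is what occupies Section \ref{Sec:IndexBundle}.
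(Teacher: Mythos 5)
Your setup coincides with the paper's: restriction to the boundary reduces the problem to the free loop space, the product splitting and the Morse--Bott analysis of the round-metric energy (with the index count $(2n-1)(2k-2)\ge 6k-6$ for $n\ge 2$) produce exactly the skeleton the paper builds in Lemma \ref{Lem:skeleton}, and the trivialization over the constant loops via $\ev^{\ast}TW'$ and Lemma \ref{Lem:StablyTrivial} is also as in the paper. But the proposal stops precisely where the lemma's actual content begins. You write that the operator along the great-circle family ``can be trivialized using the stable parallelizability of the Stiefel manifold'' and then concede that matching this with the constant-loop piece across the attaching cells ``is what occupies Section \ref{Sec:IndexBundle}.'' Stable parallelizability of $V_2(\R^{2k})$ only concerns its tangent bundle; it gives no information about the index bundle, which is an a priori unrelated virtual bundle, and in any case the trivialization is needed not just over the critical manifold but over the whole negative-disk-bundle cells joining it to the constant loops, where the kernel and cokernel dimensions jump.

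Concretely, what the paper must do (in Lemma \ref{Lem:indextrivial}) and what is missing from your sketch is: (i) split the $\bar\pa$-problem along a simple closed geodesic boundary condition, in the model of Example \ref{Ex:LefschetzWhitney}, into $2k-2$ trivial one-dimensional problems plus two one-dimensional problems of Maslov index $\pm 2$, which shows the operator over the closed geodesics has a one-dimensional cokernel, so the index bundle is genuinely virtual there and a stabilization by one auxiliary direction is required before one can speak of a kernel bundle at all; (ii) follow the family along the gradient-flow cells of $\Phi(\widehat Q)$ -- implemented via pre-glued domains, rotation of the boundary condition, and the linear gluing Lemma \ref{Lem:lingluetriv} -- to verify that the once-stabilized kernels form an honest vector bundle over the entire unstable manifold and that the trivialization coming from $TS^{2k-1}$ over the constants extends over the fibers $Q_v$; and (iii) reduce loops with simultaneous $S^1$- and $S^{2k-1}$-components to the two separate factors, again via Lemma \ref{Lem:lingluetriv}, which your appeal to the $\LL S^1$ factor ``contributing only a single extra dimension'' does not address, since the boundary condition along a class-$j\beta$ loop has Maslov index $jn$ and does not literally split as a product. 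Without (i)--(iii) the proposal is a plan rather than a proof, and the step it defers is exactly the statement being proved.
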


Lemmas \ref{Lem:Index}, \ref{Lem:GaussInterpolate}, and \ref{Lem:summarytrivindex} imply that a choice of a homotopy class of stable trivialization of the index bundle  over the $(6k-7)$-skeleton of $\XX_{\rm sm}(W')$ induces homotopy classes of stable trivializations of the tangent bundles to each of $\FF(0\beta)$, $\FF^{\ast}(-\beta)$, and $\MM^{\ast}(\beta)$. In the next section we describe how these trivializations interact near the Floer-Gromov boundary of $\FF(0\beta)$.
    
\subsection{Coherent trivialization}
\label{sec:cohertriv}
Let $\zeta$ be a boundary puncture on $\pa D$. Consider the conformal map $\psi_{\zeta}\colon D-\zeta\to H$, where $H=\{w=u+iv\colon v\ge 0\}$ is the upper half plane,
\[
\psi_{\zeta}(z)=-i\;\frac{\zeta^{-1}z+1}{\zeta^{-1}z-1},
\]
which takes $\zeta$ to $\infty$, $i\zeta$ to $-1$, and $-i\zeta$ to $1$. For $R>0$, consider the conformal map 
\[
\kappa_R\colon [0,\infty)\times[0,1]\to H,\quad
\kappa_R(\tau+it)= R e^{\pi(\tau+it)}.
\]
Let $E(R)\subset H$ be its image. Let $h$ be a metric on $H$ which agrees with the metric $du^{2}+dv^{2}$ in the disk of radius $2$, which agrees with the metric $d\tau^{2}+dt^{2}$ in $\tau+it$ coordinates on $E(3)$, and which interpolates smoothly between the two in the annular region $E(2)-E(3)$.

Thus $h$ is a metric on $H\approx D-\zeta$ in which the boundary puncture has a neighborhood which is a strip like end. For $\eta\in\R$, let $\mathbf{e}_{\eta}\colon H\to\R$ be the function
\begin{equation}
\mathbf{e}_{\eta}(w)=
\begin{cases}
1 &\text{ for }z\in H-E(3),\\
e^{\eta|\tau|} &\text{for }w=\tau+it\in[0,\infty)\times[0,1]\approx E(3).
\end{cases}
\end{equation}
In Section \ref{sec:punctures}, we construct configuration spaces for punctured (Floer) holomorphic disks using weight functions as above with $\eta=\delta\in(0,\pi)$. The configuration spaces are now modeled on the direct sum of a weighted Sobolev space with two derivatives in $L^{2}$ and a finite dimensional space of cut-off constant solutions supported near $\infty$. We denote the corresponding configuration space $\cfig_{\delta}$ and the corresponding Sobolev space of complex anti-linear maps which is the target for the $\bar\pa$-operator $\tcfig_{\delta}$. The $C^{1}$-structures on moduli spaces induced from $\cfig_{\delta}$ and $\cfig$ are canonically diffeomorphic. 

In this setting we introduce a linear gluing operation as follows. Recall that $L$ is assumed real analytic and that $J$ is standard near $L$. There is a natural map $\ev\colon \cfig_{\delta}\to L$ which is given by evaluation at the puncture. Let $\cfig_{\delta}(j\beta)$ denote the component of the space $\cfig_{\delta}$ with boundary in homotopy class $j\beta$. Consider two maps $u_1\in\cfig_{\delta}(j_1\beta)$ and $u_2\in\cfig_{\delta}(j_2\beta)$ with $\ev(u_{1})=\ev(u_{2})=q\in L$. Then there is $\rho_0>0$ such that for $\rho>\rho_0$, $[\rho_0,\infty)\times[0,1]$ is mapped to a standard coordinate neighborhood of $q$.

Write $H_{j}\approx H$, $j=1,2$ for the domain of $u_j$, $j=1,2$. Here we think of the strip neighborhood of $\infty$ in $H_2$ as $[0,\infty)\times[0,1]$, as usual. For simpler formulas we think of the neighborhood of $\infty$ in $H_1$ (where $\infty\in H_1$ corresponds to the puncture $\zeta\in\pa D$) as $(-\infty,0]\times[0,1]$. Note that for both domains the weight function looks like $e^{\delta|\tau|}$, $\tau+it\in[0,\infty)\times[0,1]$ for $H_2$ and $\tau+it\in(-\infty,0]\times[0,1]$ for $H_2$.

Let $\rho\ge 2$ and write 
\begin{align*}
H_{1;\rho} &= H_1-((-\infty,-\rho)\times[0,1]),\\
H_{2;\rho} &= H_2-((\rho,\infty)\times[0,1]). 
\end{align*}
Define
\begin{equation}\label{Eq:H_1+H_2}
H_{1}\#_{\rho} H_{2}= (H_{1;\rho}\cup H_{2;\rho})/\sim,
\end{equation}
where 
\[
\rho+it\;\sim\;-\rho+it,\quad\quad -\rho+it\in H_1\text{ and }\rho+it\in H_2.
\]

Then $H_{1}\#_{\rho} H_{2}$ comes equipped with a metric that agrees with the given metrics on $H_{1;\rho}$ and $H_{2,\rho}$ (which are the standard Euclidean metrics on the strip neighborhoods of $\infty$). Furthermore, $H_1\#_{\rho} H_2$ contains the finite strip region in the middle,
\begin{equation}\label{Eq:midstrip}
[-\rho,\rho]\times[0,1]\approx [0,\rho]\times[0,1]\cup[-\rho,0]\times[0,1]\subset H_1\#_\rho H_2.
\end{equation}
Here the weight functions of $H_{1;\rho}$ and $H_{2;\rho}$ fit together to a weight function on $H_1\#_{\rho} H_2$ given by $\tau+it\mapsto e^{\delta(|\rho|-|\tau|)}$ for $\tau+it\in[-\rho,\rho]\times[0,1]$, i.e.~in the middle strip. 

There is a preglued map $u_{1}\#_{\rho}u_{2}\colon H_{1}\#_{\rho} H_2 \rightarrow \bC^n$, which agrees with $u_j$ on $H_{j;\rho-1}$ and interpolates between the two maps in the remaining rectangle inside the standard neighborhood of $q$. 
Furthermore, both the Lagrangian boundary condition for the linearized operators $D(\bar\pa_{j})$, $j=1,2$ of the two disks and the operators themselves match where they are glued and hence induce an operator $D(\bar\pa_{\rho})$ and a Lagrangian boundary condition along $H_{1}\#_{\rho}H_{2}$. We will view this linearized operator as acting on the direct sum of a Sobolev space of $\C^{n}$-valued functions with two derivatives in $L^{2}$ weighted by $\mathbf{e}_{\delta}$ and which vanish at the point $0\in[-\rho,\rho]\times[0,1]$, and a $2k$-dimensional space of cut-off constant solutions $V_{\rm sol}$ supported in $[-\rho,\rho]\times[0,1]$. It is straightforward to check that
\[
\index(D(\bar\pa_{\rho}))=\index(D(\bar\pa_{1}))+\index(D(\bar\pa_{2}))-2k.
\]
Since we are interested in index bundles we consider stabilizations of the linearized operators. More precisely, let $D(\bar\pa)\oplus \R^{N}$ denote the operator $D(\bar\pa)$ augmented by a map $\psi\colon\R^{N}\to \tcfig_{\delta}$, where $\psi(v)$ has compact support in the interior of $H$ for any $v$. 

Let $u_1\in\cfig_{\delta}(-\beta)$ and $u_2\in\cfig_{\delta}(\beta)$ and consider stabilizations $D(\bar\pa_{1})\oplus\R^{N_1}$ and $D(\bar\pa_{2})\oplus\R^{N_2}$ that are surjective.
Then for $\rho$ sufficiently large, since both $\psi_j\colon \R^{N_j}\to\YY_{\delta}$ map to elements with compact support in the interior of $H_j$, $j=1,2$, we get an induced stabilization $D(\bar\pa_{\rho})\oplus\R^{N_1}\oplus\R^{N_2}$. Consider the further stabilization $D(\bar\pa_{\rho})\oplus\R^{N_1}\oplus\R^{N_2}\oplus\R^{2k}$ obtained by adding an extra copy of $V_{\rm sol}\approx \R^{2k}$ to $D(\bar\pa_{\rho})\oplus\R^{N_1}\oplus\R^{N_2}$. In Section \ref{Sec:LinearGluing} we establish the following result.

\begin{Lemma}\label{Lem:L2project}
For all $\rho$ that are sufficiently large,  $L^{2}$-projection gives an isomorphism 
\[
\ker(D(\bar\pa_{\rho})\oplus\R^{N_1}\oplus\R^{N_2}\oplus\R^{2k})= \ker(D(\bar\pa_{1})\oplus\R^{N_1})\oplus \ker(D(\bar\pa_{2})\oplus\R^{N_2}).
\] 
\end{Lemma}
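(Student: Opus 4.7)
The plan is a weighted linear gluing argument of the kind standard in Floer theory. Since each $D(\bar\pa_j)\oplus\R^{N_j}$ is surjective, the source of the claimed isomorphism has dimension $\mathrm{ind}(D(\bar\pa_1))+\mathrm{ind}(D(\bar\pa_2))+N_1+N_2$. The index identity $\mathrm{ind}(D(\bar\pa_\rho))=\mathrm{ind}(D(\bar\pa_1))+\mathrm{ind}(D(\bar\pa_2))-2k$ stated just before the lemma gives the same number for $\mathrm{ind}(D(\bar\pa_\rho)\oplus\R^{N_1+N_2+2k})$, so once I know the glued stabilized operator is surjective, the two spaces in question will have equal dimension and it will suffice to exhibit a map between them that is close to $L^2$-projection and injective.

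The main analytic step is to construct a uniformly bounded right inverse $Q_\rho$ of $D(\bar\pa_\rho)\oplus\R^{N_1+N_2+2k}$ for all $\rho\geq\rho_1$. Given $\eta$ in the target weighted Sobolev space on $H_1\#_\rho H_2$, I would split $\eta$ via a partition of unity subordinate to the enlarged cover $\{H_{1;\rho+1},H_{2;\rho+1}\}$, apply the individual right inverses of $D(\bar\pa_j)\oplus\R^{N_j}$ to the pieces, and patch the resulting sections back with a cutoff, producing an approximate preimage whose error is supported on the neck $[-\rho,\rho]\times[0,1]$ and takes values in $T_qL\cong\R^{2k}$. This mismatch is absorbed precisely by the stabilizing factor $V_{\rm sol}$ of cut-off constant solutions, and a standard contraction iteration then upgrades the approximation to an honest right inverse. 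Uniform bounds independent of $\rho$ require that a cut-off constant in $V_{\rm sol}$ have weighted norm comparable to its value in $T_qL$, which in turn is the reason for taking the weight rate $\delta\in(0,\pi)$, since $\pi$ is the lowest non-zero spectral value of the associated model strip operator with Lagrangian boundary condition in $L$.

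With $Q_\rho$ in hand, I would define a pregluing map
\[
\Phi_\rho\colon \ker(D(\bar\pa_1)\oplus\R^{N_1})\oplus\ker(D(\bar\pa_2)\oplus\R^{N_2})\longrightarrow \ker(D(\bar\pa_\rho)\oplus\R^{N_1+N_2+2k})
\]
by cutting off a pair of kernel elements at distance $\rho-1$ on each strip end, inserting an element of $V_{\rm sol}$ to match their values at $q$, and projecting back into the kernel by $I-Q_\rho(D(\bar\pa_\rho)\oplus\R^{N_1+N_2+2k})$. Exponential decay of the individual kernel elements along the strip ends yields a uniform lower bound $\|\Phi_\rho(v_1,v_2)\|\geq c\|(v_1,v_2)\|$, so $\Phi_\rho$ is injective and, by the dimension count from the first paragraph, an isomorphism. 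Finally, the $L^2$-projection $\Pi_\rho$ onto the direct sum of the individual kernels (defined by restricting to $H_{j;\rho}$ and orthogonally projecting onto each kernel, while discarding the $\R^{2k}$ component) satisfies $\Pi_\rho\circ\Phi_\rho=\mathrm{Id}+O(e^{-\delta\rho})$, so $\Pi_\rho$ is itself an isomorphism for all sufficiently large $\rho$. The principal obstacle will be the uniform estimate for $Q_\rho$: the neck weight $e^{\delta(|\rho|-|\tau|)}$ grows with $\rho$, and one must verify that the cutoff derivatives and the constants in $V_{\rm sol}$ interact with this weight to give uniformly bounded contributions rather than blowing-up ones.
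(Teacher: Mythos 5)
Your overall strategy is a legitimate alternative to the paper's, but it is not the route the paper takes. Lemma \ref{Lem:L2project} is proved via the abstract linear statement, Lemma \ref{Lem:lingluetriv}, and that proof never constructs a right inverse for the glued operator: it makes the same index count you do, then cuts off the kernel elements of $D(\bar\pa_1)\oplus\R^{N_1}$ and $D(\bar\pa_2)\oplus\R^{N_2}$ (replacing their $V_{\rm sol}$-parts by the glued cut-off constants) to get a subspace of the glued domain whose dimension equals the index, and shows by a contradiction/compactness argument --- a sequence $u_j$ with $\|u_j\|=1$, $\|Lu_j\|\to 0$, $L^2$-orthogonal to that subspace, whose truncations to the two halves are shown to tend to zero --- that the glued stabilized operator is uniformly injective on the $L^2$-complement. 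Surjectivity, the kernel dimension, and the fact that $L^2$-projection is an isomorphism then all follow at once. Your constructive version (patched approximate right inverse, contraction, pregluing of kernels, comparison of $\Pi_\rho\circ\Phi_\rho$ with the identity) is the other standard implementation of linear gluing and yields more quantitative information, at the price of exactly the weight bookkeeping you flag.

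As written, though, the crucial step does not quite close, and the difficulty is sharper than obtaining ``uniformly bounded contributions'': the contraction needs the error of the approximate right inverse to be \emph{small} in operator norm, not merely bounded. If the patching cutoffs transition near the seam (the natural reading of ``subordinate to the cover $\{H_{1;\rho+1},H_{2;\rho+1}\}$''), the terms $(\bar\pa\beta_j)\cdot Q_j(\chi_j\eta)$ are supported where the glued weight is of size $e^{\delta\rho}$, while the weighted bound on $Q_j(\chi_j\eta)$ only gives local size $\lesssim e^{-\delta\rho}\|\eta\|$ there; the two factors cancel and the error is $O(1)\cdot\|\eta\|$ with constant the norm of $Q_j$, so the Neumann series need not converge. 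The standard repair is to patch with ``long'' cutoffs whose derivatives sit at the interior ends of the neck, where the glued weight is $O(1)$ but the transferred preimage from the other piece is $O(e^{-2\delta\rho})$. Relatedly, your description of the error as ``taking values in $T_qL$ and absorbed by $V_{\rm sol}$'' is accurate only for the mismatch of the constant asymptotics $c_1,c_2$, which is precisely what $V_{\rm sol}\oplus\R^{2k}$ handles; the cutoff terms hitting the decaying parts are general $\C^{n}$-valued and must be made small as above. (The genuine $e^{-\pi|\tau|}$ decay against the $e^{\delta|\tau|}$ weight with $\delta<\pi$ is what makes your lower bound for $\Phi_\rho$ and the estimate $\Pi_\rho\circ\Phi_\rho=\mathrm{Id}+O(e^{-c\rho})$ correct, since those involve actual kernel elements rather than arbitrary preimages.) With these corrections your argument goes through; alternatively, the paper's complement estimate avoids constructing $Q_\rho$ altogether.
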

  
Lemma \ref{Lem:L2project} implies that stable trivializations of index bundles on the two pieces of a family of broken disks induce a stable trivialization of the index bundle after pregluing.  We formalize this with a notion of \emph{coherence} of trivializations. Write $\mathbf{I}_{j\beta}^{\ast}$ for the index bundle over $\cfig_{\delta}(j\beta)$ and $\mathbf{I}_{j\beta}$ for its restriction to the subspace $\cfig_{\delta}(1;j\beta)$ of disks with puncture at $1\in\pa D$.   Assume that,  over sufficiently large compact subsets of the configuration spaces $\cfig_{\delta}$,  we have fixed stabilizations  $Z^{\ast}_{-\beta}$ of $\mathbf{I}^{\ast}_{-\beta}$, $Z_{\beta}$ of $\mathbf{I}_{\beta}$, and $Z_{0\beta}$ of $\mathbf{I}_{0\beta}$ with properties as described before Lemma \ref{Lem:L2project}, so in particular the corresponding families of operators are everywhere surjective. Consider a compact CW-complex $Q$ with maps $p\colon Q\to \cfig_{\delta}(-\beta)$ and $q\colon Q\to\cfig_{\delta}(1;\beta)$ such that $\ev\circ \, p=\ev_1\circ \, q$. Assume furthermore that the map $p$ factors as follows:
\begin{equation} \label{Eqn:Qfactors}
\begin{CD}
Q @>{p'}>> A\times S^{1} @>{a\times\id}>> \cfig_{\delta}(-\beta)=\cfig_{\delta}(1;-\beta)\times\pa D
\end{CD},
\end{equation}
where $A$ is a compact CW-complex. Write $\Pre\colon Q\times[\rho_0,\infty)\to\cfig_{\delta}(0\beta)$ for the map that applies pregluing to the pair $(p,q)$ and consider the pull-back bundle $\Pre^{\ast}\mathbf{I}_{0\beta}$ over $Q$. By Lemma \ref{Lem:L2project} we find that there are two stable trivializations of this bundle, namely those given by 
\[
p^{\ast}Z_{-\beta}^{\ast}\oplus q^{\ast}Z_{\beta} \quad \textrm{and} \quad \Pre^{\ast}(Z_{0\beta}\oplus Z_{TL}),
\]
where $Z_{TL}$ is a fixed trivialization of $TL$.

\begin{Definition}\label{Def:CoherTriv}
The triple of stable trivializations $Z^{\ast}_{-\beta},Z_{\beta}, Z_{0\beta}$ is \emph{$(d',d)$-coherent} if the two stable trivializations $p^{\ast}Z_{-\beta}^{\ast}\oplus q^{\ast}Z_{\beta}$ and $\Pre^{\ast}(Z_{0\beta}\oplus Z_{TL})$ are homotopic for all $Q$ and $A$ as above with $\dim(A)\le d'$ and $\dim(Q)\le d$. 
\end{Definition}

The rather roundabout nature of Definition \ref{Def:CoherTriv} has avoided needing to talk about 
transversality of fiber products. The key result, proved in Section \ref{Sec:IndexBundle}, is then the following.  As always, by a stable trivialization of a given index bundle we mean one over some sufficiently large compact subset.

\begin{Lemma} \label{Lem:CoherTrivExist}
Let $d\leq 6k-7$. For any stable trivialization $Z_{-\beta}^{\ast}$ of $\mathbf{I}^{\ast}_{-\beta}$, there are stable trivializations $Z_{\beta}$ of $\mathbf{I}_{\beta}$ and $Z_{0\beta}$ of $\mathbf{I}_{0\beta}$ such that $(Z_{-\beta}^{\ast},Z_{\beta},Z_{0\beta})$ is $(1,d)$-coherent. 
\end{Lemma}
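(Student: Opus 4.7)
The plan is to combine the stable trivializations provided by Lemma \ref{Lem:summarytrivindex} with the linear gluing isomorphism of Lemma \ref{Lem:L2project}, and then resolve an extension problem by obstruction theory. By Lemma \ref{Lem:summarytrivindex} together with the Gauss map homotopy of Lemma \ref{Lem:GaussInterpolate}, each of the index bundles $\mathbf{I}_{-\beta}^{\ast}$, $\mathbf{I}_\beta$, and $\mathbf{I}_{0\beta}$ is stably trivial over the $(6k-7)$-skeleton of its respective configuration space, which via boundary restriction is homotopy equivalent to the appropriate component of the free loop space of $L$, and hence of $W' = S^1\times S^{2k-1}$. With $Z_{-1}^\ast$ fixed, the task is to choose $Z_1$ and $Z_0$ so that coherence holds.

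To define the candidate trivialization, first work near the Floer-Gromov boundary. Pick any stable trivialization $Z_1$ of $\mathbf{I}_\beta$ over a sufficiently large skeleton of $\cfig_\delta(1;\beta)$. By Lemma \ref{Lem:L2project}, $L^{2}$-projection identifies $\Pre^\ast(\mathbf{I}_{0\beta})\oplus TL \oplus V_{\mathrm{sol}}$ stably with $p^\ast \mathbf{I}_{-\beta}^\ast \oplus q^\ast \mathbf{I}_\beta$. Since $L$ is stably parallelizable (Lemma \ref{Lem:StablyTrivial}), combining $Z_{-1}^\ast$, $Z_1$, and the trivialization $Z_{TL}$ through this identification yields a stable trivialization $\widetilde{Z}_0$ of $\mathbf{I}_{0\beta}$ on a neighborhood $N$ of the pregluing image in $\cfig_\delta(0\beta)$, designed precisely so that the two stable trivializations $p^\ast Z_{-1}^\ast \oplus q^\ast Z_1$ and $\Pre^\ast(\widetilde{Z}_0 \oplus Z_{TL})$ are identified by the very isomorphism of Lemma \ref{Lem:L2project}. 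With this choice, coherence in the sense of Definition \ref{Def:CoherTriv} holds tautologically on every admissible $Q$ restricted to $N$.

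It remains to extend $\widetilde{Z}_0$ from $N$ to a stable trivialization $Z_0$ over the full $(6k-7)$-skeleton of $\cfig_\delta(0\beta)$. The obstructions to such an extension lie in
\[
H^{j+1}\!\left(\cfig_\delta(0\beta)^{(6k-7)},\, N;\, \pi_j(O)\right), \qquad 0 \le j \le 6k-7.
\]
Identifying $\cfig_\delta(0\beta)$ with $\Lambda_0(S^1 \times S^{2k-1})$ and invoking the Morse-Bott skeleton of the free loop space constructed in Section \ref{Sec:IndexBundle} from the energy functional of the round metric on $S^{2k-1}$, one observes that the pregluing image already contains, up to deformation retraction in the required range, all the cells arising from concatenations of class $\pm 1$ loops. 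A cell-by-cell inspection of the skeleton then forces these relative cohomology groups to vanish, so $\widetilde{Z}_0$ extends to a stable trivialization $Z_0$ defined on the $(6k-7)$-skeleton. Since coherence was already guaranteed on $N$, and the image of $\Pre$ for any admissible $Q$ lies inside $N$, the triple $(Z_{-1}^\ast, Z_1, Z_0)$ is automatically $(1, d)$-coherent.

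The main obstacle is the vanishing of the relative obstruction groups above; this requires an explicit cellular model for how the pregluing image sits inside $\Lambda_0(S^1 \times S^{2k-1})$, which is the reason behind the fairly elaborate skeleton construction in Section \ref{Sec:IndexBundle}. The hypothesis $d \le 6k-7$ and the factoring of $p$ through $A \times S^1$ with $\dim A \le 1$ are essential: they confine the cells of $Q$ that must be checked to a range where the stable trivialization supplied by Lemma \ref{Lem:summarytrivindex} is available and where the relevant homotopy groups $\pi_j(O)$ contribute only in controllable Bott-periodicity patterns.
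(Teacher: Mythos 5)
Your overall strategy (glue $Z_{-1}^{\ast}$ and $Z_{1}$ via Lemma \ref{Lem:L2project} along the pregluing locus, then extend to a global $Z_{0}$) is in the right spirit, but it has a genuine gap at the decisive point: you take $Z_{1}$ to be \emph{arbitrary} and then claim the relative obstruction groups for extending $\widetilde{Z}_0$ off the pregluing locus all vanish. They do not. Consider the admissible $Q=S^{1}$ given by fixing a disk in class $-\beta$ and rotating the marked point around $\pa D$ (so $p$ factors through $A\times S^{1}$ with $A$ a point), with $q$ the matching circle of class-$\beta$ disks whose evaluation point traverses the generator of $\pi_1(L)$. Under $\Pre$ this circle maps to a \emph{contractible} loop in $\LL_{0}W'$, so any globally defined stable trivialization $Z_{0}$ of $\mathbf{I}_{0\beta}$ pulls back to the bounding framing on it. On the other hand, the glued trivialization on this loop is $Z_{-1}^{\ast}|_{S^{1\ast}}\oplus Z_{1}|_{S^{1}}$, which is the bounding framing only if $Z_{1}$ is chosen with the \emph{same} framing class as $Z_{-1}^{\ast}$ on the relevant circle. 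Since stable trivializations of $\mathbf{I}_{\beta}$ over the skeleton differ by an element of $[S^{1},O]=\Z_2$ on that circle, an arbitrary $Z_{1}$ can produce the Lie (non-bounding) framing, in which case no coherent $Z_{0}$ exists and your obstruction class in $H^{1}(\,\cdot\,;\pi_0(O))\cong H^1(\,\cdot\,;\Z_2)$ is precisely this nonzero framing mismatch. The paper's proof pivots exactly on this point: it homotopes $p$ and $q$ into the $1$-skeleton of $\LL_{-1}W'$ and the product skeleton $S^{1}\times S^{1\ast}\times S^{2k-2}_1$, identifies which circle becomes contractible in $\LL_{0}W'$, fixes $Z_{1}$ to match $Z_{-1}^{\ast}$ there, chooses $Z_{0}$ on the non-contractible base-point loop to equal $Z_{-1}^{\ast}|_{S^{1\ast}}\oplus Z_{1}|_{S^{1}}$, and extends trivially over the $S^{2k-2}_1$ factor. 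So the coherent triple exists, but only after this $\Z_2$-matching; your ``for any $Z_1$'' version of the statement is false.

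A secondary, more technical issue: the trivialization produced by Lemma \ref{Lem:L2project} lives on the pullback $\Pre^{\ast}\mathbf{I}_{0\beta}$ over the fibered product, not automatically on a neighborhood $N\subset\cfig_{\delta}(0\beta)$ of the image of $\Pre$; since $\Pre$ need not be injective (and its image is not obviously a subcomplex carrying a well-defined descended trivialization), your ``tautological coherence on $N$'' step needs an argument that the glued trivialization is compatible along the fibers of $\Pre$ before you can even pose the extension problem relative to $N$. The paper avoids this entirely by checking coherence directly against explicit cell models of the fibered product and of $\LL_{0}W'$, rather than by extending a trivialization defined near the pregluing image.
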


\begin{Remark}
Coherent trivializations  refine the usual idea of ``coherent orientations", but involving framings of the index bundles  rather than of their associated determinant lines.  The axioms satisfied by Givental's quantum $K$-theory, cf.~\cite[Proposition 11]{Lee}, also rely on a version of Lemma \ref{Lem:L2project}, to ensure that the virtual Euler characteristics of the 2-term perfect obstruction theories over moduli spaces of stable maps are compatible under degeneration.  However, in the situations studied by Givental and Lee, the relevant index-type bundles are not stably trivial.
\end{Remark}

\subsection{Proofs of the main results}
We prepare for the proofs by presenting two general lemmas.
Let $\cong_{\mathrm{s}}$ denote stable isomorphism of vector bundles $E$ and $F$ over a space $X$, so $E \cong_{\mathrm{s}} F$ if and only if the bundles $E$ and $F$ define the same class in $\widetilde{KO}(X)$. Let $\Sigma X$ denote the suspension of $X$, so $\Sigma X = S^1 \wedge X$.

\begin{Lemma}  \label{Lem:KOobs}
Suppose $X = U \cup V$ is a decomposition of a cell complex into two subcomplexes. Suppose $E, F\rightarrow X$ are vector bundles with the property that $E|_U \cong_\mathrm{s} F|_U$ and $E|_V \cong_\mathrm{s} F|_V$. Then the obstruction to stable isomorphism of $E$ and $F$ over $X$ is a class in the $\widetilde{KO}$-theory of the suspension $\Sigma(U\cap V)$.
\end{Lemma}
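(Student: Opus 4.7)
The plan is to invoke the Mayer--Vietoris exact sequence in reduced real $K$-theory. For the decomposition $X = U \cup V$ of a cell complex into subcomplexes, there is a long exact sequence
\[
\cdots \to \widetilde{KO}^{-1}(U)\oplus \widetilde{KO}^{-1}(V) \to \widetilde{KO}^{-1}(U\cap V) \xrightarrow{\partial} \widetilde{KO}(X) \to \widetilde{KO}(U)\oplus \widetilde{KO}(V) \to \widetilde{KO}(U\cap V) \to \cdots
\]
Setting $[E]-[F] \in \widetilde{KO}(X)$, the hypotheses say precisely that the image of $[E]-[F]$ in $\widetilde{KO}(U) \oplus \widetilde{KO}(V)$ vanishes, so by exactness it lifts to a class in $\widetilde{KO}^{-1}(U \cap V)$. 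Using the suspension isomorphism $\widetilde{KO}^{-1}(Y) \cong \widetilde{KO}(\Sigma Y)$ then identifies the obstruction with an element of $\widetilde{KO}(\Sigma(U \cap V))$.

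Alternatively, one can construct the obstruction class directly, which makes its geometric meaning transparent. After stabilizing, choose stable isomorphisms $\phi_U : E|_U \oplus \R^N \to F|_U \oplus \R^N$ and $\phi_V : E|_V \oplus \R^N \to F|_V \oplus \R^N$. On the intersection $U \cap V$, the composition $\phi_V^{-1}\circ \phi_U$ is a stable bundle automorphism of $E|_{U\cap V} \oplus \R^N$, i.e.\ is classified by a map $U \cap V \to O$. Since $\Omega O \simeq O/U \times \Z$ gives the Bott identification $[Y, O] = \widetilde{KO}^{-1}(Y)$, this produces a class in $\widetilde{KO}^{-1}(U \cap V) \cong \widetilde{KO}(\Sigma(U \cap V))$. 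A standard check shows this class is independent of the auxiliary choices of $\phi_U$ and $\phi_V$ (modified choices differ by stable automorphisms over $U$ and $V$, which restrict to the trivial class on the intersection up to homotopy), so it is a well-defined invariant of the pair $(E,F)$.

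Finally, one verifies that this obstruction vanishes if and only if $E \cong_{\mathrm{s}} F$ globally on $X$. Indeed, if the class in $\widetilde{KO}(\Sigma(U \cap V))$ is trivial then $\phi_V^{-1}\circ \phi_U$ is stably homotopic to the identity, and the homotopy (after one further stabilization) can be used to modify $\phi_V$ on a collar neighborhood of $U \cap V$ in $V$ so that it agrees with $\phi_U$ on the overlap; the two local stable isomorphisms then glue to a global one. The main routine point is the compatibility of the two descriptions and independence of stabilization, which follows formally from the standard yoga of Mayer--Vietoris for a half-exact functor on cell pairs.
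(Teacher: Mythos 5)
Your proof is correct and takes the same route as the paper: the paper's entire argument is that the claim is immediate from the Mayer--Vietoris sequence in $\widetilde{KO}$, which is exactly your first paragraph (using $\widetilde{KO}^{-1}(U\cap V)\cong\widetilde{KO}(\Sigma(U\cap V))$). The explicit clutching-automorphism construction and the gluing verification you add are consistent elaborations of the same argument, not a different approach.
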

  
  \begin{proof}
  Immediate from the Mayer-Vietoris sequence in $\widetilde{KO}$.
  \end{proof}

  \begin{Lemma} \label{Lem:KO}
 $\widetilde{KO}(\Sigma T^2) = \bZ_2 \oplus \Z_2$, with the summands detected by the $Spin$ structure on the circle factors of $T^2$.  \end{Lemma}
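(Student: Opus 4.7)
The plan is to reduce the computation to a wedge of spheres using the standard stable splitting of a smash product, then identify the summands geometrically.

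First I would invoke the classical splitting for reduced suspensions of products of pointed CW complexes: $\Sigma(X\times Y)\simeq \Sigma X\vee \Sigma Y\vee \Sigma(X\wedge Y)$. Applied to $X=Y=S^1$ and using $S^1\wedge S^1=S^2$, this gives a homotopy equivalence
\[
\Sigma T^2\;\simeq\; S^2\vee S^2\vee S^3.
\]
Hence $\widetilde{KO}(\Sigma T^2)\cong \widetilde{KO}(S^2)\oplus \widetilde{KO}(S^2)\oplus \widetilde{KO}(S^3)$. By Bott periodicity, $\widetilde{KO}(S^2)=KO^{-2}(\mathrm{pt})=\Z_2$ and $\widetilde{KO}(S^3)=KO^{-3}(\mathrm{pt})=0$, so $\widetilde{KO}(\Sigma T^2)\cong \Z_2\oplus \Z_2$ as an abstract group.

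Next I would pin down the two summands. Each inclusion $\iota_j\colon S^1\hookrightarrow T^2$ of a circle factor ($j=1,2$) is split by the corresponding projection $T^2\to S^1$, so after suspension $\Sigma\iota_j\colon S^2\hookrightarrow \Sigma T^2$ is also split, and its image is precisely one of the two $S^2$ summands in the splitting above. The remaining $S^3=\Sigma(S^1\wedge S^1)$ comes from the ``cross term'' and contributes nothing. Thus the map
\[
\widetilde{KO}(\Sigma T^2)\;\longrightarrow\; \widetilde{KO}(\Sigma S^1)\oplus \widetilde{KO}(\Sigma S^1)\;=\;\Z_2\oplus \Z_2,
\]
induced by the two retractions $\Sigma T^2\to \Sigma S^1$, is an isomorphism.

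It remains to identify the nonzero element of each factor $\widetilde{KO}(\Sigma S^1)=\widetilde{KO}(S^2)$ with the nontrivial Spin structure on $S^1$. Using the adjunction $\widetilde{KO}(\Sigma S^1)=\widetilde{KO}^{-1}(S^1)=[S^1,O]=\pi_1(O)=\Z_2$, a class in $\widetilde{KO}(S^2)$ is represented by the clutching loop in $O$ of the associated stable bundle; this loop is nontrivial precisely when the bundle fails to be Spin, i.e.\ when $w_2\ne 0$ on $S^2$, equivalently when the induced Spin structure on the clutching circle is the non-bounding one. So the two $\Z_2$ summands of $\widetilde{KO}(\Sigma T^2)$ are detected by the (non-)extendibility of the Spin structure on each circle factor of $T^2$ across a bounding disk, which is the assertion of the lemma. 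There is no substantial obstacle; the only point requiring care is the identification of the sign convention relating $\pi_1(O)$ with the two Spin structures on $S^1$, which is standard.
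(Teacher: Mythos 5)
Your proof is correct and follows essentially the same route as the paper: both decompose $\Sigma T^2$ into the two $2$-spheres coming from the circle factors and the $3$-sphere coming from the top cell, and then use $\widetilde{KO}(S^2)=\Z_2$, $\widetilde{KO}(S^3)=0$, with the summands detected on the circle factors via $\pi_1(O)$ and Spin structures. Your use of the splitting $\Sigma(X\times Y)\simeq \Sigma X\vee\Sigma Y\vee\Sigma(X\wedge Y)$ just packages the paper's cell-decomposition/Mayer--Vietoris step as an honest homotopy equivalence, which if anything makes the direct-sum conclusion cleaner.
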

  
  \begin{proof} The suspension $\Sigma T^2$ contains a wedge of 2-spheres, from suspending the 1-skeleton of the torus, and a 3-sphere, from suspending the 2-cell of $T^2$. The result then follows from the Mayer-Vietoris theorem in $\widetilde{KO}$-theory.  \end{proof}

\begin{proof}[Proof of Theorem \ref{Thm:Main}]
Consider the (stable) tangent bundle of $\scrB$ restricted to the pieces of $\scrB$: 
\begin{enumerate}
\item On $\barFF(0\beta)$, $T\scrB = \mathbf{I}_{0\beta}$.
\item In the collar on the boundary $\NN \times [0,\infty)$ the tangent bundle is stably given by $\mathbf{I}_\beta + \mathbf{I}^*_{-\beta} - TL$.
\item Over the filling $\TT$, a fiber product over the union of solid tori $\DD$, the tangent bundle is stably given by $T\DD + \mathbf{I}_\beta - TL$.
\end{enumerate}
We compare the stably trivialized virtual bundles on the overlap of $(2)$ and $(3)$: 
\[
\mathbf{I}_\beta + \mathbf{I}^*_{-\beta} - TL \quad \textrm{and} \quad T\DD + \mathbf{I}_\beta - TL
\]
near the boundary of the fiber product 
\[
\DD \times_L \MM^*(\beta).
\]  
We fix trivializations of the summands $\mathbf{I}_\beta$ and $TL$, and regard them as being extended trivially over $\DD$. The remaining summands $\mathbf{I}^*_{-\beta}$ and $T\DD$ are both pulled back from a 2-dimensional torus inside $\NN$, which brings us into the situation of Lemma \ref{Lem:KOobs} with $U \cap V \approx T^2$.  More explicitly, the $1$-skeleton of $\cfig_{\rm sm}(-\beta)$ corresponds to the non-trivial loop in $L$, and the preimage of this 1-skeleton defines a torus $T^2_{-\beta}$ in $\cfig_{\rm sm}(-\beta)\times\pa D$.  Under evaluation into $L$, and in a suitable homology basis, exactly one of the two circle factors of this torus bounds.  We pick a trivialization of the bundle $\mathbf{I}^*_{-\beta}$ corresponding to the bounding spin structure on the bounding loop of $T^2_{-\beta}$ and which is arbitrary on the non-bounding loop of $T^2_{-\beta}$. This trivialization gives a stable trivialization of $T\DD$ near the boundary of $\DD$, which we claim extends to all of $\DD$. Indeed, Lemma \ref{Lem:KO} implies that the only condition for this is that the spin structure on any bounding loop is bounding.    

Since $k>2$, the space $\barFF(0\beta)$ has dimension $2k+1 < 6k-7$. Lemma \ref{Lem:CoherTrivExist} then implies that there is a stable trivialization of $T\barFF(0\beta)$ which is compatible with (meaning homotopic to) the stable trivialization induced by the triple $\index(\beta)$, $\mathbf{I}^{\ast}_{-\beta}$ and $TL$ near the outer boundary component $\NN$. It follows that the  stable trivialization over the collar region extends both to the Floer moduli space $\FF_{\rho_0}(0\beta)$ and to the capping space $\TT$, so we conclude that $T\BB$ is stably trivial.  Since $\pa\BB=L\ne \varnothing$, stable triviality of $T\BB$ implies $\BB$ is parallelizable, cf.~\cite[Lemma 3.5]{KM}.

It remains to show that if $L$ bounds a parallelizable manifold then so does the original homotopy sphere $K$. By construction, $L$ was obtained from $K$ by adding a $1$-handle, so $K$ may be obtained from $L$ by adding a canceling 2-handle. There are two possibilities:  the attaching circle of this 2-handle is trivial in $H_1(\BB;\bZ_2)$, meaning the circle mod 2 bounds, or it is non-trivial. In the former case the trivialization of $T\BB$ along the attaching sphere necessarily corresponds to the bounding spin structure, and hence extends over the 2-handle. In the latter case, up to homotopy we may choose the trivialization of $T\BB$ freely over the attaching circle, hence may choose it to correspond to the null-cobordant spin structure, and hence the trivialization extends over the 2-handle in this case as well. It follows that $K$ bounds a parallelizable manifold. 
\end{proof}

The proofs of the corollaries use the following result.
\begin{Lemma}\label{Lem:forCor}
Let $M \subset \bC^{8k}$ be a monotone Lagrangian submanifold {\sc pl}-homeomorphic to $S^{1}\times S^{8k-1}$ and of Maslov number $8k$. Then there is a {\sc pl}-isotopy $\phi_t\colon \C^{8k}\to\C^{8k}$ such that $\phi_0=\id$ and $\phi_1(M)=W'$, the surgery on the Whitney sphere, and a homotopy of stable Gauss maps $\psi_t\colon M\to U/O$ such that $\psi_0=G_M$ and $\psi_1=G_{W'}\circ\phi_1$.
\end{Lemma}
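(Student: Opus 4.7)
The plan is to prove the two conclusions of the lemma -- the ambient {\sc pl}-isotopy $\phi_t$ and the homotopy $\psi_t$ of stable Gauss maps -- as two parallel constructions, adapting the two steps of the proof of Lemma \ref{Lem:GaussInterpolate} to the embedded setting where no resolution of a double point is needed.

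For the {\sc pl}-isotopy, I would first observe that both $M$ and $W'$ are closed {\sc pl}-manifolds {\sc pl}-homeomorphic to $P := S^{1}\times S^{8k-1}$: the former by hypothesis, the latter by the explicit description in Example \ref{Ex:LefschetzWhitney}. Fixing any abstract {\sc pl}-homeomorphism between them produces two {\sc pl}-embeddings of $P$ into $\R^{16k}$, which are automatically homotopic as maps since the target is contractible. The codimension $8k\geq 3$ and the contractibility of $\R^{16k}$ put us within the hypotheses of Hudson's {\sc pl}-isotopy theorem already invoked in Lemma \ref{Lem:GaussInterpolate}, and the combinatorial isotopy extension theorem of \cite{HudsonZeeman} then promotes the resulting isotopy of embeddings to an ambient {\sc pl}-isotopy $\phi_t\colon \C^{8k}\to\C^{8k}$ with $\phi_0=\id$ and $\phi_1(M)=W'$.

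For the homotopy of stable Gauss maps, I would proceed as in Lemma \ref{Lem:GaussInterpolate}. Each factor of $P$ is stably parallelizable, so $TM$ is stably trivial; after stabilising by the product with $\iota\colon\R\hookrightarrow\C$, the stable Gauss maps $G_M$ and $G_{W'}\circ\phi_1$ therefore both lift to maps $M\to U$. It suffices to show these two lifts are homotopic in $U$, since composing with the projection $U\to U/O$ then produces the required family $\psi_t$. Using the $H$-space structure of $U$ and the cell decomposition of $M\simeq S^{1}\times S^{8k-1}$, homotopy classes of maps split as
\[
[M, U] \ = \ \pi_1(U)\oplus\pi_{8k-1}(U)\oplus\pi_{8k}(U) \ = \ \Z\oplus\Z\oplus 0.
\]
The $\pi_{8k}(U)=0$ summand is automatic since $8k$ is even, and the $\pi_1(U)$ summand vanishes because both Lagrangians have Maslov number $8k$ on the generator of $\pi_1(M)=\Z$.

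The hard part will be the $\pi_{8k-1}(U)=\Z$ summand, which a priori can take any integer value and so could obstruct the desired homotopy. Here I plan to exploit the fact that a lift of $G_M$ to $U$ is determined only up to the action of $[M, O]$ coming from the fibration $O\to U\to U/O$, so one is free to modify either lift by any element of $[M, O]$. Restricted to the $S^{8k-1}$-factor of $M$, this ambiguity is governed by the complexification homomorphism $c\colon \pi_{8k-1}(O)\to \pi_{8k-1}(U)$, which by Bott periodicity is an isomorphism $\Z\to\Z$ precisely in dimensions divisible by $8$ (whereas in general $4m$-dimensional situations $c$ is merely multiplication by $2$). Under the hypothesis $n=8k$ the $\pi_{8k-1}(U)$ discrepancy between the two lifts can therefore always be absorbed by a change of lift, making the two lifts homotopic in $U$. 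This is precisely the step at which the hypothesis that the dimension be divisible by $8$ is essential, as anticipated in the discussion preceding Corollary \ref{Cor:First}.
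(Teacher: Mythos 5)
Your second half (the homotopy of stable Gauss maps) is fine and, although organized differently from the paper's argument, is equivalent to it: the paper extends the homotopy cell by cell in $U/O$, using $\pi_{8k-1}(U/O)=0$ over $\{pt\}\times S^{8k-1}$, the equality of Maslov numbers over the $S^1$-cell, and a lift to $U$ together with $\pi_{8k}(U)=0$ over the top cell; your version lifts to $U$ globally and absorbs the $\pi_{8k-1}(U)$ discrepancy by changing the lift, using that the complexification $\pi_{8k-1}(O)\to\pi_{8k-1}(U)$ is onto when $8\mid 8k$. Via the exact sequence of $O\to U\to U/O$ (note $\pi_{8k-2}(O)=0$, so $\pi_{8k-1}(U/O)=\mathrm{coker}\,c$), these are the same use of the divisibility-by-$8$ hypothesis.

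The genuine problem is in your first step. The theorem invoked in Lemma \ref{Lem:GaussInterpolate} is not ``homotopic $+$ codimension $\geq 3$ $+$ connected target $\Rightarrow$ isotopic'': the unknotting theorem also requires the \emph{source} to be $(2m-q+1)$-connected (and the target $(2m-q+2)$-connected). In Lemma \ref{Lem:GaussInterpolate} this was satisfied because the comparison was made rel boundary between embeddings of a \emph{disk} $S^{2k}-\tilde B$ into $\C^{2k}-B$. In your setting $m=8k$, $q=16k$, so $2m-q+1=1$ and the source would have to be simply connected, which $S^1\times S^{8k-1}$ is not. This is not a removable hypothesis: exactly at this borderline codimension Hudson's knotted tori give PL embeddings of $S^1\times S^{q}$ in $\R^{2q+2}$ (i.e.\ of $S^1\times S^{8k-1}$ in $\R^{16k}$) that are homotopic but not ambient isotopic, so the blanket statement you rely on is false and would ``prove'' that all such embeddings are unknotted. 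A genuine repetition of the argument of Lemma \ref{Lem:GaussInterpolate} must first reduce to a relative problem over simply connected pieces (e.g.\ isotope a fiber sphere $\{pt\}\times S^{8k-1}$ of $M$ onto the corresponding sphere of $W'$ together with a regular neighborhood, and then compare the complementary pieces rel boundary), and at that point one still has to check that the complementary pieces are homotopic rel boundary in the complement --- this is precisely where the knotted-torus/Whitney-type ambiguity sits, and it is the step your proposal skips entirely.
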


\begin{proof}
The existence of $\phi_t$ follows from a repetition of the corresponding argument in Lemma \ref{Lem:GaussInterpolate}. The homotopy of stable Gauss maps can then be constructed as follows. Since $\pi_{8k-1}(U/O)=0$, we can find the desired homotopy over $\{pt\}\times S^{8k-1}$. The Maslov number then allows us to extend the homotopy further over $S^{1}\vee S^{8k-1}$. What remains is the extension over an $(8k)$-cell. Using the triviality of the tangent bundle we lift the map to $U$ and use $\pi_{8k}(U)=0$.
\end{proof}

\begin{proof}[Proof of Corollary \ref{Cor:First}]
Lemma \ref{Lem:forCor} implies that the index bundles determined by $L$ are homotopically the same as those determined by $W'$. The proof of Theorem \ref{Thm:Main} then shows that $L$ bounds a parallelizable manifold.
\end{proof}

\begin{proof}[Proof of Corollary \ref{Cor:Main}]
Recall that $P= S^1 \times S^{8k-1}$. Suppose that $\Sigma$ is an exotic $8k$-sphere with the property that
\[
(T^*(\Sigma \# P), d\theta_{\rm can}) \ \cong \ (T^*P, d\theta_{\rm can})
\]
are symplectomorphic when equipped with their canonical symplectic structures.  There is then a Lagrangian embedding $\iota: \Sigma \# P \rightarrow T^*P$ which moreover induces an isomorphism on first cohomology.  By composing $\iota$ with an appropriate ambient translation in the $T^*S^1$ factor of $T^*P = T^*S^1 \times T^*S^{8k-1}$, we obtain a new Lagrangian embedding $\sigma: \Sigma \# P \rightarrow T^*P$ for which the period of $\sigma^*(\theta_{can})$ vanishes. Thus $\sigma$ yields an \emph{exact} Lagrangian embedding
\[
\sigma\colon \ \Sigma \# P \ \hookrightarrow \ T^*P
\]
 in $T^*P$, in the homology class of the zero-section.  Results of Abouzaid and Kragh imply that any closed exact Lagrangian in the cotangent bundle has vanishing Maslov class \cite[Proof of Theorem E.2]{Kragh}.  We now view $W' \approx P\hookrightarrow \bC^{2k}$ as an embedded Lagrangian submanifold, via surgery on the Whitney sphere. By composing $\sigma$ with a conformal symplectomorphism which radially shrinks the fibers, if necessary, we can assume that it has image inside the disk bundle $D_{\varepsilon}(T^*P)$ for any given $\varepsilon>0$ (measured with respect to an arbitrary choice of metric).  By Weinstein's theorem, such small disk bundles embed symplectically into $\bC^{8k}$, so we obtain a Lagrangian embedding 
\[
\sigma'\colon \Sigma \# P \ \hookrightarrow \ \bC^{8k}.
\]
Since $\sigma$ had Maslov class zero, the Maslov class of $\sigma'$ is obtained by restriction from the embedding $D_{\varepsilon}(T^*P) \subset \bC^{8k}$, hence $\Sigma \# P \subset \bC^{8k}$ has Maslov class $8k$ (equal to that of $P=W'$). Corollary \ref{Cor:First} therefore implies that $\Sigma$ bounds a parallelizable manifold, which (via Kervaire and Milnor) gives Corollary \ref{Cor:Main}. 
\end{proof}


\section{Moduli spaces -- set up and basic properties}
\label{Sec:basicsetup}
In this section we present the basic functional analytic set up for moduli spaces. Much of the material is standard and appears (with small variations) in many places, see e.g.~\cite{EES1}. We are including it in order to make the $C^{1}$-structures on moduli spaces, which are of central importance to our main results, maximally explicit. Proofs, however, will sometimes be sketched.   

Throughout this section we will write $L\subset\C^{n}$ for a Lagrangian submanifold diffeomorphic to the mapping torus of the antipodal map $(S^{1}\,\widetilde{\times}\,S^{n-1})\#\Sigma$, where $\Sigma$ is a homotopy $n$-sphere, with minimal Maslov number $n$. We write $\beta\in\pi_1(L)=H_1(L)\cong \Z$ for the generator of Maslov index $n$. Note that the Lagrangian submanifold $L_+$ in Lemma \ref{Lem:Maslovnumber} constructed by Lagrange surgery from an immersion $f\colon K\to\C^{2k}$ with exactly one transverse double point has the properties of $L$, with homotopy sphere $\Sigma$ diffeomorphic to $K$, see Corollary \ref{Cor:LandKdiffeo}, and that in even dimensions the mapping torus is a product.

\subsection{Geometric preliminaries}
\label{sec:Geomprel}
We discuss two constructions in the underlying geometry of $L\subset\C^{n}$ that will simplify our functional analytic treatment of (Floer-)holomorphic disks. Choose a real analytic structure on $L$.
\begin{Lemma}\label{Lem:reanbdry}
Real analytic Lagrangian embeddings of $L$ into $\C^{n}$ are dense in the space of all Lagrangian embeddings.
\end{Lemma}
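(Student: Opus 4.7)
The plan is to start from an arbitrary smooth Lagrangian embedding $f\colon L\to\C^{n}$, first approximate it by a real analytic but a priori non-Lagrangian embedding $g$, and then deform $g$ to a Lagrangian by a real analytic Moser-type construction.

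First I would appeal to the standard Grauert approximation theorem that $C^{\omega}(L,\C^{n})$ is $C^{\infty}$-dense in $C^{\infty}(L,\C^{n})$, yielding a real analytic embedding $g\colon L\to\C^{n}$ that is $C^{k}$-close to $f$ for any prescribed $k$. Then $\beta:=g^{\ast}\omega_{0}$ is a real analytic closed $2$-form on $L$ with small $C^{k-1}$-norm (since $f^{\ast}\omega_{0}=0$). Because $L = (S^{1}\,\widetilde{\times}\,S^{n-1})\#\Sigma$ has $H^{2}(L;\R)=0$ in the relevant dimensions, $\beta$ is exact. Fixing a real analytic Riemannian metric on $L$ with associated Green's operator $G$, the primitive $\gamma:=d^{\ast}G\beta$ is real analytic by elliptic regularity in the real analytic category, with $\|\gamma\|_{C^{k-1}}$ controlled by $\|\beta\|_{C^{k-2}}$.

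Next I would run Moser's trick in the real analytic setting. Seeking a path $g_{t}$ of real analytic embeddings with $g_{0}=g$ and $g_{t}^{\ast}\omega_{0}=(1-t)\beta$ reduces, upon differentiating, to the requirement that the generating vector field $V_{t}$ along $g_{t}(L)$ satisfy $g_{t}^{\ast}\iota_{V_{t}}\omega_{0}=-\gamma$. I would pin $V_{t}$ down uniquely by requiring it to take values in the Euclidean-orthogonal complement of $Tg_{t}(L)$, a real analytic subbundle of $T\C^{n}|_{g_{t}(L)}$; for $g$ sufficiently close to $f$ the pairing defined by $\omega_{0}$ between $Tg_{t}(L)$ and this complement remains non-degenerate (being close to the Lagrangian case), so $V_{t}$ is uniquely determined and depends real analytically on its inputs. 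The resulting ODE has real analytic right-hand side, so its flow $g_{t}$ is real analytic by Cauchy-Kowalevski, and $g_{1}$ is a real analytic Lagrangian embedding $C^{k}$-close to $f$.

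The main obstacle is quantitative: one must ensure that the Moser flow exists on all of $[0,1]$ and that $g_{1}$ remains within the prescribed $C^{k}$-neighborhood of $f$, both of which are needed to make the orthogonal pairing with $\omega_{0}$ stay non-degenerate along the flow. This reduces to a Gr\"onwall-type bound for $\|V_{t}\|_{C^{k-1}}$ in terms of $\|\gamma\|_{C^{k-1}}$, and ultimately in terms of $\|\beta\|_{C^{k-2}}$, all of which can be made arbitrarily small by choosing the initial real analytic approximation $g$ close enough to $f$.
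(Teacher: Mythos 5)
Your overall strategy (approximate by a real analytic but non-Lagrangian embedding, then correct it by a Moser-type deformation) is natural, but the correction step has a genuine gap. The vector field $V_t$ you construct at a point $p\in L$ is determined by the linear system $\omega_0(V_t(p),dg_t(p)w)=-\gamma_p(w)$ together with $V_t(p)\perp dg_t(p)(T_pL)$, so it depends on the first derivative $dg_t(p)$. Hence $\pa_t g_t=V_t$ is not an ODE (it is not the flow of an ambient vector field) but a fully nonlinear first-order PDE for the family of embeddings. Two consequences. First, the Gr\"onwall bound you invoke does not close: estimating $\|V_t\|_{C^{k-1}}$ requires $\|g_t\|_{C^{k}}$, estimating $\|\pa_t g_t\|_{C^{k}}$ requires $\|g_t\|_{C^{k+1}}$, and so on — there is a loss of one derivative at each step, the system is not hyperbolic, and smooth well-posedness on all of $[0,1]$ (hence also the persistence of the non-degenerate pairing along the flow) is not established. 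Second, real analyticity of $g_1$ does not follow from ``analytic ODE'' theory; Cauchy--Kovalevskaya only gives a local-in-time analytic solution whose lifespan is governed by analytic norms (bounds on a complex strip) and analyticity radii of $g$ and $\gamma$, and these are not made small or large, respectively, by taking the Grauert approximation $C^{k}$-close to $f$ — indeed the analyticity radius of an approximation of a merely smooth $f$ typically degenerates as the approximation improves. Standard smooth fixes (Weinstein neighborhood coordinates, cut-off ambient Hamiltonian extensions) would destroy real analyticity, so this difficulty cannot be sidestepped within your scheme as written.

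The paper avoids solving any PDE: it lifts $\phi$ to a Legendrian embedding of $L$ in $\C^{n}\times(\R/\mathfrak{a}\Z)$, perturbs so that the front projection to $\R^{n}\times(\R/\mathfrak{a}\Z)$ is generic, and uses that a generic front determines the Lagrangian via $y_j=\pa z/\pa x_j$. Approximating the front map — an unconstrained object — by a real analytic map then directly yields a nearby real analytic Lagrangian embedding, because the Lagrangian condition is built into the reconstruction from the front. To rescue your approach you would need either a quantitative Cauchy--Kovalevskaya argument in scales of analytic spaces with control of analyticity radii, or a reformulation (generating families/fronts) in which the Lagrangian constraint is automatic; the latter is exactly the paper's proof.
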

\begin{proof}
Consider a Lagrangian embedding $\phi\colon L\to\C^{n}$. Let $\mathfrak{a}=\int_{\beta} \phi^{\ast}(y\cdot dx)$. Then $L$ admits a Legendrian lift into $\C^{n}\times (\R/\mathfrak{a}\Z)$ with contact form $dz-y\cdot dx$. Furthermore, the lift is unique up to translation in the added circle direction. After arbitrarily small perturbation of $\phi$, the projection $\phi(L)\to\R^{n}\times(\R/\mathfrak{a}\Z)$ is a front with front-generic singularities. Such fronts determine the corresponding Lagrangian embeddings uniquely (using $y_j=\frac{\pa z}{\pa x_j}$). The fronts arising from approximations of the projection by real analytic maps $L\to\R^{n}\times(\R/\mathfrak{a}\Z)$ are still generic, hence give real analytic Lagrangian embeddings arbitrarily near $\phi$.
\end{proof}

Let $B_{\C^{n}}(r_q)$ denote the ball of radius $r_q>0$ around $0\in\C^{n}$, and let $B_{\R^{n}}(r_q)=B_{\C^{n}}(r_q)\cap \R^{n}$. If $L\subset \C^{n}$ is a real analytic Lagrangian submanifold then any point $q\in L$ has a neighborhood $(W_q, L \cap W_q) \subset (\C^{n},L)$ that is bi-holomorphic via $\psi_q$ to $(B_{\C^{n}}(r_q), B_{\R^n}(r_q))$.
Furthermore, by compactness of $L$, we may take $r_q=r'>0$, where $r'$ is independent of $q$.    
We will use the following notation below. Take $0<r<\frac{r'}{\sqrt{2}}$ and define a  product neighborhood of $q\in L$:
\begin{equation}\label{Eq:disk^2nbhd}
U_q=\psi_q\left(B_{\R^{n}}(r)\times iB_{\R^{n}}(r)\right).
\end{equation}

We next review a construction in \cite[Section 5.2]{EES1}. Consider the restriction $T_L(TL)$ of the tangent bundle of $TL$ to the $0$-section $L\subset TL$. Let $J_{L}\colon T_L(TL)\to T_L(TL)$ denote the natural complex structure which maps a horizontal vector tangent to $L\subset TL$ at $q\in L$ to the corresponding vector tangent to the fiber $T_qL\subset TL$ at $0\in T_qL$.  Using Taylor expansion in the fiber directions, it is straightforward to check that the inclusion $\iota\colon L\to \C^{n}$ admits an extension $P\colon U\to \C^{n}$, where $U\subset TL$ is a neighborhood of the $0$-section, such that $P$ is an immersion with $J_0\circ dP = dP\circ J_L$ along the $0$-section $L\subset U$.  (Recall that $J_0$ denotes the standard complex structure on $\C^n$.) 

We next construct a metric $\hat g$ on a neighborhood of the $0$-section in $TL$. Fix a Riemannian metric $g$ on $L$. 
Let $v\in TL$ with $\pi(v)=q$. Let $X$ be a tangent vector to $TL$ at $v$. The Levi-Civita connection of  $g$ gives the decomposition $X=X_{H}+X_{V}$, where $X_{V}$ is a vertical tangent vector, tangent to the fiber, and where $X_{H}$ lies in the horizontal subspace at $v$ determined by the connection. Since $X_V$ is a vector in $T_qL$ with its endpoint at $v\in T_qL$ we can translate it linearly to the origin $0\in T_qL$; we also use $X_V$ to denote this translated vector. Write $\pi X\in T_qL$ for the image of $X$ under the differential of the projection $\pi\colon TL\to L$. Let $R$ denote the curvature tensor of $g$ and define the field $\hat g$ of quadratic forms along $TL$ as follows: 
\begin{equation}
\hat{g}(v)(X,Y)=g(q)(\pi X,\pi Y)+g(q)(X_{V},Y_{V})+g(q)(R(\pi X,v)\pi Y,v),
\end{equation} 
where $v\in TL$, $\pi(v)=q$, and $X,Y\in T_v(TL)$.

\begin{Lemma}
There exists $\delta>0$ such that $\hat g$ is a Riemannian metric on $\{v\in TL\colon g(v,v)< \delta\}$. In this metric the $0$-section $L$ is totally geodesic and the geodesics of $\hat g$ in $L$ are exactly those of the original metric $g$. Moreover, if $\gamma$ is a geodesic in $L$ and $X$ is a vector field in $T(TL)$ along $\gamma$ then $X$ is a Jacobi field if and only if $J_L X$ is. 
\end{Lemma}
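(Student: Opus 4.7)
The plan is to address the four assertions in turn, with the bulk of the work concentrated on the Jacobi field claim.

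\textbf{Riemannian property.} Symmetry of $\hat g$ follows from the standard symmetry $g(R(X,Y)Z,W) = g(R(Z,W)X,Y)$ of the Riemann tensor, applied with $Y = W = v$. Positive definiteness is immediate at the zero section, where the curvature correction drops out and $\hat g$ reduces to the Sasaki-type form $g(\pi X,\pi Y) + g(X^V, Y^V)$, positive definite on $T_v(TL) = H_v \oplus V_v$. By compactness of $L$ and continuity of $g$ and $R$, there exists $\delta>0$ such that the curvature term is dominated by the Sasaki part throughout $\{g(v,v)<\delta\}$.

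\textbf{Totally geodesic zero section and geodesics.} The key observation is that the fiberwise antipodal map $\sigma\colon v\mapsto -v$ is an isometry of $\hat g$ on the neighborhood above: $\sigma$ commutes with $\pi$, acts as $+\id$ on horizontal and as $-\id$ on vertical tangent vectors, so the first two terms of $\hat g$ are manifestly preserved, and the curvature term is preserved because $v$ appears twice. Its fixed point set is $L$, which is therefore totally geodesic. Furthermore $\hat g$ restricted to vectors tangent to $L$ coincides with $g$ (the $X^V$ and $v$ dependent terms both vanish), so $g$-geodesics in $L$ are exactly the $\hat g$-geodesics that remain in $L$.

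\textbf{Jacobi fields.} Along a $g$-geodesic $\gamma\subset L$, decompose a vector field $X$ along $\gamma$ in $T(TL)$ as $X = X^H + X^V$, with both components canonically identified with vector fields along $\gamma$ in $L$ via $d\pi$ and the Levi-Civita identification of the vertical subspace with $TL$. The strategy is to show that the $\hat g$-Jacobi equation along $\gamma$ decouples into independent equations for $X^H$ and $X^V$, and that each reduces to the classical Jacobi equation $\nabla_{\dot\gamma}^2 Y + R(Y,\dot\gamma)\dot\gamma=0$ of $(L,g)$. The horizontal case follows from the totally geodesic property together with $\hat g|_L = g$. For the vertical case, the curvature term in the definition of $\hat g$ has been tailored so that the curvature operator of $\hat g$ acting on vertical vectors along $L$ reproduces $R(\cdot,\dot\gamma)\dot\gamma$; once this is verified, both components satisfy the same equation. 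Since $J_L$ exchanges the horizontal and vertical summands (modulo a sign), it carries the pair of decoupled equations to itself, so $X$ is Jacobi if and only if $J_L X$ is.

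The main obstacle is the vertical reduction in the last step: it requires an explicit computation of the Levi-Civita connection and curvature of $\hat g$ along the zero section. The cleanest approach is to work in Fermi-type coordinates along $\gamma$, expand $\hat g$ to second order in $v$, and match with the standard curvature expansion of a Riemannian metric in normal coordinates, where the quadratic term $g(R(\pi X, v)\pi Y, v)$ is precisely what makes the two Jacobi equations agree.
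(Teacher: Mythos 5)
Your handling of the first three assertions is fine: symmetry of the correction term via the pair symmetry of $R$, positive definiteness on a small disk bundle by compactness, and the observation that the fiberwise map $v\mapsto -v$ is a $\hat g$-isometry with fixed point set $L$ is a clean route to the totally geodesic statement. (For what it is worth, the paper offers no argument of its own here -- it simply cites \cite[Proposition 5.3]{EES1} -- so a self-contained proof is exactly what is being asked of you.)

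For the Jacobi field claim, however, your proposal stops precisely where the proof has to begin, and this is a genuine gap rather than a routine omission. You write that the curvature term "has been tailored so that the curvature operator of $\hat g$ acting on vertical vectors along $L$ reproduces $R(\cdot,\dot\gamma)\dot\gamma$; once this is verified\dots", and then flag this verification as "the main obstacle", offering only a method (Fermi coordinates, second-order expansion) without carrying it out. But this verification is the entire content of the statement, and it is coefficient-sensitive: for the unmodified Sasaki metric the mixed horizontal--vertical curvature at the zero section vanishes, so vertical fields along a base geodesic would satisfy $\nabla_{\dot\gamma}^2 X=0$ rather than the base Jacobi equation, and the lemma would be false whenever $R(\cdot,\dot\gamma)\dot\gamma\neq 0$; only the precise quadratic term $g(R(\pi X,v)\pi Y,v)$, entering $\hat R$ through its second $v$-derivatives, restores the match. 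Concretely, a complete argument must check along the zero section that (i) the first derivatives of the correction vanish there, so the Christoffel symbols of $\hat g$ agree with the Sasaki ones and $\hat\nabla_{\dot\gamma}$ preserves the horizontal/vertical splitting, restricting to $\nabla_{\dot\gamma}$ on each summand; (ii) the curvature components of $\hat g$ with three horizontal and one vertical entry vanish on $L$, so the Jacobi operator genuinely decouples -- your appeal to total geodesy only controls the purely tangential block via the Gauss and Codazzi equations and says nothing about these mixed terms; and (iii) the two-horizontal--two-vertical block, which receives contributions both from $\partial_x\Gamma$ (the Sasaki part) and from $\partial_v\partial_v$ of the correction, equals the base curvature on the nose. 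With (i)--(iii) in hand your decoupling argument does finish the proof, since $J_L$ interchanges the two identical scalar systems; without them the "if and only if" statement is unproved.
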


\begin{proof}
This is \cite[Proposition 5.3]{EES1}.
\end{proof}

Consider the immersion $P\colon U\to \C^{n}$, where $U$ is a neighborhood of the $0$-section in $TL$, with $J_0\circ dP=dP\circ J_L$. The push forward under $dP$ of the metric $\hat g$ gives a metric on a neighborhood of $L$ in $\C^{n}$. Extend the metric $\hat g$ to a metric, still denoted $\hat g$, on all of $\C^{n}$ which we take to agree with the standard flat metric on $\C^{n}$ outside a (slightly larger) neighborhood of $L$. We write $\exp\colon T\C^{n}\to\C^{n}$ for the exponential map in the metric $\hat g$. Since $L$ is totally geodesic for $\hat g$, $\exp$ takes tangent vectors to $L$ to points on $L$.

\subsection{Configuration spaces for closed disks}
\label{sec:closeddisks}
In this section we describe a Banach manifold set-up for the study of \eqref{Eqn:CR1},  and derive basic properties of the solution spaces. We point out that if $X_H=0$, the operator in the right hand side of \eqref{Eqn:CR1} reduces to the standard $\bar\pa$-operator, $du^{0,1}=\bar\pa u$, whose solutions are $J$-holomorphic disks. Our framework covers the cases $X_H = 0$ and $X_H \neq 0$ simultaneously.

Consider the unit disk $D\subset \C$ with the standard metric and let $\gamma_r$, $r\in[0,\infty)$ and $H$ be as in Section \ref{ssec:ham}. Let 
$\sblv^{s}(D,\C^{n})$
denote the Sobolev space of $\C^{n}$-valued functions with $s$ derivatives in $L^{2}$ with the Sobolev $s$-norm (viewed as a vector space over $\R$).  More specifically, we consider the closure in the $s$-norm on $D$ of functions that extend smoothly to the double of $D$. We write
$\sblv^{s}(\pa D,\C^{n})$  
for the Sobolev space of functions on the boundary $\pa D$ with respect to the induced metric. If $s>\frac12$ then there is a continuous restriction (or trace) map
\[
\sblv^{s}(D,\C^{n})\to\sblv^{s-\frac12}(\pa D, \C^{n}),\quad u\mapsto u|_{\pa D}.
\]
Recall that elements in $\sblv^{s}(D,\C^{n})$ (respectively $\sblv^{s}(\pa D,\C^{n})$) are uniquely represented by $C^{l}$-functions, where $l$ is the largest integer such that $l<s-1$ (respectively such that $l<s-\frac{1}{2}$).

The metrics on $D$ and on $\C^{n}$  give metrics on all bundles of $\C^{n}$-valued tensors on $D$. Let $\Hom(TD,\C^{n})$ and $\Hom^{0,1}(TD,\C^{n})$ denote the bundles over $D$ with fiber at $z\in D$ equal to the space of linear maps $T_{z}D\to\C^{n}$ and $(i,J)$-complex anti-linear maps $T_{z}D\to\C^{n}$, respectively. We write 
$\sblv^{s}(D,\Hom(TD,\C^{n}))$ and $\sblv^{s}(D,\Hom^{0,1}(TD,\C^{n}))$ for the Sobolev spaces of sections with $s$ derivatives in $L^{2}$ of the bundles indicated. We will also use the subspace $\dot\sblv^{1}(D,\Hom^{0,1}(TD,\C^{n}))$ of sections that vanish on the boundary:
\begin{equation}\label{Eq:sblvdbar=0onbdry}
\dot\sblv^{1}(D,\Hom^{0,1}(TD,\C^{n}))=
\left\{
A\in \sblv^{1}(D,\Hom^{0,1}(TD,\C^{n}))\colon A|_{\pa D}=0
\right\}.
\end{equation}

The configuration space for \eqref{Eqn:CR1} is the subset $\cfig\subset\sblv^{2}(D,\C^{n})$ of all $u\in\sblv^{2}(D,\C^{n})$ that satisfy the following conditions:
\begin{enumerate}
\item $u|_{\pa D}(z)\in L\subset\C^{n}$ for all $z\in\pa D$.
\item $(du)^{0,1}|_{\pa D}=0\in\sblv^{\frac{1}{2}}(\pa D,\Hom^{0,1}(TD,\C^{n}))$. 
\end{enumerate}

\begin{Lemma}\label{Lem:expmapclosed}
The subset $\cfig\subset\sblv^{2}(D,\C^{n})$ is closed and is a Banach submanifold. The tangent space $T_{u}\cfig$ at $u$ is the subspace of vector fields  $v\in\sblv^{2}(D,\C^{n})$ that satisfy 
\begin{enumerate}
\item $v(z)\in T_{u(z)}L$ for all $z\in\pa D$ and
\item $(\nabla v)^{0,1}|_{\pa D}=0\in\sblv^{\frac{1}{2}}(\pa D,\Hom^{0,1}(TD,\C^{n}))$,
\end{enumerate}
where $\nabla$ denotes the Levi-Civita connection of the metric $\hat g$, see Section \ref{sec:Geomprel}. Furthermore, the map $\Exp\colon T_{u}\cfig\to\cfig$,
\[
[\Exp(v)](z)=\exp_{u(z)}(v(z)),\quad z\in D,
\]
gives local $C^{1}$-coordinates near $u$ when restricted to $v$ in a sufficiently small ball around $0 \in T_{u}\cfig$. 
\end{Lemma}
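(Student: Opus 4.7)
My plan is to prove the three assertions in turn. First, I would dispatch closedness of $\cfig\subset\sblv^{2}(D,\C^{n})$ by observing that both defining conditions are closed: the boundary trace is continuous from $\sblv^{2}(D,\C^{n})$ to $\sblv^{3/2}(\pa D,\C^{n})$ and $L\subset\C^{n}$ is closed, so condition (1) cuts out a closed set, and condition (2) is the zero-set of the continuous map $u\mapsto (du)^{0,1}|_{\pa D}\in\sblv^{1/2}(\pa D,\Hom^{0,1}(TD,\C^{n}))$. The candidate tangent space should be the linearization of these two constraints at $u\in\cfig$. Since $u$ itself satisfies the constraints, the linearizations simplify to $v(z)\in T_{u(z)}L$ on $\pa D$ and $(\nabla v)^{0,1}|_{\pa D}=0$; the covariant derivative $\nabla$ of $\hat g$ can be used in place of $d$ because the Christoffel corrections vanish on the locus where the original condition already holds.

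The main work lies in producing the local chart via $\Exp$, and this is where the tailored metric $\hat g$ from Section \ref{sec:Geomprel} is essential. The pointwise exponential map is smooth, and since $\sblv^{2}(D,\C^{n})\hookrightarrow C^{0}(D,\C^{n})$ in real dimension two, the assignment $v\mapsto \exp_{u(\cdot)}(v(\cdot))$ defines a $C^{\infty}$ map of Banach spaces by the standard composition ($\Omega$-)lemma on Sobolev spaces; its differential at $v=0$ is the identity, so by the inverse function theorem $\Exp$ is already a local $C^{1}$-diffeomorphism of $\sblv^{2}(D,\C^{n})$. The substance is to show that $\Exp$ carries a neighborhood of $0$ in $T_{u}\cfig$ into $\cfig$. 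Condition (1) is preserved because $L$ is totally geodesic for $\hat g$, so $v(z)\in T_{u(z)}L$ on $\pa D$ implies $\exp_{u(z)}(v(z))\in L$. For condition (2), I would compute $d\Exp(v)$ along $\pa D$ via Jacobi fields of $\hat g$ based at points of $L$; the defining property of $\hat g$ that $J_{L}X$ is Jacobi whenever $X$ is along an $L$-geodesic, together with $J=J_{0}$ near $L$, identifies the $(0,1)$-part of $d\Exp(v)|_{\pa D}$ with $(\nabla v)^{0,1}|_{\pa D}$ up to an invertible pointwise operator.

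The hard part will be this interaction between the exponential chart and condition (2): a priori, linearizing a first-order Cauchy--Riemann-type boundary condition involves derivatives of both the metric and the almost complex structure, and these terms could fail to match unless the geometry is chosen carefully. The entire purpose of the tailored metric $\hat g$ --- totally geodesic $L$, plus $J_{L}$-equivariance of the Jacobi equation along $L$-geodesics --- is to render this matching transparent, reducing it to a pointwise Jacobi-field calculation. Once these geometric identities are in hand, the conclusion follows at once: $\Exp$ restricts to a $C^{1}$-diffeomorphism from a neighborhood of $0$ in $T_{u}\cfig$ onto a neighborhood of $u$ in $\cfig$, which simultaneously establishes the Banach submanifold structure, gives the required local coordinates, and identifies the tangent space as stated.
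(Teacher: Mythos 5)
Your proposal is correct and takes essentially the same route as the paper: the paper's own ``proof'' is only a citation to \cite[Lemma 3.2]{EES} and \cite[Proposition 5.9]{EES1}, and the standard argument carried out there is exactly what you sketch --- closedness and the linearized boundary conditions are routine, while the substance is that the tailored metric $\hat g$ of Section \ref{sec:Geomprel} (with $L$ totally geodesic and $J_L$ preserving Jacobi fields, together with $J=J_0$ near $L$) makes $\Exp$ preserve both defining conditions, so that it furnishes the $C^{1}$-charts and identifies $T_u\cfig$. You correctly pinpoint the Jacobi-field computation along boundary geodesics as the step for which $\hat g$ was constructed, which is precisely the content of the cited references.
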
 

\begin{proof}
This is a simpler version of \cite[Lemma 3.2]{EES} (see \cite[Proposition 5.9]{EES1} for the details of the standard argument alluded to there).
\end{proof}

The connected components of the space $\cfig$ are in $1-1$ correspondence with $\pi_2(\C^{n},L)\cong H_1(L)$. For $j\in\Z$, we write $\cfig(j\beta)$ for the connected component of all $u\in\cfig$ such that the continuous loop $u|_{\pa D}$ represents the class $j\cdot\beta$.

The target space for the operator in \eqref{Eqn:CR1} corresponding to $\cfig$ is  $\tcfig = \dot\sblv^{1}(D, \Hom^{0,1}(TD,\C^{n}))$. Consider the operator corresponding to the Floer equation, $\bar\pa_{\Fl}\colon \cfig\to\tcfig$,
\[
\bar\pa_{\Fl}(u)=(du + \gamma_r\otimes X_{H})^{0,1},
\]
where $r\in[0,\infty)$ and write $\bar\pa_{\Fl;j\beta}=\bar\pa_{\Fl}|_{\cfig(j\beta)}$.
\begin{Lemma}\label{Lem:Floermapindexclosed}
The map $\bar\pa_{\Fl;j\beta}$ is a $C^{1}$ Fredholm map of index
\[
\ind(\bar\pa_{\Fl;j\beta})=n(1+j),
\]
which depends smoothly on the parameter $r\in[0,\infty)$.
\end{Lemma}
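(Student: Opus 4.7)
The plan is to treat Lemma~\ref{Lem:Floermapindexclosed} as a standard Fredholm package: verifying that $\bar\pa_{\Fl}$ lands in the dotted target space $\tcfig$, extracting smoothness from a Nemytskii computation in the exponential chart of Lemma~\ref{Lem:expmapclosed}, and reading off Fredholmness and the index from Riemann--Roch for Cauchy--Riemann operators with totally real boundary. The key point to check at the outset is the target condition. Condition~(2) in the definition of $\cfig$ already forces $(du)^{0,1}|_{\pa D}=0$, and the Hamiltonian perturbation $\gamma_r\otimes X_H$ vanishes in a collar of $\pa D$ because $H_t$ is constant for $t\in[0,\epsilon_0]\cup[1-\epsilon_0,1]$, and these values of $t$ correspond under $\xi$ to collars of $\pa D$. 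Thus $\bar\pa_{\Fl}(u)\in\dot\sblv^{1}$, and the analogous reasoning together with $(\nabla v)^{0,1}|_{\pa D}=0$ from the description of $T_u\cfig$ shows that the linearization also lands in $\tcfig$.

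For the smoothness claim, I would write the operator in the coordinates $v\mapsto\Exp(v)$. It then takes the pointwise form $\Phi(z,r,v(z),\nabla v(z))$ for a smooth function $\Phi$ built from $J$, $H$, and the cut-off $\gamma_r$. Since $D$ is two-dimensional, $\sblv^{2}(D)\hookrightarrow C^{0}(D)$ is a Banach algebra and $\sblv^{1}(D)$ is a module over it, so composition with smooth pointwise functions gives a $C^\infty$ map between the relevant Banach spaces. The dependence on $r\in[0,\infty)$ is smooth because $\alpha_r$, $\beta(r)$, and $H$ are smooth in their arguments. This establishes the $C^{1}$-regularity (in fact $C^\infty$) of $\bar\pa_{\Fl;j\beta}$, jointly with smooth dependence on $r$.

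For Fredholmness and index, the linearization at $u\in\cfig(j\beta)$ is a real-linear Cauchy--Riemann operator on $u^{\ast}T\C^{n}$ with totally real boundary condition $(u|_{\pa D})^{\ast}TL$, plus a compact zeroth-order term from $\gamma_r\otimes\nabla X_H$. This is the classical Lagrangian boundary value problem on a disk, Fredholm from $\sblv^{2}$ to $\dot\sblv^{1}$; the boundary vanishing in the target is no additional obstruction, being precisely compatible with the constraint $(\nabla v)^{0,1}|_{\pa D}=0$ on $T_u\cfig$. Riemann--Roch for such boundary value problems gives
\[
\ind(D_u\bar\pa_{\Fl})=n\,\chi(D)+\mu\bigl((u|_{\pa D})^{\ast}TL\bigr)=n+jn=n(1+j),
\]
using $\chi(D)=1$ and the fact that the generator $\beta\in\pi_1(L)$ has Maslov index $n$.

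The main obstacle is essentially bookkeeping: keeping the various boundary-vanishing conditions of $\cfig$, $T_u\cfig$, and $\tcfig$ consistent with the collar vanishing of the Hamiltonian term, so that all four steps genuinely take place within the Banach-manifold framework of Lemma~\ref{Lem:expmapclosed}. Once this alignment is in place, the result reduces to the standard elliptic theory of Cauchy--Riemann operators with Lagrangian boundary conditions on a disk.
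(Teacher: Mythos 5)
Your proposal is correct and takes essentially the same route as the paper, whose proof simply records that the linearization is $\bar\pa v + Kv$ with $K$ compact, that the Lagrangian boundary condition $T_{u(z)}L$ has Maslov index $jn$ on $\cfig(j\beta)$, and then quotes the index formula $\ind(\bar\pa_{\Lambda})=n+\mu(\Lambda)$, leaving the smoothness and target-space bookkeeping you spell out implicit. One small refinement: the vanishing of $\gamma_r\otimes X_H$ near the two boundary points $\pm 1\in\pa D$ (where all values of $t$ accumulate) uses the compact support of $\alpha_r$ in $s$, not only the constancy of $H_t$ for $t\in[0,\epsilon_0]\cup[1-\epsilon_0,1]$.
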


\begin{proof}
The linearization of $\bar\pa_{\Fl}$ is
\[
D(\bar\pa_{\Fl})=\bar\pa v + K v,
\] 
where $v\in T_u\cfig\subset \sblv^{2}(D,\C^{n})$ and where the operator $K$ is compact. The Lagrangian boundary condition is the loop  $T_{u(z)}L$, $z\in\pa D$ which has Maslov index $jn$ if $u\in\cfig(j\beta)$. The lemma follows from the well-known index formula for the $\bar\pa_{\Lambda}$-operator on the disk with $n$-dimensional Lagrangian boundary condition $\Lambda(z)$, $z\in\pa D$ of Maslov index $\mu(\Lambda)$: $\ind(\bar\pa_{\Lambda})=n+\mu(\Lambda)$.
\end{proof}

Let $\FF^{r}(j\beta)={\bar\pa_{\Fl;j\beta}}^{-1}(0)$, with $r$  the parameter of the Hamiltonian term in the operator. Let
\[
\FF(j\beta) \ =\bigcup_{r\in[0,\infty)}\FF^{r}(j\beta)\times\{r\} \ \ \ \subset \ \ \ \cfig\times[0,\infty).
\]
In the case of trivial Hamiltonian term, i.e.~when $X_{H}=0$, we write $\bar\pa$ instead of $\bar\pa_{\Fl}$; the solution space $\bar\pa^{-1}(0)\subset\cfig$ consists of $J$-holomorphic disks, and carries an action of the group $G$ of conformal automorphisms of the disk. We write
\begin{equation}
\MM(j\beta)= \FF^0({j\beta})/G
\end{equation}

\subsection{Basic structure of moduli spaces} \label{Sec:BasicStructure}
In this section we collect standard properties of moduli spaces of Floer disks that follow from Floer-Gromov compactness and transversality. Since the moduli spaces $\MM(j\beta)$ of holomorphic disks are defined as quotients, in order to get a $C^{1}$-structure we need to fix gauge. To  this end we start with a discussion of marked points.

Consider an element $\hat u$ in $\MM(\beta)$ represented by a map $u\colon (D,\pa D)\to (\C^{n},L)$. Since $u$ is non-constant, $u|_{\pa D}$ is non-constant and there exists an arc $A\subset \pa D$ where $u|_{\pa D}$ is an immersion. Fix a codimension 1 hypersurface $T\subset L$ such that $u(\zeta)\in T$, $du(\zeta)\ne 0$ and with $T$ transverse to $u$ at $u(\zeta)$, for $\zeta$ in some finite subset  $\zz\subset \pa D$. 
Explicitly, we take $T= T_1\cup T_2\cup T_3$ to be three small disjoint $(n-1)$-disks intersecting $u|_{\pa D}$ transversely at $u(\zeta_1')$, $u(\zeta_2')$, and $u(\zeta_3')$. Write $u_{T}=u\circ\phi$ where $\phi\colon D\to D$ is the unique conformal map such that $\phi(1)=\zeta_1'$, $\phi(i)=\zeta_2'$, and $\phi(-1)=\zeta_3'$. For simpler notation write $\zeta_1=1$, $\zeta_2=i$, and $\zeta_3=-1$.

Then there is a neighborhood $W(u_{T})\subset\cfig(\beta)$ such that any $w\in \bar\pa^{-1}(0)\cap W(u_{T})$ intersects $T_j$ transversely at a point $\zeta_j'\in\pa D$ close to $\zeta_j$, $j=1,2,3$. To see this, we observe that the norm in $\cfig(\beta)$ controls the $C^{0}$-norm which in turn controls all other norms for holomorphic functions. In particular, by shrinking $W(u_T)$ we infer that $w$ is arbitrarily $C^{1}$-close to $u$,  and hence a neighborhood as claimed exists. We obtain local coordinates on an open set of $\MM(\beta)$ around $\hat u$ by identifying the neighborhood with
\[
\bar\pa^{-1}(0)\cap \{w\in W(u_{T})\colon w(\zeta_j)\in T_j,\,j=1,2,3\}.
\]
Since all holomorphic disks in $W(u_{T})$ meet $T_j$ transversely, the action of the conformal group $G$ of $D$ is transverse to the second factor of the intersection, so  provided $\bar\pa$ is itself transverse near $\hat u$ we get a $C^{1}$-smooth chart.

We next consider coordinate changes. Thus let $T$ and $T'$ be two collections of hypersurfaces as above. On the overlap of the corresponding open subsets, the coordinate change is given by the reparametrization $w_{T}=w_{T'}\circ \phi$, where $\phi\colon D\to D$ is the unique conformal automorphism of $D$ such that $\phi(\zeta_j)=\zeta'_j$, where $\zeta'_j$ are points such that $w_T(\zeta_j')\in T_j$. To see that this gives a $C^{1}$-map we simply note that the $C^{1}$-norm of the location of the marked points is controlled by the $C^{1}$-norm of $u_T$ which in turn is controlled by the norm in $\cfig(\beta)$ since the functions are holomorphic. More specifically, let $u\colon D\to\C^{n}$ be holomorphic with $u(\{1,i,-1\})\notin T'$ and suppose $u$ intersects $T'$ transversely at the points $\xi_1,\xi_2,\xi_3$. Then there exists $\phi\in G$ such that if $\hat u=u\circ \phi$ then $u(\{1,i,-1\})\in T'$. Note that the derivative of the $\xi_j$-component of the inverse map in direction of the gauge orbit has components
\[
\frac{\pa \xi_j}{\pa \phi}=- \left\langle\nu,du_{\xi_j}\left(\frac{\pa \phi}{\pa v}\right)\right\rangle,
\] 
where $\nu$ is the normal of $T'$ at $u(\xi_j)$ and where $v$ is the vector tangent to the boundary at $\xi$.

In conclusion we find that transition functions are smooth; picking a finite cover of the compact Hausdorff space $\MM(\beta)$ we get a $C^{1}$-smooth structure. Note that the argument used to show that transition functions are smooth, in combination with the existence of a common refinement of any two finite covers, shows that the $C^{1}$-structure is independent of the cover. 

\begin{Lemma}\label{Lem:tvholdisks}
For a generic almost complex structure $J\in\JJ_{L}$ on $\C^{n}$,
the moduli spaces $\MM(j\beta)$ are empty for $j<0$, $\MM(0\beta)$ consists of constant maps, and $\MM(\beta)$ is a compact $C^{1}$-manifold of dimension $2n-3$.
\end{Lemma}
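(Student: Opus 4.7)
The plan is to verify the three assertions in turn, drawing on monotonicity of $L$ (minimal Maslov number $n$, with $\beta$ generating $\pi_1(L)\cong\Z$) together with the standard transversality framework for simple holomorphic disks. For $j<0$, monotonicity forces the symplectic area of a disk in class $j\beta$ to be a positive multiple of the Maslov index $jn<0$, contradicting positivity of energy; the same consideration leaves only constants in $\MM(0\beta)$. The formal dimension of $\MM(\beta)$ then follows from Lemma \ref{Lem:Floermapindexclosed} applied with trivial Hamiltonian term: the parametrized Fredholm index is $n+\mu(\beta)=2n$, and quotienting by the $3$-dimensional conformal group $G=PSL_2(\R)$ gives $2n-3$.

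The central step is transversality under the constraint $J\in\JJ_L$. I would first argue that every holomorphic disk $u$ in class $\beta$ is simple. Since $\beta$ is primitive in $\pi_1(L)$ and $\mu(\beta)=n$ is the minimal Maslov number, a Lazzarini-type decomposition of a non-simple disk would produce an underlying simple disk of strictly smaller positive Maslov number, which is precluded. Simplicity guarantees an open dense set of somewhere-injective interior points, and a standard energy/monotonicity estimate (a non-constant $J$-holomorphic disk of fixed area cannot be contained in an arbitrarily small tubular neighborhood of $L$) shows that such injective points exist outside the $\delta$-neighborhood of $L$ on which $J$ is pinned to $J_0$. This is exactly the input needed to make the universal moduli space $\MM^{\mathrm{univ}}(\beta)\to\JJ_L$ into a Banach manifold, and Sard-Smale then produces a comeager subset of regular $J\in\JJ_L$ for which $\MM(\beta)$ is transversely cut out.

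Finally, compactness follows by ruling out Gromov degenerations. There are no non-constant holomorphic spheres in $\C^n$, so no sphere bubbling can occur. Disk bubbling in class $\beta$ would decompose $\beta$ as a sum of disk classes of positive Maslov number, each at least $n$ by minimality, forcing a single summand equal to $\beta$ and thus no genuine bubble. Hence $\MM(\beta)$ is already compact. Combined with the local $C^1$-charts constructed just before the statement (three disjoint codimension-one slice hypersurfaces in $L$ rigidifying the $G$-action, with transitions smooth by $C^{1}$-control of boundary evaluation for holomorphic maps), this produces the claimed closed $C^1$-manifold structure. The main obstacle I anticipate is precisely the compatibility of transversality with the restriction $J=J_0$ near $L$; this is overcome by the interior-injective-point argument sketched above, but it is the only step that is not entirely formal.
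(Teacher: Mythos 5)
Your proposal is correct, and the parts dealing with $j\le 0$, with the dimension count, and with compactness coincide with the paper's argument (area/monotonicity considerations, plus the fact that any bubble tree in a Gromov--Floer limit would decompose the primitive class $\beta$ into classes $j\beta$ with $j>0$, which is impossible). Where you genuinely diverge is the transversality step: the paper disposes of it in one line by citing Oh's theorem on the Fredholm theory of holomorphic discs under perturbation of boundary conditions \cite{Oh:Perturb} --- for discs in the primitive class one achieves regularity by an arbitrarily small (real analytic Hamiltonian) perturbation of $L$ with $J=J_0$ kept fixed, which is how Proposition \ref{Prop:Mismfld} is phrased --- whereas you run the generic-$J$ argument inside $\JJ_L$ directly: simplicity of all class-$\beta$ discs via a Lazzarini/Kwon--Oh decomposition and primitivity, existence of injective points in the region where $J$ is allowed to vary, then Sard--Smale on a universal moduli space. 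This is a legitimate, more self-contained alternative, with one sub-step that deserves a cleaner justification: the reason a non-constant disc with boundary on $L$ cannot lie entirely in the fixed $\delta$-neighborhood where $J\equiv J_0$ is not really a monotonicity estimate but relative exactness --- for $\delta$ small this neighborhood is a Weinstein neighborhood, so $\omega=d\lambda$ there with $\lambda|_L=0$, and by Stokes any disc contained in it with boundary on $L$ would have zero area, contradicting $\omega(\beta)>0$; note also that the injective points so obtained are automatically interior points of $D$, since $\pa D$ maps to $L$. With that replacement your argument goes through, and it matches the lemma's literal statement (``generic $J\in\JJ_L$''), while the paper's route via \cite{Oh:Perturb} keeps $J=J_0$ and perturbs $L$, which is what its real-analytic set-up prefers. (Minor point: the conformal automorphism group of the disc is $PSL_2(\R)$, as you say, of dimension three, so the quotient indeed gives $2n-3$.)
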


\begin{proof}
The first and second statements hold since the symplectic area of a curve in class $j\beta$ is negative when $j<0$  and zero if $j=0$. For $C^{1}$-smoothness of $\MM(\beta)$, given the definition of the $C^{1}$-structure above, we need only show that the linearization $D(\bar\pa)$ is generically surjective. For disks in primitive homology classes, this is due to Oh \cite{Oh:Perturb}. Finally, by Gromov-Floer compactness the space $\MM(\beta)$ is compact modulo bubbling. The homology classes of the disks in a bubble tree must sum to $\beta$, but each disk lies in homology class $j\beta$, $j>0$. Therefore the bubble tree must be trivial and the space is compact. 
\end{proof}

For the perturbed equation the smooth structure of moduli spaces is more straightforward, and we have the following result.

\begin{Lemma}\label{Lem:tvmdli}
For generic Hamiltonian and for an almost complex structure $J\in\JJ_L$ on $\C^{n}$ for which Lemma \ref{Lem:tvholdisks} holds:
\begin{enumerate}
\item The moduli space $\FF(0\beta)$ is a transversely cut out $(n+1)$-manifold with boundary. Its boundary is canonically diffeomorphic to $L$, $\pa \FF(0\beta)=\FF^{0}(0\beta)=L$, viewed as the space of constant maps into $L$.
\item The moduli space $\FF(-\beta)$ is a transversely cut out closed manifold of dimension $1$.
\item The moduli space $\FF(j\beta)=\varnothing$ for $j<-1$.
\end{enumerate}   
\end{Lemma}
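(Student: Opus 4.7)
The plan is to split the analysis into the three regimes of the parameterized equation: near $r=0$, for $r$ in the interior, and as $r\to\infty$.

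First I would exploit that $\beta(r)=0$ in a right neighborhood $[0,\varepsilon)$ of $r=0$, so on this interval the Hamiltonian term of \eqref{Eqn:CR1} vanishes identically and the Floer equation reduces to the ordinary $\bar\pa$-equation with Lagrangian boundary on $L$. Lemma \ref{Lem:tvholdisks} then gives $\FF^{r}(0\beta)=L$ (the space of constant maps) and $\FF^{r}(j\beta)=\varnothing$ for $j\le-1$ and all $r\in[0,\varepsilon)$. In particular $\FF(0\beta)$ contains the product collar $L\times[0,\varepsilon)$, which identifies $\pa\FF(0\beta)=\FF^{0}(0\beta)=L$ canonically and shows that the $\FF(j\beta)$ for $j\le-1$ acquire no boundary at $r=0$.

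Next I would establish interior transversality by the usual Sard--Smale argument, varying the Hamiltonian $H$ among compactly supported functions satisfying the normalization and displacement hypotheses. The universal moduli space is smooth because adding a perturbation supported in the interior of the strip (where $\gamma_{r}\neq 0$) gives a surjective linearization on the universal level. For a comeagre set of $H$ the parameterized operator
\[
\bar\pa_{\Fl}\colon\cfig(j\beta)\times(0,\infty)\to\tcfig
\]
is therefore transversely cut out, making $\FF(j\beta)\cap(\cfig\times(0,\infty))$ a $C^{1}$-manifold of dimension $n(1+j)+1$ by Lemma \ref{Lem:Floermapindexclosed}. Gluing this with the $r=0$ collar gives (1), and the specializations $j=-1$ and $j\le-2$ yield a $1$-manifold and the empty set respectively (for $j\le-2$ the virtual dimension is $\le 1-n<0$), proving the dimension parts of (2) and (3).

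To complete (2) I would verify that $\FF(-\beta)$ is closed. The $r=0$ slice is already empty. An upper bound $r\le r_{0}$ follows from the displacement assumption: if $(u_{k},r_{k})\in\FF(-\beta)$ with $r_{k}\to\infty$, an energy rescaling in the middle strip region produces in the limit a nonconstant Floer strip with boundary on $L$ whose ends are Hamiltonian chords of $X_{H}$ from $L$ to itself, but $\phi^{1}(L)\cap L=\varnothing$ rules out such chords. For $r$ in a compact subinterval of $(0,\infty)$, Gromov--Floer compactness fails only through bubbling of $J$-holomorphic disks; each such bubble lies in a class $k_{i}\beta$ with $k_{i}\ge 1$ by monotonicity and the minimal Maslov number assumption, so the main Floer component sits in class $-(1+\sum k_{i})\beta$ with virtual dimension $\le 1-n<0$ and is generically empty. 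Hence no bubbling occurs and $\FF(-\beta)$ is a compact $1$-manifold without boundary.

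The principal technical point is the displacement-based $r$-bound in the third step, which is a standard but careful application of the isoperimetric-type inequality for Floer strips combined with extraction of a limiting chord. The transversality step is routine once one perturbs $H$ away from its support-fixed endpoints, and the reduction at $r=0$ is forced by the cut-off conventions on $\beta(r)$.
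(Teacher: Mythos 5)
Your proposal is correct and follows essentially the same route as the paper: transversality comes from perturbing the Hamiltonian via a standard Sard--Smale argument, and compactness of $\FF(-\beta)$ comes from Gromov--Floer compactness plus the observation that any bubble lies in a class $j\beta$ with $j\ge 1$, forcing the remaining Floer component into $\FF(l\beta)$ with $l\le -2$, which is generically empty for dimension reasons. Your extra discussion of the $r=0$ collar and the displacement-based bound on $r$ supplies details the paper treats as part of Proposition \ref{Prop:Fismfld} rather than in the proof of this lemma, so it is a welcome but not divergent addition.
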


\begin{proof}
The transversality properties follow from standard arguments perturbing the Hamiltonian.
It remains to show the compactness statements. It follows from Gromov-Floer compactness that $\FF(-\beta)$ is compact up to bubbling of holomorphic disks. By transversality, any bubble must lie in $\MM(j\beta)$ for $j\ge 1$, which implies that the Floer disk in the limit lies in $\FF(l\beta)$, $l<-1$; but these spaces are empty, so $\FF(-\beta)$ is compact. 
\end{proof}

It is straightforward to include a boundary marked point on solutions. First we consider a version of the moduli space $\FF(j\beta)$ with a marked point on the boundary. Write $\FF^{\ast}(j\beta)$ for this space and note that it is a product:
\[
\FF^{\ast}(j\beta)=\FF(j\beta)\times\pa D,
\] 
where the first factor encodes the map and the second the location of the marked point. In particular, the $C^{1}$-smooth structure of $\FF(j\beta)$ gives a $C^{1}$-smooth structure on $\FF^{\ast}(j\beta)$. Also, there is a natural smooth evaluation map $\ev\colon\FF^\ast(j\beta)\to L$, 
\[
\ev((u,\zeta))=u(\zeta).
\] 
Second we consider the space $\MM^{\ast}(\beta)$ which is a version of $\MM(\beta)$ with a marked point on the boundary. More precisely, let $G_{1}\subset G$ denote the subgroup consisting of conformal automorphisms of $D$ that fix $1\in\pa D$, and define
\[
\MM^{\ast}(\beta)= \FF^{0}(\beta)/G_{1}.
\]
Again there is an induced $C^{1}$-smooth structure and a natural evaluation map $\ev\colon\MM^{\ast}(\beta)\to L$,
\[
\ev(u)=u(1).
\]

The following result describes the broken disks which are added to compactify  $\FF(0\beta)$.   
\begin{Lemma}\label{Lem:bubbles}
For a generic Hamiltonian and almost complex structure $J \in \scrJ_L$ 
the boundary of $\FF(0\beta)$ in the Gromov-Floer compactification consists of two-level broken curves, with top level $(u_1,\zeta)\in\FF^{\ast}(-\beta)$, lower level $u_2\in \MM^{\ast}(\beta)$, and with $\ev(u_1,\zeta)=\ev(u_2)$.
\end{Lemma}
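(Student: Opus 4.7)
The plan is to combine standard Gromov--Floer compactness with the vanishing results of Lemmas \ref{Lem:tvholdisks} and \ref{Lem:tvmdli} and a dimension count. Fix a sequence $(u_n,r_n)\in\FF(0\beta)$ with no convergent subsequence in $\FF(0\beta)$. The parameter $r_n$ lies in a compact subinterval of $[0,\infty)$ by Proposition \ref{Prop:Fismfld} (the equation has no solutions for $r$ large), so after passing to a subsequence we may assume $r_n\to r_\infty$ and hence reduce to a standard Floer equation with a fixed Hamiltonian term $\gamma_{r_\infty}\otimes X_H$. The uniform energy bound follows from exactness of $\omega_0$ in $\C^n$ and the compact support of $H$, which ties the energy of a disk in class $j\beta$ to the action of the generator $\beta\in\pi_1(L)$ plus a bounded contribution from $H$.

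Gromov--Floer compactness then produces a limit consisting of a principal Floer disk $u_0$ of class $\beta_0=j_0\beta$, together with a finite collection of honest $J$-holomorphic bubble disks of classes $j_i\beta$ for $i=1,\dots,\nu$, attached along a tree at boundary nodes; sphere bubbles do not occur because $\omega_0=-d\theta$ is exact and so $\pi_2(\C^n)$-classes have zero area. Conservation of homotopy class gives $j_0+\sum_{i=1}^\nu j_i = 0$. By Lemma \ref{Lem:tvholdisks}, each bubble satisfies $j_i\geq 1$, since $\MM(j\beta)$ is empty for $j<0$ and consists only of constants (which are not stable here as bubbles on a smooth disk) for $j=0$. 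By Lemma \ref{Lem:tvmdli}(3), $j_0\geq -1$. The only way to solve $j_0+\sum j_i=0$ under these constraints is $\nu=1$, $j_0=-1$, $j_1=1$.

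Hence the limit is a two-level broken curve with a unique principal disk $u_1\in\FF(-\beta)$ carrying one boundary node $\zeta\in\pa D$, and a unique bubble $u_2$ in class $\beta$ attached there; the incidence $u_1(\zeta)=u_2(\zeta')$ for some $\zeta'\in\pa D$ is precisely the matching condition in $L$. Remembering $\zeta$ and quotienting the bubble by the subgroup of $G$ fixing $\zeta'$ yields $(u_1,\zeta)\in\FF^*(-\beta)$ and $u_2\in\MM^*(\beta)$ with $\ev(u_1,\zeta)=\ev(u_2)$, so every Gromov--Floer limit lies in the claimed fiber product.

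A dimension check shows the configuration is generic and no further strata are needed: $\dim\FF^*(-\beta)\times_L\MM^*(\beta) = 2+(2n-2)-n = n$, which is codimension $1$ in $\FF(0\beta)$ (Remark \ref{Rem:dimhmtpy}). The main obstacle is organisational rather than conceptual: one must verify that no further degenerations (additional nodes, trees with more levels, interior bubbles, strip-breakings, \emph{etc.}) contribute, which in our setting is a direct consequence of exactness, minimal Maslov number $n$, and the vanishing results already proved, so no abstract multi-valued perturbations are needed.
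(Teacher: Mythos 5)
Your argument is correct and follows essentially the same route as the paper's proof: identify the degeneration as boundary bubbling, use the emptiness/transversality statements for $\MM(j\beta)$ and $\FF(j\beta)$ together with additivity of the homotopy class to force exactly one bubble in class $\beta$ attached to a Floer disk in class $-\beta$, with the matching condition recorded by the marked point; you simply spell out the energy bound, the compact range of $r$, the absence of sphere bubbles and ghost components, and the arithmetic that the paper leaves implicit. (Note only that your citation ``$j_0\ge -1$ by Lemma \ref{Lem:tvmdli}(3)'' uses the intended reading of that item, namely $\FF(j\beta)=\varnothing$ for $j\le -2$, which is how the paper itself applies it.)
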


\begin{proof}
The non-compactness of $\FF(0\beta)$ comes from holomorphic disks bubbling off at boundary points. By Lemma \ref{Lem:tvmdli} the only possible bubble is one holomorphic disk in $\MM(\beta)$. This implies that the other component of the broken configuration is a disk $u_1\in\FF(-\beta)$. The marked point $\zeta\in\pa D$ is the point where the holomorphic disk is attached, and by our definition of marked points for holomorphic disks, the domain $D$ of the holomorphic bubble disk is attached at $1\in\pa D$ to the domain of $u_1$ at $\zeta$.
\end{proof}

\begin{Remark}
As we shall see later, the Gromov-Floer boundary is diffeomorphic to the (generically) transverse fibered product
\[
\FF^{\ast}(-\beta)\times_{L} \MM^{\ast}(\beta)=(\ev\times\ev)^{-1}(\Delta_L),
\]  
where $\Delta_L\subset L\times L$ is the diagonal.
This identification follows from a gluing result which uniquely constructs all unbroken Floer disks near any broken configuration. The identification can be studied with various degrees of accuracy: one can view it as a $C^{0}$-statement and obtain the compactification as a topological space, or one can carry out the gluing more carefully to  identify the compactification as a $C^{1}$-manifold with boundary. Schemes for carrying $C^{1}$-data in gluing  problems were worked out in \cite{FO3} and \cite{HWZ} in more complicated situations. In the terminology of \cite{HWZ}, the gluing problems studied in the present paper can be phrased in the language of M-polyfolds. We will present a $C^{1}$-identification in Section \ref{Sec:gluing2} below.
\end{Remark}

\subsection{Configuration spaces for disks with punctures and jet-conditions}
\label{sec:punctures}
In this section we present a functional analytic setting for describing disks with one boundary puncture, which we will use in our description of the boundary of the space of Floer disks $\FF(0\beta)$. In fact, our description uses jet-conditions at the puncture and we present a set up for higher jet conditions that does not require imposing higher regularity conditions on the maps in  the configuration space. Many constructions are similar to those in Section \ref{sec:closeddisks} and some of the details from there will not be repeated here. 

Let $\zeta\in\pa D$. In Section \ref{sec:cohertriv} we identified $(D,\zeta)$ with the upper half plane $(H,\infty)$, constructed a half strip neighborhood of $\infty$ in $H$ and introduced a weight function $\mathbf{e}_{\delta}$ depending on a real parameter $\delta$ and given by $e^{\delta|\tau|}$ for $\tau+it$ in the half strip neighborhood $[0,\infty)\times[0,1]$ 
around $\infty$.  For $s>0$, we write $\sblv^{s}_\delta(D-\zeta,\C^{n})$
for the weighted Sobolev space of $\C^{n}$-valued functions with $s$ derivatives in $L^{2}$ weighted by $\mathbf{e}_\delta$, with the weighted Sobolev $s$-norm.  More specifically, we consider the closure, in the weighted $s$-norm on $D-\zeta\approx H$ with respect to the cylindrical end metric $h$ weighted by $\mathbf{e}_{\delta}$, of functions that extend smoothly to the double of $D-\zeta$.

In parallel with Section \ref{sec:closeddisks}, we will also use the Sobolev spaces $\sblv^{s}_{\delta}(\pa D-\zeta,\C^{n})$  
of functions on the boundary with respect to the metric induced by $h$ and weight induced by $\mathbf{e}_{\delta}$. Again, if $s>\frac12$, there is a continuous restriction map
\[
\sblv^{s}_{\delta}(D-\zeta,\C^{n})\to\sblv^{s-\frac12}_{\delta}(\pa D-\zeta,\C^{n}),\quad u\mapsto u|_{\pa D-\zeta}.
\]
Furthermore, we use the spaces 
$\sblv^{s}_{\delta}(D-\zeta,\Hom(TD,\C^{n}))$, 
$\sblv^{s}_{\delta}\left(D-\zeta,\Hom^{0,1}(TD,\C^{n})\right)$, and
$\dot\sblv^{1}_{\delta}\left(D-\zeta,\Hom^{0,1}(TD,\C^{n})\right)$,
which are defined in analogy with the corresponding spaces in Section \ref{sec:closeddisks}, using the metric $h$ and weight $\mathbf{e}_{\delta}$.

We next construct Banach manifolds that we use to build configuration spaces for punctured Floer disks. Fix $\delta>0$ and let $q\in\C^{n}$. Write $\sblv_{\delta}^{s}(D-\zeta,\C^{n};q)$
for the affine Sobolev space modeled on the vector space $\sblv_{\delta}^{s}(D-\zeta,\C^{n})$ with origin shifted to $q$. More formally, let $q$ denote the constant function on $D-\zeta$ with value $q\in\C^{n}$. Then elements  $u\in\sblv_{\delta}^{s}(D-\zeta,\C^{n};q)$ are functions of the form $u=u'+q$ where $u'\in\sblv_{\delta}^{s}(D-\zeta,\C^{n})$, and the distance between $u_1$ and $u_2$ is $\|u_1-u_2\|_{s;\delta}$, where $\|\cdot\|_{s;\delta}$ is the norm in $\sblv^{s}_{\delta}(D-\zeta,\C^{n})$.

Let $q\in L$ and consider the subspace $\cfig_{\delta}(\zeta,q)\subset\sblv^{2}_{\delta}(D-\zeta,\C^{n};q)$ of all $u$ which satisfy the following conditions:
\begin{enumerate}
\item $u|_{\pa D-\zeta}(z)\in L\subset\C^{n}$ for all $z\in\pa D-\zeta$.
\item $(du)^{0,1}|_{\pa D-\zeta}=0\in\sblv^{\frac{1}{2}}_{\delta}(\pa D-\zeta,\Hom^{0,1}(TD,\C^{n}))$. 
\end{enumerate}

\begin{Lemma}
The subset $\cfig_{\delta}(\zeta,q)\subset\sblv^{2}_{\delta}(D-\zeta,\C^{n};q)$ is closed and is a Banach submanifold. The tangent space $T_{u}\cfig_{\delta}(\zeta,q)$ at $u$ is the subspace of vector fields  $v\in\sblv^{2}_{\delta}(D-\zeta,\C^{n})$ such that 
\begin{enumerate}
\item $v(z)\in T_{u(z)}L$ for all $z\in\pa D-\zeta$ and
\item $(\nabla v)^{0,1}|_{\pa D-\zeta}=0\in\sblv^{\frac{1}{2}}(\pa D-\zeta,\Hom^{0,1}(TD,\C^{n}))$,
\end{enumerate}
where $\nabla$ denotes the Levi-Civita connection of the metric $\hat g$, see Section \ref{sec:Geomprel}. Furthermore, the map $\Exp\colon T_{u}\cfig_{\delta}(\zeta,q)\to\cfig_{\delta}(\zeta,q)$,
\[
[\Exp(v)](z)=\exp_{u(z)}(v(z)),\quad z\in D-\zeta,
\]
gives local $C^{1}$-coordinates near $u$ when restricted to $v$ in a sufficiently small ball around $0\in T_{u}\cfig_{\delta}(\zeta,q)$. 
\end{Lemma}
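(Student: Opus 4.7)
The plan is to adapt the proof of Lemma \ref{Lem:expmapclosed} to the weighted punctured setting, noting that the key geometric inputs (totally geodesic $L$ and the complex-linearity of $dP$ along $L$) were already prepared in Section \ref{sec:Geomprel} precisely so that this argument goes through. First I would unpack the role of the weight: for any $u\in\cfig_{\eta}(\zeta,q)$, one has $u=q+u'$ with $u'\in\sblv^{2}_{\eta}(D-\zeta,\C^{n})$, and since $\eta>0$ the weighted Sobolev norm forces $u'$ to decay exponentially on the strip-like end around $\zeta$. Consequently $u$ extends continuously to $D$ with $u(\zeta)=q\in L$, and away from $\zeta$ the analysis reduces to a local version of the closed disk case.

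To show $\cfig_{\eta}(\zeta,q)$ is closed and a Banach submanifold, I would introduce the defining map $F\colon \sblv^{2}_{\eta}(D-\zeta,\C^{n};q)\to \mathcal{V}_1\oplus\mathcal{V}_2$, where $\mathcal{V}_1$ encodes the boundary condition $u|_{\partial D-\zeta}\in L$ (as a section of the normal bundle of $L$ lifted via the continuous trace $\sblv^{2}_{\eta}\to \sblv^{3/2}_{\eta}$) and $\mathcal{V}_2=\sblv^{\frac12}_{\eta}(\partial D-\zeta,\Hom^{0,1}(TD,\C^{n}))$ encodes $(du)^{0,1}|_{\partial D-\zeta}$. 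The map $F$ is $C^{1}$, and its differential is a surjective bounded linear map with split kernel; closedness follows from continuity of $F$, and the submanifold structure from the implicit function theorem in Banach spaces. Linearizing the two components of $F$ at $u$ yields precisely the two tangent space conditions in the statement, where the appearance of $\nabla v$ in the second condition reflects the choice of the connection $\nabla$ of $\hat g$ as the natural identification of nearby fibers along $u|_{\partial D-\zeta}\subset L$.

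For the exponential chart $\Exp\colon T_{u}\cfig_{\eta}(\zeta,q)\to \cfig_{\eta}(\zeta,q)$, I would check three things in order. Preservation of the boundary condition $\Exp(v)(z)\in L$ for $z\in\partial D-\zeta$ uses that $L$ is totally geodesic for $\hat g$: since $u(z)\in L$ and $v(z)\in T_{u(z)}L$, the $\hat g$-geodesic $\exp_{u(z)}(v(z))$ stays in $L$. Preservation of the infinitesimal complex anti-linear boundary condition uses the compatibility $J_{0}\circ dP=dP\circ J_{L}$ along $L$ from Section \ref{sec:Geomprel}, which translates through $\exp$ into the condition on $(\nabla v)^{0,1}$. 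Smoothness of $\Exp$ as a map of Banach manifolds, and its local invertibility near $v=0$, follow from standard arguments on composition with smooth maps in Sobolev spaces.

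The main obstacle, and the only point that genuinely differs from Lemma \ref{Lem:expmapclosed}, is verifying that $\Exp$ respects the weighted decay at $\zeta$: I need that if $u-q$ lies in $\sblv^{2}_{\eta}$ on the strip end and $v\in\sblv^{2}_{\eta}$, then $\Exp(v)-q$ again lies in $\sblv^{2}_{\eta}$ with $C^{1}$-dependence on $v$. This reduces to a Taylor expansion of $\exp_{u(z)}(v(z))-q$ in $(u-q, v)$ on the strip end, using that $\exp$ is real analytic and that $\sblv^{2}_{\eta}$ on the half-strip is a Banach algebra under pointwise multiplication (since $2>1=\dim/2$), so that products of exponentially decaying terms decay at rate $\eta$ (indeed faster). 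Once this continuity-of-composition estimate is in place, the inverse function theorem gives the required $C^{1}$-chart.
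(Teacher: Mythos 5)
Your proposal is correct and follows essentially the same route as the paper, whose proof simply defers to the standard argument of \cite[Lemma 3.2]{EES} (detailed in \cite[Proposition 5.9]{EES1}), exactly as for Lemma \ref{Lem:expmapclosed}: the metric $\hat g$, with $L$ totally geodesic and Jacobi fields preserved by $J_L$, makes $\Exp$ respect both boundary conditions, and the implicit/inverse function theorem then gives the $C^{1}$-chart. Your identification of the only genuinely new point in the punctured setting --- that $\Exp$ is compatible with the exponential weight at $\zeta$, which you handle via the decay of $u-q$ and $v$ on the strip-like end together with the Banach algebra property of $\sblv^{2}_{\eta}$ there --- is precisely the right supplement to that standard argument.
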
 

\begin{proof}
As with Lemma \ref{Lem:expmapclosed}, this is a simpler version of \cite[Lemma 3.2]{EES}.
\end{proof}

The connected components of $\cfig_{\delta}(\zeta,q)$ are in $1-1$ correspondence with $\pi_2(\C^{n},L)\cong H_1(L)$, and for $j\in\Z$ we write $\cfig_{\delta}(j\beta;\zeta,q)$ for the connected component of all $u\in\cfig_{\delta}(\zeta,q)$ such that the continuous loop given by the arc $u|_{\pa D-\zeta}$ completed with the point $q$ represents the class $j\beta$.

Our basic configuration spaces for punctured Floer disks are built by patching together functions in the Banach manifolds $\cfig_{\delta}(\zeta,q)$. To this end we first describe certain cut-off functions. Consider $q\in L$ and the neighborhood $U_q$ of $q$, see \eqref{Eq:disk^2nbhd} and use notation as there. Let $a\colon [0,r]\to[0,1]$ be a smooth decreasing function such that $a$ equals $1$ on $[0,\frac{r}{3}]$ and equals $0$ outside $[0,\frac{2r}{3}]$. Let $b\colon [-r,r]\to[-\frac{r^{2}}{100n},\frac{r^{2}}{100n}]$ be a function with $b(0)=0$ and $b'(0)=1$, and which equals $0$ outside $[-\frac{r}{100},\frac{r}{100}]$. Use coordinates $z=x+iy$ on $\C^n$, $x,y\in\R^{n}$. Let $\hat a(x)=a(|x|)$, and define the the cut off function $\alpha_q\colon U_q\to\C$ as follows
\begin{equation}\label{Eq:cut-off}
\alpha_{q}(x+iy)=\hat a(x)\hat a(y)+i\sum_{j=1}^{n}\frac{\pa \hat a}{\pa x_j}b(y_j).
\end{equation}
The role of the second summand and in particular the function $b$ is to adjust the standard cut off function corresponding to the first summand so that it becomes holomorphic along $L$,  here  corresponding to $y_j=0$, $j=1,\dots,n$, without altering its values along $L$ and over all altering its values only by a small amount.

\begin{Lemma} \label{Lem:alphaq}
The function $\alpha_q$ has the following properties:
\begin{enumerate}
\item $\alpha_q$ equals $1$ on $B_{\R^{n}}(\frac{r}{3})\times iB_{\R^{n}}(\frac{r}{3})$.
\item $\alpha_q$ equals $0$ outside $B_{\R^{n}}(\frac{2r}{3})\times iB_{\R^{n}}(\frac{2r}{3})$.
\item $\alpha_q$ is real-valued on $U_q\cap L$
\item $\alpha_q$ is holomorphic, $d\alpha+i\circ d\alpha \circ J=0$, along $L\cap U_q$.  
\end{enumerate}
\end{Lemma}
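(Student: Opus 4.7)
The plan is to verify each of the four properties of $\alpha_q$ directly from its explicit definition in \eqref{Eq:cut-off}. All four are consequences of the prescribed support and jet data of the auxiliary functions $a$ and $b$, with no analytic subtlety, so the main task is careful bookkeeping of where each term of $\alpha_q = \hat a(x)\hat a(y) + i \sum_{j} \frac{\partial \hat a}{\partial x_j}(x)\, b(y_j)$ vanishes. Throughout I will write $\alpha_q = u + iv$ with $u = \hat a(x)\hat a(y)$ and $v = \sum_j \frac{\partial \hat a}{\partial x_j}(x)\, b(y_j)$.

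Properties (1) and (2) I would dispatch as support statements. For (1), on $B_{\R^n}(\tfrac{r}{3}) \times iB_{\R^n}(\tfrac{r}{3})$ we have $\hat a(x) = \hat a(y) = 1$ because $a \equiv 1$ on $[0,\tfrac{r}{3}]$; since $a$ is locally constant there, $\frac{\partial \hat a}{\partial x_j}$ vanishes identically, killing the imaginary summand and leaving $\alpha_q \equiv 1$. For (2), the first term vanishes outside the $\tfrac{2r}{3}$-box because $a$ is supported in $[0,\tfrac{2r}{3}]$, while in the second term, each product $\frac{\partial \hat a}{\partial x_j}(x)\, b(y_j)$ is compactly supported: $\frac{\partial \hat a}{\partial x_j}$ is supported in the annulus $\tfrac{r}{3} \le |x| \le \tfrac{2r}{3}$ and $b(y_j) = 0$ for $|y_j| \ge \tfrac{r}{100}$. (I flag that the set named in the statement of (2) should, for consistency with the support of $\hat a$, be read as the $\tfrac{2r}{3}$-box; this looks like a typo.)

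Property (3) I would verify by evaluating the imaginary part at $y=0$: every summand of $v$ carries a factor $b(0) = 0$, so $v$ vanishes on $L \cap U_q = \{y = 0\}$ and $\alpha_q$ is real there.

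The real content is property (4), and it is also where the particular choices in the definition of $\alpha_q$ pay off. The condition $d\alpha_q + i \circ d\alpha_q \circ J_0 = 0$ along $L$ is equivalent to the standard Cauchy--Riemann equations $\partial_{x_j} u = \partial_{y_j} v$ and $\partial_{y_j} u = -\partial_{x_j} v$ holding pointwise at every $(x,0) \in L \cap U_q$. Since $\hat a(y) = a(|y|)$ and $a$ is constant in a neighborhood of $0$, $\hat a$ is smooth at $y = 0$ with $\hat a(0) = 1$ and $\partial_{y_j} \hat a(0) = 0$; this gives $\partial_{x_j} u = \partial_{x_j} \hat a(x)$ and $\partial_{y_j} u = 0$ at $y = 0$. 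On the imaginary side, $b(0) = 0$ forces $\partial_{x_j} v = 0$ at $y = 0$, and $b'(0) = 1$ delivers $\partial_{y_j} v = \partial_{x_j}\hat a(x)$. Both Cauchy--Riemann equations then match. The single nontrivial ingredient in the proof is this calibration $b'(0) = 1$, which is precisely what makes the cross derivative of $v$ reproduce the $x$-derivative of $u$. I do not expect any genuine obstacle; the only care needed is in handling $\hat a(y)$ smoothly through $y = 0$ (which works because $a$ is constant there) and in interpreting the support statement (2) correctly.
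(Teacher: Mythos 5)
Your verifications of (1), (3) and (4) are correct, and they are exactly the direct check the paper intends (its own proof is just ``Straightforward''): the only point with real content is (4), where the calibration $b(0)=0$, $b'(0)=1$, together with $\hat a\equiv 1$ near the origin, yields the Cauchy--Riemann equations along $y=0$, precisely as you compute.

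Your treatment of (2), however, has a genuine gap. From the support data you quote, the $j$-th summand $\frac{\partial\hat a}{\partial x_j}(x)\,b(y_j)$ is supported in $\{\tfrac{r}{3}\le|x|\le\tfrac{2r}{3}\}\times\{|y_j|\le\tfrac{r}{100}\}$, and this set is \emph{not} contained in the $\tfrac{2r}{3}$-box: only the single coordinate $y_j$ is constrained, while the other $y$-coordinates are free. Concretely, at $x+iy$ with $|x|=r/2$ and $y=(\epsilon,y_2,0,\dots,0)$, $0<\epsilon<\tfrac{r}{100}$, $|y_2|$ close to $r$, one has $\alpha_q(x+iy)=i\,\frac{\partial\hat a}{\partial x_1}(x)\,b(\epsilon)$, which is generically nonzero, even though this point lies outside the $\tfrac{2r}{3}$-box and arbitrarily close to $\pa U_q$. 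So your argument does not prove (2) under your reading, and in fact with \eqref{Eq:cut-off} exactly as printed that statement is false; what is literally true is only vanishing for $|x|\ge\tfrac{2r}{3}$, which is not enough for the paper's subsequent step of extending $\alpha_q$ by zero to all of $\C^{n}$. The honest conclusion is that the formula needs a small repair --- for instance, multiply the imaginary sum by $\hat a(y)$ --- which restores the support statement and leaves (1), (3), (4) untouched, since $\hat a(0)=1$ and $d\hat a(0)=0$ mean the extra factor changes neither the value nor the first derivatives along $y=0$. Having already flagged the typo in the statement of (2), you should have noticed that your own support bookkeeping does not close the argument, and either flagged or repaired this as well.
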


\begin{proof}
Straightforward.
\end{proof}

By $(2)$ we extend $\alpha_q$ smoothly by $0$ to all of $\C^{n}$. It then follows that it has properties $(3)$ and $(4)$ with $U_q$ replaced by $\C^{n}$. 

We next define a family of diffeomorphisms parameterized by $L\times B_{\R^{n}}(r_0)$. Recall the local biholomorphic maps $\psi_{q}$, $q\in L$, see \eqref{Eq:disk^2nbhd}. For $z\in U_q$ write $w=\psi_q^{-1}(z)$ and define $\Phi_{q}[c_0]\colon (\C^{n},L)\to(\C^{n},L)$ as follows:
\begin{equation}\label{Eq:diffeoatp}
\Phi_{q}[c_0](z)=
\begin{cases}
z &\text{for } z\notin U_q,\\
\psi_q\left(
w+\alpha_{q}(w)\cdot c_0
\right)&\text{for } z\in U_q.
\end{cases}
\end{equation}
Then $\Phi_q[c_0]$ is a diffeomorphism for $r_0>0$ sufficiently small and it follows from Lemma \ref{Lem:alphaq} that $\Phi_q[c_0]$ is holomorphic along $L$.  Now fix $\delta>0$ and consider the bundle
\[
\cfig_{\delta}(\pa D,L)\to \pa D\times L,
\] 
the fiber of which at $(\zeta,q)\in \pa D\times L$ equals $\cfig_{\delta}(\zeta,q)$. As we shall see, this bundle is a $C^{1}$-smooth locally trivial bundle and in particular the total space of the bundle is a Banach manifold. 

We construct local trivializations using the diffeomorphisms of \eqref{Eq:diffeoatp} and use notation as there. Let $V_p=\psi_p(B_{\R^{n}}(r_0))\subset L$ and let $A$ be an arc around $\zeta\in\pa D$. Define the trivialization 
\[
\phi\colon A\times V_p\times\cfig_{\delta}(\zeta,q)\to \cfig_{\delta}(\pa D,L)
\]
as follows:
\begin{equation}\label{Eq:trivviadiffeo}
\phi(\zeta',q', u)(z) = \Phi_{q}[\psi_q^{-1}(q')]\left(u\left(\tfrac{\zeta}{\zeta'} z\right)\right).
\end{equation}
It is straightforward to verify that coordinate changes are $C^{1}$-smooth. 
We will sometimes also consider $\cfig_{\delta}(\pa D, L)$ as a bundle over $\pa D$, composing the bundle projection to $\pa D\times L$ with the projection $\pa D \times L \rightarrow \pa D$, and we write
\[
\cfig_{\delta}(L)=\cfig_{\delta}(\pa D, L)|_{1\in\pa D}.
\]
Furthermore, we will consider the sub-bundle $\cfig_{\delta}(j\beta;\pa D, L)$ with fiber over $(\zeta,q)$ equal to $\cfig_{\delta}(j\beta;\zeta,q)$ and the corresponding sub-bundle $\cfig_{\delta}(j\beta;L)$ of $\cfig_{\delta}(L)$.  Similarly, we let
\[
\tcfig_{\delta}(\zeta) = \dot\sblv^{1}_{\delta}(D-\zeta, \Hom^{0,1}(TD,\C^{n})),
\]
and define $\tcfig_{\delta}(\pa D)$ as the bundle over $\pa D$ with fiber over $\zeta$ equal to $\tcfig_{\delta}(\zeta)$ and write $\tcfig_{\delta}=\tcfig_{\delta}(1)$. We define $C^{1}$-smooth local trivializations using re-parametrization as above.

We next construct configuration spaces for disks with jet conditions at the boundary.
Let $J^{m}(\pa D,L)$ denote the space of $m$-jets of maps $\pa D\to L$. This map fibers over the $0$-jet space $\pa D\times L$ with fiber $J^{m}_{(\zeta,q)}(L)$ over $(\zeta,q)\in\pa D\times L$ equal to $(\R^{n})^{m}$, with components corresponding to the coefficients of the Taylor expansion. Thus, in local coordinates $\R$ on $\pa D$ around $\zeta$ and $\R^{n}$ on $L$ around $q$, the element $\cc=(c_1,\dots,c_m)$ represents the $m$-germ of the map
\[
t\mapsto c_1t+c_2 t^{2}+\dots+ c_mt^{m}.
\]  
We write $J^{m}(L)$ for the fiber of $J^{m}(\pa D,L)$ over $1\in\pa D$, so that $J^{m}(L)$ fibers over $L$ with fiber $(\R^{n})^{m}$. Let $\pi\colon J^{m}(\pa D,L)\to \pa D\times L$ denote the projection and fix $\eta$ with $0<\eta<\pi$. The configuration spaces for punctured disks that we will use are the following:
\begin{equation}
\cfig_{m\pi+\eta}(J^{m}(\pa D,L))=\pi^{\ast}\cfig_{m\pi+\eta}(\pa D, L)
\end{equation}
and
\begin{equation}
\cfig_{m\pi+\eta}(J^{m}(L))=\cfig_{m\pi+\eta}(J^{m}(\pa D,L))|_{1\in\pa D}.
\end{equation}

In order to use $\cfig_{m\pi+\eta}(J^{m}(\pa D,L))$ as configuration spaces for Floer type equations we will associate Sobolev functions to its elements. To this end we fix a smooth family of smooth maps parameterized by $\pa D\times J^{m}(L)$. More precisely, for $(\zeta,q)\in \pa D\times L$ and $\cc\in J^{m}_{(\zeta,q)}(L)$ we define a map $v_{q}[\zeta,\cc]\colon D-\zeta\to U_q$ with the following properties:
\begin{enumerate}
\item $v_q[\zeta,\cc](\pa D-\zeta)\subset L$,
\item $v_q[\zeta,\cc](\pa D-\zeta)$ is holomorphic on the boundary, and
\item there is $\rho>0$ such that in the strip neighborhood $[\rho,\infty)\times[0,1]$ of the puncture we have (in $U_q$-coordinates)
\begin{equation}
v_{q}[\zeta,\cc](z)=\sum_{j=1}^{m} c_j e^{-j\pi z}.
\end{equation}
\end{enumerate}
It is clear that there is a smooth family of functions with these properties.  We then define
\begin{equation}\label{Eq:weight+Taylor}
\Psi^{m}\colon \cfig_{m\pi+\eta}(J^{m}(\pa D,L))\to\cfig_{\eta}(\pa D,L)
\end{equation}
fiber wise as follows:
\[
\Psi^{m}[u](z)= u(z) + \alpha_q(u(z))\cdot v_q[\zeta;\cc](z),
\]
where the addition refers to standard addition in the $\C^{n}$-coordinates of $U_q$ (note that $\alpha_q$ is supported well inside $U_q$).

We consider the Floer equation for punctured disks in the setup described above. Fix $0<\eta<\pi$ and an integer $m\ge 0$. We start with the case of bundles over the $0$-jet space. Let $r\in[0,\infty)$ and consider the $\pa D$-bundle map $\bar\pa_{\Fl}\colon \cfig_{m\pi+\eta}(\pa D,L)\to\tcfig_{m\pi+\eta}(\pa D)$,
\[
\bar\pa_{\Fl}(u)=(du + \gamma_r\otimes X_H)^{0,1},
\]
and write $\bar\pa_{\Fl;j\beta}=\bar\pa_{\Fl}|_{\cfig_{m\pi+\eta}(j\beta;\pa D,L)}$. As in Section \ref{sec:closeddisks}, in cases when the Hamiltonian term vanishes we write $\bar\pa$ instead of $\bar\pa_{\Fl}$ and we take the puncture to be fixed at $1\in\pa D$.
\begin{Lemma}\label{Lem:Floermapindexpnctrs}
The map $\bar\pa_{\Fl;j\beta}\colon \cfig_{m\pi+\eta}(j\beta;\zeta,L)\to \tcfig_{m\pi+\eta}(\zeta)$ is a $C^{1}$ Fredholm map of index
\[
\ind(\bar\pa_{\Fl;j\beta})=n(1+j-m),
\]
which depends smoothly on $\zeta\in\pa D$ and $r\in[0,\infty)$.
\end{Lemma}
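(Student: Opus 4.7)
The plan is to reduce to standard Fredholm theory on a half-strip with Lagrangian boundary conditions in weighted Sobolev spaces, and then compute the index via the Lockhart--McOwen spectral flow formula, bootstrapping from the closed-disk index of Lemma \ref{Lem:Floermapindexclosed}.

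First I would verify that the linearization $D(\bar\pa_{\Fl})$ at any $u \in \cfig_{m\pi+\delta}(j\beta;\zeta,q)$ is Fredholm on the fiber. Near $\zeta$, working in the cylindrical-end coordinates $\tau+it\in[0,\infty)\times[0,1]$ and using the real analytic chart $\psi_q$ to straighten $L$ to $\R^n$ near $q$, the operator is a compact perturbation of the standard $\pa_s+J_0\pa_t$ on $\C^n$-valued functions with $\R^n$ boundary conditions on both sheets of the strip. The asymptotic operator $A=-J_0\pa_t$ has pure point spectrum $\pi\Z$, each eigenvalue having real multiplicity $n$ with eigenspace spanned by $\R^n\cdot e^{i\pi k t}$. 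Since $0<\delta<\pi$, the weight $m\pi+\delta$ avoids $\mathrm{spec}(A)$, so Lockhart--McOwen theory gives Fredholmness on the fiber; including the $n$-dimensional variation of $q\in L$ in the bundle $\cfig_{m\pi+\delta}(j\beta;\zeta,L)\to L$ then produces a Fredholm operator on the full domain with index equal to the fiberwise index plus $n$.

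Next I would handle the base case $m=0$ by directly comparing with the closed-disk configuration space $\cfig(j\beta)$ of Section \ref{sec:closeddisks}. Any $u\in\cfig(j\beta)$ takes some value $q=u(\zeta)\in L$ at the puncture, and linear elliptic regularity, which applies since $\gamma_r\otimes X_H$ vanishes near $\pa D$ and the Lagrangian boundary condition extends smoothly across $\zeta$, forces $u-q$ to decay exponentially at $\zeta$ at every rate $\delta<\pi$. Conversely every section of $\cfig_\delta(j\beta;\zeta,L)\to L$ arises from such a pair $(q,u')$. The two Fredholm problems are then canonically identified, yielding total index $n(1+j)$ by Lemma \ref{Lem:Floermapindexclosed}. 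The passage from weight $\delta$ to weight $m\pi+\delta$ is computed by Lockhart--McOwen spectral flow: increasing the weight across an eigenvalue of $A$ of multiplicity $k$ drops the Fredholm index by $k$. Here we cross the eigenvalues $\pi,2\pi,\dots,m\pi$ each of multiplicity $n$, for a total index change of $-mn$, giving $n(1+j)-mn=n(1+j-m)$ as claimed. Smooth dependence on $\zeta\in\pa D$ follows from the rotation-based $C^1$-trivializations of $\cfig_{m\pi+\delta}(\pa D,L)$ and $\tcfig_{m\pi+\delta}(\pa D)$ constructed just before the lemma; smooth dependence on $r$ is inherited from the smooth family $\gamma_r\otimes X_H$; and $C^1$-regularity of $\bar\pa_{\Fl}$ itself is standard for first-order differential operators with smooth coefficients between $\sblv^2$ and $\sblv^1$.

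The main obstacle I anticipate is making the identification in the base case rigorous: I must verify that the correspondence $(q,u')\mapsto q+u'$ is a homeomorphism of Banach manifolds with uniform control of the weighted $\sblv^2_\delta$-norm of $u-u(\zeta)$ on $\cfig$-bounded sets. This reduces to the explicit Fourier expansion of a $\bar\pa$-solution near a boundary puncture (where, crucially, no Reeb chord is present since both sheets land in the same Lagrangian $L$), together with interior elliptic estimates for the Floer equation that are uniform because the Hamiltonian term has compact support away from a neighborhood of $\pa D$.
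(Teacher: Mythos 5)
Your proposal is correct and follows essentially the same route as the paper: the index is obtained from the closed-disk index of Lemma \ref{Lem:Floermapindexclosed} by spectral flow of the asymptotic operator (eigenvalues $\pi\Z$, each of real multiplicity $n$) as the exponential weight is turned on, plus an extra $n$ from the $L$-directions in the domain of the bundle. The only difference is bookkeeping: you absorb the crossing of the eigenvalue $0$ into your $m=0$ identification of the weight-$\delta$ problem over $L$ with the closed-disk problem (exactly the Taylor/Fourier correspondence the paper uses in Lemma \ref{Lem:markedandpctrs}) and then cross $\pi,\dots,m\pi$, whereas the paper counts all $m+1$ eigenvalues $0,\pi,\dots,m\pi$ at once, giving the same total $n(1+j-m)$.
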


\begin{proof}
The proof is a modification of the proof of Lemma \ref{Lem:Floermapindexclosed}. The index of the linearized operator equals the index of the ordinary $\bar\pa$-operator with  positive exponential weight at the puncture $\zeta$ and with
Lagrangian boundary condition the loop $T_{u(z)}L$, $z\in\pa D$, which has Maslov index $jn$ if $u\in\cfig_{m\pi+\eta}(j\beta;\pa D,L)$.  The positive exponential weight lowers the index by $n\cdot (m+1)$ (in terms of spectral flow we pass the $m+1$ eigenvalues $0,\pi,\dots,m\pi$ of the asymptotic operator). There is an additional $n$-dimensional contribution to the index from the directions spanned by $L$ in the domain. We find that the total index equals $n+jn-(m+1)n+n$.
\end{proof}

Consider next the bundles over jet-spaces. Define 
\[
\bar\pa_{\Fl}^{(m)}\colon \cfig_{m\pi+\eta}(J^{m}(\pa D,L))\to \tcfig_{m\pi+\eta}(\pa D)
\]
as the composite 
\[
\bar\pa_{\Fl}^{(m)}=\bar\pa_{\Fl}\circ \Psi^{m},
\]
where $\Psi^{m}\colon\cfig_{m\pi+\eta}(J^{m}(L))\to\cfig_{\eta}(\pa D,L)$ is the map in \eqref{Eq:weight+Taylor}. Note that it follows from the definition of $\Psi^{m}$ that $\bar\pa_{\Fl}^{(m)}(u)$ lies in the target space with weights as claimed: the function added to $u$ in \eqref{Eq:weight+Taylor} is holomorphic in the strip like end. We write $\bar\pa_{\Fl;j\beta}^{(m)}$ for 
$\bar\pa_{\Fl}^{(m)}|_{\cfig_{m\pi+\eta}(J^{m}(j\beta;\pa D,L))}$. Following our usual practice, we write $\bar\pa^{(m)}$ and $\bar\pa^{(m)}_{j\beta}$, dropping the subscript $\Fl$ when the Hamiltonian term vanishes. 

\begin{Lemma}\label{Lem:Floermapjet}
The map $\bar\pa^{(m)}_{\Fl;j\beta}\colon \cfig_{m\pi+\eta}(J^{m}(\zeta,L))\to \tcfig_{m\pi+\eta}(\zeta)$ is a $C^{1}$ Fredholm map of index
\[
\ind(\bar\pa^{(m)}_{\Fl;j\beta})=n(1+j),
\]
which depends smoothly on $\zeta\in\pa D$ and $r\in[0,\infty)$.
\end{Lemma}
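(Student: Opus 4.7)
The plan is to reduce the statement directly to Lemma \ref{Lem:Floermapindexpnctrs} by exploiting the factorization $\bar\pa^{(m)}_{\Fl;j\beta} = \bar\pa_{\Fl;j\beta}\circ \Psi^{m}$ and keeping careful track of how the jet data enlarge the domain by a finite-dimensional factor. Concretely, the bundle $\cfig_{m\pi+\delta}(J^{m}(\zeta,L))\to \cfig_{m\pi+\delta}(\zeta,L)$ is, by the definition in Section \ref{sec:punctures}, a smooth vector bundle with fiber $J^{m}_{(\zeta,q)}(L)\cong (\R^{n})^{m}$ of real dimension $nm$. So at a point $(u,\cc)$ with $u\in \cfig_{m\pi+\delta}(j\beta;\zeta,q)$, the tangent space splits (after choosing a smooth local section of the bundle) as
\[
T_{(u,\cc)}\cfig_{m\pi+\delta}(J^{m}(\zeta,L)) \ \cong \ T_{u}\cfig_{m\pi+\delta}(\zeta,q) \oplus (\R^{n})^{m}.
\]

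Next I would identify the linearization. Differentiating $\Psi^{m}[u]=u+\alpha_q(u)\cdot v_{q}[\zeta;\cc]$ in the two summands yields a bounded linear map $D\Psi^{m}\colon T_{u}\cfig_{m\pi+\delta}\oplus(\R^{n})^{m}\to T_{\Psi^{m}[u]}\cfig_{\delta}$ which, in the $T_{u}\cfig_{m\pi+\delta}$-summand, is a bounded perturbation of the inclusion $\cfig_{m\pi+\delta}\hookrightarrow \cfig_{\delta}$ (the perturbation is compact because it is multiplication by the compactly supported cut-off $\alpha_q$ times the fixed profile $v_q[\zeta;\cc]$), and, in the $(\R^{n})^{m}$-summand, is the inclusion $\dot\cc\mapsto \alpha_q(u)\cdot v_{q}[\zeta;\dot\cc]$. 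Crucially, each $v_{q}[\zeta;\dot\cc]$ is holomorphic along the strip-like end and on $\pa D$ near $\zeta$, so $\bar\pa_{\Fl}(\Psi^{m}[u])$ and its linearization in the $\dot\cc$-direction decay as $e^{-\pi|\tau|}$ times the weight $e^{-(m\pi+\delta)|\tau|}$ of the unperturbed setting; in particular the composite $D\bar\pa^{(m)}_{\Fl;j\beta}$ genuinely lands in the weighted target $\tcfig_{m\pi+\delta}(\zeta)$, as claimed in the setup of $\bar\pa^{(m)}_{\Fl}$.

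With these pieces in hand the proof is essentially a dimension count. The Cauchy--Riemann part of $D\bar\pa^{(m)}_{\Fl;j\beta}$ acting on $T_{u}\cfig_{m\pi+\delta}$ is the Fredholm operator of Lemma \ref{Lem:Floermapindexpnctrs}, of index $n(1+j-m)$. Appending the finite-dimensional $(\R^{n})^{m}$ summand adds a bounded linear map whose source has real dimension $nm$; by stability of the Fredholm index under finite-rank enlargements of the domain we conclude that $D\bar\pa^{(m)}_{\Fl;j\beta}$ is Fredholm of index
\[
n(1+j-m)+nm \ = \ n(1+j),
\]
as asserted. Smooth dependence of the whole family on $(\zeta,r)\in\pa D\times[0,\infty)$ is inherited from the corresponding smoothness in Lemma \ref{Lem:Floermapindexpnctrs}, the smoothness of the chosen family $v_{q}[\zeta;\cc]$ and of the bundle trivializations of $\cfig_{m\pi+\delta}(\pa D,L)$, and the smooth dependence of the Hamiltonian term $\gamma_{r}\otimes X_{H}$ on $r$.

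The only nontrivial point to check is the first one sketched above, namely that the $\dot\cc$-variations are mapped into the weighted target $\tcfig_{m\pi+\delta}(\zeta)$ and not merely into $\tcfig_{\delta}(\zeta)$; this is the sole reason the weight $m\pi+\delta$ was chosen, and it is exactly where the holomorphicity of the Taylor profiles $\sum c_j e^{-j\pi z}$ along $\pa D$ is used. Once this is in place, everything else is a routine consequence of the material already in the excerpt, so I do not expect a serious obstacle.
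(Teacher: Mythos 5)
Your argument is correct and is essentially the paper's proof: the paper likewise deduces the lemma from Lemma \ref{Lem:Floermapindexpnctrs}, with the extra $mn$ in the index coming from the fiber dimension of $J^{m}(\pa D,L)\to\pa D\times L$, the point about $\bar\pa^{(m)}_{\Fl}(u)$ landing in the $(m\pi+\delta)$-weighted target being noted (as you do) right after the definition of $\Psi^m$ via holomorphicity of the Taylor profile in the strip-like end. Your write-up just spells out the linearization of $\Psi^m$ and the finite-rank index bookkeeping in more detail than the paper does.
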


\begin{proof}
This is a consequence of Lemma \ref{Lem:Floermapindexpnctrs}, with the $mn$ dimensions added in the domain corresponding to the fiber-dimension of $J^{m}(\pa D,L)\to\pa D\times L$. 
\end{proof}

We next show that for any $m$, if $0$ denotes the $0$-section, the solution spaces $(\bar\pa^{(m)})^{-1}(0)$ are $C^{1}$-diffeomorphic to the moduli spaces of disks with marked points discussed in Section \ref{sec:closeddisks}. We first describe the smooth structure on the moduli space in the case of vanishing Hamiltonian. Recall that in this case we take the puncture fixed at $1\in\pa D$. The definition of the $C^{1}$-structure on $(\bar\pa^{(m)})^{-1}(0)/G_{1}$ parallels the corresponding definition for $\MM$ in Section \ref{sec:closeddisks} and we give only a brief sketch. 

We use hypersurfaces $T=T_1\cup T_2$ that intersect a representative of a class transversely to fix gauge locally. (Note that only two marked points are now needed since the puncture is fixed by $G_1$.) The moduli space has a finite cover of such charts. To check that transition functions are smooth in the topology induced from the ambient configuration space, we must control the effect of conformal automorphisms on punctured disks. We thus consider the effect of moving the location of the auxiliary marked points on a punctured disk. Identify the domain of a punctured holomorphic disk $u$ with the upper half plane $H$, with puncture  at $\infty$ that will be fixed throughout and two marked points from intersections with hypersurfaces  at $-1,1$. If $\zeta_1,\zeta_2\in \pa H$ are intersection points with hypersurfaces arising from some other chart, the point corresponding to $u$ is $u\circ\phi$, where $\phi\colon H\to H$ is the unique automorphism with $\phi(1)=\zeta_1$ and $\phi(-1)=\zeta_2$. Any such $\phi$ is a composition of a translation, keeping the distance between the marked points fixed, and a scaling, keeping one of the marked points fixed and moving the other. 

A translation by a real constant $c$ in the upper half plane gives the coordinate change $\phi_c(w')=w+c$ on $H$. In the cylindrical ends, setting $w=e^{\pi z}$ and $w'=e^{\pi z'}$, the corresponding change of coordinates is
\begin{equation}\label{Eq:movemarked1}
z'=z+\frac{1}{\pi}\sum_{n>0} (-1)^{n+1}n^{-1} c^{n}e^{-n\pi z},
\end{equation}
which induces a $C^{1}$-diffeomorphism of the charts. To move only one of the punctures, we may assume that the other one is at $0\in\pa H$ via the above, and then scale by a real positive constant $\sigma_r(w')=rw$. The corresponding coordinate change in the cylindrical end is then given by
\begin{equation}\label{Eq:movemarked2}
z'=z+\log r,
\end{equation}
which again induces a $C^{1}$-diffeomorphism.  After these preliminaries we state the result that relates disks with and without punctures.
\begin{Lemma}\label{Lem:markedandpctrs}
For a generic non-trivial family of Hamiltonians $H_R$ we have, for $j\in\Z$,
\[
(\bar\pa_{\Fl;j\beta}^{(m)})^{-1}(0) \approx_{C^{1}} \FF^{\ast}(j\beta).
\]
For trivial Hamiltonian we have, for $j\in\Z$:
\[
(\bar\pa_{j\beta}^{(m)})^{-1}(0)/G_1\approx_{C^{1}}\MM^{\ast}(j\beta).
\]
\end{Lemma}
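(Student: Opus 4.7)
The strategy is to exhibit mutually inverse bijections between the two moduli spaces using $\Psi^{m}$ in one direction and a Taylor-template subtraction in the other, then upgrade to a $C^{1}$-diffeomorphism via Remark \ref{Rem:normsame}. I would define the forward map $\Phi\colon (\bar\pa^{(m)}_{\Fl;j\beta})^{-1}(0)\to\FF^{\ast}(j\beta)$ as follows. A solution $u$ lies in the fiber over some $(\zeta,q)\in\pa D\times L$ carrying jet data $\cc$, and satisfies $\bar\pa_{\Fl}(\Psi^{m}(u))=0$ by the very definition of $\bar\pa_{\Fl}^{(m)}$. The map $w=\Psi^{m}(u)$ is a finite-energy solution of the Floer equation on $D-\zeta$ with boundary on $L$: finiteness of energy is guaranteed by the positive weight $\delta>0$, and since the Hamiltonian is compactly supported and $\Psi^{m}$ inserts a holomorphic template at the puncture, the equation reduces to the plain $\bar\pa$-equation with Lagrangian boundary near $\zeta$. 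Standard boundary removal of singularities then extends $w$ smoothly across $\zeta$ to a smooth disk $\bar w\colon D\to\C^{n}$ with $\bar w(\pa D)\subset L$, and I set $\Phi(u)=(\bar w,\zeta)\in\FF(j\beta)\times\pa D=\FF^{\ast}(j\beta)$.

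For the inverse, given $(\bar w,\zeta)\in\FF^{\ast}(j\beta)$, set $q=\bar w(\zeta)$ and work in the biholomorphic coordinate patch $U_{q}$. In the cylindrical-end coordinate $z=\tau+it$ near $\zeta$, the eigenvalue analysis for the $\bar\pa$-operator with boundary values in $\R^{n}\cap U_{q}$ produces a convergent expansion $\bar w(z)-q=\sum_{k\ge 1}a_{k}e^{-k\pi z}$ with $a_{k}\in\R^{n}$; retain the initial segment $\cc=(a_{1},\dots,a_{m})$ and define $u=\bar w-\alpha_{q}(\bar w)\cdot v_{q}[\zeta,\cc]$ in $U_{q}$-coordinates. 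Subtracting the first $m$ Fourier modes produces a remainder decaying at rate $e^{-(m+1)\pi\tau}$, comfortably faster than the weight $e^{-(m\pi+\delta)\tau}$ with $\delta<\pi$ permits; hence $u\in\cfig_{m\pi+\delta}(J^{m}(\zeta,L))$ with jet data $\cc$, and $\Psi^{m}(u)=\bar w$ by construction, so $\bar\pa_{\Fl}^{(m)}(u)=0$. This shows $\Phi$ is a set-theoretic bijection.

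For $C^{1}$-smoothness I would invoke Remark \ref{Rem:normsame}: on the solution spaces all weighted Sobolev norms are equivalent to the $C^{0}$-norm via elliptic regularity, so the induced $C^{1}$-structures are intrinsic to the set of solutions and independent of the choice of ambient Banach manifold. Combined with the fact that $\Psi^{m}$ is smooth as a bundle map and that Fourier-coefficient extraction/template subtraction is smooth in each fixed local chart, this upgrades the bijection $\Phi$ to a $C^{1}$-diffeomorphism. In the trivial-Hamiltonian case the puncture is pinned at $1\in\pa D$ and both sides carry a $G_{1}$-action by reparametrization; removal of singularities commutes with biholomorphisms, so $\Phi$ is $G_{1}$-equivariant and descends to the asserted $C^{1}$-diffeomorphism $(\bar\pa^{(m)}_{j\beta})^{-1}(0)/G_{1}\approx_{C^{1}}\MM^{\ast}(j\beta)$, using that the $G_{1}$-action is free on non-constant disks so that both quotient $C^{1}$-structures are well defined (with the constant-map stratum in class $0\beta$ matching $L$ on both sides as the jet is automatically forced to vanish).

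The main technical obstacle will be to promote pointwise boundary removal of singularities to a statement about $C^{1}$-dependence of $\bar w$ on $u$ as $u$ varies in its Banach ambient space. This rests on standard boundary elliptic estimates together with the facts that $L$ is real analytic and $J$ is standard near $L$ (furnishing the smooth local models with prescribed boundary jets from Section \ref{sec:punctures}) and that the Hamiltonian has compact support away from $\pa D$, which localizes the regularity question to the pure $\bar\pa$-regime in a neighborhood of the puncture, where the relevant exponential-decay estimates and their smooth dependence on parameters are classical.
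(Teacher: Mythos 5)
Your route is the paper's route: near the marked point/puncture the Hamiltonian term vanishes and $J=J_0$ near $L$, so solutions are genuinely holomorphic there, and the identification of $(\bar\pa^{(m)}_{\Fl;j\beta})^{-1}(0)$ with $\FF^{\ast}(j\beta)$ is the correspondence between the Taylor expansion in the closed-disk model and the Fourier expansion $c_0+\sum_{k>0}c_ke^{-k\pi w}$ in the strip-end model, with the first $m$ coefficients carried by the jet datum and the tail (decaying like $e^{-(m+1)\pi\tau}$, compatible with the weight $m\pi+\delta$, $\delta<\pi$) carried by the weighted Sobolev element; $C^{1}$-ness then comes from the fact that these coefficients control and are controlled by the respective norms, which is exactly the content of the paper's (very terse) proof, as is the $G_1$-equivariance remark in the unperturbed case.

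There is, however, one step that fails as written. You define the inverse by $u=\bar w-\alpha_q(\bar w)\cdot v_q[\zeta,\cc]$ and assert that $\Psi^{m}(u)=\bar w$ ``by construction''. But $\Psi^{m}[u](z)=u(z)+\alpha_q(u(z))\cdot v_q[\zeta,\cc](z)$ evaluates the cut-off at $u(z)$, not at $\bar w(z)$. At points $z$ where $\bar w(z)$ lies in the transition region of $\alpha_q$ (where $0<\alpha_q<1$) and where $v_q[\zeta,\cc](z)\ne 0$ --- and $v_q$ is only required to equal the pure exponential template for $\tau\ge\rho$ and to interpolate somehow before that, so it need not vanish there --- one has $\alpha_q(u(z))\ne\alpha_q(\bar w(z))$, hence $\Psi^{m}(u)\ne\bar w$, and then your $u$ is not necessarily a zero of $\bar\pa^{(m)}_{\Fl}$; so the asserted bijection is not established by this formula. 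The defect is localized and repairable: near the puncture both $u$ and $\bar w$ take values in the region $\{\alpha_q\equiv 1\}$, so there the subtraction is exact, and this is the only region where the jet/weight bookkeeping matters; away from the puncture one must instead solve the pointwise equation $u+\alpha_q(u)\,v_q[\zeta,\cc]=\bar w$ for $u$ (a contraction argument, using smallness of $v_q$ relative to the Lipschitz constant of $\alpha_q$, or an appropriate choice of the auxiliary family $v_q$), or avoid the explicit global formula altogether and argue directly through the coefficient correspondence near the puncture, which is effectively what the paper does. The remaining ingredients of your argument (removal of singularities for the forward map, preservation of the homotopy class, and the norm-equivalence argument for $C^{1}$-smoothness) are sound and coincide with the paper's.
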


\begin{proof}
The operator $F$ agrees with the standard $\bar\pa$-operator in a neighborhood of the boundary. Using $U_{u(\zeta)}$-coordinates, $\zeta\in\pa D$, we have solutions near $\zeta$ in the closed disk model (which we think of as the half plane around $0$) given by their Taylor expansion
\[
u(z)=\sum_{k\ge 0} c_k z^{k}.
\]
The corresponding Fourier expansion for a strip like end, $z=e^{-\pi w}$, is
\[
u(w)=c_0+\sum_{k>0} c_k e^{-\pi k w} ,
\]
where $c_k\in\R^{n}$, $k\ge 0$.  The Taylor coefficients of a holomorphic function are controlled by and control the $\|\cdot\|_2$-norm; similarly, the Fourier coefficients $c_j$, $j>0$, are controlled by the $\|\cdot\|_{2;\delta}$-norm, whilst $c_0$ is controlled by the topology of $L$. It follows that the natural map is $C^{1}$ with respect to the topologies induced from the ambient spaces. 
\end{proof}

\section{The Floer-Gromov boundary of $\FF(0\beta)$}
\label{Sec:theboundary}
It follows from Lemma \ref{Lem:bubbles} that the Floer-Gromov boundary of $\FF(0\beta)$ is the fibered  product
\[
\NN=\FF^{\ast}(-\beta)\times_L \MM^{\ast}(\beta).
\]
In this section we show that $\NN$ is an orientable $C^{1}$-manifold for generic data, and then we consider gauge fixing of the second factor in broken disks that arise as limits of sequences of smooth disks in $\FF(0\beta)$.

\subsection{Transversality for fibered products}
\label{sec:FF(-b)}
By Lemma \ref{Lem:tvmdli}, $\FF(-\beta)$ is an orientable closed $1$-manifold and consequently
\[
\FF^{\ast}(-\beta)=\FF(-\beta)\times\pa D
\] 
is a union of tori. As mentioned above there is a natural evaluation map
\[
\ev\colon\FF^{\ast}(-\beta)\to L,\quad \ev(u,\zeta)=u(\zeta).
\]
As explained in Section \ref{ssec:capping} we will use a filling of $\FF^{\ast}(-\beta)$. Thus let $\DD$ be a collection of solid tori such that $\pa\DD=\FF^{\ast}(-\beta)$ and let $\ev\colon \DD\to L$ be an extension of $\ev\colon\FF^{\ast}(-\beta)\to L$ that is constant in a small collar neighborhood of the boundary.    

\begin{Lemma}\label{Lem:fiberprod1}
For a generic family of Hamiltonians and almost complex structure such that Lemma \ref{Lem:tvmdli} holds, the map
\[
\ev\times\ev\colon\FF^{\ast}(-\beta)\times\MM^{\ast}(\beta)\to L\times L
\]
is transverse to $\Delta_L$ and consequently $\NN$ is a $C^{1}$-smooth orientable manifold. 
\end{Lemma}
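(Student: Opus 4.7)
The proof is a standard parametric transversality argument. By Lemma \ref{Lem:tvmdli} and the discussion preceding Lemma \ref{Lem:tvholdisks}, for generic $(J,H)$ the individual moduli spaces $\FF^*(-\beta)$ and $\MM^*(\beta)$ are already transversely cut out closed $C^1$-manifolds of dimensions $2$ and $2n-2$ respectively, each equipped with a $C^1$-smooth evaluation map to $L$. What must still be arranged is transversality of $\ev\times\ev$ to the diagonal $\Delta_L\subset L\times L$.

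My approach is to form the universal fiber product
\[
\widetilde{\NN} \ = \ \left\{(u_1,\zeta,u_2,J,H) \suchthat \bar\partial_{\Fl,(J,H)}u_1=0,\ \bar\partial_J u_2=0,\ u_1(\zeta)=u_2(1)\right\}/G_1,
\]
restricted to classes $-\beta$ and $\beta$, over a Banach neighborhood of admissible pairs $(J,H)\in\JJ_L\times C^{\infty}(\C^n\times[0,1],\R)$. Linearization of the defining conditions produces a Fredholm complex whose surjectivity reduces to three statements: surjectivity of $D\bar\partial_{\Fl}$ on $u_1$ after $H$-variation, surjectivity of $D\bar\partial_J$ on $u_2$ after $J$-variation, and surjectivity of the evaluation-matching $(\eta_1,\eta_2)\mapsto\eta_1(\zeta)-\eta_2(1)\in T_qL$, where $q=u_1(\zeta)=u_2(1)$. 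The first two are by now standard. For the third, I would argue that $u_2$ represents the primitive class $\beta$, hence is somewhere injective, with injective points dense in the interior of the disk; such points lie at positive distance from $L$ (else $u_2|_{\pa D}$ would be a constant loop, contradicting $[u_2]=\beta\ne 0$), so a small $J$-perturbation supported near such a point stays within $\JJ_L$ and produces a variation $\eta_2\in\ker D\bar\partial_J$ whose value $\eta_2(1)$ realizes any prescribed vector in $T_qL$. Sard--Smale applied to the projection $\widetilde{\NN}\to \JJ_L\times C^{\infty}(\C^n\times[0,1],\R)$ then yields transversality of $\NN$ for generic $(J,H)$, with dimension
\[
\dim\NN \ = \ 2+(2n-2)-n \ = \ n,
\]
matching the expected dimension of $\pa\FF(0\beta)$.

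Orientability follows from Lemma \ref{Lem:Maslov=0}: $L$ is spin, so determinant lines of Cauchy--Riemann operators with boundary on $L$ admit canonical trivializations, yielding orientations on $\FF^*(-\beta)$ and $\MM^*(\beta)$; a transverse fiber product of oriented manifolds over the oriented manifold $L$ is again oriented. The main technical obstacle I anticipate is the surjectivity of the linearized evaluation-matching within the constrained class $\JJ_L$, where $J=J_0$ near $L$; once the admissibility of interior $J$-perturbations is verified as above, the rest of the argument is the familiar template for transverse fiber products of pseudoholomorphic moduli spaces.
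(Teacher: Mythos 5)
Your overall route is the same as the paper's: the paper simply applies ``standard arguments'' to the combined Fredholm map $\mathbf{F}=(\bar\pa_{\Fl;-\beta},\bar\pa_{\beta},\pr_L,\pr_L)$ on the weighted punctured-disk configuration spaces, declares it transverse to the product of the zero sections and $\Delta_L$ for generic $(H,J)$, and takes $\NN=\mathbf{F}^{-1}(\Delta_L)/G_1$; your universal-moduli-space/Sard--Smale formulation, the dimension count $2+(2n-2)-n=n$, and the orientability via the spin structure of $L$ are all consistent with that. The trouble is in the two specific claims you use to justify the key surjectivity.

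First, the assertion that interior injective points of $u_2$ ``lie at positive distance from $L$'' is false in general, and the parenthetical reason (that otherwise $u_2|_{\pa D}$ would be constant) is a non sequitur: interior points of a holomorphic disk with boundary on $L$ can perfectly well map to, or arbitrarily near, $L$. What you actually need---and what is true---is that \emph{some} injective point maps outside the fixed $\delta$-neighborhood of $L$ on which every $J\in\JJ_L$ must equal $J_0$; for $\delta$ small this follows because a nonconstant disk with boundary on $L$ cannot lie inside a Weinstein neighborhood of $L$ (by Stokes its $\omega_0$-area would vanish there), combined with openness of the complement and density of injective points. Second, and more seriously, your three-way reduction tacitly treats the perturbations of the two factors as independent, but $J$ is shared: a variation $Y$ supported near a point of the image of $u_2$ also enters the linearized Floer equation of $u_1$ whenever the image of $u_1$ meets its support, so the tangent vector of the universal fiber product realizing a prescribed value $\eta_2(1)$ comes with a correction of $\eta_1$ whose value $\eta_1(\zeta)$ you do not control; surjectivity of the difference map $(\eta_1,\eta_2)\mapsto\eta_1(\zeta)-\eta_2(1)$ therefore does not follow from the three statements you list. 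The standard repair is to decouple the factors: either argue additionally that the support of $Y$ can be taken disjoint from the (compact, two-dimensional) image of $u_1$, or---more robustly, and in keeping with the paper's ``generic family of Hamiltonians''---realize the evaluation-matching through variations of $H$, which perturb only the Floer factor (this uses the usual Floer--Hofer--Salamon-type argument for Hamiltonian perturbations supported where $\gamma_r\neq 0$), and pair such a variation with the zero tangent vector on the $\MM^{\ast}(\beta)$ side. With either repair the rest of your argument goes through and agrees with the paper's proof.
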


\begin{proof}
Standard arguments show that for generic $H$ and $J$ the map
\[
\mathbf{F}\colon\cfig_{\delta}(-\beta;\pa D, L)\times\cfig_{\delta}(\beta; L)\to
\tcfig_{\delta}(\pa D)\times \tcfig_{\delta}\times L\times L,
\]
given by
\[
\mathbf{F}(u_1,u_2)=\left(\bar\pa_{\Fl;-\beta}(u_1),\bar\pa_\beta(u_2),\pr_L(u_1),\pr_L(u_2)\right),
\]
where $\pr_L$ denote the bundle projection into $L$, is a Fredholm map of index $2 + 2n-2n=2$ which is transverse to the product of the $0$-sections and the diagonal. The fibered product is then $\NN=\mathbf{F}^{-1}(\Delta_L)/G_1$, where $G_1$ acts on the second factor, and is an orientable $n$-manifold. 
\end{proof}

\begin{Lemma}\label{Lem:fiberprod2}
For a generic family of Hamiltonians and almost complex structure such that Lemmas \ref{Lem:tvmdli} and \ref{Lem:fiberprod1} hold, and for a generic extension $\ev\colon\DD\to L$, the map
\[
\ev\times\ev\colon\DD\times\MM^{\ast}(\beta)\to L\times L
\]
is transverse to $\Delta_L$ and consequently $\TT$ is a smooth orientable manifold with $\pa \TT=\NN$. 
\end{Lemma}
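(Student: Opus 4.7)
The plan is a standard parametric transversality argument, perturbing only the extension $\ev\colon \DD \to L$ on the interior of $\DD$ while keeping the Hamiltonian and almost complex structure underlying Lemmas \ref{Lem:tvmdli} and \ref{Lem:fiberprod1} fixed. First, transversality at intersection points $(x,u)$ with $x\in\pa\DD$ is automatic: because $\ev$ is constant in the normal direction throughout a collar neighborhood $\pa\DD\times[0,\epsilon)\subset\DD$, the image of $d(\ev\times\ev)$ at $(x,u)$ is unchanged if $T_x\DD$ is replaced by its subspace $T_x(\pa\DD)$, and transversality to $\Delta_L$ then follows directly from Lemma \ref{Lem:fiberprod1}. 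Hence only intersection points with $x$ in the interior of $\DD$ require attention.

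To treat the interior, form the Banach space $\mathcal{E}$ of $C^k$-extensions (with $k$ large) of $\ev|_{\pa\DD}$ that agree with the originally fixed $\ev$ on a slightly smaller sub-collar of $\pa\DD$, and consider the universal evaluation
\[
\mathrm{Ev}\colon \mathcal{E}\times\mathrm{int}(\DD)\times\MM^{\ast}(\beta)\to L\times L,\qquad (f,x,u)\mapsto (f(x),\ev(u)).
\]
Bump functions supported near $x\in\mathrm{int}(\DD)$ and taking values in prescribed directions of $T_{f(x)}L$ lie in $\mathcal{E}$, so the partial differential of $\mathrm{Ev}$ in the $\mathcal{E}$-direction surjects onto the first factor $T_{f(x)}L$; adding $T\Delta_L$ then fills the target, showing $\mathrm{Ev}$ is transverse to $\Delta_L$. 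Applying the Sard-Smale theorem to the projection $\mathrm{Ev}^{-1}(\Delta_L)\to\mathcal{E}$ produces a residual set of regular values, and smooth approximation then yields a smooth extension $\ev$ with the desired interior transversality. Combined with the automatic boundary transversality, $\ev\times\ev$ is transverse to $\Delta_L$ globally, so $\TT$ is a $C^1$-manifold of dimension $\dim(\DD)+\dim(\MM^{\ast}(\beta))-\dim(L)=2+(2n-2)-n=n$, with boundary $\TT\cap(\pa\DD\times\MM^{\ast}(\beta))=\NN$.

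Orientability follows from orientability of each of the factors---$\DD$ tautologically, $\MM^{\ast}(\beta)$ from Lemma \ref{Lem:fiberprod1} together with Proposition \ref{Prop:Mismfld}, and $L$ because $\mu_L=2k$ is even---together with the standard rule that the transverse preimage of an oriented submanifold under a map of oriented manifolds inherits a canonical orientation, which restricts on $\pa\TT$ to the previously fixed orientation on $\NN$. I do not expect a substantive obstacle: the mildly non-standard point is the interplay between the frozen collar and the interior perturbation, and this is handled by the observation above that on the collar, where the original $\ev$ is retained, the proof of Lemma \ref{Lem:fiberprod1} already suffices, so the Sard-Smale step only needs to arrange transversality on the open set where the perturbation is genuinely free.
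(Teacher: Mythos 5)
Your argument is essentially sound, and it takes a mildly different route from the paper. The paper does not perturb only the filling map: its proof packages everything into a universal Fredholm map $\mathbf{F}\colon\DD\times\cfig_{\delta}(\beta;L)\to\tcfig_{\delta}(\pa D)\times L\times L$, $\mathbf{F}(u_1,u_2)=(\bar\pa_\beta(u_2),\ev(u_1),\ev(u_2))$, on the Banach configuration space, achieves transversality to the product of the zero-section and $\Delta_L$ by a small perturbation of $H$, $J$ and the extension, and then takes $\TT=\mathbf{F}^{-1}(\Delta_L)/G_1$. You instead keep $H$ and $J$ (hence Lemmas \ref{Lem:tvmdli} and \ref{Lem:fiberprod1}) frozen, treat $\MM^{\ast}(\beta)$ as an already cut-out $C^1$-manifold, and run a parametric transversality/Sard--Smale argument only in the space of extensions of $\ev|_{\pa\DD}$, with the collar-constancy of $\ev$ making boundary (and frozen-collar) transversality automatic from Lemma \ref{Lem:fiberprod1}. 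This is a legitimate and in some ways cleaner reduction --- it matches the ``generic extension'' phrasing of the statement and avoids re-perturbing the data underlying the earlier lemmas --- at the cost of relying on the $C^1$ (rather than Banach-space) model of $\MM^{\ast}(\beta)$ and its evaluation map, a regularity point the paper also treats as standard.

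One concrete error to fix: your dimension count. The components of $\DD$ are solid tori filling the $2$-tori of $\FF^{\ast}(-\beta)$, so $\dim\DD=3$, not $2$; hence
\[
\dim\TT \ = \ 3+(2n-2)-n \ = \ n+1,
\]
consistent with the paper's index computation ($3+2n-2n=3$ before imposing the diagonal and quotienting by $G_1$) and, more importantly, consistent with your own claim $\pa\TT=\NN$, since $\NN$ is $n$-dimensional and cannot bound an $n$-dimensional manifold. With that correction, and with the orientability assertion for $\MM^{\ast}(\beta)$ taken from the paper's discussion in Section \ref{ssec:capping} rather than from Proposition \ref{Prop:Mismfld} (which does not state orientability), your proof goes through.
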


\begin{proof}
Standard arguments show that for generic $\DD$, after small perturbation of $H$ and $J$, the map
\[
\mathbf{F}\colon\DD\times\cfig_{\delta}(\beta; L)\to
\tcfig_{\delta}(\pa D)\times L\times L,
\]
given by
\[
\mathbf{F}(u_1,u_2)=\left(\bar\pa_\beta(u_2),\ev(\delta),\ev(u_2)\right),
\]
is a Fredholm map of index $3+2n-2n=3$ which is transverse to the product of the $0$-section and the diagonal. The fibered product is then $\TT=\mathbf{F}^{-1}(\Delta_L)/G_1$, where $G_1$ acts on the second factor, and is an orientable $(n+1)$-manifold with boundary equal to the fibered product over $\pa\DD$, which is $\NN$.
\end{proof}

\begin{Remark}
The $C^{1}$-structures of $\NN$ and $\TT$ are defined through finite open covers, in analogy with definition of the $C^{1}$-structure of $\MM^{\ast}(\beta)$, to take account of the quotient in the second factor by the group $G_1$ of conformal automorphisms.
\end{Remark}

In order to find a $C^{1}$-collar neighborhood of the Gromov-Floer boundary of $\FF(0\beta)$ we will require more detailed information on $\NN$. We write elements $\xi\in\NN$ as $\xi=((u_1,\zeta),u_2)$, where $(u_1,\zeta)\in\FF^{\ast}(-\beta)=\FF(-\beta)\times\pa D$ and $u_2\in\MM^{\ast}(\beta)$ and where $\ev(u_1,\zeta)=\ev(u_2)$.  Recall that $\ev(u_2)=u_2(1)$ and define
\begin{equation}\label{Eq:1stder=0}
\NN_{0}=\{\xi\in\NN\colon du_2(1)=0\}.
\end{equation}  
Below we will write $\pr_2\colon \FF^{\ast}(-\beta)\times\MM^{\ast}(\beta) \rightarrow \MM^{\ast}(\beta)$ for the projection to the second factor. Recall that $\NN_{0}\subset\NN\subset\FF^{\ast}(-\beta)\times\MM^{\ast}(\beta)$. 

\begin{Lemma}\label{Lem:singNN}
For a generic Hamiltonian and almost complex structure such that Lemmas \ref{Lem:tvmdli} and \ref{Lem:fiberprod1} hold, $\NN_0$ is a transversely cut out $0$-manifold and $\pr_{2}\colon\NN_{0}\to \MM^{\ast}(\beta)$ is an embedding. Furthermore, for any $\xi=((u_1,\zeta),u_2)\in\NN_0$, the second derivative of $u_2$ at the marked point satisfies $d^{(2)}u_2(1)\ne 0$. 
\end{Lemma}

\begin{proof}
Consider the map
\[
\mathbf{G}\colon \cfig_{\delta}(-\beta;\pa D,L)\times \cfig_{2\pi+\delta}(\beta;J^{2}(L))
\ \longrightarrow \
\tcfig_{\delta}(\pa D)\times\tcfig_{2\pi+\delta}\times L\times J^{2}(L),  
\]
where
\[
\mathbf{G}\left((u_1,\zeta),(u_2,\cc_{2})\right)=
\left(\bar\pa_{\Fl;-\beta}(u_1),\bar\pa_{\beta}(u_2,\cc_2),\ev(u_1,\zeta),\pr_{J^{2}(L)}(u_{2},\cc_2)\right).
\]
This is a Fredholm map of index $2+2n-4n=2-2n$. Consider the preimage of the product of $0$-sections and the subset of $L\times J^{2}(L)$ over $\Delta_L$ with first derivative component in $J^{2}(L)$ equal to $0$. Generically the solution space has dimension $2-2n+2n=2$. The $2$-dimensional automorphism group $G_1$ acts on the holomorphic component and it follows that $\NN_{0}$ is a transversely cut out $0$-manifold. For the statement on the second derivative we consider instead the inverse image of the product of $0$-sections and the subset of $L\times J^{2}(L)$ over $\Delta_L$ with both first and second derivative equal to $0$. Here the formal dimension equals $2-n$ and we conclude that the locus is generically empty, as claimed. A similar transversality argument shows that $\pr_{2}|_{\NN_{0}}$ is an embedding.
\end{proof}

Similarly, define
\begin{equation}\label{Eq:1stder=0'}
\TT_{0}=\{\xi\in\TT\colon du_2(1)=0\}
\end{equation}  
The corresponding result in this situation is the following:

\begin{Lemma}\label{Lem:singTT}
For a generic Hamiltonian and almost complex structure such that Lemmas \ref{Lem:tvmdli} and \ref{Lem:fiberprod2} hold, $\TT_0$ is a transversely cut out $1$-manifold. Furthermore, for any $\xi=((u_1,\zeta),u_2)\in\TT_0$, the second derivative of $u_2$ at the marked point satisfies $d^{(2)}u_2(1)\ne 0$. 
\end{Lemma}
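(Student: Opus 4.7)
The plan is to mirror the proof of Lemma \ref{Lem:singNN}, substituting the filling $\DD$ (of genuine dimension $3$) for the Floer-disk factor $\cfig_{\delta}(-\beta;\pa D,L)\times[0,\infty)$ (whose Fredholm contribution had formal dimension $2$). Concretely, I would consider the $C^{1}$ Fredholm map
\[
\mathbf{G}\colon \DD\times \cfig_{2\pi+\delta}(\beta;J^{2}(L)) \ \longrightarrow \ \tcfig_{2\pi+\delta}\times L\times J^{2}(L),
\]
defined by
\[
\mathbf{G}\bigl(\delta,(u_{2},\cc_{2})\bigr) \ = \ \bigl(\bar\pa^{(2)}_{\beta}(u_{2},\cc_{2}),\ \overline{\ev}(\delta),\ \pr_{J^{2}(L)}(u_{2},\cc_{2})\bigr).
\]
By Lemma \ref{Lem:Floermapjet} we have $\ind(\bar\pa^{(2)}_{\beta})=2n$, so $\ind(\mathbf{G})= 3+2n-4n = 3-2n$.

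First I would establish transversality of $\mathbf{G}$ by a Sard--Smale argument, perturbing $(H,J)$ together with the extension $\overline{\ev}\colon \DD\to L$ on the interior of $\DD$, where it is free; in the collar of $\pa\DD=\FF^{\ast}(-\beta)$ it remains fixed. Surjectivity of the universal linearization onto the $\tcfig_{2\pi+\delta}\oplus J^{2}(L)$-directions follows from the jet-perturbation machinery of Section \ref{sec:punctures} (exactly as in Lemma \ref{Lem:singNN}), while surjectivity onto $L$ comes from free interior variations of $\overline{\ev}$.

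With transversality in place, the preimage of the codimension-$2n$ submanifold $\{0\}\times\Sigma_{1}$, with $\Sigma_{1}=\{(q,q,0,\cc)\in L\times J^{2}(L):q\in L,\ \cc\in\R^{n}\}$, is a $C^{1}$-manifold of formal dimension $(3-2n)+2n=3$ on which the $2$-dimensional group $G_{1}$ acts freely; the quotient is exactly $\TT_{0}$, now a $1$-manifold. For the second-derivative statement I would repeat the argument with the codimension-$3n$ stratum $\Sigma_{2}=\{(q,q,0,0):q\in L\}$; its preimage has formal dimension $(3-2n)+n=3-n$, which is strictly negative for $n=2k\ge 6$, so after generic perturbation this locus is empty.

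The main obstacle is the Sard--Smale transversality step near $\pa\DD$, where $\overline{\ev}$ is rigid and all perturbative freedom must come from $(H,J)$. However, boundary points of $\TT_{0}$ are precisely the points of $\NN_{0}$, for which transversality has already been arranged in Lemma \ref{Lem:singNN}; the required global transversality for $\mathbf{G}$ is therefore obtained by combining the already-established boundary data with interior perturbations of $\overline{\ev}$ and $J$. Once this is done, the identification $\pa \TT_{0}=\NN_{0}$ is automatic from the definitions, and the conclusion follows.
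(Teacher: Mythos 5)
Your proposal is correct and is essentially the paper's own argument: the paper proves this lemma as a ``straightforward modification'' of the proof of Lemma \ref{Lem:singNN}, which is exactly what you carry out, replacing the Floer-disk factor of formal dimension $2$ by the $3$-dimensional filling $\DD$ so that the counts become $3-2n+2n=3$ (hence a $1$-manifold after the free $G_1$-quotient) and $3-n<0$ for the locus with vanishing second derivative. Your handling of transversality near $\pa\DD$ (boundary data already transverse by Lemma \ref{Lem:singNN}, interior freedom from the extension $\overline{\ev}$ and from $J$) matches the genericity hypotheses of Lemmas \ref{Lem:fiberprod2} and \ref{Lem:singNN} assumed in the statement.
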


\begin{proof}
Straightforward modification of the proof of Lemma \ref{Lem:singNN}.
\end{proof}

\subsection{Gauge fixing} 
\label{sec:gauge}
We describe normal forms for elements in $\NN$ near the point of intersection of the two constituent disks. Consider a pair $((u_1,\zeta),u_2)\in\FF^{\ast}(-\beta)\times\MM^{\ast}(\beta)$ that contributes to $\NN$, i.e.~such that $u_1(\zeta)=u_2(1)$.  In holomorphic coordinates $(\C^{n},\R^{n})$ centered at $u_1(\zeta)\in L\subset\C^{n}$, the disk $u_2$ has a Fourier expansion of the following form, for $z\in[\rho,\infty)\times[0,1]$:  
\begin{equation}\label{Eq:normalform}
u_{2}(z)=\sum_{n>0} c_n e^{-n\pi z},
\end{equation}
where $c_n\in\R^{n}$. If $((u_1,\zeta),u_2)\in\NN_0$ then $c_1=0$ and $c_2\ne 0$ and if $((u_1,\zeta),u_2)$ lies outside a fixed neighborhood of $\NN_0$ then $|c_1|>\delta>0$ for some $\delta$. 

Consider small nested balls $B(u_1(\zeta);\epsilon)\subset B(u_1(\zeta);2\epsilon)$ around $u_1(\zeta)$ in $L$ and a holomorphic disk $u_2\in \MM^{\ast}(\beta)$ represented by $\hat u_2\colon (H,\pa H)\to(\C^{n},L)$ with $\hat u_2(\infty)=u_1(\zeta)$. Then for any parameterization $\hat w_2\colon (H,\pa H)\to(\C^{n},L)$ of a holomorphic disk $w_2\in\MM^{\ast}(\beta)$ in a small neighborhood of $u_2$, let $I_{2\epsilon}\subset\pa H$ denote the interval around the puncture $\infty$ which is the connected component of $\hat w_2^{-1}(B(2\epsilon;u_1(\zeta)))\cap \pa H$ that contains $\infty$. 
We consider points in $I_{2\epsilon}$ which map to $\pa B(\epsilon;u_1(\zeta))$ and we call them $(\epsilon,(u_1,\zeta))$-points, where $(u_1,\zeta)\in\FF^{\ast}(-\beta)$. Fix a metric on the closed manifold $\MM^{\ast}(\beta)$ and write $V(u_2;\delta)$ for the $\delta$-neighborhood of $u_2\in\MM^{\ast}(\beta)$. 

Lemma \ref{Lem:singNN} implies that $\NN_0$ is a compact $0$-manifold such that the restriction $\pr_{2}|_{\NN_{0}}$ is an embedding into $\MM^{\ast}(\beta)$. In particular, $\pr_2(\NN_0)$ is a finite subset. We write 
\[
\pr_2(\NN_0)=\{u_2^1,\dots u_2^{m}\}  
\]
and for $\delta>0$ we let
\[
V(\delta)=\bigcup_{j=1}^{m} V(u_2^{j};\delta).
\]

\begin{Lemma}\label{Lem:interswnbhd}
For $\epsilon>0$ sufficiently small, there exists $\delta_1>\delta_0>0$ such that $V(u_2^{j};\delta_1)$, $j=1,\dots,m$ are mutually disjoint and such that the following hold for all $(u_1,\zeta)\in\FF^{\ast}(-\beta)$:
\begin{enumerate}
\item If $u_2\notin V(\delta_1)$ and $\ev(u_2)=\ev(u_1,\zeta)$ then $u_2$ has exactly 2 $(\epsilon,(u_1,\zeta))$-points in $I_{2\epsilon}$, one in each component of $I_{2\epsilon}-\{\infty\}$, and the corresponding intersections with $\pa B(u_1(\zeta);\epsilon)$ are transverse.
\item If $u_2\in V(\delta_1)-V(\delta_0)$ and $\ev(u_2)=\ev(u_1,\zeta)$, then there are 4 $(\epsilon,(u_1,\zeta))$-points in $I_{2\epsilon}$, one in one of the components of $I_{2\epsilon}-\{\infty\}$ and three in the other, and the corresponding intersections with $\pa B(u_1(\zeta);\epsilon)$ are transverse.
\item If $u_2\in V(\delta_0)$ for some $j=1,\dots,m$ and $\ev(u_2)=\ev(u_1,\zeta)$, then there are 2, 3, or 4 $(\epsilon,(u_1,\zeta))$-points in $I_{2\epsilon}$, one in one of the components of $I_{2\epsilon}-\{\infty\}$ and all others in the other component, and the two intersection points with $\pa B(u_1(\zeta);\epsilon)$ most distant from $\infty$ are transverse.
\end{enumerate}
Note also $\delta_1\to 0$ as $\epsilon\to 0$.
\end{Lemma}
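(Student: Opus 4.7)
The plan is to reduce the count and transversality to an explicit analysis of the leading Fourier asymptotics on the strip-like end near the puncture. Because $\NN$ is compact (Lemma \ref{Lem:fiberprod1}), the Fourier expansion \eqref{Eq:normalform} yields a uniform $\rho_0$ and a constant $C$ with
\[
\bigl\|u_2(\tau+it) - c_1 e^{-\pi(\tau+it)} - c_2 e^{-2\pi(\tau+it)}\bigr\|_{C^1([\rho_0,\infty)\times[0,1])} \le C\,e^{-3\pi\rho_0}
\]
for every $u_2\in\pr_2^{-1}(\NN)$. By Lemma \ref{Lem:singNN}, $\NN_0$ is finite and $c_2\ne 0$ at each of its points, so by continuity $|c_2|$ is bounded below on a fixed neighborhood of $\pr_2^{-1}(\NN_0)$, while $|c_1|$ is bounded below on the complement of any fixed such neighborhood. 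I first choose $\epsilon$ small enough that $I_{2\epsilon}$ is contained in $\{s\le e^{-\pi\rho_0}\}$ with $s=e^{-\pi\tau}$; this is possible by the uniform exponential decay.

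Next I substitute $s=e^{-\pi\tau}$: since the $c_n$ are real (because $u_2|_{\pa D}$ takes values in $L\subset\C^n$), the two boundary arcs of the end near $\infty$ parameterize as
\[
\phi_\pm(s) \;=\; \pm c_1 s + c_2 s^2 + O(s^3).
\]
Intersections of $I_{2\epsilon}$ with $\pa B(u_1(\zeta);\epsilon)$ correspond to positive roots of $|\phi_\pm(s)|=\epsilon$, and transversality to $\tfrac{d}{ds}|\phi_\pm|^2\ne 0$ at the root. The model polynomial $|\phi_\pm|^2 = |c_1|^2 s^2 \pm 2\langle c_1,c_2\rangle s^3 + |c_2|^2 s^4$ is strictly monotone for $s>0$ on the aligned side where $\pm\langle c_1,c_2\rangle\ge 0$, giving one transverse root; this is the \emph{nice} arc appearing in (1) and (3). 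On the opposite side, elementary calculus shows the model has an interior local maximum of size comparable to $|c_1|^4/|c_2|^2$ near $s\sim |c_1|/|c_2|$, so the root count there is three (all transverse) when $|c_1|^4/|c_2|^2\gg\epsilon^2$, one (transverse) when $|c_1|^4/|c_2|^2\ll\epsilon^2$, and a tangency at the threshold.

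I would then define $V(\delta_0)\subset V(\delta_1)$ as sufficiently small open neighborhoods of $\pr_2^{-1}(\NN_0)$ on which $|c_1|^4/|c_2|^2$ is bounded by $\epsilon^2/2$ and $2\epsilon^2$ respectively; this is possible because $c_1=0$ on $\pr_2^{-1}(\NN_0)$ and $c_1,c_2$ are continuous. The three regimes $u_2\notin V(\delta_1)$, $u_2\in V(\delta_1)\setminus V(\delta_0)$, and $u_2\in V(\delta_0)$ then produce cases (1), (2), and (3) respectively, and the relation $\delta_1\to 0$ as $\epsilon\to 0$ is automatic from the construction. Transversality of the two outermost intersections holds in all three cases because, at $s\asymp\sqrt{\epsilon/|c_2|}$, one has $\tfrac{d}{ds}|\phi_\pm|^2\gtrsim |c_2|^2 s^3$, which is bounded below uniformly.

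The main obstacle is showing that the $O(s^3)$ Fourier tail does not corrupt the combinatorics read off from the degree-four model. This works because the relevant roots live at $s\gtrsim\epsilon^{1/2}$, so the tail contributes $O(\epsilon^{3/2})$ to $|u_2|^2$, strictly smaller than the $\epsilon^2$-scale separation of the model's critical values; a Rouch\'e-type argument applied to the real function $s\mapsto |u_2(s)|^2 - \epsilon^2$ then transports the intersection count and outermost-root transversality from the model to $|u_2|^2$ itself, completing the proof.
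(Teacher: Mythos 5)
Your overall strategy is the intended one --- the paper's own proof is a one-liner asserting that the lemma ``follows immediately from \eqref{Eq:normalform}'', and expanding the boundary arcs as $\phi_{\pm}(s)=\pm c_1s+c_2s^2+O(s^3)$, $s=e^{-\pi\tau}$, with uniform control of the tail by compactness of $\NN$, is exactly the right starting point. But your execution has a genuine gap: you never use the outer ball $B(u_1(\zeta);2\epsilon)$, i.e.\ the fact that the $(\epsilon,(u_1,\zeta))$-points are counted only inside $I_{2\epsilon}$, which on each boundary arc ends at the \emph{first exit} from $B(u_1(\zeta);2\epsilon)$. Counting all small positive roots of $|\phi_{\pm}(s)|=\epsilon$ is not the quantity in the lemma, and with your own trichotomy the argument becomes internally inconsistent: away from $\NN_0$ one has $|c_1|$ bounded below, hence $|c_1|^4/|c_2|^2\gg\epsilon^2$, which by your count would give three points on one side and four in total, contradicting the two points required in case (1). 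The correct mechanism for (1) is that when $|c_1|^2\gtrsim\epsilon|c_2|$ the arc leaves $B(2\epsilon)$ at $s\approx 2\epsilon/|c_1|$, \emph{before} the dip at $s\sim|c_1|/|c_2|$, so $I_{2\epsilon}$ is cut off after the single innermost crossing on each side; the ``one plus three'' configuration of case (2) occurs precisely in the transitional regime where the local maximum of $|u_2|$ along the bad arc lies strictly between $\epsilon$ and $2\epsilon$, i.e.\ $|c_1|^2\asymp\epsilon|c_2|$. Thus your assignment of regimes to cases (1)--(3) is essentially inverted, and the inclusions you impose go the wrong way: what is needed is a \emph{lower} bound on $V(\delta_1)$, namely $\{u_2\in\pr_2(\NN):|c_1|^2\le C\epsilon|c_2|\}\subset V(\delta_1)$ (this is also what forces $\delta_1\to0$ as $\epsilon\to0$), not that $|c_1|^4/|c_2|^2$ is small \emph{on} $V(\delta_1)$.

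A second, related problem is your claim that ``elementary calculus shows the model has an interior local maximum of size comparable to $|c_1|^4/|c_2|^2$'' on the opposite arc. Computing $\frac{d}{ds}\bigl(|c_1|^2s^2-2\langle c_1,c_2\rangle s^3+|c_2|^2s^4\bigr)=2s\bigl(|c_1|^2-3\langle c_1,c_2\rangle s+2|c_2|^2s^2\bigr)$ shows that interior critical points exist only when $\langle c_1,c_2\rangle\ge\frac{2\sqrt2}{3}|c_1||c_2|$, i.e.\ when $c_1$ is nearly aligned with $c_2$; if, say, $c_1\perp c_2$ then \emph{both} arcs are monotone and there are exactly two transverse points no matter how small $|c_1|$ is. Since, by the transversality of Lemma \ref{Lem:singNN}, $c_1$ ranges over a full neighborhood of $0\in\R^n$ as $u_2$ varies in $\pr_2(\NN)$ near a point of $\pr_2(\NN_0)$, a trichotomy governed by the single scalar $|c_1|^4/|c_2|^2$ cannot yield the ``exactly four, all transverse'' conclusion of case (2) on a metric annulus; an honest argument must track the component of $c_1$ along $c_2$ as well as its size (indeed this is a point where the paper's own one-line proof, and arguably the statement of (2), are glossed). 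Your tail estimate and the transversality of the outermost intersections are fine, but as written the proof does not establish the case structure of the lemma.
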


\begin{proof}
This follows immediately from \eqref{Eq:normalform}.  The three cases are indicated schematically in Figure \ref{Fig:IntervalChoices}.
\end{proof}

\begin{center}
\begin{figure}[ht]
\includegraphics[scale=0.5]{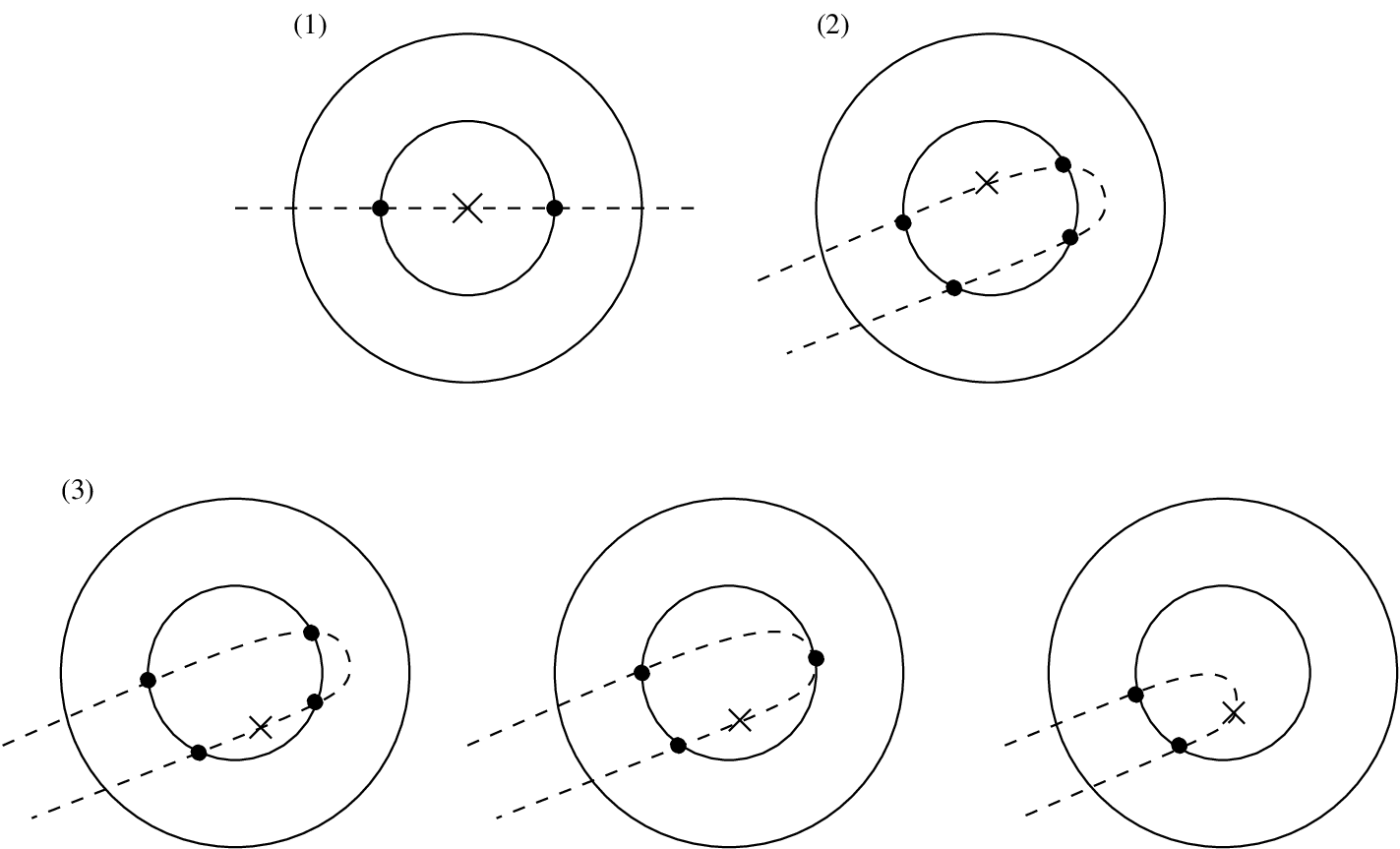}
\caption{The top line depicts cases $(1)$ and $(2)$ of Lemma \ref{Lem:interswnbhd} and the lower line the three possibilities in case $(3)$. The concentric circles represent $\pa B(u_1(\zeta),\epsilon)$ and $\pa B(u_1(\zeta),2\epsilon)$, $(\epsilon,(u_1,\zeta))$-points of $I_{2\epsilon}$ are black dots, and $\infty$ is the cross.}\label{Fig:IntervalChoices}
\end{figure}
\end{center}

Fix $0<\delta_0<\delta_1$ such that Lemma \ref{Lem:interswnbhd} holds, and consider the following two open sets of $\NN$:
\begin{align}
U^{\rm reg} &=\left\{\xi=((u_1,\zeta),u_2)\in\NN\colon u_2\notin V(\delta_0)\right\},\\
U^{\rm sing} &=\left\{\xi=((u_1,\zeta),u_2)\in\NN\colon u_2\in V(\delta_1)\right\}.
\end{align} 
Using the exponential map $\exp\colon TL\to L$ of a Riemannian metric on $L$, we may identify the disks $B(u_1(\zeta);\epsilon)$ with  $\epsilon$-disks in $T_{u_{1}(\zeta)}L$. In particular, we may consider the $(\epsilon,(u_1,\zeta))$-points at $\xi=((u_1,\zeta),u_2)\in\NN$ as points in the fiber over $\xi$ in the pull-back bundle
\[
\ev^{\ast} TL \ \longrightarrow \NN,
\]
where $\ev\colon\NN\to L$, $\ev((u_1,\zeta),u_2)=u_1(\zeta)=u_2(1)$. With this convention we get the following sections of this bundle:
\begin{itemize}
\item The $(\epsilon, (u_1,\zeta))$-point in the component of $I_{2\epsilon}-\{\infty\}$ to the left (right) of $\infty$ which is closest to $\infty$ gives a $C^{1}$-smooth section $q^{\rm reg}_{-}$ ($q^{\rm reg}_{+}$) of the bundle $\ev^{\ast} TL$ over $U^{\rm reg}$;
\item The $(\epsilon, (u_1,\zeta))$-point in the component of $I_{2\epsilon}-\{\infty\}$ to the left (right) of $\infty$ which is farthest from $\infty$ gives a $C^{1}$-smooth section $q^{\rm sing}_{-}$ ($q^{\rm sing}_{+}$) of the bundle $\ev^{\ast} TL$ over $U^{\rm sing}$.
\end{itemize}

We use these families to fix parametrizations for the $\MM^{\ast}(\beta)$-component of elements in $\NN$. If $\xi\in U^{\rm reg}\subset\NN$, $\xi=((u_1,\zeta),u_2)$ then we let $u_2^{\rm reg}\colon H\to\C^{n}$ denote the representative of $u_2$ such that $u_2^{\rm reg}(\pm 1)=q^{\rm reg}_{\mp}$. Similarly if $\xi\in U^{\rm sing}\subset\NN$, $\xi=((u_1,\zeta),u_2)$ then we let $u_2^{\rm sing}\colon H\to\C^{n}$ denote the representative of $u_2$ such that $u_2^{\rm sing}(\pm 1)=q^{\rm sing}_{\mp}$.

In order to define uniquely parametrizations over all of $\NN$, we interpolate between $u_{2}^{\rm reg}$ and $u_{2}^{\rm sing}$ in $U^{\rm reg}\cap U^{\rm sing}$. To this end, note that if $((u_1,\zeta),u_2)\in U^{\rm reg}\cap U^{\rm sing}$ then $u_2$ lies in  one of the disjoint annular regions $V(u_2^{j};\delta_1)- V(u_2^{j},\delta_0)$, $j=1,\dots,m$. Let $\delta(u_2)\in(\delta_0,\delta_1)$ denote the distance between $u_2$ and $u_2^{j}$; this defines a smooth function
\[
\delta\colon V(\delta_1)-V(\delta_0)\to (\delta_0,\delta_1).
\]

By Lemma \ref{Lem:interswnbhd}, if $((u_1,\zeta),u_2)\in \NN$ and $u_2\in V(\delta_1)-V(\delta_0)$ then there are exactly 4 $(\epsilon,(u_1,\zeta))$-points in $I_{2\epsilon}$, each corresponding to a transverse intersection with $\pa B(u_1(\zeta);\epsilon)$. Furthermore, one component of $I_{2\epsilon}-\{\infty\}$ contains 1 such point and the other component contains 3. Using the parameterization $u^{\rm reg}_{2}\colon H\to\C$ we think of $I_{2\epsilon}$ as an interval in $\pa H$ containing $\infty$.
\begin{enumerate}
\item In the case when the component with 1 intersection lies in the negative direction from $\infty$, we get the following intersection points in $I_{2\epsilon}$ (listed in the order induced by the orientation of $I_{2\epsilon}$), together with $\infty$:
\[
1 \ , \ \infty \ , \ -1 \ , \ \zeta_1 \ , \ \zeta_2.
\]
Here $1$ corresponds to $q^{\rm reg}_{-}=q^{\rm sing}_{-}$, $-1$ to $q^{\rm reg}_{+}$, and $\zeta_2$ to $q^{\rm sing}_{+}$.
\item In the case when the component with 1 intersection lies in the positive direction from $\infty$, we get the following intersection points in $I_{2\epsilon}$ (listed in the order induced by the orientation of $I_{2\epsilon}$), together with $\infty$:
\[
\zeta_1 \ , \ \zeta_2 \ , \ 1 \ , \ \infty \ , \ -1.
\]
Here $1$ corresponds to $q^{\rm reg}_{-}$, $\zeta_1$ to $q^{\rm sing}_{-}$, and $-1$ to $q^{\rm reg}_{+}=q^{\rm sing}_{+}$.
\end{enumerate}

Let $\alpha\colon [\delta_0,\delta_1]\to [0,1]$ be a smooth increasing function equal to $0$ near $\delta_0$ and equal to $1$ near $\delta_1$. For $\xi=((u_1,\zeta),u_2)\in U^{\rm reg}\cap U^{\rm sing}$, define the automorphism
\[
\psi[u_2]\colon H \to H
\]
as  follows: 
\begin{enumerate}
\item In case (1) above, with 1 intersection in the negative direction from $\infty$,   
$\psi[u_2]$ satisfies
\begin{equation}\label{Eq:interpol-}
\psi[u_2](1)=1 \ , \ \psi[u_2](\infty)=\infty \ , \ \psi[u_2](-1)= \alpha(\delta(u_2))(-1) +(1-\alpha(\delta(u_2)))\zeta_2. 
\end{equation}
\item In case (2) above, with 1 intersection  in the positive direction from $\infty$, 
$\psi[u_2]$ satisfies
\begin{equation}\label{Eq:interpol+}
\psi[u_2](1)=\alpha(\delta(u_2))1+(1-\alpha(\delta(u_2)))\zeta_1 \ , \ \psi[u_2](\infty)=\infty \ , \ \psi[u_2](-1)= -1. 
\end{equation}
\end{enumerate}

With the above established we turn to the main construction of this section and define a family $u^{\rm st}_{2}$ of stable maps representing the holomorphic component of any element in $\NN$. For $((u_1,\zeta),u_2)\in \NN$ and $u_2\in V(\delta_1)-V(\delta_0)$, define 
\[
u_{2}^{\rm st}\colon H\to\C^{n},\quad u_{2}^{\rm st}= u_2^{\rm reg}\circ \psi[u_2].
\]
Then $u_{2}^{\rm st}= u_{2}^{\rm reg}$ if $\delta(u_2)$ lies in a neighborhood of $\delta_1$ and $u_{2}^{\rm st}=u_{2}^{\rm sing}$ if $\delta(u_2)$ lies in a neighborhood of $\delta_0$;  we extend the family of parametrizations to all of $\NN$ by declaring that if $\xi=((u_1,\zeta),u_2)\in U^{\rm reg}- U^{\rm sing}$ then $u_2^{\rm st}= u_2^{\rm reg}$ and if $\xi=((u_1,\zeta),u_2)\in U^{\rm sing}- U^{\rm reg}$ then $u_2^{\rm st}= u_2^{\rm sing}$.

It is straightforward to compare the various parametrizations: if $u_2\in V(\delta_1)- V_1(\delta_0)$ then we have three parametrizations $u_2^{\rm st}$, $u_{2}^{\rm reg}$, and $u_2^{\rm sing}$. In the compact part of $H$, $H-E(R_0)$ for some large but fixed $R_0$, any two of these parametrizations differ by a diffeomorphism, whilst it follows from \eqref{Eq:movemarked1} and \eqref{Eq:movemarked2} that there are functions $k^{\rm reg}$ and $h^{\rm reg}$, $k^{\rm sing}$, and $h^{\rm sing}$ such that for $z\in[\rho_0,\infty)\times[0,1]\approx E(R_0)$ 
\begin{align*}
u_2^{\rm reg}(z) &=u_2^{\rm st}\left(z+k^{\rm reg}[u_2]+h^{\rm reg}[u_2](z)\right),\\
u_2^{\rm sing}(z) &=u_2^{\rm st}\left(z+k^{\rm sing}[u_2]+h^{\rm sing}[u_2](z)\right),
\end{align*} 
where the constants $k^{\rm reg}[u_2]=\Ordo(1)$ and $k^{\rm sing}[u_2]=\Ordo(1)$, i.e.~are uniformly bounded in $U^{\rm reg}\cap U^{\rm sing}$, and where similarly $h^{\rm reg}[u_2](z)=\Ordo(e^{-\pi z})$ and $h^{\rm sing}[u_2](z)=\Ordo(e^{-\pi z})$.

In complete analogy with the above we find an open cover $W^{\rm reg}\cup W^{\rm sing}$ of $\TT$ that restricts to $U^{\rm reg}\cup U^{\rm sing}$ over the boundary $\NN$. Here $W^{\rm sing}$ is a small neighborhood of $\TT_{0}$ and $W^{\rm reg}$ the complement of a slightly smaller such neighborhood. Exactly as for $U^{\rm reg}\cup U^{\rm sing}$ we use $W^{\rm reg}\cup W^{\rm sing}$ to produce stable parametrizations $u_2^{\rm st}$ for the holomorphic disk components of elements in the fibered product
\[
\DD\times_L\MM^{\ast}(\beta),
\]
that agree with the stable parametrizations defined above along the boundary $\NN=\pa \TT$.

\subsection{Marked points on disks at the boundary of $\FF(0\beta)$}
In order to control marked points under bubbling we will use auxiliary hypersurfaces that are Poincar{\'e} dual to the generator $\beta$ of $H_{1}(L)$. Consider a map $u_1\in\FF(-\beta)$. Then the $C^{1}$-map $\ev(u_1,\cdot)\colon\pa D\to L$ represents the homology class $-\beta$. Let $P$ be a hypersurface Poincar{\'e} dual to $\beta$. After small perturbation, $P$ is transverse to $\ev(u_1,\cdot)$ and the signed count of intersection points in $\ev(u_1,\cdot)^{-1}(P)$ equals $-1$. Noting that $P$ is orientable,   let $P'$ be a small shift of $P$ along a normal vector field. For sufficiently small shift,  $P'$ is also transverse to $\ev(u_1,\cdot)$.

By compactness of $\FF^{\ast}(-\beta)$ there is a finite collection of such hypersurfaces $P_{1},\dots,P_{m}$, with parallels $P_1',\dots, P_m'$, and an open cover $\FF^{\ast}(-\beta)=U_1\cup U_2\cup\dots\cup U_m$ such that if $(u_1,\zeta)\in U_j$ then $\ev(u_1,\cdot)$ is transverse to $P_j\cup P_j'$ and $u_1(\zeta)\notin P_j\cup P_j'$.

Let $\pi\colon\FF(0\beta)\to[0,\infty)$ denote the natural projection and recall that $\pi(\FF(0\beta))$ is contained in a compact subset. Consider a sequence of maps $u_j\colon (D,\pa D)\to(\C^{n},L)$, $j=1,2,\dots$ in $\FF(0\beta)$. By the Arzela-Ascoli theorem, if $|du_j|$ is uniformly bounded, then $u_{j}$ has a convergent subsequence $u_{j_k}$ with limit $u$ which solves the Floer equation $(du+\gamma_R\otimes X_{H_R})^{0,1}=0$ for $R=\lim_k\pi(u_{j_k})$. Consequently, for any $M>0$ the open set
\[
U_M=\{u\in\FF(0\beta)\colon \sup_{z\in D}|du(z)|>M\}
\]  
is a neighborhood of the Gromov-Floer boundary of $\FF(0\beta)$. 
For $M>0$ and $u\in\FF(0\beta)$, let $B^{(1)}(u,M)=|du|^{-1}([M,\infty))\subset D$ and let $\delta(u,M)=\diam(B^{(1)}(u,M))$.

\begin{Lemma}\label{Lem:diamblowup} 
If $\epsilon_M=\sup_{u\in U_M}\delta(u,M)$ then $\epsilon_M\to 0$ as $M\to\infty$.
\end{Lemma}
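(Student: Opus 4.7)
The plan is to argue by contradiction using Gromov--Floer compactness together with the classification of boundary configurations given in Lemma \ref{Lem:bubbles}. Suppose $\epsilon_M \not\to 0$. Then one can find $\epsilon_0 > 0$, integers $M_k \to \infty$, and solutions $u_k \in U_{M_k}$ with $\delta(u_k, M_k) \ge \epsilon_0$. By definition of $\delta$, there exist points $z_k, w_k \in D$ with $|du_k(z_k)|, |du_k(w_k)| \ge M_k$ and $|z_k - w_k| \ge \epsilon_0$. Passing to a subsequence, $z_k \to z_\infty$ and $w_k \to w_\infty$ with $|z_\infty - w_\infty| \ge \epsilon_0$, and the parameters $r_k = \pi(u_k)$ converge to some $r_\infty \in [0,\infty)$ (using the fact from Proposition \ref{Prop:Fismfld} that $r$ is bounded on $\FF(0\beta)$).

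Next I apply Gromov--Floer compactness to the sequence $\{u_k\}$. Solutions to the Floer equation in the trivial class $0\beta$ have uniformly bounded energy (the topological contribution vanishes and the Hamiltonian contribution is controlled by $\|H\|$), so a subsequence converges in the Gromov--Floer sense to a broken configuration. The standard rescaling/bubbling analysis (Hofer's lemma applied to $|du_k|$) shows that at any point where $|du_k|$ blows up, an appropriate rescaling yields a non-constant pseudoholomorphic bubble; since $\gamma_r$ is compactly supported in $D$ and is multiplied by a factor that vanishes after rescaling by $|du_k(z_k)|^{-1} \to 0$, the resulting bubble is genuinely $J$-holomorphic rather than Floer-holomorphic, and no sphere bubble is possible in $\C^n$ by exactness. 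Hence both $z_\infty$ and $w_\infty$ must be bubble points in the limit, giving rise to at least two non-constant bubble components in classes $j\beta$ with $j \ge 1$ each.

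But Lemma \ref{Lem:bubbles} asserts that the only broken configuration appearing in the Gromov--Floer compactification of $\FF(0\beta)$ consists of a single Floer disk in class $-\beta$ together with one holomorphic disk bubble in class $\beta$; in particular, the decomposition $\beta_0 + \sum \beta_j = 0$ from \eqref{Eqn:BubbleSum}, combined with $\beta_0 \ge -1$ (since $\FF(j\beta) = \varnothing$ for $j \le -2$) and $\beta_j \ge 1$ for each bubble (since $\MM(j\beta)$ is empty or constant for $j \le 0$), forces exactly one bubble. This directly contradicts the existence of two distinct blowup points $z_\infty \ne w_\infty$. The main obstacle is making the rescaling analysis near a bubble point rigorous in the Floer setting, to confirm that the Hamiltonian term drops out under the rescaling and that one obtains a genuine non-constant $J$-holomorphic disk with boundary on $L$; but this is standard and requires no new ideas beyond what is already recalled in Sections \ref{Sec:basicsetup}--\ref{Sec:theboundary}.
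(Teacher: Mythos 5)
Your argument is correct and is essentially the paper's proof: the paper also argues by contradiction, noting that a failure of $\epsilon_M\to 0$ would produce a sequence in $\FF(0\beta)$ with bubbling at two distinct points, contradicting Lemma \ref{Lem:bubbles}. You have simply spelled out the standard Gromov--Floer compactness, rescaling, and homotopy-class bookkeeping that the paper leaves implicit.
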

\begin{proof}
Assume that the statement is false.  Then there is a sequence of disks in $\FF(0\beta)$ with bubbles forming at (at least) two points. This contradicts Lemma \ref{Lem:bubbles}.
\end{proof}

It follows from Lemma \ref{Lem:diamblowup} that for any $\delta_0>0$ there exists $M>0$ such that for every $u\in U_M$, $\delta(u,M)<\delta_0$. Let $I_u\subset\pa D$ denote the smallest interval containing $\pa D\cap B^{(1)}(u,M)$ and let $\zeta_u$ denote the mid point of $I_u$.

\begin{Lemma}\label{Lem:blowuppointsinP}
For all sufficiently large $M>0$, if $u\in U_M$ then for some $P_j$, $j\in\{1,\dots,m\}$, there are points $\zeta,\zeta'\in I_u$ such that $u(\zeta)\in P_j$ and $u(\zeta')\in P_j'$.
\end{Lemma}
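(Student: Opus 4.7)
I would argue by contradiction. Suppose the conclusion fails, so there is a sequence $M_k\to\infty$ and maps $u_k\in U_{M_k}$ such that for every $j\in\{1,\dots,m\}$ there do not exist both $\zeta\in I_{u_k}$ with $u_k(\zeta)\in P_j$ and $\zeta'\in I_{u_k}$ with $u_k(\zeta')\in P_j'$. By Lemma \ref{Lem:diamblowup}, $\diam(I_{u_k})\to 0$, and by construction $u_k$ lies near the Gromov-Floer boundary of $\FF(0\beta)$, so by Lemma \ref{Lem:bubbles} and Gromov-Floer compactness a subsequence (still denoted $u_k$) converges to a broken configuration consisting of a Floer disk $u_1\in\FF(-\beta)$ with a marked point $\zeta_1\in\pa D$ and a holomorphic bubble $u_2\in\MM^{\ast}(\beta)$ attached at $\zeta_1$, with $u_1(\zeta_1)=u_2(1)$. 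In the convergence, $u_k$ tends to $u_1$ in $C^{\infty}_{\mathrm{loc}}$ away from $\zeta_1$, while suitable rescalings of $u_k$ near $\zeta_{u_k}$ converge to $u_2$; the bubbling points $\zeta_{u_k}$ accumulate at $\zeta_1$, and consequently $I_{u_k}$ is an arc shrinking to $\zeta_1$.

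Next I would localize the choice of hypersurface. Since $(u_1,\zeta_1)\in\FF^{\ast}(-\beta)$, pick $j_0$ with $(u_1,\zeta_1)\in U_{j_0}$; then $u_1|_{\pa D}$ is transverse to $P_{j_0}\cup P_{j_0}'$ and $u_1(\zeta_1)\notin P_{j_0}\cup P_{j_0}'$. Hence there exists an arc neighborhood $J$ of $\zeta_1$ on $\pa D$ whose image $u_1(J)$ is disjoint from $P_{j_0}\cup P_{j_0}'$. By $C^{0}$-convergence outside $\zeta_1$, for $k$ large any intersection of $u_k|_{\pa D\setminus I_{u_k}}$ with $P_{j_0}\cup P_{j_0}'$ occurs outside $J$ and is a small perturbation of a transverse intersection of $u_1|_{\pa D}$. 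In particular, intersections of $u_k$ with $P_{j_0}$, resp.~$P_{j_0}'$, outside $I_{u_k}$ have algebraic count equal to $u_1\cdot P_{j_0}=-1$, resp.~$-1$.

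Since $u_k$ represents the trivial relative homotopy class $0\beta$, the total algebraic intersection $u_k(\pa D)\cdot P_{j_0}=0$ and similarly for $P_{j_0}'$. Therefore the intersections contributed by $u_k|_{I_{u_k}}$ with $P_{j_0}$, resp.~$P_{j_0}'$, carry algebraic count $+1$, resp.~$+1$, and in particular are non-empty. This furnishes $\zeta,\zeta'\in I_{u_k}$ with $u_k(\zeta)\in P_{j_0}$ and $u_k(\zeta')\in P_{j_0}'$, contradicting the assumption.

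The one step that needs a little care is the localization of boundary intersections: I would make it precise using the standard description of Gromov convergence for bubbles, namely that after a conformal rescaling centered near $\zeta_{u_k}$ the maps $u_k$ converge on compact sets to $u_2$, whose boundary represents the class $\beta\in H_1(L)$ and therefore meets $P_{j_0}$ (and $P_{j_0}'$) with algebraic count $\pm 1$; the small Hamiltonian perturbation in \eqref{Eqn:CR1} is irrelevant as it is switched off in a neighborhood of $\pa D$, so $u_k|_{\pa D}$ is genuinely holomorphic near the boundary and the limiting bubble is a pseudo-holomorphic sphere/disk capturing precisely the concentrated energy. This is the one place where one has to verify that the intersection points of the bubble with $P_{j_0}$ and $P_{j_0}'$ lift to points of $I_{u_k}$, rather than appearing in the collar region between $I_{u_k}$ and the arc neighborhood $J$; for this I would use that the collar carries uniformly small energy and that $u_1(\zeta_1)\notin P_{j_0}\cup P_{j_0}'$, together with transversality, to exclude any stray intersection in the collar for $k$ large.
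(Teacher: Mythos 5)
Your argument is correct and is essentially the paper's own proof: argue by contradiction, use Gromov--Floer convergence to a broken configuration $(\hat u_1,\zeta)\in\FF^{\ast}(-\beta)$, choose $j_0$ with $(\hat u_1,\zeta)\in U_{j_0}$ so that $\hat u_1|_{\pa D}$ meets $P_{j_0}$ and $P_{j_0}'$ transversely with algebraic count $-1$ while $\hat u_1(\zeta)\notin P_{j_0}\cup P_{j_0}'$, and compare with the vanishing total intersection number of the null-homologous boundary loops $u_k|_{\pa D}$ to force intersection points with both hypersurfaces inside $I_{u_k}$. The collar/neck point you flag at the end is exactly what the paper leaves implicit in its appeal to Gromov--Floer compactness (the neck maps into a small neighborhood of $\hat u_1(\zeta)$, hence misses the closed sets $P_{j_0}\cup P_{j_0}'$), so your added care there is a refinement of, not a departure from, the paper's argument.
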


\begin{proof}
Assume that the lemma does not hold, then there is a sequence of maps $u_l\in U_{M_l}$, $M_l\to\infty$ as $l\to\infty$ such that $u_l(I_{u_l})\cap P_j=\varnothing$ or $u_l(I_{u_l})\cap P_j'=\varnothing$ for all $j=1,\dots,m$. But, by Gromov-Floer compactness and Lemma \ref{Lem:bubbles}, $(u_l,\zeta_{u_l})$ has a subsequence that converges to some $(\hat u_1,\zeta)\in\FF^{\ast}(-\beta)$ uniformly on compact subsets of $D-\zeta$. Now, $(\hat u_1,\zeta)\in U_j$ for some $j\in\{1,\dots,m\}$ and hence $\hat u_1|_{\pa D}$ intersects $P_j$ and $P_j'$ transversely with intersection number $-1$, and $\hat u_1(\zeta)\notin P_j\cup P_j'$. Since the intersection numbers of $u_l|_{\pa D}$ with $P_j$ and with $P_j'$ equal $0$ we find that $u_l(I_{u_l})\cap P_j\ne \varnothing$ and $u_l(I_{u_l})\cap P_j'\ne \varnothing$ for $l$ large enough and $u_l$ in the subsequence. This contradicts the assumption and the lemma follows.
\end{proof}

Let $H_{\delta}\subset H$ be a half disk around the origin $0$ parameterizing a small neighborhood in $D$ around $\zeta_u$ in such a way that $0\mapsto \zeta_{u}$. Let $u\in U_M$, fix $P_j$ as in Lemma \ref{Lem:blowuppointsinP}, and let $\zeta',\zeta$ be (ordered) points in $\pa H_{\delta}$ such that $u(\zeta)\in P_j$ and $u(\zeta')\in P'_j$ (or vice versa). Let $a=\frac{\zeta'-\zeta}{2}$ and $b=\frac{\zeta+\zeta'}{2}$; then the map $\psi\colon H\to H$, $\psi(z)=az+b$ satisfies $\psi(-1)=\zeta$ and $\psi(1)=\zeta'$. Since $\zeta_{u}$ corresponds to $0\in H_{\delta}$, Lemma \ref{Lem:diamblowup} implies that $a,b\to 0$ as $M\to\infty$. Let $\Omega_\rho=\psi^{-1}(H_{\delta})\subset H$. 

\begin{Lemma}\label{Lem:limit1}
For any sequence of maps $u_l\in U_{M_l}\subset \FF(0\beta)$ with $M_l\to\infty$, there exists $P_j$ such that the sequence $u_{l}\circ\psi\colon\Omega_\rho\to\C^{n}$ has a subsequence that converges uniformly on compact subsets to a holomorphic disk $u\colon (H,\pa H)\to(\C^{n},L)$ with $u(1)\in P_j'$ and $u(-1)\in P_j$, which represents an element in $\MM^{\ast}(\beta)$ with marked point at $\infty\in H$. 
\end{Lemma}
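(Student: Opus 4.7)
The strategy is a standard bubble-rescaling argument, combined with the gauge fixing provided by the hypersurfaces $P_j$ and $P_j'$. First, I pass to a subsequence so that one fixed index $j$ works for every $u_l$ (by Lemma \ref{Lem:blowuppointsinP}) and set $v_l := u_l \circ \psi_l\colon \Omega_{\rho_l} \to \C^{n}$. Since $|a_l| \to 0$ while $H_\delta$ has fixed size, the domains $\Omega_{\rho_l}$ exhaust $H$, so any given compact subset of $H$ lies in $\Omega_{\rho_l}$ for all sufficiently large $l$. In the rescaled coordinates the Floer equation for $u_l$ becomes a perturbed Cauchy--Riemann equation for $v_l$ in which the Hamiltonian term acquires a factor of $|a_l|$ from the pullback of the $1$-form $\gamma_{r_l}$, while $X_H$ remains bounded; consequently on each compact subset of $H$ the inhomogeneous term vanishes uniformly as $l \to \infty$, and $v_l$ solves the $\bar\pa_J$-equation modulo an error tending to zero.

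Next, I establish uniform $C^{0}$ and gradient bounds for $v_l$ on compact subsets of $H$, so that elliptic bootstrapping and Gromov--Floer compactness yield a subsequence converging in $C^{\infty}_{\rm loc}$. The gradient bound is not direct from the definition of $\psi_l$, and I argue by comparison with a canonical bubble rescaling: by Lemma \ref{Lem:bubbles}, after a subsequence, $(u_l)$ converges in the Gromov--Floer sense to a broken configuration $(\hat u_1, u_2) \in \NN$, and there exist centers $c_l \to \zeta_u$ and scales $\lambda_l \to 0$ such that the canonically rescaled maps $u_l(c_l + \lambda_l \cdot )$ converge uniformly on compacts to a parametrization $\tilde u_2$ of the bubble $u_2 \in \MM(\beta)$. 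Writing $\zeta = c_l + \lambda_l z_1^l$ and $\zeta' = c_l + \lambda_l z_2^l$, the values $z_1^l$ and $z_2^l$ are bounded (since $u_l(\zeta) \in P_j$, $u_l(\zeta') \in P_j'$, and $\tilde u_2|_{\pa H}$ intersects $P_j, P_j'$ transversely in a compact region of $\pa H$), and bounded away from each other by that same transversality. After a further subsequence, $z_i^l \to z_i$ with $z_1 \ne z_2$, and $\psi_l$ differs from the canonical rescaling by a convergent affine automorphism of $H$.

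Combining these observations, $v_l$ converges uniformly on compact subsets of $H$ to a $J$-holomorphic map $u\colon (H, \pa H) \to (\C^{n}, L)$ with $u(-1) \in P_j$ and $u(1) \in P_j'$, obtained from $\tilde u_2$ by the limiting affine automorphism. By removal of singularities, $u$ extends smoothly across $\infty$ with finite energy; it is non-constant because the bubble carries positive energy; and by Lemma \ref{Lem:bubbles} it represents the homotopy class $\beta$. Marking $\infty$ as the distinguished boundary point (corresponding to the node of the broken configuration at $\zeta_u$), we obtain the required element of $\MM^{\ast}(\beta)$. The main obstacle is the comparability step: without the transversality of $P_j, P_j'$ with $\tilde u_2|_{\pa H}$, the parameters $z_i^l$ could drift to infinity or collide, and the limit of $v_l$ could fail to be a well-defined non-constant bubble; it is precisely here that the careful choice of hypersurfaces in the paragraph preceding Lemma \ref{Lem:blowuppointsinP} enters.
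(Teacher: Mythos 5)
Your overall strategy (rescale, compare $\psi_l$ with a canonical bubbling rescaling, and transport the limit bubble through a convergent affine map) could be made to work, but as written the crucial comparability step rests on a claim that the construction does not provide. You assert that ``$\tilde u_2|_{\pa H}$ intersects $P_j, P_j'$ transversely in a compact region of $\pa H$'' and use this both for boundedness of $z_1^l, z_2^l$ and for their separation. No such transversality was arranged: in the paragraph before Lemma \ref{Lem:blowuppointsinP} the hypersurfaces $P_j, P_j'$ are chosen generically only with respect to the boundary loops $\ev(u_1,\cdot)$ of the \emph{Floer} disks $(u_1,\zeta)\in\FF^{\ast}(-\beta)$ (a compact one--dimensional family), together with the condition $u_1(\zeta)\notin P_j\cup P_j'$ on the open sets $U_j$; nothing is imposed on the boundaries of the holomorphic bubbles in $\MM^{\ast}(\beta)$, and a given bubble boundary may well be tangent to $P_j$. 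What your argument actually needs is different: boundedness of the rescaled crossing parameters requires that the node value $\hat u_1(\zeta)=\tilde u_2(\infty)$ avoid $P_j\cup P_j'$, so that the crossings $\zeta_l,\zeta_l'$ cannot drift into the neck region (whose image converges to the node value); separation of $z_1^l$ and $z_2^l$ then follows simply from disjointness of the two compact hypersurfaces, with no transversality needed. Moreover, your pigeonhole choice of $j$ (``one fixed index works for every $u_l$'') does not guarantee this node condition: you must instead choose $j$, after extracting the Gromov--Floer limit, so that $(\hat u_1,\zeta)\in U_j$ --- exactly as in the proof of Lemma \ref{Lem:blowuppointsinP} and as recorded in Corollary \ref{Cor:limit2} --- and this is permitted because the lemma only asserts existence of a suitable $P_j$. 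Without these repairs, the crossing points could escape to infinity at the bubble scale and the affine comparison with the canonical rescaling (``the main obstacle'' you flag) genuinely breaks down.

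For comparison, the paper's proof avoids this analysis entirely by a soft dichotomy on $\sup_K|d(u_l\circ\psi)|$: if the derivatives are locally uniformly bounded, one extracts a $C^\infty_{\rm loc}$ limit, which is non-constant because it meets both of the disjoint hypersurfaces at $\pm 1$, and Lemma \ref{Lem:bubbles} places it in $\MM^{\ast}(\beta)$; if not, a bubble forms for the rescaled sequence at some $\xi\in\pa H$, Lemma \ref{Lem:bubbles} forces it to be the unique bubble, so $u_l\circ\psi$ converges to a constant away from $\xi$, contradicting that the maps pass through the two disjoint hypersurfaces. If you prefer your quantitative route, incorporate the correct choice of $j$ and replace the transversality claim by the node-avoidance and disjointness arguments indicated above.
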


\begin{proof}
In case the derivative of $u_j\circ\psi$ is uniformly bounded we can extract a convergent subsequence, with limit which must be non-constant since the hypersurfaces $P_j$ and $P_j'$ are disjoint. The lemma then follows from Lemma \ref{Lem:bubbles}. 

Assume next that the derivative is not uniformly bounded. Then we have bubbling at at least one point $\xi\in\pa H$, and recover a non-constant holomorphic disk from there. Lemma \ref{Lem:bubbles} implies that this is the only bubble and hence $u_j\circ\psi$ must converge to a constant map on compact subsets of $H-\xi$. This however contradicts $P_j$ and $P_j'$ being disjoint, so we conclude that the derivative is uniformly bounded.
\end{proof}

\begin{Corollary}\label{Cor:limit2}
For each $\delta_0>0$, $\epsilon_0>0$, and $\epsilon>0$, there exists $M>0$ and $\rho_0>0$ such that for every $u\in U_M\subset\FF(0\beta)$ there exists $((u_1,\zeta),u_2)\in \NN$ such that the following holds:
\begin{enumerate}
\item The $C^{1}$-distance between $u|_{D-B(\zeta;\delta_0)}$ and $u_1|_{D-B(\zeta;\delta_0)}$ is smaller than $\epsilon_0$, where $B(\zeta;\delta_0)$ denotes a $\delta_0$-neighborhood of $\zeta$ in $D$.
\item There exists $j$ such that $(u_1, \zeta) \in U_j$ and a parameterization $u_2\colon H\to\C^{n}$ with $u_2(1)\in P_j'$ and $u_2(-1)\in P_j$ such that $u\circ\psi$ is defined on $H_{\rho_0}$ and lies at $C^{1}$-distance at most $\epsilon_0$ from $u_2|_{H_{\rho_0}}$. Furthermore, if $((u_1,\zeta),u_2)\in U^{\rm reg}$ (resp.~$((u_1,\zeta),u_2)\in U^{\rm sing}$) then the two $(\epsilon,(u_1,\zeta))$-points corresponding to $q^{\rm reg}_{\pm}$ (resp.~to $q^{\rm sing}_{\pm}$) lie in $u_2(\pa H_{\rho_0})$. 
\end{enumerate}
\end{Corollary}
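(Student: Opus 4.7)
I would proceed by contradiction. Suppose the corollary fails, so that there exist $\delta_0, \epsilon_0, \epsilon > 0$ and sequences $M_l \to \infty$, $\rho_l \to \infty$, and $u_l \in U_{M_l}$ such that for every $((u_1,\zeta), u_2) \in \NN$ and every $\rho_0 \le \rho_l$, at least one of the clauses (1) or (2) of Corollary \ref{Cor:limit2} fails for $u_l$ with that data.

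First I would extract a limiting broken configuration. By Lemma \ref{Lem:diamblowup}, $\delta(u_l, M_l) \to 0$, so the blow-up intervals $I_{u_l} \subset \pa D$ shrink to a single point, and after passing to a subsequence the midpoints $\zeta_{u_l}$ converge to some $\zeta \in \pa D$. By Gromov-Floer compactness (Lemma \ref{Lem:bubbles}), after a further subsequence $u_l$ converges to a broken disk with unbroken component $u_1 \in \FF(-\beta)$ and a single bubble $u_2 \in \MM^\ast(\beta)$ attached at $\zeta$; the convergence $u_l \to u_1$ is in $C^1_{\rm loc}(D \setminus \{\zeta\})$, which immediately gives clause (1) for $l$ large. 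By Lemma \ref{Lem:blowuppointsinP} there is some $j$ with $(u_1,\zeta) \in U_j$ and boundary points $\zeta^{(l)}, (\zeta')^{(l)} \in I_{u_l}$ with $u_l(\zeta^{(l)}) \in P_j$, $u_l((\zeta')^{(l)}) \in P_j'$. Normalising with the affine map $\psi_l\colon H \to H$ sending $\pm 1$ to these points, Lemma \ref{Lem:limit1} yields a holomorphic disk $u_2\colon (H, \pa H) \to (\C^n, L)$ with $u_2(-1) \in P_j$, $u_2(1) \in P_j'$, representing an element of $\MM^\ast(\beta)$ with puncture at $\infty$, such that $u_l \circ \psi_l \to u_2$ in $C^1_{\rm loc}(H)$. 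For any fixed $\rho_0$, the compact set $H_{\rho_0}$ is eventually contained in the domain of $u_l \circ \psi_l$ (since $\psi_l^{-1}(H_\delta)$ exhausts $H$), and the $C^1_{\rm loc}$ convergence gives the first part of clause (2).

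The real content is the uniform choice of $\rho_0$ for the ``furthermore'' statement about $(\epsilon,(u_1,\zeta))$-points. For each $((u_1,\zeta), u_2) \in \NN$ the Fourier expansion \eqref{Eq:normalform} in the holomorphic coordinates centered at $u_1(\zeta)$ shows that $u_2(\tau + it) \to u_1(\zeta)$ exponentially as $\tau \to \infty$, so there is $\rho^\ast(u_1,\zeta,u_2,\epsilon)$ such that $u_2(\tau + it) \in B(u_1(\zeta); \epsilon)$ for all $\tau \ge \rho^\ast$. Hence the four candidate $(\epsilon, (u_1,\zeta))$-points of $u_2$ (whichever of $q^{\rm reg}_\pm$ or $q^{\rm sing}_\pm$ are relevant, according as $\xi \in U^{\rm reg}$ or $U^{\rm sing}$) all lie in $\pa H \cap \overline{H_{\rho^\ast}}$. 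Since $\NN$ is compact (Lemma \ref{Lem:fiberprod1}) and the sections $q^{\rm reg}_\pm$, $q^{\rm sing}_\pm$ constructed in Section \ref{sec:gauge} are continuous in $\xi$, the quantity $\rho^\ast$ is bounded uniformly on $\NN$ by some $\rho^{\ast\ast}$. Fixing any $\rho_0 > \rho^{\ast\ast} + 1$ and using the $C^1_{\rm loc}$ convergence $u_l \circ \psi_l \to u_2$, the $(\epsilon, (u_1,\zeta))$-points of $u_2$ are contained in $u_2(\pa H_{\rho_0})$ for $l$ large, completing clause (2) and contradicting the assumption that the corollary fails.

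\textbf{Main obstacle.} The heart of the argument is Gromov-Floer compactness together with the matching of parameterisations: the affine rescaling $\psi_l$ used in Lemma \ref{Lem:limit1} is fixed by intersections with $P_j, P_j'$, whereas the stable parameterisation $u_2^{\rm st}$ of Section \ref{sec:gauge} is fixed by the $q^{\rm reg/sing}_\pm$-points, and one must verify that both are well-defined on the appropriate open sets $U^{\rm reg}, U^{\rm sing}$ simultaneously. This is the only genuinely subtle point, and it is handled by uniform exponential decay of the Fourier expansion on the strip-like end combined with compactness of $\NN$; once that is secured, the contradiction is immediate.
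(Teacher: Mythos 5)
Your proposal is correct and follows essentially the same route as the paper: clause (1) and the first part of (2) are obtained by extracting a subsequence that would otherwise contradict Lemma \ref{Lem:limit1} (via Lemmas \ref{Lem:diamblowup}, \ref{Lem:bubbles}, and \ref{Lem:blowuppointsinP}), and the ``furthermore'' clause follows because the points where $u_2$ meets $\pa B(u_1(\zeta);\epsilon)$ lie at uniformly bounded distance from $\pm 1$ by compactness. The only cosmetic difference is that you derive this uniform bound from the Fourier expansion \eqref{Eq:normalform} together with compactness of $\NN$, whereas the paper cites compactness of $\ev^{-1}(u_1(\zeta))\subset\MM^{\ast}(\beta)$ — the same idea.
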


\begin{proof}
If $(1)$ does not hold then we extract a sequence contradicting Lemma \ref{Lem:bubbles} and if the first part of $(2)$ does not hold then we extract a sequence contradicting Lemma \ref{Lem:limit1}. For the last part of $(2)$, observe that when parametrized as above, the points where $u_2\colon H\to\C^{n}$ intersects $\pa B(u_1(\zeta);\epsilon)$ lie at uniformly finite distance from $\pm 1$, by compactness of $\ev^{-1}(u_1(\zeta))\subset\MM^{\ast}(\beta)$.
\end{proof}

Corollary \ref{Cor:limit2} allows us to extract subsequences with bubbles that limit to maps of the form $u^{\rm st}_2$. More precisely, given $u$ as above, if $u_2\in U^{\rm reg}$ (resp.~if $u_2\in U^{\rm sing}$) then Corollary \ref{Cor:limit2} enables us to find the transverse intersection points with $\pa B(u_1(\zeta);\epsilon)$ corresponding to $q^{\rm reg}_{\pm}$ (resp.~to $q^{\rm sing}_{\pm}$). This allows us to define maps $\phi^{\rm st}\colon H\to H$, of the form $z\mapsto az+b$, which take $\pm 1$ to the intersection points corresponding to $q^{\rm reg}_{\pm}$ if $((u_1,\zeta),u_2)\in U^{\rm reg} - U^{\rm sing}$, to the intersection points corresponding to $q^{\rm sing}_{\pm 1}$ if $((u_1,\zeta),u_2)\in U^{\rm sing}- U^{\rm reg}$, and to the interpolation between intersection points determined by $\delta(u_2)$ as in \eqref{Eq:interpol-} and \eqref{Eq:interpol+} if $((u_1,\zeta),u_2)\in U^{\rm reg}\cap U^{\rm sing}$.

\begin{Corollary}\label{Cor:limit3}
Corollary \ref{Cor:limit2} holds with $(2)$ replaced by 
\begin{itemize}
\item[$(2')$] The map $u\circ\phi^{\rm st}$ is defined on $H_{\rho_0}$ and lies at $C^{1}$-distance at most $\epsilon_0$ from $u_2^{\rm st}|_{H_{\rho_0}}$.
\end{itemize}
\end{Corollary}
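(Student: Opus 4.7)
The plan is to derive Corollary \ref{Cor:limit3} directly from Corollary \ref{Cor:limit2} by comparing the parametrization $\tilde u_2$ supplied there (the one with $\tilde u_2(\pm 1)\in P_j\cup P'_j$) to the stable parametrization $u_2^{\rm st}$ constructed in Section \ref{sec:gauge}, and by comparing the affine map $\psi$ of Corollary \ref{Cor:limit2} to the affine map $\phi^{\rm st}$ built from intersections of $u$ itself with $\pa B(u_1(\zeta);\epsilon)$. Since both pairs differ only by affine automorphisms of $H$ fixing $\infty$, the comparison reduces to tracking four marked points on $\pa H$ under $C^1$-convergence.

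First, I would fix $\epsilon$ small enough that Lemma \ref{Lem:interswnbhd} applies, and then apply Corollary \ref{Cor:limit2} with a smaller error $\epsilon_0'\ll\epsilon_0$ to obtain $((u_1,\zeta),u_2)\in\NN$ together with the parametrization $\tilde u_2\colon H\to\C^n$ (with $\tilde u_2(\pm 1)\in P_j\cup P'_j$) and the affine map $\psi$ such that $u\circ\psi$ is $\epsilon_0'$-close to $\tilde u_2$ in $C^1$ on $H_{\rho_0}$. Let $\eta\colon H\to H$ be the unique affine automorphism fixing $\infty$ with $u_2^{\rm st}=\tilde u_2\circ\eta$; by definition of $u_2^{\rm st}$ (cf.~Section \ref{sec:gauge} and formulas \eqref{Eq:interpol-}, \eqref{Eq:interpol+}), $\eta(\pm 1)$ are the distinguished points in $\pa H$ whose $\tilde u_2$-images are the $(\epsilon,(u_1,\zeta))$-points $q^{\rm reg}_\mp$, $q^{\rm sing}_\mp$, or the interpolated combination thereof.

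The key step is to show that $\phi^{\rm st}$ is $C^1$-close to $\psi\circ\eta$. This follows from transversality: each relevant intersection of $\tilde u_2|_{\pa H}$ with the sphere $\pa B(u_1(\zeta);\epsilon)$ is transverse by Lemma \ref{Lem:interswnbhd}, so the implicit function theorem yields a continuous (indeed $C^1$) dependence of the intersection locations on the map. Applying this to $u\circ\psi$, which is $\epsilon_0'$-close to $\tilde u_2$ in $C^1$, the four $(\epsilon,(u_1,\zeta))$-points of $u\circ\psi$ lie within $O(\epsilon_0')$ of the corresponding points of $\tilde u_2$; pulling back by $\psi$ and using that the interpolation weight $\alpha(\delta(u_2))$ used to define $\phi^{\rm st}$ coincides with that used in \eqref{Eq:interpol-}--\eqref{Eq:interpol+}, we obtain $\|\psi^{-1}\circ\phi^{\rm st}-\eta\|_{C^1}=O(\epsilon_0')$.

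Finally, writing $u\circ\phi^{\rm st}=(u\circ\psi)\circ(\psi^{-1}\circ\phi^{\rm st})$ and $u_2^{\rm st}=\tilde u_2\circ\eta$, the triangle inequality gives
\[
\|u\circ\phi^{\rm st}-u_2^{\rm st}\|_{C^1(H_{\rho_0})}\le \|u\circ\psi-\tilde u_2\|_{C^1}\cdot\|D(\psi^{-1}\circ\phi^{\rm st})\|+\|\tilde u_2\circ(\psi^{-1}\circ\phi^{\rm st})-\tilde u_2\circ\eta\|_{C^1},
\]
both terms of which are $O(\epsilon_0')$. Choosing $\epsilon_0'$ sufficiently small (and possibly shrinking $H_{\rho_0}$ slightly, since $\phi^{\rm st}\circ(\psi\circ\eta)^{-1}$ may move the domain by $O(\epsilon_0')$) gives $(2')$. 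The main obstacle is handling the overlap region $U^{\rm reg}\cap U^{\rm sing}$, where $\phi^{\rm st}$ involves a convex combination of two distinct $(\epsilon,(u_1,\zeta))$-points; this is controlled by the transversality built into Lemma \ref{Lem:interswnbhd}(2) and the smoothness of the interpolation function $\alpha\circ\delta$.
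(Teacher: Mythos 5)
Your proposal is correct and takes essentially the same route as the paper, whose entire proof is the observation that the affine reparametrizations $\psi$ and $\phi^{\rm st}$ differ by a uniformly bounded amount, so that the $C^{1}$-estimate of Corollary \ref{Cor:limit2} transfers directly. Your write-up simply makes explicit the bookkeeping the paper leaves implicit: the transversality of the relevant $(\epsilon,(u_1,\zeta))$-crossings from Lemma \ref{Lem:interswnbhd}, the comparison of $\psi^{-1}\circ\phi^{\rm st}$ with the bounded affine map $\eta$ relating the two normalizations of $u_2$, and the resulting triangle inequality.
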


\begin{proof}
Noting that the coefficients $a$ and $b$ in the maps $\psi$ and $\phi^{\rm st}$ differ by a uniformly bounded amount, this follows immediately from Corollary \ref{Cor:limit2}.
\end{proof}

\section{Gluing} 
\label{Sec:gluing2}
In this section we define a Floer gluing map 
\[
\Phi\colon \NN\times[0,\infty)=\FF^{\ast}(-\beta)\times_{L}\MM^{\ast}(\beta)\times[0,\infty)\to \FF(0\beta)
\] 
following Floer's classical gluing scheme, and show that this map gives a $C^{1}$-parametrization of a neighborhood of the Gromov-Floer boundary of $\FF(0\beta)$.

\subsection{The Floer-Picard lemma with parameters}
Our main tool for gluing is the following result. Let $T$ and $M$ be finite dimensional smooth manifolds and let $\pi_{X}\colon X\to M\times T$ be a smooth bundle of Banach spaces over $M\times T$. Let $\pi_{B}\colon B\to T$ be a smooth bundle of Banach spaces over $T$. Let $f\colon X\to B$ be a smooth bundle map of bundles over $T$ and write $f_{t}\colon X_{t}\to B_{t}$ for the restriction of $f$ to the fiber over $t\in T$ (where the fiber $X_t$ is the bundle over $M$ with fiber $X_{(m,t)}$ at $m\in M$). If $m\in M$, then write $d_{m}f_{t}\colon X_{(m,t)}\to B_{t}$ for the differential of $f_{t}$ restricted to the vertical tangent space of $X_{t}$ at $0\in X_{(m,t)}$, where this vertical tangent space is identified with the fiber $X_{(m,t)}$ itself; similarly, the tangent spaces of $B_{t}$ are identified with $B_{t}$ using linear translations. Denote by $0_{B}$ the $0$-section in $B$,  and by $D(0_X;\epsilon)$  an $\epsilon$-disk sub-bundle of $X$.

\begin{Lemma}\label{Lem:FloerPicard}
Let $f\colon X\to B$ be a smooth Fredholm bundle map of $T$-bundles, with Taylor expansion in the fiber direction:
\begin{equation}\label{e:TaylorFP}
f_{t}(x)=f_{t}(0)+d_{m}f_{t}\,x+N_{(m,t)}(x),\quad\text{where }\; 0,x\in X_{(m,t)}.
\end{equation}
Assume that $d_{m}f_{t}$ is surjective for all $(m,t)\in M\times T$ and has a smooth family of uniformly bounded right inverses $Q_{(m,t)}\colon B_t\to X_{(m,t)}$, and that the non-linear term $N_{(m,t)}$ satisfies a quadratic estimate of the form
\begin{equation}\label{e:QuadraticFP}
\|N_{(m,t)}(x)-N_{(m,t)}(y)\|_{B_t}\le C\|x-y\|_{X_{(m,t)}}(\|x\|_{X_{(m,t)}}+\|y\|_{X_{(m,t)}}),
\end{equation}
for some constant $C>0$.  Let  $\ker(df_{t})\to M$ be the vector bundle with fiber over $m\in M$ equal to $\ker(d_mf_{t})$. 
If $\|Q_{(m,t)}f_{t}(0)\|_{X_{(m,t)}}\le\frac{1}{8C}$, then for $\epsilon<\frac{1}{4C}$,
$f^{-1}(0_{B})\cap D(0_{X};\epsilon)$  is a smooth submanifold
diffeomorphic to the bundle over $T$ with fiber at $t\in T$ the $\epsilon$-disk bundle in $\ker(df_{t})$.
\end{Lemma}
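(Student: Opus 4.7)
The plan is to carry out a parameterized Newton--Picard iteration, along the standard lines that are by now routine in Floer theory, with the parameter $(m,t) \in M \times T$ playing the role of the variable that the iteration must depend smoothly on. First I would work locally: fix a point $(m_0,t_0)$, choose a smooth local trivialization of the bundle $X$ near $(m_0,t_0)$, and use the surjectivity of $d_m f_t$ together with the existence of the right inverse $Q_{(m,t)}$ to split
\[
X_{(m,t)} \;=\; \ker(d_m f_t) \,\oplus\, \mathrm{Im}(Q_{(m,t)}),
\]
the splitting being smooth in $(m,t)$ because $Q$ is (the projection onto the first factor is $\mathrm{id} - Q_{(m,t)} \circ d_m f_t$). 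Write $K_{(m,t)}$ and $V_{(m,t)}$ for the two summands.

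For $v \in K_{(m,t)}$ of norm at most $\epsilon$, the equation $f_t(v+w)=0$ with $w \in V_{(m,t)}$ small is, by the Taylor expansion \eqref{e:TaylorFP} and the fact that $v \in \ker d_m f_t$, equivalent to
\[
d_m f_t (w) \;=\; -\bigl(f_t(0)+N_{(m,t)}(v+w)\bigr).
\]
Since $d_m f_t\colon V_{(m,t)} \to B_t$ is a Banach space isomorphism with inverse $Q_{(m,t)}|_{B_t}$, this equation is in turn equivalent to the fixed point equation $w = T_{v,m,t}(w)$, where
\[
T_{v,m,t}(w) \;:=\; -Q_{(m,t)}\bigl(f_t(0)+N_{(m,t)}(v+w)\bigr).
\]
The next step is to verify, using the quadratic estimate \eqref{e:QuadraticFP}, that for $\|v\| \le \epsilon < \tfrac{1}{4C}$ the map $T_{v,m,t}$ is a $\tfrac12$-contraction from the closed $\epsilon$-ball in $V_{(m,t)}$ to itself: the Lipschitz estimate follows from $\|Q\|$ being uniformly bounded and $\|v\|, \|w_i\| \le \epsilon$, while $T_{v,m,t}(0) = -Q_{(m,t)}(f_t(0)+N_{(m,t)}(v))$ lies in the $\epsilon$-ball because of the bound $\|Q_{(m,t)}f_t(0)\| \le 1/(8C)$ together with $\|N_{(m,t)}(v)\| \le C\|v\|^2$ (the latter using $N_{(m,t)}(0)=0$, which is forced by the Taylor expansion).

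The Banach fixed point theorem then produces a unique solution $w=w(v,m,t)$, and the uniform contraction estimates together with smoothness of the data $(f,Q)$ in $(m,t)$ give, via the parameterized contraction mapping principle (equivalently, the uniform implicit function theorem applied to the map $(v,m,t,w) \mapsto w - T_{v,m,t}(w)$), that $w(v,m,t)$ depends smoothly on all of its arguments. The map $(v,m,t)\mapsto v+w(v,m,t)$ is then a smooth local parameterization of $f^{-1}(0_B)\cap D(0_X;\epsilon)$ by the $\epsilon$-disk bundle in $\ker(df_t)$; uniqueness of the fixed point guarantees that these local parametrizations glue to a global diffeomorphism. The only real subtlety is the last one, the smooth dependence on $(m,t)$, which relies on the assumption that the right inverses $Q_{(m,t)}$ vary smoothly and are uniformly bounded so that the contraction constants and the size of the disk on which $T_{v,m,t}$ is a self-map are locally uniform in $(m,t)$.
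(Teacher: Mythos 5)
Your proposal is correct and follows essentially the same route as the paper: both reduce the problem to a fixed-point equation parameterized by the kernel element and $(m,t)$, using the right inverse $Q_{(m,t)}$ and the quadratic estimate to get a uniform contraction, and then invoke smooth dependence on parameters. The only cosmetic difference is that the paper encodes the graph-over-the-kernel parameterization by augmenting $f_t$ to $\widehat{f}_{(m,t)}(x)=(f_t(x),\,p_{(m,t)}x-k)$ whose differential is invertible and running Newton iteration, whereas you split $X_{(m,t)}=\ker(d_mf_t)\oplus\mathrm{Im}(Q_{(m,t)})$ and solve for the complement component by the Banach fixed point theorem -- these are equivalent formulations of the same Newton--Picard argument.
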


\begin{proof}
The proof for the case when $M$ and $T$ are both points appears in Floer \cite{Floer:mem} and generalizes readily to the case under study here. We give a short sketch pointing out some features that will be used below. Let $K_{(m,t)}=\ker(d_mf_{t})$ and choose a smooth splitting $X_{(m,t)}=X_{(m,t)}'\oplus K_{(m,t)}$ with
  projection $p_{(m,t)}\colon X_{(m,t)}\to K_{(m,t)}$. For $k_{(m,t)}\in K_{(m,t)}$, define the bundle map $\widehat{f}_{(m,t)}\colon X_{(m,t)} \to B_{t}\oplus K_{(m,t)}$, 
\[
\widehat{f}_{(m,t)}(x)=\bigl(f_t(x)\;,\;p_{(m,t)}\,x-k_{(m,t)}\bigr). 
\]
Then solutions to the equation $f_t(x)=0$, $x\in X_{(m,t)}$ with $p_{(m,t)}\,x=k_{(m,t)}$ are in one-to-one correspondence with solutions to the equation $\widehat{f}_{(m,t)}(x)=0$. Moreover, the differential $d\widehat{f}_{(m,t)}$ is an isomorphism with inverse 
$\widehat{Q}_{(m,t)}\colon B_t\oplus K_{(m,t)}\to X_{(m,t)}$, 
\[
\widehat{Q}_{(m,t)}\,(b,k)= Q_{(m,t)}\,b\; +\;k.
\]
On the other hand, solutions of the equation $\widehat{f}_{(m,t)}(x)=0$ are in one-to-one correspondence with fixed points of the map $F_{(m,t)}\colon X_{(m,t)}\to X_{(m,t)}$ given by
\[
F_{(m,t)}(x)=x\;-\;\widehat{Q}_{(m,t)}\,\widehat{f}_{(m,t)}(x).
\]
Fixed points are obtained from the Newton iteration scheme: if
\[
v_0=k_{(m,t)},\quad v_{j+1}=v_j\;-\;\widehat{Q}_{(m,t)}\,\widehat{f}_{(m,t)}(v_j),
\]
then $v_{j}$ converges to $v_\infty$ as $j\to\infty$ and
$F_{(m,t)}(v_\infty)=v_{\infty}$. Furthermore, if $\|f_{t}(0_{X_{(t,m)}})\|$ is sufficiently
small then there is $0<\delta<1$ such that:
\[
\|v_{j+1}-v_j\|\le \delta^{j}\|f_{t}(0)\|
\]
and consequently
\begin{equation}\label{Eq:iterest}
\|v_\infty-v_0\|\le \kappa \|f_{t}(0)\|,
\end{equation}
where $\kappa$ is a constant.
\end{proof}

\subsection{Pre-gluing maps of disks}
\label{sec:pregluemaps}
Recall from Section \ref{sec:cohertriv} that we associated to a pair of punctured disks $H_j$, $j=1,2$ and $\rho\in[1,\infty)$ a pre-glued disk $H_{1}\#_{\rho} H_{2}$ and that weight functions $\mathbf{e}_{\delta}$ on $H_j$ induced a weight function on $H_{1}\#_{\rho} H_{2}$. For simpler notation below we will write $D_{\rho}$ for $H_{1}\#_\rho H_{2}$.
For $((u_{1},\zeta),u_{2})\in\NN$ and $\rho\in[\rho_0,\infty)$ define the map
\[
w_{\rho}=u_1\#_\rho u_2\colon D_{\rho}\to\C^{n}
\]  
as follows. For $\rho_0$ large enough $u_1$ (resp.~$u_2^{\rm st}$) takes the region $(-\infty,-\rho_0]\times[0,1]$ (resp.~$[\rho_0,\infty)\times[0,1]$) to the holomorphic $(\C^{n},\R^{n})$-coordinates around $u_1(\zeta)\in L$. Define 
\[
u_1\#_\rho u_2(z)=
\begin{cases}
u_1(z) &\text{for }z\in H_1-\bigl((-\infty,-\rho+1)\times[0,1]\bigr) ,\\
u_2^{\rm st}(z) &\text{for }z\in H_2-\bigl((\rho+1,\infty)\times[0,1]\bigr),\\
(1-\alpha(z))u_1(z)+\alpha(z)u_2(z) &\text{for }z\in [-1,1]\times[0,1],
\end{cases}
\]
where all domains are subsets of $D_{\rho}=H_1\#_\rho H_2$. Here the domains of the first and second maps are clear. The third domain should be understood as 
\[
[-1,1]\times[0,1]\subset [-\rho,\rho]\times[0,1]\subset D_\rho,
\]
see \eqref{Eq:midstrip}, and $\alpha$ in the definition of the map is a smooth cut off function equal to $0$ near $\tau=-1$, equal to $1$ near $\tau=1$, and real valued and holomorphic on the boundary $[-1,1]\times\{0,1\}= (\pa D_{\rho})\cap [-1,1]\times[0,1]$.

\subsection{Adapted configuration spaces}
A pre-glued domain $D_{\rho}$ contains a middle strip
$[-\rho,\rho]\times[0,1]\subset D_{\rho}$,  
see \eqref{Eq:midstrip}. Write $0\in D_{\rho}$ for the point $(0,0)\in[-\rho,\rho]\times[0,1]$.  Let $\widehat{\cfig}$ denote the bundle over $[\rho_0,\infty)\times L$ the fiber of which over $(\rho,q)$ is the subset  
\[
\widehat{\cfig}(\rho,q)\subset\sblv^{2}_{\delta}(D_{\rho},\C^{n})
\]
of functions $u\colon D_{\rho}\to\C^{n}$ that satisfy the following:
\begin{enumerate}
\item $u$ takes the boundary $\pa D_{\rho}$ to $L$, the restriction of 
$(du)^{0,1}$ to the boundary (the trace) vanishes, and $u|_{\pa D_{\rho}}$ represents the zero homology class.
\item $u(0)=q\in L$.  
\end{enumerate}

We view $\widehat{\cfig}$ as a bundle over $[\rho_0,\infty)\times L$ with local trivializations in the $L$-directions given by composition with cut-off translation diffeomorphisms, as in \eqref{Eq:trivviadiffeo}, and with trivializations in the $[\rho_0,\infty)$-direction given by pre-composition with diffeomorphisms $\psi\colon D_{\rho}\to D_{\rho'}$ such that $\psi(0)=0$, $\psi=\id$ outside the middle strip, and inside the middle strip $\psi$ is close to a diffeomorphism that stretches/shrinks the $[-\rho,\rho]\times[0,1]$ in the first coordinate and which is holomorphic along the boundary. 

Each fiber $\widehat{\cfig}_{\rho}$ of  $\widehat{\cfig}\to[\rho_{0},\infty)$ fibers over $L$. The vertical tangent bundle of $\widehat{\cfig}\to[\rho_0,\infty)$ can be described as the bundle with fiber over $u\in\widehat{\cfig}_{\rho}$ equal to the direct sum 
\[
T_{u}\widehat{\cfig}_{\rho}=\EE(u)\oplus V_{\rm sol}(u).
\]
Here $\EE(u)\subset\sblv^{2}_{\delta}(D_{\rho})$ is the subspace of vector fields $v$ along $u$ that satisfy 
\begin{enumerate}
\item $v(z)\in T_{u(z)}L$ and $(\nabla v)^{0,1}|_{\pa D_{\rho}}=0$ (where $\nabla$ is the connection associated to the metric $\hat g$, see Section \ref{sec:Geomprel}),
\item $v(0,0)=0$,
\end{enumerate} 
and $V_{\rm sol}(u) \cong \R^n$ is the $n$-dimensional space of cut-off constant solutions coming from the values of the holomorphic coordinates at $u(0)$ (i.e. the space arising from linearizations of translation diffeomorphisms).  There is an exponential map $\exp_{u}\colon\EE(u)\oplus V_{\rm sol}(u)\to\widehat{\cfig}$ which can be written
\[
\exp_{u}(v,c)=\Phi_{c}(\Exp_{u}(v)),
\]
where $\Phi_{c}\colon \C^{n}\to\C^{n}$ is the translation diffeomorphism associated to $c\in\R^{n}$ and where $\Exp$ is defined through the metric $\hat g$, which gives $C^{1}$-charts of $\widehat{\cfig}_{\rho}$. 

\begin{Remark}
These charts vary in a $C^{1}$-smooth way with $\rho\in[\rho_0,\infty)$. Indeed, the local trivialization over the base is simply precomposition with a diffeomorphism $\psi\colon D_{\rho}\to D_{\rho'}$ that is holomorphic along the boundary, and the exponential map above is equivariant under precomposition with $\psi$:
\[
\exp_{u\circ\psi}(v\circ\psi, c)=\left(\exp_{u}(v,c)\right)\circ\psi.
\]
\end{Remark}

We will use $\widehat{\cfig}$ as the source space for the Floer operator. We denote the naturally corresponding target space $\widehat{\tcfig}$. This space is a locally trivial bundle over $[\rho_0,\infty)$ with fiber at $\rho$ equal to 
\[
\dot\sblv^{1}_{\delta}(D_{\rho},\Hom^{0,1}(TD_{\rho},\C^{n})),
\]
the subset of elements $A\in\sblv^{1}_{\delta}(D_{\rho},\Hom^{0,1}(TD_{\rho},\C^{n}))$ with $A|_{\pa D_{\rho}}=0$, and with local trivializations given by precomposition with the differentials $d\psi$ of the diffeomorphisms $\psi\colon D_{\rho}\to D_{\rho'}$ described above.

\subsection{Pre-gluing as a map and the Floer operator}
Consider the product $\widehat{\cfig}\times[0,\infty)$ and define the map 
\[
\Pre\colon\NN\times[\rho_0,\infty)\to\widehat{\cfig}\times[0,\infty)
\]
as follows (recall that $\NN=\FF^{\ast}(-\beta)\times_L\MM^{\ast}(\beta)$):
\[
\Pre(\xi,\rho)=(u_1\#_{\rho} u_{2},r(u_1)),
\]
where $\xi=((u_1,\zeta),u_2)$ and $r(u_1)$ is the coordinate of the Hamiltonian at $u_1$, i.e.~$u_1$ solves the Floer equation $(du_1+\gamma_{r(u_1)}\otimes X_H)^{0,1}=0$. By compactness of the moduli spaces involved we find that if $\rho_{0}$ is sufficiently large then $\Pre$ is a fiber preserving embedding (as a map of bundles over $[\rho_0,\infty)$). More precisely, the restriction 
\[
\Pre_{\rho}=\Pre|_{\NN\times\{\rho\}}\colon \NN\to\cfig_{\rho}
\] 
is a family of embeddings which depends smoothly on $\rho$.

Consider the normal bundle $N\NN_{\rho}$ of $\NN_{\rho}=\Pre_{\rho}(\NN)\subset\widehat{\cfig}_{\rho}$ as a sub-bundle of the restriction $T_{\NN_{\rho}}\widehat{\cfig}_{\rho}$ of the tangent bundle of $\widehat{\cfig}_{\rho}$ to $\NN_{\rho}$. The fiber $N_{w_{\rho}}\NN_{\rho}$ of this normal bundle at $w_{\rho}=u_{1}\#_{\rho} u_{2}\in\widehat{\cfig}_{\rho}$ is the $L^{2}$ complement, see Remark \ref{Rem:L2pairing} below, of the subspace $d\Pre(T_{\xi}\NN)\subset T_{w_\rho}\widehat{\cfig}_{\rho}$, where $\xi=((u_1,\zeta),u_2)\in\NN$. 

By transversality and a standard linear gluing result, see Lemma \ref{Lem:lingluetriv} below, $d\Pre(T_{\xi}\NN)= T_{w_{\rho}}\NN_{\rho}$ is given by
\[
T_{w_{\rho}}\NN_{\rho}=\ker(u_1)'\times_{T_{u_1(\zeta)}L}\ker(u_2)', 
\]  
where $\ker(u_i)'$ is spanned by cut-off versions of vector fields of the form $v'+c'$ where $v+c$ lies in the kernel of the linearized operator at $u_i$, with $v'$ a cut off version of $v$ and $c'$ the element in $V_{\rm sol}(u_1\#_{\rho} u_2)$ with the same constant value as $c'\in V_{\rm sol}(u_i)$. An element in the fibered product is of the form $v_1'+c'+v_2'$ where $(v_1',c')\in\ker(u_1)'$ and $(v_2', c')\in\ker(u_2)'.$

The bundles $T_{\NN}\widehat{\cfig}\to[\rho_0,\infty)$ and $N\NN\to [\rho_0,\infty)$ are locally trivial, with local trivializations given by precomposition with the diffeomorphisms $\psi\colon D_{\rho}\to D_{\rho'}$, in the latter case followed by $L^{2}$-projection to the normal bundle.  

\begin{Remark}\label{Rem:L2pairing}
The $L^{2}$ pairing on $T_{w_{\rho}}\widehat{\cfig}_{\rho}$ is to be understood as follows. Recall that
\[
T_{w_{\rho}}\widehat{\cfig}_{\rho}=\EE(w_{\rho})\oplus V_{\rm sol}(w_{\rho}).
\]
We define
\[
\langle (v,c) \, , \, (\tilde v, \tilde c)\rangle = \langle v \, ,\, \tilde v\rangle_{\sblv^{2}_{\delta}} + \langle c \, , \, \tilde c\rangle_{\R^{n}},
\]
where the first summand is the pairing on $\EE(w_{\rho})\subset\sblv^{2}_{\delta}(D_{\rho};\C^{n})$ induced by the weighted $L^{2}$-pairing on the ambient space and the second is the inner product of the values of the cut-off solutions at $0$ in $T_{w_{\rho}(0)}L$. 

We will also use the corresponding norm, if
\[
(v,c)\in \EE(w_{\rho})\oplus V_{\rm sol}(w_{\rho})=T_{w_{\rho}}\widehat{\cfig}_{\rho}\subset T_{\NN_{\rho}}\widehat{\cfig}_{\rho}
\]
then we define
\[
\|(v,c)\|_{T_{\NN_{\rho}}\widehat{\cfig}_{\rho}}=\|v\|_{2,\delta}+ \|c\|_{T_{w_{\rho}(0)}L},
\]
where $\|\cdot\|_{2,\delta}$ is the weighted Sobolev $2$-norm on $\sblv^{2}_{\delta}(D_{\rho},\C^{n})$ and where $\|\cdot\|_{T_qL}$ is the norm on the tangent space of $L$ at $q$ induced by the Riemannian metric. We will write $\|\cdot\|_{N\NN_{\rho}}$ for the restriction of this norm to the sub-bundle $N\NN_{\rho}$. 
\end{Remark}

The Floer equation now gives a smooth bundle map $f\colon T_{\NN}\widehat{\cfig}\to\widehat{\tcfig}$, where $f_{\rho}\colon T_{\NN_{\rho}}\widehat{\cfig}_{\rho}\to\widehat{\tcfig}_{\rho}$ is defined as follows. If $w_{\rho}= u_{1}\#_{\rho} u_{2}\in \NN_{\rho}$ then for $(v,c)\in T_{w_{\rho}}\widehat{\cfig}_{\rho}=\EE(w_\rho)\oplus V_{\rm sol}(w_\rho)$:
\begin{equation}\label{Eq:Floermaponglued}
f_{\rho}(v,c)=\left(
d \exp_{w_{\rho}}(v,c) + \gamma_{r(u_1)}\otimes X_{H}(\exp_{w_{\rho}}(v,c))
\right)^{0,1}.
\end{equation}
Below we will keep the notation $f$ for the restriction $f|_{N\NN}$. 

\subsection{The gluing map}
To construct the gluing map 
\[
\Phi\colon \NN\times[\rho_0,\infty)\to \FF(0\beta)
\]
which parametrizes a $C^{1}$-neighborhood of the Gromov-Floer boundary we will apply Lemma \ref{Lem:FloerPicard} to the map $f\colon N\NN\to\widehat{\tcfig}$ defined in \eqref{Eq:Floermaponglued}.  In the notation of Lemma \ref{Lem:FloerPicard}:

\begin{itemize}
\item  $[\rho_0,\infty)$ corresponds to $T$, $\NN$ corresponds to $M$;
\item  $N\NN$ corresponds to $X$,  $\widehat{\tcfig}$ corresponds to $B$; 
\item  we use the norm $\|\cdot\|_{N\NN_{\rho}}$ from Remark \ref{Rem:L2pairing} and  the norm $\|\cdot\|_{\widehat{\tcfig}_{\rho}}$ induced from the weighted Sobolev $1$-norm on $\sblv^{1}_{\delta}(D_{\rho}, \Hom^{0,1}(TD_{\rho},\C^{n}))$.
\end{itemize}

\begin{Lemma}\label{Lem:almostholomorphic}
There exists $\gamma>0$ such that for any $\xi=((u_1,\zeta),u_2)\in \NN$, if $w_\rho=u_{1}\#_{\rho} u_{2}$ then
\[
\|f_{\rho}(w_{\rho})\|_{\widehat{\tcfig}_{\rho}}=\Ordo(e^{-\gamma\rho}).
\]
\end{Lemma}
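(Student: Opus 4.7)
My plan is to reduce the estimate to the standard exponential decay of holomorphic half-strips at their boundary punctures, combined with the observation that $f_\rho(w_\rho)$ is supported on a small interpolation region where the weight $\mathbf{e}_\delta$ is bounded by $e^{\delta\rho}$.

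First I would localize the support of $f_\rho(w_\rho)$ to the interpolation region $[-1,1]\times[0,1]\subset D_\rho$. Outside this region, on $H_{1;\rho-1}$ the pre-glued map equals $u_1$, which solves the full Floer equation by definition, so $f_\rho(w_\rho)=0$ there. On $H_{2;\rho-1}$ the pre-glued map equals $u_2^{\rm st}$, which is $J_0$-holomorphic. Since the Hamiltonian $1$-form $\gamma_{r(u_1)}$ has compact support in $D$ away from the puncture $\zeta$, for $\rho_0$ sufficiently large this form vanishes identically on the $H_2$ side of $D_\rho$ as well as on the entire interpolation region. Hence on $H_{2;\rho-1}$ the Floer operator reduces to $\bar\partial u_2^{\rm st}=0$, which is satisfied.

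On the interpolation region itself, $f_\rho(w_\rho)$ likewise reduces to $\bar\partial w_\rho$. Both $u_1$ and $u_2^{\rm st}$ take values in the local holomorphic coordinate neighborhood $U_{u_1(\zeta)}$ around the matching point $u_1(\zeta)=u_2(1)\in L$, and both are honestly holomorphic there because $J=J_0$ near $L$ and the Hamiltonian term vanishes. Writing $w_\rho=(1-\alpha)u_1+\alpha u_2^{\rm st}$ in these $\C^n$-coordinates and using $\bar\partial u_1=\bar\partial u_2^{\rm st}=0$, one obtains the clean formula
\[
f_\rho(w_\rho)\ =\ (\bar\partial\alpha)\,(u_2^{\rm st}-u_1).
\]
The cutoff $\alpha$ is real-valued and holomorphic along $\partial D_\rho$, so this form automatically satisfies the boundary vanishing required to lie in $\widehat{\tcfig}_\rho$.

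The analytic heart of the argument is the Fourier expansion at the punctures, exactly as in \eqref{Eq:normalform}: in strip coordinates near $\infty\in H_j$ one has $u_j=\sum_{n\ge 1}c_n^{(j)}e^{\mp n\pi\tau_j}$, with sign chosen so the series decays toward the puncture, and with coefficients that are uniformly bounded in $\xi\in\NN$ by compactness. The interpolation region corresponds to $|\tau_j|\ge\rho-1$ for $j=1,2$, so $|u_j|=\Ordo(e^{-\pi\rho})$ together with any fixed number of derivatives; hence pointwise on the interpolation region $|\nabla^k f_\rho(w_\rho)|=\Ordo(e^{-\pi\rho})$ for $k=0,1$. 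Since $\mathbf{e}_\delta\le e^{\delta\rho}$ on this fixed-area region, integrating gives
\[
\|f_\rho(w_\rho)\|_{\widehat{\tcfig}_\rho}\ \le\ C\,e^{\delta\rho}\cdot e^{-\pi\rho}\ =\ \Ordo(e^{-(\pi-\delta)\rho}),
\]
which is the desired bound with any $\gamma\in(0,\pi-\delta]$, positive by the assumption $0<\delta<\pi$. The only substantive input is the exponential decay of $u_j$ and its derivatives in the strip ends, which is standard for holomorphic half-strips with totally real Lagrangian boundary; everything else is support tracking and a single integration against the explicit weight.
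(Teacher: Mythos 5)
Your proof is correct and follows essentially the same route as the paper: both arguments rest on the Fourier expansion of $u_1$ and $u_2^{\rm st}$ at the puncture, giving decay $\Ordo(e^{-\pi\rho})$ (uniform over the compact space $\NN$) of the maps and their derivatives on the interpolation region, which beats the weight bound $e^{\delta\rho}$ there since $\delta<\pi$. Your additional bookkeeping — the support localization, the identity $f_\rho(w_\rho)=(\bar\pa\alpha)(u_2^{\rm st}-u_1)$, and the boundary-vanishing check via $\alpha$ being holomorphic along $\pa D_\rho$ — is exactly what the paper's one-line proof leaves implicit.
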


\begin{proof}
Write $u_2=u_2^{\rm st}$.
By Fourier expansion near $\infty\in H$, $|u_j^{(k)}(z)|=\Ordo(e^{-(\gamma+\delta)\rho})$, $j=1,2$, $k\le 2$, in the interpolation region where the weight function is bounded by $e^{\delta\rho}$.
\end{proof}

If $\xi=((u_1,\zeta),u_2)\in\NN$, let $w_{\rho}=u_{1}\#_{\rho} u_{2}$.  For the next result, in the correspondence with Lemma \ref{Lem:FloerPicard}, the fiber $N_{w_{\rho}}\NN$ of the normal bundle $N\NN_{\rho}$ at $w_{\rho}$ corresponds to $X_{(m,t)}$ and $w_{\rho}$ corresponds to $0\in X_{(m,t)}$.

\begin{Lemma}\label{Lem:partialglu}
For $\rho_0$ sufficiently large, the vertical differential 
\[
d_{w_{\rho}}f_{\rho}\colon N_{w_{\rho}}\NN_{\rho}\to \widehat{\tcfig}_{\rho}
\]
is surjective and admits a uniformly bounded right inverse. Moreover, the quadratic estimate \eqref{e:QuadraticFP} for the non-linear term in the Taylor expansion of $f_{\rho}$ holds.
\end{Lemma}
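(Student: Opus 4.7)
The plan is to apply standard linear Floer gluing to the linearization and to reduce the surjectivity claim on the normal bundle to surjectivity on the full tangent bundle of $\widehat{\cfig}_\rho$. Recall that $T_{w_\rho}\widehat{\cfig}_\rho=\EE(w_\rho)\oplus V_{\rm sol}(w_\rho)$ and that $N_{w_\rho}\NN_\rho$ is defined as the $L^2$-complement of $d\Pre(T_\xi\NN)=T_{w_\rho}\NN_\rho$. First I would argue that, for $\rho$ large, the full vertical differential $D_{w_\rho}f_\rho\colon T_{w_\rho}\widehat{\cfig}_\rho \to \widehat{\tcfig}_\rho$ is surjective with a uniformly bounded right inverse. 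Since $\gamma_{r(u_1)}$ has compact support well inside $H_1$ (away from the neck), the operator $D_{w_\rho}f_\rho$ agrees on the neck with the glued Cauchy-Riemann operator $D(\bar\pa_\rho)$ of Section \ref{sec:cohertriv}; on the two ends it restricts to the linearized Floer operator at $u_1$ and the linearized $\bar\pa$-operator at $u_2$, both of which are surjective by the transversality statements Lemma \ref{Lem:tvholdisks} and Lemma \ref{Lem:tvmdli}, with bounded right inverses $Q_1,Q_2$ whose norms are uniform in $\xi\in\NN$ by compactness of $\NN$.

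Given $\eta\in\widehat{\tcfig}_\rho$, the approximate right inverse is constructed in the usual way: cut $\eta$ into pieces $\eta_1,\eta_2$ supported in $H_{1;\rho}$ and $H_{2;\rho}$ respectively using bump functions on the middle strip, apply $Q_1,Q_2$ to obtain $(v_j,c_j)\in T_{u_j}\cfig_{\delta}$ with $c_1=c_2$ (thanks to the matching $V_{\rm sol}$ factor), and pre-glue these with cutoffs into an element of $T_{w_\rho}\widehat{\cfig}_\rho$. The defect from being an exact right inverse is concentrated in the interpolation region where the weight is $\le e^{\delta\rho}$ while the glued fields decay like $e^{-\pi\rho}$, giving an error of order $e^{-(\pi-\delta)\rho}$; a Neumann series then produces a genuine right inverse of norm uniformly bounded in $\rho$. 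Now by Lemma \ref{Lem:L2project}, the pre-glued kernel elements fill out $T_{w_\rho}\NN_\rho$ and coincide (up to the exponentially small error) with $\ker D_{w_\rho}f_\rho$. Restricting the bounded right inverse to $N_{w_\rho}\NN_\rho$ — which is a complement of the kernel — therefore gives a uniformly bounded right inverse of $d_{w_\rho}f_\rho$ on the normal bundle, as required. Smoothness of this family of right inverses in $(\xi,\rho)$ comes from smoothness of $Q_1,Q_2$ in the base point and smoothness of the gluing profile in $\rho$.

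For the quadratic estimate, I would write the Taylor expansion of $f_\rho$ in the form \eqref{e:TaylorFP}. The non-linear remainder $N_{(m,t)}$ comes from two sources: the difference between $d\exp_{w_\rho}(v,c)$ and its linearization, and the difference between $X_H(\exp_{w_\rho}(v,c))$ and its linearization in $(v,c)$; both depend smoothly on $(v,c)$ and vanish to first order at $0$. In real dimension two the weighted Sobolev space $\sblv^{2}_\delta(D_\rho,\C^n)$ embeds continuously into $C^0$ and is an algebra under pointwise multiplication, so the standard calculus-lemma estimates yield \eqref{e:QuadraticFP}. Uniformity of the constant $C$ in $\rho$ is obtained by decomposing $D_\rho$ into pieces of uniformly bounded geometry (the two ends, the middle strip of length $2\rho$, and the two interpolation annuli), on each of which the embedding and multiplication constants are controlled by the corresponding constants on the fixed model strip; the weight $\mathbf{e}_\delta$ poses no difficulty because it is bounded above and below on each such piece of unit length.

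The main obstacle is establishing that \emph{all} the constants obtained above — the norm of the right inverse and the constant $C$ in the quadratic estimate — are uniform over $\rho\in[\rho_0,\infty)$ and $\xi\in\NN$ simultaneously. Uniformity over $\xi$ is immediate from compactness of $\NN$ (Lemma \ref{Lem:fiberprod1}). Uniformity over $\rho$ is the delicate point: it requires verifying that the cut-off construction produces exponentially small errors \emph{in the weighted norm}, which is exactly what the choice $0<\delta<\pi$ guarantees, since each $u_j$ decays like $e^{-\pi|\tau|}$ near its puncture (see \eqref{Eq:normalform}) while the weight grows only like $e^{\delta|\tau|}$. Once uniformity is in hand, Lemma \ref{Lem:FloerPicard} applies directly and produces the gluing map $\Phi$ in the next step.
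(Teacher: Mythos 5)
Your overall strategy (approximate right inverse from the pieces, Neumann series, then correcting so the inverse lands in $N_{w_\rho}\NN_\rho$) is a legitimate alternative to the paper's argument, and your sketch of the quadratic estimate is fine in outline (the paper simply cites \cite[Proof of Proposition 4.6]{EES}). But the linear step has a genuine gap at its very first input. The vertical differential $d_{w_\rho}f_\rho$ is taken at a frozen base point of $\NN\times[\rho_0,\infty)$, so the Hamiltonian parameter $r=r(u_1)$ is frozen; the operator your construction needs on the $H_1$-end is therefore the \emph{fixed-$r$} linearization at $u_1\in\FF(-\beta)$, which has index $n+\mu(-\beta)=0$. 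Lemma \ref{Lem:tvmdli} does not say this operator is surjective: it asserts transversality of the $r$-parametrized problem (that is exactly why $\FF(-\beta)$ is a $1$-manifold). The fixed-$r$ operator fails to be surjective precisely at the critical points of the function $r$ on the closed $1$-manifold $\FF(-\beta)$, where the tangent line to $\FF(-\beta)$ is vertical and the index-zero operator acquires a one-dimensional kernel and cokernel; such points exist on every component (extrema of $r$ on a circle), they cannot be removed by genericity, and nothing excludes them from appearing in $\NN$. Consequently there is no uniformly bounded family $Q_1$ over $\NN$, and the approximate-right-inverse construction for the full glued differential collapses exactly where the uniformity is needed.

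A related conflation occurs in your identification of the kernel: $T_{w_\rho}\NN_\rho=d\Pre(T_\xi\NN)$ is \emph{not} exponentially close to $\ker d_{w_\rho}f_\rho$. It contains the preglued direction along $\FF(-\beta)$ in which $r$ varies (and the marked-point direction), and since the differential is taken at fixed $r$ this direction is sent to approximately $-\dot r\,\pa_r\bigl(\gamma_r\otimes X_H(w_\rho)\bigr)^{0,1}$, an element of unit size; conversely, near the degenerate $u_1$ above the kernel of the full differential can a priori exceed dimension $n$. (Also, the relevant linear gluing statement is the fibered-product part of Lemma \ref{Lem:lingluetriv}, not Lemma \ref{Lem:L2project}.) The paper's proof is structured to sidestep both issues: it never asserts surjectivity of the piece operators nor identifies $\ker d_{w_\rho}f_\rho$ with $T_{w_\rho}\NN_\rho$; instead it notes that $df_\rho$ on all of $T_{w_\rho}\widehat{\cfig}_\rho$ has index $n=\dim T_{w_\rho}\NN_\rho$, proves the uniform injectivity estimate $\|(v,c)\|\le C\|df_\rho(v,c)\|$ directly on the $L^2$-complement $N_{w_\rho}\NN_\rho$ by the cut-off/contradiction argument of Lemma \ref{Lem:lingluetriv} (using the fibered-product transversality of Lemma \ref{Lem:fiberprod1} and the gauge fixing of Section \ref{sec:gauge}), and then obtains surjectivity together with the uniformly bounded right inverse from this estimate plus the index count. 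To repair your route you would have to prove surjectivity of the glued operator near the degenerate points of $\FF(-\beta)$ separately, which does not follow from the soft transversality statements you invoke; restructuring the linear step along the paper's estimate-plus-index-count lines is the more robust fix.
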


\begin{proof}
The quadratic estimate follows from \cite[Proof of Proposition 4.6]{EES}. In order to see that the differential is surjective and admits a uniformly bounded right inverse, we first note that the linearization 
\[
df_{\rho}\colon T_{w_{\rho}}\widehat{\cfig}_{\rho}\to \widehat{\tcfig}_{\rho}
\]
at $w_{\rho}$ is a Fredholm operator of index $n$. The subspace $T\NN_{\rho}\subset T_{w_{\rho}}\widehat{\cfig}_{\rho}=\EE(w_{\rho})\oplus V_{\rm sol}(w_{\rho})$ is also of dimension $n$ and we get a bounded right inverse as desired provided we prove an estimate of the form
\[
\|(v,c)\|_{T\widehat{\cfig}_{\rho}}\le C\|df_{\rho}(v,c)\|_{\widehat{\tcfig}_{\rho}}
\]
on the $L^{2}$-complement of $T_{w_{\rho}}\NN_{\rho}$. This follows from standard arguments, cf. Lemma \ref{Lem:lingluetriv}.
\end{proof}

Lemmas \ref{Lem:almostholomorphic} and \ref{Lem:partialglu} have the following consequence:

\begin{Corollary} \label{Cor:C1Structure}
The Newton iteration map with initial values in $\NN_{\rho}=\Pre(\NN\times\{\rho\})\subset N\NN_{\rho}$ gives a $C^{1}$ diffeomorphism
\[
\Phi\colon \MM^{\ast}(\beta)\times_{L}\FF^{\ast}(-\beta)\times[\rho_0,\infty)\to f^{-1}(0),
\]
where $0$ denotes the $0$-section in $\widehat{\tcfig}$,
such that $\Phi((u_1,\zeta),u_2,\rho)$ limits to a broken curve with components $(u_1,\zeta)\in\FF^{\ast}(-\beta)$ and $u_2\in\MM^{\ast}(\beta)$ as $\rho\to\infty$.
\end{Corollary}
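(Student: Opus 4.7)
The plan is to apply the parametric Floer--Picard lemma (Lemma \ref{Lem:FloerPicard}) to the bundle map $f\colon N\NN\to\widehat{\tcfig}$ over $\NN\times[\rho_0,\infty)$, using the pre-glued maps $w_\rho=u_1\#_\rho u_2$ (the zero section of $N\NN$) as initial data for the Newton iteration. The three hypotheses of Floer--Picard are already in place: Lemma \ref{Lem:almostholomorphic} supplies the exponentially small error $\|f_\rho(w_\rho)\|_{\widehat{\tcfig}_\rho}=\Ordo(e^{-\gamma\rho})$, while Lemma \ref{Lem:partialglu} supplies a uniformly bounded right inverse $Q_{(\xi,\rho)}$ to the vertical differential $d_{w_\rho}f_\rho\colon N_{w_\rho}\NN_\rho\to\widehat{\tcfig}_\rho$ together with the quadratic estimate for the non-linear remainder. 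A standard linear gluing argument identifies $d\Pre(T_\xi\NN)$ as a complement to $\mathrm{im}(Q_{(\xi,\rho)})$ inside $T_{w_\rho}\widehat{\cfig}_\rho$, so that the vertical linearization restricted to the $L^{2}$-complement $N_{w_\rho}\NN_\rho$ is an isomorphism with uniformly bounded inverse and the ``kernel bundle'' in the conclusion of Floer--Picard is trivial of rank zero.

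Consequently, the Newton iteration produces for each $(\xi,\rho)$ with $\rho\ge\rho_0$ a unique zero $\sigma(\xi,\rho)\in N_{w_\rho}\NN_\rho$ of $f_\rho$ in an $\epsilon$-ball around $w_\rho$, depending $C^{1}$ on the parameters, and setting $\Phi(\xi,\rho)=\exp_{w_\rho}(\sigma(\xi,\rho))$ gives a $C^{1}$-family of genuine Floer holomorphic disks. The Newton bound \eqref{Eq:iterest} gives $\|\sigma(\xi,\rho)\|_{N\NN_\rho}=\Ordo(e^{-\gamma\rho})$, which translates directly into Floer--Gromov convergence of $\Phi(\xi,\rho)$ to the broken configuration $((u_1,\zeta),u_2)$ as $\rho\to\infty$. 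For smooth dependence on the gluing parameter, I would work in the local trivializations of $N\NN$ and $\widehat{\tcfig}$ over $[\rho_0,\infty)$ given by precomposition with the diffeomorphisms $D_\rho\to D_{\rho'}$ that fix $0$ and holomorphically rescale the middle strip; in these trivializations, the initial data $w_\rho$, the right inverse $Q_{(\xi,\rho)}$, and the non-linear term all depend $C^{1}$-smoothly on $\rho$, so the fixed point does as well.

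The step I expect to be the main obstacle is proving surjectivity, i.e.\ that the image of $\Phi$ exhausts $f^{-1}(0)$ in a neighborhood of the Gromov--Floer boundary. Given a sequence of Floer disks $u^{(l)}\in\FF(0\beta)$ with gradient blow-up, Corollary \ref{Cor:limit3} produces broken limits $((u_1^{(l)},\zeta^{(l)}),u_2^{(l)})\in\NN$ and gluing parameters $\rho_l\to\infty$ such that $u^{(l)}$, after the gauge-fixing re-parametrization of Section \ref{sec:gauge}, is $C^{1}$-close to the pre-glued model $w_{\rho_l}$. Projecting $u^{(l)}-w_{\rho_l}$ orthogonally onto $N_{w_{\rho_l}}\NN_{\rho_l}$ and invoking the uniqueness clause of Floer--Picard then identifies $u^{(l)}$ with $\Phi((u_1^{(l)},\zeta^{(l)}),u_2^{(l)},\rho_l)$ for $l$ large. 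The technical hurdle is upgrading the $C^{1}$-closeness from Corollary \ref{Cor:limit3} to closeness in the weighted Sobolev topology of $\widehat{\cfig}$; this requires elliptic bootstrap estimates on the middle strip combined with the exponential decay at the punctures built into the weight $\mathbf{e}_\delta$, so that the difference lies well inside the uniqueness radius $\epsilon$ of Lemma \ref{Lem:FloerPicard}.
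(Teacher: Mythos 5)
Your first two paragraphs are exactly the paper's argument: the corollary is deduced directly from Lemma \ref{Lem:FloerPicard}, with Lemma \ref{Lem:almostholomorphic} and Lemma \ref{Lem:partialglu} supplying the small initial error, the uniformly bounded right inverse and the quadratic estimate, and with the kernel bundle of rank zero, so that the Newton solutions sweep out all of $f^{-1}(0)$ inside the $\epsilon$-disk bundle. The surjectivity question you flag in your last paragraph is not part of this corollary --- exhaustion of $f^{-1}(0)$ there is already contained in the uniqueness clause of Lemma \ref{Lem:FloerPicard} --- but is the content of the later Theorem \ref{Thm:gluing}, where the paper indeed uses Corollary \ref{Cor:limit3} together with action and Fourier-expansion arguments to upgrade $C^{1}$-closeness to closeness in the weighted Sobolev norm, much as you anticipate.
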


\begin{proof}
Immediate from Lemma \ref{Lem:FloerPicard}.
\end{proof}

\subsection{Parameterizing a neighborhood of the Gromov-Floer boundary}
In this section we show that $\exp(f^{-1}(0))\subset\widehat{\cfig}$ gives a $C^{1}$-collar neighborhood $\approx \NN\times[0,\infty)$ of the Gromov-Floer boundary of $\FF(0\beta)$. There are two main points remaining. First, Corollary \ref{Cor:C1Structure} gives a $C^{1}$ $1$-parameter family, with parameter $\rho\in [\rho_0,\infty)$, of $\NN$-families of solutions in $\FF(0\beta)$ without giving much explicit information on how the solutions depend on $\rho$, and we must extract such information. Second, we need to establish the surjectivity of our construction, i.e.~show that all disks near the boundary are in the image of the gluing map.

Fix $\rho\in [\rho_0,\infty)$, and let $f^{-1}(0)_{\rho}=f^{-1}(0)\cap N\NN_{\rho}$. The domain of a map $u$ in $N\NN_{\rho}$ is the domain $D_{\rho}$ of the map $w_{_{\rho}}=u_{1}\#_{\rho} u_{2}$ (where $(u_{1},\zeta)\in\FF^{\ast}(-\beta)$ and $u_{2}\in\MM^{\ast}(\beta)$ with $u_{1}(\zeta)=u_{2}(1)$) in the fiber over which $u$ lies. In Section \ref{sec:cohertriv} we introduced the notation $H_{1}\#_{\rho} H_{2}=D_{\rho}$ for such domains with subsets $H_{1;\rho}\subset D_{\rho}$ and $H_{2;\rho}\subset D_{\rho}$ that are also subsets of the domains $H_{1}$ of $u_{1}$ and $H_{2}$ of $u_{2}$, see \eqref{Eq:H_1+H_2}. In particular, $H_{1;\rho}$ is a complement in $H_{1}$ of a strip neighborhood $(-\infty,-\rho)\times[0,1]$ of the boundary puncture $\zeta$. We identify $H_{1}$ with the upper half plane $H$ with $\zeta$ at $0$ and then think of $H_{1;\rho}$ as the complement $H-H_{e^{-\pi\rho}}$ of a half disk $H_{e^{-\pi\rho}}$ of radius $e^{-\pi\rho}$ centered at $0$. Similarly, we identify $H_{2}$ with the upper half plane with the marked point $1$ at $\infty$, and take $H_{2;\rho}$ to correspond to the disk $H_{e^{\pi\rho}}$ centered at $0$. Using these identifications we identify of $D_{\rho}$ with the upper half plane with $H-H_{e^{-\pi\rho}}$ corresponding to $H_{1;\rho}$, with $H_{2;\rho}$ corresponding to the disk $H_{e^{\pi\rho}}$ attached by the map which is scaling by $e^{-2\pi\rho}$.        

With these conventions we define a re-parametrization map
\[
\Psi_{\rho}\colon f^{-1}(0)_{\rho}\to\FF(0\beta),
\]  
as follows.  An element $u\in N\NN_{\rho}$ is a pair $(w_{\rho},(v,c))$ where $(v,c)\in N_{w_{\rho}}\NN_{\rho}$ and we consider it as a map $u=\exp_{w_{\rho}}(v,c)\colon D_{\rho}\to\C^{n}$, compare \eqref{Eq:Floermaponglued}. Using the above identification $D_{\rho}\approx H$, the map $u$ can be considered as a map from the upper half plane $H$. We further identify $H$ with the closed disk $D$ and let $\Psi_{\rho}(u)\colon D\to\C^{n}$ be the map $u$ considered as a map on $D$ after these re-parameterizations of the domain. Then the re-parameterized map $\Psi_{\rho}(u)$ solves the Floer equation if and only if $f(u)=0$. 

Let $\Psi\colon f^{-1}(0)\to \FF(0\beta)$ be the map which equals $\Psi_{\rho}$ on $f^{-1}(0)_{\rho}$ and note that as $\rho\to\infty$, 
\[
\inf_{u\in f^{-1}(0)_{\rho}}\{ \sup_{D}|d\Psi(u)|\}\to\infty.
\]

\begin{Lemma}\label{Lem:gluemb}
For $\rho$ sufficiently large, the map $\Psi$ is a $C^{1}$ embedding.
\end{Lemma}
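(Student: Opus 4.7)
The plan is to verify the three defining features of a $C^{1}$-embedding: smoothness, injectivity of the differential, and being a homeomorphism onto its image. Smoothness is immediate: by Corollary \ref{Cor:C1Structure} the Newton iteration gives a $C^{1}$-diffeomorphism $\Phi\colon\NN\times[\rho_0,\infty)\to f^{-1}(0)$, and the reparametrization $\Psi_{\rho}$ is obtained from $\exp$ followed by the scaling $z\mapsto e^{-2\pi\rho}z$ of the chart around $\zeta\in\pa D$, which depends smoothly on $\rho$. Composing gives a $C^{1}$-map $\NN\times[\rho_0,\infty)\to\FF(0\beta)$.

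I would check injectivity of $d\Psi$ next. At a point $u\in f^{-1}(0)_\rho$ the tangent space splits as $T_uf^{-1}(0)=T_uf^{-1}(0)_\rho\oplus\R\,\partial_\rho$. On the first summand, $d\Psi=d\Psi_{\rho}$ is injective because $\Psi_{\rho}$ is an embedding: the Newton iteration chart map has an isomorphism as differential, $\exp$ is a local diffeomorphism near the zero section, and the domain reparametrization is a diffeomorphism. Transversality in the $\partial_\rho$ direction is the heart of the matter. I would track the geometric invariant $\mathcal{I}(u)=\sup_{D}|d\Psi(u)|$. The reparametrization has scale factor $e^{\pi\rho}$ near the puncture, so by the Fourier expansion \eqref{Eq:normalform} of $u_{2}^{\mathrm{st}}$ the quantity $\mathcal{I}$ is of order $e^{\pi\rho}|c_1|$ (or $e^{2\pi\rho}|c_2|$ along $\NN_0$), whence $\partial_\rho\mathcal{I}$ is at least of order $e^{\pi\rho}$, while along variations in $T_uf^{-1}(0)_\rho$ the invariant $\mathcal{I}$ changes by only bounded amounts since $\NN$ is compact. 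Thus $\partial_\rho\Psi$ is transverse to $d\Psi_{\rho}(T_uf^{-1}(0)_\rho)$. Global injectivity then combines the slicewise embedding property of $\Psi_{\rho}$ with the monotonicity recorded just before the lemma: $\inf_{u\in f^{-1}(0)_\rho}\mathcal{I}(u)\to\infty$ as $\rho\to\infty$, so after enlarging $\rho_0$, $\mathcal{I}$ separates slices. Finally, $\Psi$ is a homeomorphism onto its image because it is continuous and injective with $\NN$ compact: on $\NN\times[\rho_0,R]$ it is proper, and points with $\rho\to\infty$ escape every compact subset of $\FF(0\beta)$ by Lemma \ref{Lem:bubbles}.

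The main obstacle is making the $\partial_\rho$-transversality rigorous. To handle it, I would appeal to the Newton iteration estimate \eqref{Eq:iterest}, which bounds the true solution near the pre-glued approximation by $\kappa\|f_{\rho}(w_\rho)\|=O(e^{-\gamma\rho})$ (Lemma \ref{Lem:almostholomorphic}), so the $\partial_\rho$-derivative of the iteration correction is negligible. The dominant $\partial_\rho$-behaviour therefore comes from the explicit $\rho$-dependence of $w_\rho=u_{1}\#_{\rho}u_{2}$, where differentiating the cutoffs in the interpolation region \eqref{Eq:midstrip} produces a leading contribution of order $e^{-\pi\rho}$ that drives the transversality conclusion above. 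Care is required because $\Psi_\rho$ and the reparametrization must be treated simultaneously: what looks like a vanishing contribution on $D_\rho$ in $\sblv^{2}_\delta$-norm becomes, after the scaling $z\mapsto e^{-2\pi\rho}z$ onto $D$, an $O(1)$ variation concentrated near $\zeta$, and it is exactly this concentrated variation that distinguishes the $\partial_\rho$-direction from deformations coming from $\NN$.
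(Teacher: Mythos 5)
Your overall strategy—use the slicewise embedding property of $\Psi_{\rho}$ and then find a quantitative mechanism that distinguishes the $\rho$-direction, first infinitesimally and then globally—is in the same spirit as the paper, but the mechanism you propose does not work as stated. The key claim, that the invariant $\mathcal{I}(u)=\sup_{D}|d\Psi(u)|$ changes ``by only bounded amounts'' along variations in $T_uf^{-1}(0)_{\rho}$ because $\NN$ is compact, is false: the reparametrization $z\mapsto e^{-2\pi\rho}z$ magnifies a unit-size variation of the $\MM^{\ast}(\beta)$-component of $\NN$ by exactly the same exponential factor that makes $\mathcal{I}$ large, so a unit tangent vector of $\NN$ moving $\sup|du_2^{\rm st}|$ at unit rate changes $\mathcal{I}$ at rate comparable to $e^{2\pi\rho}$, i.e.\ at least as fast as your claimed $\partial_\rho\mathcal{I}$. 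Compactness of $\NN$ bounds the variation of the bubble-scale quantity $\sup|du_2^{\rm st}|$, not of its rescaled version. (A secondary inaccuracy: $\mathcal{I}$ is of order $e^{2\pi\rho}\sup|du_2^{\rm st}|$; the Fourier coefficients $c_1,c_2$ of \eqref{Eq:normalform} govern the decay at the node, not the sup of the derivative, so ``$e^{\pi\rho}|c_1|$'' is not the right order.) If you pass to a scale-invariant quantity such as $\log\mathcal{I}$ to repair this, the exponential gap between the $\rho$-derivative and the slice derivatives disappears and transversality no longer follows without further input. Your globalization step has the same defect: $\inf_{f^{-1}(0)_{\rho}}\mathcal{I}\to\infty$ does not mean $\mathcal{I}$ separates slices, since on each slice $\mathcal{I}$ sweeps out an interval and these intervals overlap for nearby $\rho\ne\rho'$; so injectivity between distinct nearby slices is exactly what remains unproved.

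The paper avoids these traps by working directly with distances in the configuration-space norm (equivalently, for solutions, the $C^{0}$-distance): from the Newton estimate \eqref{Eq:iterest} and the rescaling it extracts the lower bound \eqref{Eq:DerivativeBound}, $\bigl|\partial_\rho(\Psi_{\rho}(\xi))\bigr|\ge Ce^{\pi\rho}$, while the maps $\Psi_{\rho'}$ are uniformly Lipschitz in $\xi$; a triangle-inequality argument then shows that two distinct pairs $(\xi,\rho)\ne(\xi',\rho')$ with $\mathrm{d}(\xi,\xi')\le\epsilon_0$ cannot have images at distance $\Ordo(\mathrm{d}(\xi,\xi'))$, which gives the embedding property on $\epsilon_0$-neighborhoods, and points with $\mathrm{d}(\xi,\xi')>\epsilon_0$ are separated because $\Psi$ is $C^{0}$-close to the (injective) pregluing map for $\rho$ large. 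If you want to salvage your differential-splitting formulation, you would need to replace the scalar invariant $\mathcal{I}$ by an estimate of this kind comparing the $\rho$-variation and the $\xi$-variations in one fixed norm, together with a uniform lower bound for $d\Psi_\rho$ on slice directions; none of this is supplied in your write-up.
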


\begin{proof}
The distance functions in $\FF(0\beta)$ and $\NN$ are induced from configuration spaces which are Banach manifolds with norms that control the $C^{0}$-norm. Since all norms are equivalent to the $C^{0}$-norm for (Floer) holomorphic maps, we may think of all distances $\mathrm{d}(\cdot,\cdot)$ in the calculations below as $C^{0}$-norms. Equation \eqref{Eq:iterest} and our rescaling implies that 
\begin{equation} \label{Eq:DerivativeBound}
\left|\frac{\pa(\Psi_{\rho}(\xi))}{\pa\rho}\right|_{C^{0}}\ge C e^{\pi\rho}.
\end{equation}
for some constant $C>0$. 

Assume now that that there is a sequence of pairs $(\xi,\rho)\ne (\xi',\rho')$ such that 
\[
\mathrm{d}(\Psi_{\rho}(\xi),\Psi_{\rho'}(\xi'))=\Ordo(\mathrm{d}(\xi,\xi')).
\]
Then it would follow that
\begin{equation} \label{Eq:InjectiveGlue}
\mathrm{d}(\Psi_{\rho}(\xi),\Psi_{\rho'}(\xi))\le \mathrm{d}(\Psi_{\rho'}(\xi),\Psi_{\rho'}(\xi'))+\Ordo(\mathrm{d}(\xi,\xi'))=\Ordo(\mathrm{d}(\xi,\xi')).
\end{equation}
However, taking $\epsilon_0>0$ small and $\mathrm{d}(\xi',\xi)\le \epsilon_0$, \eqref{Eq:InjectiveGlue} contradicts the derivative bound \eqref{Eq:DerivativeBound} once $\rho, \rho'$  are sufficiently large. We conclude that for small enough $\epsilon_0$,  the map is a $C^{1}$ embedding in an $\epsilon_0$-neighborhood of any point.

It follows that the map is also a global embedding: since $\Psi$ approaches the pregluing map as $\rho\to \infty$, there is $\rho_0>0$ so that for $\rho, \rho' > \rho_0$, if  $\mathrm{d}(\xi,\xi')>\epsilon_0$ then $\mathrm{d}(\Psi_{\rho}(\xi),\Psi_{\rho'}(\xi'))\ge \frac12\epsilon_0$.
\end{proof}

We are now ready to state our main gluing result:
\begin{Theorem}\label{Thm:gluing}
For $\rho_0$ sufficiently large, the map $\Psi\colon \NN\times[\rho_0,\infty) \rightarrow \FF(0\beta)$ is a $C^{1}$ embedding onto a neighborhood of the Gromov-Floer boundary.
\end{Theorem}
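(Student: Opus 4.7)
The theorem contains two assertions: that $\Psi$ is a $C^{1}$ embedding, and that its image is an open neighborhood of the Gromov-Floer boundary $\NN\subset\barFF(0\beta)$. The first assertion follows at once from the previous results: Corollary \ref{Cor:C1Structure} identifies the source $\NN\times[\rho_{0},\infty)$ with the $C^{1}$-manifold $f^{-1}(0)\subset N\NN$ via $\Phi$, and Lemma \ref{Lem:gluemb} shows that the rescaling map $\Psi\colon f^{-1}(0)\to\FF(0\beta)$ is a $C^{1}$ embedding. What remains is surjectivity onto a neighborhood of $\NN$.

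The plan for surjectivity is a compactness-plus-uniqueness argument. Suppose to the contrary that there is a sequence $v_{n}\in\FF(0\beta)\setminus\mathrm{im}(\Psi)$ that converges to the Gromov-Floer boundary in the sense that $v_{n}\in U_{M_{n}}$ with $M_{n}\to\infty$, so that $\sup|dv_{n}|\to\infty$ on a shrinking arc (Lemma \ref{Lem:diamblowup}). By Corollary \ref{Cor:limit3}, after passing to a subsequence we find points $\xi_{n}=((u_{1}^{n},\zeta_{n}),u_{2}^{n})\in\NN$, auxiliary hypersurfaces $P_{j(n)}$, affine maps $\phi_{n}^{\rm st}$, and parameters $\rho_{n}\to\infty$ such that $v_{n}\circ\phi_{n}^{\rm st}$ is $C^{1}$-close to the stabilized representative $u_{2}^{n,\rm st}$ on $H_{\rho_{n}}$, and $v_{n}$ is $C^{1}$-close to $u_{1}^{n}$ away from a shrinking neighborhood of $\zeta_{n}$. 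By compactness of $\NN$ we may further pass to a subsequence so that $\xi_{n}\to\xi_{\infty}\in\NN$.

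The next step is to interpret $v_{n}$, after the reparameterization built into $\Psi_{\rho_{n}}$, as a small perturbation of the pre-glued map $w_{\rho_{n}}=u_{1}^{n}\#_{\rho_{n}}u_{2}^{n,\rm st}$. Concretely, for $n$ large the Corollary \ref{Cor:limit3} estimates give a unique pair $(V_{n},C_{n})\in T_{w_{\rho_{n}}}\widehat{\cfig}_{\rho_{n}}=\EE(w_{\rho_{n}})\oplus V_{\rm sol}(w_{\rho_{n}})$ with $\|(V_{n},C_{n})\|\to 0$ such that $v_{n}=\exp_{w_{\rho_{n}}}(V_{n},C_{n})$ (up to the scale/shift by $\phi_{n}^{\rm st}$ absorbed in the definition of $\Psi$), and this pair automatically satisfies $f_{\rho_{n}}(V_{n},C_{n})=0$. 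Decompose $(V_{n},C_{n})=(V_{n}^{T},C_{n}^{T})+(V_{n}^{\perp},C_{n}^{\perp})$ into tangential and normal parts relative to $T_{w_{\rho_{n}}}\NN_{\rho_{n}}\subset T_{w_{\rho_{n}}}\widehat{\cfig}_{\rho_{n}}$. Since $\Pre_{\rho_{n}}\colon\NN\to\widehat{\cfig}_{\rho_{n}}$ is an embedding and the pre-gluing family is $C^{1}$ in the base $\NN$, the tangential part $(V_{n}^{T},C_{n}^{T})$ corresponds to a $C^{1}$-small displacement $\eta_{n}\in\NN$ with $\xi_{n}+\eta_{n}\in\NN$, whose pre-gluing $w'_{\rho_{n}}=\Pre_{\rho_{n}}(\xi_{n}+\eta_{n})$ satisfies $v_{n}=\exp_{w'_{\rho_{n}}}(V_{n}',C_{n}')$ for a pair $(V_{n}',C_{n}')\in N_{w'_{\rho_{n}}}\NN_{\rho_{n}}$ of norm $O(\|(V_{n},C_{n})\|)\to 0$.

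The final step is the uniqueness half of the Floer-Picard Lemma \ref{Lem:FloerPicard}. Lemmas \ref{Lem:almostholomorphic} and \ref{Lem:partialglu} verified the hypotheses of that lemma with a uniform constant $C$, so the conclusion is that inside the disk-bundle $D(0_{N\NN_{\rho_{n}}};\epsilon)$ of radius $\epsilon<1/(4C)$ the set $f_{\rho_{n}}^{-1}(0)$ coincides with the image of the Newton iteration map applied to the pre-glued configurations $\NN\times\{\rho_{n}\}$, i.e.\ with $\Phi(\NN\times\{\rho_{n}\})$. For $n$ large, $(V_{n}',C_{n}')$ lies in this ball and solves $f_{\rho_{n}}=0$, so $v_{n}=\Psi_{\rho_{n}}(V_{n}',C_{n}')\in\mathrm{im}(\Psi)$, a contradiction.

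The main obstacle in executing this plan is the passage in the third paragraph from the qualitative $C^{1}$-approximation provided by Corollary \ref{Cor:limit3} to a quantitative identification of $v_{n}$ with an element of the normal slice $N\NN_{\rho_{n}}$ of a controlled size $\epsilon<1/(4C)$. This requires that the stable parameterizations $u_{2}^{\rm st}$ built in Section \ref{sec:gauge} vary with $C^{1}$-estimates that are \emph{uniform over all of $\NN$}, including the delicate transition region $U^{\rm reg}\cap U^{\rm sing}$ near $\NN_{0}$ where the second-order-vanishing of $du_{2}(1)$ forces a reparameterization by \eqref{Eq:interpol-}--\eqref{Eq:interpol+}. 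The asymptotic formulas \eqref{Eq:movemarked1}--\eqref{Eq:movemarked2} for the effect of moving marked points in the strip-like end provide the needed uniform control, so once the gauge-fixing from Section \ref{sec:gauge} is in place the whole argument falls into line.
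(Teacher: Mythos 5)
Your overall architecture coincides with the paper's: the embedding statement is indeed just Corollary \ref{Cor:C1Structure} plus Lemma \ref{Lem:gluemb}, and surjectivity is obtained by showing that any Floer disk close to the Gromov--Floer boundary can be written as $\exp_{w_{\rho}}$ of a small element of the normal slice and then invoking the uniqueness part of the Floer--Picard Lemma \ref{Lem:FloerPicard} (with the uniform constants of Lemmas \ref{Lem:almostholomorphic} and \ref{Lem:partialglu}). However, there is a genuine gap at the step where you assert that the estimates of Corollary \ref{Cor:limit3} ``give a unique pair $(V_{n},C_{n})\in\EE(w_{\rho_{n}})\oplus V_{\rm sol}(w_{\rho_{n}})$ with $\|(V_{n},C_{n})\|\to 0$.'' Corollary \ref{Cor:limit3} only provides $C^{1}$-closeness to $u_{1}^{n}$ and $u_{2}^{n,{\rm st}}$ on the two \emph{fixed-size} end pieces of the pre-glued domain, together with the qualitative fact that on the long middle strip $[-\rho_{n}+\rho_{0},\rho_{n}-\rho_{0}]\times[0,1]$ the map is $C^{0}$-close to a constant. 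But the norm on $\EE(w_{\rho_{n}})\oplus V_{\rm sol}(w_{\rho_{n}})$ is a \emph{weighted} Sobolev norm whose weight grows like $e^{\delta(\rho_{n}-|\tau|)}$ toward the middle of the neck, so $C^{0}$-smallness there loses a factor of order $e^{\delta\rho_{n}}$ and does not by itself yield $\|(V_{n},C_{n})\|\to 0$, nor even boundedness. This is exactly the point the paper singles out (``Because of the weight in the Sobolev norm in pre-glued domains, knowing that the limiting holomorphic map on the strip region is constant is not quite sufficient''), and it is where the real work of the surjectivity argument lies.

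The paper closes this gap by an argument you do not supply: by action/energy considerations both end-segments of the neck must map into one and the same standard holomorphic chart $(\C^{n},\R^{n})$ around $u_{1}(\zeta)=u_{2}(1)$, and then the whole neck maps into that chart, where the restriction of the solution is honestly holomorphic and admits a Fourier expansion $c_{0}+\sum_{m<0}c_{m}e^{m\pi z}$. The resulting exponential decay (as in \cite[Theorem 1.3]{Ekholmtrees}) shows that the $C^{0}$-norm near the two ends of the neck controls the weighted norm over the entire neck, and the constant term $c_{0}$ is precisely the shift absorbed by $V_{\rm sol}(w_{\rho_{n}})$; this is what converts the qualitative convergence of Corollary \ref{Cor:limit3} into the quantitative bound $\epsilon<1/(4C)$ needed for Floer--Picard. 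By contrast, you locate the ``main obstacle'' in the uniformity of the gauge fixing $u_{2}^{\rm st}$ over the transition region near $\NN_{0}$; that uniformity is already delivered by Section \ref{sec:gauge} and the formulas \eqref{Eq:movemarked1}--\eqref{Eq:movemarked2}, and fixing it does not address the weighted-norm loss on the neck. Without the holomorphicity-on-the-neck estimate (or an equivalent exponential decay statement for solutions on long strips), the claim $\|(V_{n},C_{n})\|\to 0$, and hence the contradiction via Lemma \ref{Lem:FloerPicard}, is not justified.
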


\begin{proof}
Take $\rho_0$ sufficiently large that the conclusion of Lemma \ref{Lem:gluemb} holds. 
It only then remains to show that (perhaps for some larger $\rho_0$) the map $\Psi$ is surjective onto a neighborhood of the Gromov-Floer boundary of $\FF(0\beta)$. A Gromov-Floer convergent sequence $w_{j}$ with non-trivial bubbling converges uniformly on compact subsets. We need to show that $w_j$ eventually lies in the image under the exponential map of a small neighborhood of $\NN \subset N\NN$ where the Newton iteration map is defined. To see this, we note that by Corollary \ref{Cor:limit3}, for any fixed $\rho_0$, the maps $w_j$ $C^{1}$-converge on the two ends of the strip region $[-\rho+\rho_0,\rho-\rho_0]\times[0,1]$ to $u_1$ and $u_2^{\rm st}$, respectively. On the remaining growing strip we have a holomorphic map which converges to a constant (since there cannot be further bubbling). 

Because of the weight in the Sobolev norm in pre-glued domains, knowing that the limiting holomorphic map on the strip region is constant is not quite sufficient. However, 
by action considerations, if $\rho > \rho_0 \gg 0$ is large enough, then both end-segments of $[-\rho+\rho_0,\rho-\rho_0]\times[0,1]$  must map into the same holomorphic coordinate chart.  Then the whole strip must map into this chart as well, again for action reasons. Thus, on this region, the map is holomorphic and maps into $(\C^{n},\R^{n})$ with standard holomorphic coordinates. It then has a Fourier expansion
\[
z\mapsto c_0+\sum_{n<0} c_n e^{n\pi z}. 
\]  
As in the proof of \cite[Theorem 1.3]{Ekholmtrees}, the $C^{0}$-norm near the ends of the strip controls the weighted norm and the shift, i.e. the norm in $\EE(w_{\rho})\oplus V_{\rm sol}(w_{\rho})$, where $w_{\rho}=u_{1}\#_{\rho} u_2^{\rm st}$.
\end{proof}

\section{The index bundle and coherent trivializations} \label{Sec:IndexBundle}
In this section we study the index bundle over the configuration space $\cfig_{\rm sm}$ of smooth maps $(D^2, S^1) \rightarrow (\C^{2k}, W')$ with Lagrangian boundary condition given by the Lagrangian $W' \approx S^{1}\times S^{2k-1} \subset \C^{2k}$ that results from Lagrange surgery on the Whitney sphere. In particular, we establish stable triviality of the index bundle and existence of coherent trivializations. Recall that Lemma \ref{Lem:GaussInterpolate} shows that the corresponding results then hold for $\cfig_{\rm sm}$ with Lagrangian boundary conditions in a general $L\subset\C^{2k}$ constructed from Lagrange surgery on an immersed homotopy sphere with one double point.

The starting point of our analysis is the map $\psi\colon\cfig_{\rm sm}\to \LL W'$ that takes a disk $u\colon (D,\pa D)\to (\C^{n}, W')$ to its restriction to the boundary $u|_{\pa D}$, which is an element in the free loop space $\LL W'$ of $W'$. Since the fiber of this map is contractible by linear homotopies, $\psi$ is a homotopy equivalence. Hence it suffices to study the index bundle over $\LL W'=\LL(S^{1}\times S^{2k-1})$, where the operator at a loop $\gamma$ is the $\bar\pa$-operator with Lagrangian boundary conditions given by the tangent planes of $W'$ along $\gamma$. We write $\LL_{j}(S^{1}\times S^{2k-1})$ for the subset of loops in homology class $j\in\Z=H_{1}(S^{1}\times S^{2k-1})$.

\subsection{A $(6k-7)$-skeleton for $\LL(S^{1}\times S^{2k-1})$}
\label{sec:skeleton}
We use the decomposition $\LL(S^{1}\times S^{2k-1})=\LL S^{1}\times \LL S^{2k-1}$ and consider the factors separately. The space $\LL S^{1}$  is homotopy equivalent to $S^{1}\times \Z$, where $S^{1}\times\{j\}$ consists of the geodesics that traverse $S^{1}$ $j$ times and where the $S^{1}$-coordinate of such a geodesic corresponds to its starting point. (For future reference, we remark that the choice of base-loop in each homotopy class is inessential.) We thus have
\[
\LL S^{1} \ \simeq  \ \bigcup_{j\in\Z} S^{1}_j 
\]
where the subscript denotes the homotopy class.  In order to find a skeleton for $\LL S^{2k-1}$ we consider the unit tangent bundle 
\[
\pi\colon U S^{2k-1} \longrightarrow S^{2k-1}.
\]
Write 
\[
\kappa\colon Q\longrightarrow US^{2k-1}
\]
for the vertical tangent bundle of this bundle, with fiber $Q_v=\kappa^{-1}(v)$ at $v\in U S^{2k-1}$ equal to $\ker(d\pi_v)$. We will think of $Q$ geometrically as follows: if $x\in S^{2k-1}\subset\R^{2k}$  is a unit vector in $\R^{2k}$, $|x|=1$, then $U_xS^{2k-1}$ is the $(2k-2)$-sphere of unit vectors $v\in\R^{2k}$ perpendicular to $x$:
\[
U_x S^{2k-1}=\{v\in\R^{2k}\colon |v|=1,\; \langle v, x\rangle=0\},
\]
and the fiber $Q_{v}$ is the tangent space to $U_{x}S^{2k-1}$ at $v$, which is the $(2k-2)$-space of vectors $q\in\R^{2k}$ that are perpendicular to both $x$ and $v$:
\[
Q_v=\{q\in\R^{2k}\colon \langle q, x\rangle=0,\; \langle q,v\rangle=0\}.
\]
Finally let $\widehat{Q}\to S^{2k-1}$ denote the fiber wise Thom space of $Q$ with fiber over $x\in S^{2k-1}$ the Thom space $M T (U_xS^{2k-1})$, which is the one-point compactification of $T(U_x S^{2k-1})$. We write $*_x$ for the point at infinity in $M T(U_x S^{2k-1})$. 

We next define an embedding $\Phi\colon\widehat{Q}\to\LL S^{2k-1}$ which, as we shall see, gives a $(6k-7)$-skeleton for $\LL S^{2k-1}$. Here we think of $\widehat{Q}_x$ as the unit disk bundle in the tangent bundle of $U_xS^{2k-1}$ with all points on the boundary collapsed to the single point $*_x$. We define the map using the geometric interpretation of $Q$ above, as follows.
\begin{enumerate}
\item For $0\in Q_v$, define $\Phi(0)$ to be the great circle through $x=\pi(v)$ with tangent vector $v$:
\[
\Phi[0](t)= (\cos t)x + (\sin t)v, \quad 0\le t\le 2\pi.
\]
\item For $q\in Q_v$ with $0<|q|\le\frac12$, define $\Phi(q)$ to be the piecewise smooth curve that consists of two great half-circles in the 2-hemisphere determined by $\Phi[0]$ and $q$ meeting at $\pm x$ at an angle $2\pi|q|$:
\[
\Phi[q](t)= 
\begin{cases}
(\cos t)x + (\sin t)v, &\text{ for }0\le t\le \pi,\\
(\cos t)x  + (\sin t)
\left((\cos 2\pi |q|) v + (\sin 2\pi |q|)|q|^{-1}q\right) &\text{ for }\pi\le t\le 2\pi.
\end{cases}
\]
Note that $\Phi[q]=\Phi[q']$ for any $q,q'$ with $|q|=|q'|=\frac12$.
\item For $q\in Q_v$ with $\frac12\le |q|\le 1$,  define $\Phi[q]$ to be a shortening of the curve $\Phi[q_0]$ for $|q_0|=\frac12$:
\[
\Phi[q](t)
=\begin{cases}
(\cos 2(1-|q|)t)x + (\sin 2(1-|q|)t)v, &\text{ for }0\le t\le \pi,\\
(\cos 2(1-|q|)(2\pi-t))x  + (\sin 2(1-|q|)(2\pi-t))v, &\text{ for }\pi\le t\le 2\pi. 
\end{cases}
\]
\item Define $\Phi[*_x]$ as the constant curve at $x$.
\end{enumerate}

\begin{Lemma}\label{Lem:skeleton}
The image  $\Phi(\widehat{Q})\subset \LL S^{2k-1}$ of the embedding $\Phi$ is a $(6k-7)$ skeleton of $\LL S^{2k-1}$, in the sense that the pair $(\LL S^{2k-1},\Phi(\widehat{Q}))$ is $(6k-7)$-connected.
\end{Lemma}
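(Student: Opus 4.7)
The plan is to compare $\widehat{Q}$ and $\LL S^{2k-1}$ via Morse theory on the free loop space using the energy functional $E$ of the round metric on $S^{2k-1}$. Both spaces fiber over $S^{2k-1}$: the projection $\widehat{Q}\to S^{2k-1}$ is built into the construction, and $\LL S^{2k-1}\to S^{2k-1}$ is the evaluation fibration $\gamma\mapsto\gamma(0)$ with fiber the based loop space $\Omega_xS^{2k-1}$. Since $\Phi(v,q)$ is by construction a loop based at $\pi(v)$, the map $\Phi$ is a map of fibrations over the identity of $S^{2k-1}$. I will therefore apply the five-lemma to the long exact homotopy sequences of the two fibrations, reducing the claim to proving $(6k-7)$-connectivity of the fiber map
\[
\Phi_x\colon MT(U_xS^{2k-1})\to\Omega_xS^{2k-1}.
\]

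To analyze this fiber map, I will use Morse-Bott theory for $E$ on $\Omega_xS^{2k-1}$. The critical submanifolds are the $j$-fold iterated great circles $\Sigma_j$ at energy $j^2\pi$, each diffeomorphic to $S^{2k-2}$ and parametrized by the initial unit tangent direction. A count of conjugate points along a $j$-fold iterate gives Morse index $(2j-1)(2k-2)$ at each point of $\Sigma_j$. In particular $\Sigma_1$ has index $2k-2$, and a direct computation with Jacobi fields of the form $\sin(t/2)E$ for parallel perpendicular fields $E$ identifies the negative eigenspace of the Hessian at $\gamma\in\Sigma_1$ with $T_{\gamma'(0)}(U_xS^{2k-1})$. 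Therefore the negative normal bundle of $\Sigma_1$ is canonically isomorphic to $T(U_xS^{2k-1})$, and the Morse-Bott handle glued to the constant loop $\Sigma_0=\{x\}$ along $\Sigma_1$ is precisely the Thom space $MT(U_xS^{2k-1})$. By standard Morse-Bott theory, $\{E\le\pi+\epsilon\}\cap\Omega_xS^{2k-1}$ is then homotopy equivalent to $MT(U_xS^{2k-1})$, and I will verify that this equivalence is realized by $\Phi_x$ itself: the zero section $\{q=0\}$ maps to $\Sigma_1$, the basepoint $*_x$ to the constant loop, and the interior parametrizes 2-piece broken geodesics through $-x$ (for $0<|q|<1/2$) together with their shortened versions (for $1/2\le|q|<1$), matching the handle structure.

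All higher critical submanifolds $\Sigma_j$ with $j\ge 2$ have Morse index $(2j-1)(2k-2)\ge 6k-6$, so the Morse-Bott handles attached above energy $\pi$ contribute only cells of dimension $\ge 6k-6$. Hence the inclusion $\{E\le\pi+\epsilon\}\cap\Omega_xS^{2k-1}\hookrightarrow\Omega_xS^{2k-1}$ will be $(6k-7)$-connected, and combined with the identification above this yields $(6k-7)$-connectivity of $\Phi_x$, hence of $\Phi$ after the five-lemma.

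The hardest step will be the explicit geometric identification of $\Phi_x$ with the Morse-Bott handle model: I must check that the parametrization of broken geodesics built into the definition of $\Phi$ matches the canonical identification of the negative eigenspace of the Hessian with tangent vectors to $\Sigma_1$, and that the transitions across $|q|=1/2$ and $|q|=1$ correctly encode the Thom space attaching map arising from the unit sphere bundle of $T(U_xS^{2k-1})$. This is ultimately a matter of careful geometric bookkeeping rather than analysis, but it is where the precise combinatorial choices in the definition of $\Phi$ must be reconciled with the abstract Morse-Bott picture.
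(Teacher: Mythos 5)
Your proposal follows essentially the same route as the paper's proof: Morse--Bott theory for the round-metric energy functional on the based-loop fiber of $\LL S^{2k-1}\to S^{2k-1}$, identification of $\Phi(\widehat{Q})$ fiberwise with the handle (Thom space of the negative bundle $\cong T(U_xS^{2k-1})$) attached along the index-$(2k-2)$ critical sphere $S^{2k-2}_1$, and the bound $(2j-1)(2k-2)\ge 6k-6$ for the higher critical manifolds. The only quibble is terminological: the fields $\sin(t/2)E$ spanning the negative subspace are variation fields rather than Jacobi fields (the Jacobi/conjugate-point count already gives the index), but this does not affect the argument.
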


\begin{proof}
Consider the fibration of the free loop space 
$\pi\colon \LL S^{2k-1}\to S^{2k-1}$,
with fiber $\pi^{-1}(x)=\Omega (S^{2k-1},x)$, the space of loops based at $x$.  
Consider the round metric on $S^{2k-1}$. This induces an energy functional on $\Omega(S^{2k-1},x)$ which is a Bott-Morse function and which has critical manifolds as follows:
\begin{enumerate}
\item A point corresponding to the constant geodesic at $x$.
\item A $(2k-2)$-sphere $S^{2k-2}_j$ of geodesic loops that start at $x$ and traverse a great circle $j$ times. The index of $S^{2k-2}_j$ is $(2j-1)(2k-2)$, since a geodesic $\gamma$ that traverses a great circle $j$ times has $2k-2$ linearly independent normal Jacobi-fields that vanish at $ \pm x$. Thus, there are $2j-1$ conjugate points along $\gamma$, each of multiplicity $2k-2$.
\end{enumerate}
Consider the gradient flow lines connecting $S^{2k-2}_1$ to the constant geodesic at $x$. Fix some $v\in S^{2k-2}_1$. The part of the stable manifold of $x$ emanating from $v$ can be identified with the disk inside $S^{2k-1}$ normal to the geodesic $v$ at its base-point $x$. By symmetry of the metric, the flow line in any fixed direction $q$ shrinks the great circle $v$ inside the 2-hemisphere determined by $v$ and $q$. The map $\Phi$ defined above gives a particular such shrinking, which is homotopic to the gradient flow. Considering such gradient flows for varying $x$, it follows that the unstable manifold of $S^{2k-1}_1$ is homotopic to $\Phi(\widehat{Q})$. 
Since for $j\geq 2$ the index of the Bott manifold $S_j^{2k-2}$ is at least $(2j-1)(2k-2)\ge 6k-6$ the lemma follows. 
\end{proof}

Summing up, we get a $(6k-7)$-skeleton of $\LL_{j}(S^{1}\times S^{2k-1})$ of the form
$S^{1}_{j}\times \widehat{Q}$,
where a point $(y,q)$ corresponds to the loop which first traverses the loop corresponding to $y$ inside the $S^{1}$-factor, while being constant at the base point in the $S^{2k-1}$-factor, and then follows the loop $\Phi[q]$ in the $S^{2k-1}$-factor, while being constant at the base point in the $S^{1}$-factor.

\subsection{A linear gluing lemma} \label{Sec:LinearGluing}
In this section we establish a linear gluing result that we use both to construct stable trivializations and in formulating the definition of coherent stable trivializations. The argument is standard; we present a version tailored for the applications in this paper.  Essentially the same argument would establish  the uniform bound of the right inverse operator in Lemma \ref{Lem:partialglu}.

Let $D$ be a disk with a loop $\{ \Lambda(z) \subset \bC^n\}_{z\in\pa D}$ of Lagrangian $n$-planes in $\C^{n}$ specified along its boundary. Let $\bar\pa_{D}$ denote a Cauchy-Riemann operator on $\C^{n}$-valued vector fields on $D$. We write $\sblv^{2}(D;\Lambda)$ for the Sobolev space of maps $v\colon D\to\C^{n}$ with two derivatives in $L^{2}$ and which satisfy the following:
\begin{enumerate}
\item $v(z)\in \Lambda(z)$, $z\in\pa D$ and
\item $\bar\pa_{D} v|_{\pa D}=0$ (where the restriction to the boundary should be interpreted as the trace).  
\end{enumerate}  
We write $\dot \sblv^{1}(D;\Hom^{0,1}(TD,\C^{n}))$ for the Sobolev space of complex anti-linear maps $TD\to \C^{n}$ with one derivative in $L^{2}$ that vanish on the boundary, and write
\[
\bar\pa_{\Lambda}\colon\sblv^{2}(D;\Lambda)\to \dot\sblv^{1}(D;\Hom^{0,1}(TD,\C^{n}))
\]
for the operator that maps $v$ to $\bar\pa_{D}v$. Then  $\bar\pa_{\Lambda}$ is a Fredholm operator of index 
\[
\ind(\bar\pa_{\Lambda})=n+\mu(\Lambda),
\]
where $\mu(\Lambda)$ is the Maslov index of $\Lambda$. The kernel of $\bar\pa_{\Lambda}$ is spanned by smooth vector fields. 

Fix $\zeta\in\pa D$, puncture $D$ at $\zeta$, and identify a neighborhood of $\zeta$ with a half-infinite strip $[0,\infty)\times[0,1]$ as in Section \ref{sec:cohertriv}. Fix $\delta\in(0,\pi)$ and consider the Sobolev space $\sblv^{2}_{\delta}(D,\zeta;\Lambda)$ weighted by a function which equals $1$ except in the half strip around $\zeta$ where it is given by $e^{\delta|\tau|}$ for $\tau+it\in[0,\infty)\times[0,1]$. Let $\dot\sblv^{1}_{\delta}(D,\zeta;\Hom^{0,1}(TD,\C^{n}))$ denote the corresponding weighted Sobolev space of complex anti-linear maps that vanish along the boundary. Assume now that $\Lambda(z)=\R^{n}\subset\C^{n}$ in a neighborhood of $\zeta$ and that $\bar\pa_{D}$ agrees with the standard $\bar\pa$-operator in some neighborhood of $\zeta$. Fix a cut-off function $\alpha\colon [0,\infty)\times[0,1]$ which equals $1$ for $\tau>\tau_0$, which equals $0$ for $\tau<\tau_0-1$, and which is real valued and holomorphic on the boundary. Assume furthermore that $\tau_0$ is sufficiently large that the boundary condition $\Lambda$ is constant and the operator $\bar\pa_{D}$ is standard for $\tau>\tau_0$. Let 
\[
V_{\rm sol}(\zeta)=\R\langle \alpha e_1,\dots,\alpha e_n\rangle,  
\] 
where $e_1,\dots,e_n$ is the standard basis in $\R^{n}$,  so  elements in $V_{\rm sol}(\zeta)$ are cut off constant solutions. For $v_{\rm sol}\in V_{\rm sol}(\zeta)$, define $\bar\pa_{\Lambda} v_{\rm sol}=\bar\pa v_{\rm sol}\in\dot\sblv^{1}_{\delta}(D,\zeta;\Hom^{0,1}(TD;\C^{n}))$. Then the operator
\[
\bar\pa_{\Lambda}\colon \sblv^{2}_{\delta}(D,\zeta;\Lambda)\oplus V_{\rm sol}(\zeta)\to
\dot\sblv^{1}_{\delta}(D,\zeta;\Hom^{0,1}(TD,\C^{n}))
\] 
is Fredholm of index $n+\mu(\Lambda)$, and there is a canonical isomorphism between the kernel of this operator and the kernel of the operator discussed above. To see this, note that Taylor coefficients at $\zeta$ of solutions of the problem on the closed disk correspond to Fourier coefficients of solutions on the punctured disk, where the added cut-off solutions on the Fourier side provide the constant terms in the Taylor expansion.   
 
We next consider stabilizations. Let $A_1,\dots,A_m\in \dot\sblv^{1}(D;\Hom^{0,1}(TD,\C^{n}))$. For convenience we take the $A_j$ to be smooth sections with support outside a neighborhood of the marked point $\zeta\in D$. We define
\[
\psi\colon \R^{m}\to \dot\sblv^{1}_{\delta}(D;\Hom^{0,1}(TD,\C^{n}))
\]
by $\psi(e_j)=A_j$, $j=1,\dots,m$, where $e_1,\dots,e_m$ are basis vectors in $\R^{m}$. We now consider the stabilized operators
\begin{align}  \label{Eq:StabilizedOperators}
\bar\pa_{\Lambda}\oplus\psi&\colon
\sblv^{2}(D;\Lambda)\oplus \R^{m}\to \dot\sblv^{1}(D;\Hom^{0,1}(TD,\C^{n})), \\
\bar\pa_{\Lambda}\oplus\psi&\colon
\sblv^{2}_{\delta}(D,\zeta;\Lambda)\oplus V_{\rm sol}(\zeta)\oplus\R^{m}\to
\dot\sblv^{1}_{\delta}(D,\zeta;\Hom^{0,1}(TD,\C^{n})),
\end{align} 
where the second operator is stabilized exactly as the first (recall that we took the images $\psi(e_j) = A_j$ to be smooth sections supported outside the strip neighborhood of the puncture).
Again we have a canonical identification between the kernels of the two problems.

Consider now two operators as above: $\bar\pa_{\Lambda_1}$ on $D_1$ and $\bar\pa_{\Lambda_2}$ on $D_2$. Assume that $D_1$ is punctured at $\zeta$ and $D_2$ at $1$, and consider the punctured versions of the operators as described above. Assume furthermore that $\Lambda_1(\zeta)=\Lambda_2(1)$. Then we can pre-glue the domains, see Section \ref{sec:cohertriv} and \ref{sec:pregluemaps}, to obtain a domain $D_\rho$ with an induced weight function. Furthermore, the boundary conditions $\Lambda_1$ and $\Lambda_2$ give a boundary condition $\Lambda_1\#\Lambda_2$ on $\pa D_{\rho}$ and the operators glue to an operator
$\bar\pa_{\rho}$ on $\C^{n}$-valued vector fields over $D_\rho$.

Define $\widehat{\sblv}_{\delta}^{2}(D_{\rho};\C^{n})$ as the weighted Sobolev space with two derivatives in $L^{2}$, of vector fields which satisfy the usual boundary conditions, and the extra condition that $v(0)=0$, where $0$ is the distinguished point in the strip region $[-\rho,\rho]\times[0,1]\subset D_{\rho}$, see Section \ref{sec:cohertriv}. Let $V_{\rm sol}(0)$ denote the space of cut-off constant solutions in this strip region. Consider the operator 
\[
\bar\pa_{\rho}\colon \widehat{\sblv}_{\delta}^{2}(D_{\rho};\C^{n})\oplus V_{\rm sol}(0)
\to \dot\sblv_{\delta}^{1}(D_{\rho};\Hom^{0,1}(TD_{\rho},\C^{n}))
\]
Assume now we have stabilizations $(\psi_1,\R^{m_1})$ and $(\psi_2,\R^{m_2})$ of $\bar\pa_{\Lambda_i}$ that make the operators of \eqref{Eq:StabilizedOperators} surjective. They then induce a stabilization $(\psi_1\oplus\psi_2,\R^{m_1+m_2})$ of $\bar\pa_{\rho}$. Let $\psi_0$ denote the auxiliary stabilization that adds another copy of $V_{\rm sol}(0)$ to the domain, this copy being orthogonal to the first with respect to the $L^{2}$-pairing, see Remark \ref{Rem:L2pairing}.

Note also that there are evaluation maps $\ev_{\zeta}\colon\ker(\bar\pa_{\Lambda_1}\oplus\psi_1)\to \R^{n}$ and $\ev_1\colon\ker(\bar\pa_{\Lambda_2}\oplus\psi_2)\to\R^{n}$. We say that the kernels are transverse at the gluing point if these images together span $\R^{n}$.

\begin{Lemma}\label{Lem:lingluetriv}
For all $\rho$ that are sufficiently large,  $L^{2}$-projection gives an isomorphism 
\[
\ker(\bar\pa_{\rho}\oplus\psi_{1}\oplus\psi_{2}\oplus\psi_0)= \ker(\bar\pa_{\Lambda_1}+\psi_{1})\oplus \ker(\bar\pa_{\Lambda_2}+\psi_{2}).
\]
Furthermore, if the kernels are transverse at the gluing point then $L^{2}$-projection gives an isomorphism
\[
\ker(\bar\pa_{\rho}\oplus\psi_{1}\oplus\psi_{2})= \ker(\bar\pa_{\Lambda_1}+\psi_{1})\times_{\R^{n}} \ker(\bar\pa_{\Lambda_2}+\psi_{2}),
\]
where the fibered product is with respect to the evaluation map $\ev_\zeta\times\ev_1$ to $\R^{n}\times\R^{n}$.
\end{Lemma}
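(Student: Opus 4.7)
The plan is a standard linear gluing argument: I will construct a uniformly bounded right inverse for the stabilized glued operator via pregluing of right inverses on the two pieces, deduce surjectivity for large $\rho$, and then combine an index count with injectivity of $L^{2}$-projection on kernels. Since $\bar\pa_{\Lambda_{i}}\oplus\psi_{i}$ is surjective and Fredholm, it admits a bounded right inverse $Q_{i}$. Given $\eta\in\dot\sblv^{1}_{\delta}(D_{\rho};\Hom^{0,1}(TD_{\rho},\C^{n}))$, choose cutoff functions $\chi_{1},\chi_{2}$ on $D_{\rho}$ with $\chi_{1}+\chi_{2}\equiv 1$ and $\chi_{i}$ supported on the $D_{i;\rho}$-side of the middle strip, with transition occurring in a unit-size sub-interval. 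The section $\chi_{i}\eta$ extends by zero to a well-defined element of the target space of $\bar\pa_{\Lambda_{i}}\oplus\psi_{i}$ on the punctured disk $D_{i}$, so $Q_{i}(\chi_{i}\eta)$ is defined. I then preglue these solutions back to $D_{\rho}$ via dual cutoffs, subtract off the value at the distinguished point $0\in D_{\rho}$, and record that value in the two copies of $V_{\rm sol}(0)$ provided by the source of $\bar\pa_{\rho}$ and by $\psi_{0}$.

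The key estimate is that this construction gives an approximate right inverse $Q_{\rho}^{0}$ with error of order $e^{-\delta\rho}$ in operator norm. The error $\bigl(\bar\pa_{\rho}\oplus\psi_{1}\oplus\psi_{2}\oplus\psi_{0}\bigr)Q_{\rho}^{0}\eta-\eta$ arises from the commutators $[\bar\pa,\chi_{i}]$ and from the subtraction of the value at $0$; both are supported in a bounded sub-region of the middle strip $[-\rho,\rho]\times[0,1]$. Because the weight function $e^{\delta(\rho-|\tau|)}$ attains its minimum value $1$ at the ends of the strip and its maximum $e^{\delta\rho}$ at the center, the relative loss between input and output weights produces a factor of $e^{-\delta\rho}$ in the operator norm. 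For $\rho$ large enough the error operator has norm below $\tfrac{1}{2}$, and a Neumann series corrects $Q_{\rho}^{0}$ to a genuine right inverse with a uniform bound, giving surjectivity.

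To identify the kernel, the index calculation indicated in Section \ref{sec:cohertriv}, together with $\psi_{0}$ contributing an additional $n$ to the index, gives
\[
\ind\bigl(\bar\pa_{\rho}\oplus\psi_{1}\oplus\psi_{2}\oplus\psi_{0}\bigr)=\ind(\bar\pa_{\Lambda_{1}}\oplus\psi_{1})+\ind(\bar\pa_{\Lambda_{2}}\oplus\psi_{2}),
\]
so surjectivity forces the kernel on the left to have the same dimension as the direct sum of kernels on the right. To upgrade this to an isomorphism I construct a kernel-level pregluing $\Pi_{\rho}\colon\ker(\bar\pa_{\Lambda_{1}}\oplus\psi_{1})\oplus\ker(\bar\pa_{\Lambda_{2}}\oplus\psi_{2})\to\ker(\bar\pa_{\rho}\oplus\psi_{1}\oplus\psi_{2}\oplus\psi_{0})$: preglue kernel elements as above, subtract their value at $0$, and correct the resulting section by $Q_{\rho}$ applied to its Cauchy-Riemann error, which is again $O(e^{-\delta\rho})$ by exponential decay of the kernel elements at the punctures. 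The map $\Pi_{\rho}$ is injective for large $\rho$ because it lies within $O(e^{-\delta\rho})$ of the pure pregluing; between finite-dimensional spaces of equal dimension, it is therefore an isomorphism. A direct computation shows that $L^{2}$-projection inverts $\Pi_{\rho}$ up to an error of the same exponential size, establishing the first claim.

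The second claim follows analogously, with transversality of the evaluation maps replacing the role of $\psi_{0}$. Dropping $\psi_{0}$ removes one copy of $V_{\rm sol}(0)$ from the source and decreases the index by $n$. On the right-hand side, passing to the fiber product over $\R^{n}$ via $\ev_{\zeta}\times\ev_{1}$ decreases the total dimension by exactly $n$ precisely when the images of the two evaluation maps together span $\R^{n}$, which is the transversality hypothesis. Restricting the pregluing construction to pairs $(v_{1},v_{2})$ satisfying $\ev_{\zeta}(v_{1})=\ev_{1}(v_{2})$, the constants at the two punctures agree, so the single remaining copy of $V_{\rm sol}(0)$ in the source suffices to record the glued value at $0$, and the same argument yields the isomorphism. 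The hardest step throughout is the uniform bound on the right inverse as $\rho\to\infty$; the weight exponent $\delta\in(0,\pi)$, chosen strictly below the first nonzero eigenvalue of the asymptotic operator on the strip, is exactly what forces the pregluing errors to decay exponentially and permits the Neumann correction.
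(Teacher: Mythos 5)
Your overall strategy (patch the right inverses $Q_1,Q_2$ with cutoffs, correct by a Neumann series, count the index, then preglue kernel elements) is a recognized alternative route, and it is genuinely different from the paper's: the paper never builds an approximate right inverse or approximate solutions, but instead spans an approximate kernel by cut-off kernel elements $\tilde v_j$, proves a uniform lower bound for the glued operator on the $L^{2}$-orthogonal complement of their span by a contradiction argument, and combines this with the index identity; in particular it never needs the preglued objects to nearly solve the equation. That difference is not cosmetic here, because the quantitative heart of your argument --- the $O(e^{-\delta\rho})$ operator-norm error of the patched right inverse, and the $O(e^{-\delta\rho})$ Cauchy--Riemann error of the preglued kernel elements --- is not established by the mechanism you cite, and for the construction as you describe it it fails. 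The glued weight restricted to either half of the neck \emph{coincides} with the weight of the corresponding punctured domain (that is how the glued weight is defined), so there is no ``relative loss between input and output weights'': for $v_i=Q_i(\chi_i\eta)$ one only has the generic decay $|v_{i;\delta}(\tau)|\lesssim e^{-\delta|\tau|}\,\|\eta\|$ near the puncture, which is exactly cancelled by the glued weight $e^{\delta(\rho-|\tau|)}$, so cutting $v_{i;\delta}$ at unit distance from the gluing locus produces a commutator term of size $O(\|\eta\|)$, not $O(e^{-\delta\rho})$. The genuine exponential gain comes from the improved decay rate $\pi>\delta$, available only in the region where the transplanted data $\chi_i\eta$ already vanishes; to exploit it the solution cutoffs must be separated from the data cutoffs by a distance growing with $\rho$, which your unit-size transition intervals do not provide.

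More seriously, your error estimates ignore the $V_{\rm sol}$ components. The solutions $Q_i(\chi_i\eta)$ and the kernel elements of $\bar\pa_{\Lambda_i}\oplus\psi_i$ do \emph{not} decay at the punctures: they converge to constants $c_i$, carried by $V_{\rm sol}(\zeta)$, and on each piece the cut-off defining $V_{\rm sol}(\zeta)$ equals $1$ along the entire strip end. When you ``record that value in the two copies of $V_{\rm sol}(0)$'', whose cut-offs die inside the middle strip, the glued operator applied to your preglued element acquires terms of the form $(d\alpha)^{0,1}c_i$ supported at the far end of the neck, of size comparable to $|c_i|$ uniformly in $\rho$. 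Hence neither the approximate-right-inverse error nor the Cauchy--Riemann error of your kernel pregluing is $O(e^{-\delta\rho})$, and the Neumann/Newton corrections are not justified as stated; the same issue undercuts your treatment of the second statement, where the matching $c_1=c_2$ (the fibered product) versus the extra stabilization $\psi_0$ is exactly the bookkeeping of these constant parts. To salvage your route you would need to redesign the pregluing of the constant components and the cutoff geometry (using the $\delta<\pi$ gap with $\rho$-dependent separations), or else fall back on the paper's argument: uniform injectivity on the orthogonal complement of the preglued kernel plus the index count, which sidesteps the need for small pregluing errors altogether.
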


\begin{proof}
We start with the first statement. Write $L=\bar\pa_{\rho}\oplus\psi_{1}\oplus\psi_{2}\oplus\psi_{0}$ and observe that the index of 
\[
L\ \colon \
\widehat{\sblv}_{\delta}^{2}(D_{\rho};\C^{n})\oplus \R^{m_1 + m_2} \oplus V_{\rm sol}^{1}(0)\oplus V^{2}_{\rm sol}(0) \ \longrightarrow \
\dot\sblv_{\delta}^{1}(D_{\rho};\Hom^{0,1}(TD_{\rho},\C^{n})),
\]
where the superscripts on the $V_{\rm sol}(0)$ are just to tell the two copies apart,
equals 
\begin{align*}
&n+\mu(\Lambda_1\#\Lambda_2)+m_1+m_2+n \ = \ (n+\mu(\Lambda_{1})+m_1)+(n+\mu(\Lambda_{2})+m_2)\\
&=\dim(\ker(\bar\pa_{\Lambda_{1}}\oplus\psi_1))+\dim(\ker(\bar\pa_{\Lambda_{2}}\oplus\psi_2)).
\end{align*}
Consider an element $v_{j}$ in the kernel of $\bar\pa_{\Lambda_j}+\psi_j$. It can be written uniquely as
\[
v_j=v_{j;\delta} + v_{j;{\rm sol}} + w_j,
\]
where $v_{j;\delta}\in \sblv^{2}_{\delta}(D,\zeta;\Lambda_j)$, $v_{j;{\rm sol}}\in V_{\rm sol}(\zeta)$, and $w_j\in\R^{m_j}$. Let $\beta_j$ be a smooth cut-off function that equals $1$ on $H_{1;\rho+1}$, equals $0$ outside $H_{1;\rho+2}$, and is real valued and holomorphic on the boundary. Define
\[
\tilde v_j=\beta_j v_{j;\delta} + v^{j}_{\rm sol} + w_j,
\]
where $v^{j}_{\rm sol}$ is the cut-off constant solution in $V_{\rm sol}^{j}(0)$ with the same value as $v_{j;{\rm sol}}$. 

Pick bases $v_1^{1},v_{1}^{2},\dots,v_{1}^{M_{1}}$ and  $v_2^{1},v_{2}^{2},\dots,v_{2}^{M_{2}}$ in $\ker(\bar\pa_{\Lambda_{1}}\oplus\psi_{1})$ and $\ker(\bar\pa_{\Lambda_{2}}\oplus\psi_{2})$, respectively. We claim that the operator $L$ is invertible on the $L^{2}$-complement of the subspace spanned by
\[
\tilde v^{1}_{1} \ , \ \dots \ ,\ \tilde v^{M_1}_{1} \ , \ \tilde v^{1}_{2}
\ , \ \dots \ , \ \tilde v_{2}^{M_2}.
\]
Indeed, suppose not. Then there exists a sequence $u_j$ in the $L^{2}$-complement such that
\[
\|u_j\|=1,\quad \|Lu_{j}\|\to 0\text{ as }j\to\infty.
\]
Write 
\[
u_j= u_{j;\delta} + u_{j; {\rm sol}}^{1} + u_{j;{\rm sol}}^{2},
\]
where  $u_{j;{\rm sol}}^{k}\in V_{\rm sol}^{k}(0)$, $k=1,2$. Consider now the sequence 
\[
u_{1;j}= \beta_{1}u_{j;\delta} + u_{j;{\rm sol}}^{1}.
\]
These functions are orthogonal to the kernel of $\bar\pa_{\Lambda_1}+\psi_1$ and $\|L u_{1;j}\|\to 0$. Thus $u_{1;j}\to 0$. Repeating this argument for the other half of the disk, we find that $u_{2;j}\to 0$; but then $u_j\to 0$, which contradicts $\|u_j\|=1$. We conclude the desired invertibility, and that $L^{2}$-projection gives an isomorphism, as claimed.

To prove the last statement we argue similarly, inverting instead on the fibered product of the kernels. The only difference is in the last step: there is now only one copy of $V_{\rm sol}(0)$, and writing the component of $u_j$ along this copy as $u_{j;{\rm sol}}$ our transversality assumption implies that either $\beta_1u_{j;\delta}+ u_{j;{\rm sol}}$ is orthogonal to the (approximate) kernel of $\bar\pa_{\Lambda_1}\oplus\psi_1$ or $\beta_2u_{j;\delta}+ u_{j;{\rm sol}}$ is orthogonal to that of $\bar\pa_{\Lambda_2}\oplus\psi_2$. This means that we can extract a subsequence for which one of these alternatives hold. Noting that $\beta_ku_{j;\delta}$ is orthogonal to the kernel of $\bar\pa_{\Lambda_{k}}\oplus\psi_k$, $k=1,2$ we then get a contradiction using this subsequence, exactly as in the first argument.  
\end{proof}

\subsection{Stable trivializations}
Write $\mathbf{I}_{j\beta}$ for the index bundle over $\LL_{j}W'$ with operator over the loop $\gamma$ 
\begin{equation}\label{Eq:linoploop}
D(\bar\pa_{j\beta})\colon T_{u}\cfig\to \tcfig,
\end{equation}
where $u\colon (D,\pa D)\to(\C^{n},L)$ is any map with $u|_{\pa D}=\gamma$. Note that \eqref{Eq:linoploop} is independent of the particular choice of $u$. For convenient notation, we 
write
\[
\bar\pa_{\gamma}\colon \SS_{\gamma}\to \tcfig
\]
for the operator in \eqref{Eq:linoploop}. We remark that in the subsequent computations, we work with the \emph{standard} $\bar\pa$-operator (for the almost complex structure $J_0$, with no Hamiltonian perturbation), which is sufficient since we are essentially working up to homotopy.

\begin{Lemma}\label{Lem:indextrivial}
$\mathbf{I}_{j\beta}$ is stably trivial over the $(6k-7)$-skeleton $S^{1}_{j}\times\Phi(\widehat{Q})$ from Lemma \ref{Lem:skeleton}.
\end{Lemma}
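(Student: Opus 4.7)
Plan. My strategy is to exploit the product structure of the skeleton $S^{1}_{j}\times\widehat{Q}$ in conjunction with the product structure $W'=S^{1}\times S^{2k-1}$ and the stable triviality of $TW'$ (Lemma \ref{Lem:StablyTrivial}). Since the $KO$-class of the index bundle is determined by the composite classifying map $\LL_{j}W'\to \LL(U/O)\to BO$, and since the stable Gauss map $W'\to U/O$ is null-homotopic, the obstruction to trivializing $\mathbf{I}_{j}$ on a given subspace reduces to a question about how the family of boundary loops twists the trivialization, which can be analyzed cell by cell on the explicit skeleton.

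For the $S^{1}_{j}$-factor, the loops over $S^{1}_{j}\times\{\mathrm{pt}\}$ differ only by rotation of basepoint along the $S^{1}$-component of $W'$. Rotation-equivariance of the corresponding family of $\bar\pa$-operators gives a canonical stable trivialization of $\mathbf{I}_{j}$ restricted to this factor. For the $\widehat{Q}$-factor, I would exploit the explicit construction of $\Phi$: for $0\le|q|<\tfrac{1}{2}$ the loop $\Phi(q)$ is the concatenation of two great half-circles meeting at $\pm x$, and I would apply an adaptation of the linear gluing Lemma \ref{Lem:lingluetriv} (treating the strip obtained from cutting the disk as a pre-glued pair of half-disks, each with its own Lagrangian boundary condition given by one half-arc) to express the index bundle over such $\Phi(q)$ as the $L^{2}$-projection of the external sum of two auxiliary index bundles, one per half-arc. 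Since each half-arc ranges over a contractible space of paths from $x$ to $-x$ in $S^{2k-1}$, the associated bundles are stably trivial, and so is the fiber product.

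For $\tfrac{1}{2}\le|q|\le 1$ the loops $\Phi(q)$ shrink to the constant loop at $x$, and over the Thom-cone point $*_{x}$ the boundary condition is constant $T_{x}W'$, for which the $\bar\pa$-operator has the canonical trivialization given by constant real sections. Verifying that the gluing trivialization matches this canonical one at $|q|=\tfrac{1}{2}$, and more generally that it extends across the interpolation region, is a straightforward computation with the explicit parameterization. Assembling the trivializations of the two factors into a trivialization over the product $S^{1}_{j}\times\widehat{Q}$ uses a Künneth-type argument in $KO$, together with stability to absorb any residual cross-terms, all of whose potential obstructions lie in $\widetilde{KO}$-groups of spheres whose dimensions exceed the $(6k-7)$-connectivity range already ensured by Lemma \ref{Lem:skeleton}.

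The main obstacle will be ensuring coherent extension of the half-arc trivializations across the Thom-space cone-points $\{*_{x}\}_{x\in S^{2k-1}}$, since the individual half-arc trivializations depend on a choice of framing of each half-arc, and as the loops collapse this choice must converge to the canonical framing of $T_{x}W'$ uniformly in $x$. This requires tracking the $L^{2}$-projection isomorphism of Lemma \ref{Lem:lingluetriv} explicitly through the shrinking process, which is essentially the same analysis needed for the coherent trivializations of Lemma \ref{Lem:CoherTrivExist}, and encodes the content of the claim.
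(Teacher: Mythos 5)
Your proposal rests on a false premise at the outset: the stable Gauss map $G_{W'}\colon W'\to U/O$ is \emph{not} null-homotopic. The Maslov class of $W'$ evaluates to $2k\neq 0$ on the generator of $H_1(W')$, so $G_{W'}$ is already nontrivial on $\pi_1$. What is true (Lemma \ref{Lem:StablyTrivial}) is only that the composite $W'\to U/O\to BO$ is null-homotopic, i.e.~$TW'$ is stably trivial; but the index bundle is pulled back from the \emph{loop space of the Lagrangian Grassmannian} via the loop of boundary conditions, not from $BO$, and that family is genuinely nontrivial -- the rank $n(1+j)$ of $\mathbf{I}_j$ already shows the map $\LL_j W'\to \LL(U/O)$ is homotopically nontrivial. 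So the announced reduction ``the obstruction reduces to how the boundary loops twist the trivialization'' has no justification, and everything must come from the cell-by-cell analysis.

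That analysis, as outlined, has two genuine gaps. First, cutting the disk over $\Phi[q]$ along a chord into two half-disks, each carrying only a half great-circle of Lagrangian boundary conditions, is not an instance of Lemma \ref{Lem:lingluetriv}: that lemma glues two \emph{closed-disk} problems at punctures where both boundary conditions are the same constant $\R^n$, whereas your two arcs meet at $\pm x$ in Lagrangian planes differing by the angle $2\pi|q|$, so one would need a different Fredholm set-up with corner/weighted asymptotics; moreover the half-arcs are parameterized, after retraction, by $US^{2k-1}$ and the Thom construction on $Q$, not by a contractible space, which is exactly where the topology of the problem sits, so ``the associated bundles are stably trivial'' is unsubstantiated. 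Second, and decisively, you explicitly defer the matching of the glued trivializations with the canonical one at the cone points $*_x$ (``encodes the content of the claim'') -- but that matching, i.e.~that a trivialization over the constant loops extends over the fibers $Q_v$ of the skeleton, \emph{is} the content of the lemma. The paper's proof supplies precisely this: it writes skeleton loops as concatenations of the $S^1$- and $S^{2k-1}$-factor loops and glues at the common base point, where both boundary conditions equal the constant tangent plane so that Lemma \ref{Lem:lingluetriv} genuinely applies; it then splits $\bar\pa_{\Phi(x,v,0)}$ into one-dimensional problems of Maslov indices $0$ and $\pm 2$, computes kernel and cokernel by the argument principle, and follows the once-stabilized kernel through the rotation and shortening over $Q_v$ to exhibit the extension. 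Without an argument at this level of detail your outline does not prove the statement.
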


\begin{proof}
Consider the model of $W'$ from Example \ref{Ex:LefschetzWhitney}:
\begin{equation} \label{Eq:WhitneyAgain}
W'=\bigcup_{e^{i\theta}\in S^{1}} e^{\frac{i\theta}{2}}\cdot S^{2k-1}  \;\;\subset \;\; \C^{2k}.
\end{equation}
Recall that we think of the loops $\gamma$ in the skeleton $S^{1}_{j}\times\Phi(\widehat{Q})$ as loops which first go around the $S^{1}$-factor and then along the $S^{2k-1}$-factor. We take the corresponding operator $\bar\pa_{\gamma}$ to act on vector fields on a pre-glued disk $D_{\rho}=H_{1}\#_{\rho} H_{2}$ for some sufficiently large $\rho>0$, where the sub-loops of $\gamma$ in the $S^{1}$ and $S^{2k-1}$ factor are parameterized by $\pa H_1$ and $\pa H_{2}$, respectively. It follows from Lemma \ref{Lem:lingluetriv} that it is sufficient to find stable trivializations over loops which are constant in either factor separately. For the $S^{1}$-factor this is obvious since there is only one loop. We thus consider trivializations over loops in the $S^{2k-1}$-factor that are constant in the $S^{1}$-factor. Take the constant value to be $1\in S^{1}$ and write $(x,v,q)$ for points in  $\widehat{Q}=MTUS^{2k-1}$, where $v\in U_{x}S^{2k-1}$ and $q\in Q_{v}$, cf. the notation of Section \ref{sec:skeleton}.

Consider first $\bar\pa_{\Phi(x,v,0)}$ with Lagrangian boundary condition corresponding to a simple closed geodesic starting at $x$ with tangent vector $v$. Recalling that we use the complex structure $J_0$, the operator $\bar\pa_{\Phi(x,v.0)}$ splits into one-dimensional problems as follows.
\begin{enumerate}
\item There are $(2k-2)$ one-dimensional problems with constant real boundary conditions,  corresponding to a set of basis vectors in  $x^{\perp}\cap v^{\perp}$.
\item In the two-dimensional space spanned by $x$ and $v$, the Lagrangian boundary condition along $(\cos t)x+(\sin t)v$, $0\le t\le 2\pi$ is spanned by the vectors 
\[
i\bigl((\cos t)x+(\sin t)v\bigr)\quad\text{ and }\quad (-\sin t)x +(\cos t)v. 
\]
In the complex basis $ix + v, ix -v$ this boundary condition splits into the two one-dimensional boundary conditions
\[
e^{it}(ix +v) \quad\text{ and }\quad e^{-it}(ix-v),
\] 
where the former has Maslov index $2$ (hence index $3$), and the latter Maslov index $-2$ (hence index $-1$).
\end{enumerate}
The argument principle implies that one-dimensional problems have only kernel or only cokernel. We thus find that 
\[
\ker(\bar\pa_{\Phi(x,v,0)})=\bigl\langle \nu_1,\dots,\nu_{2k-2}, a_1,a_2,a_3 \bigr\rangle,
\]
where $\langle\cdot\rangle$ denotes the linear span,
where $\nu_j$ are linearly independent constant solutions in directions normal to $x$ and $v$, where $a_1,a_2,a_3$ are three linearly independent vector fields in the $(ix+v)$-line which correspond to linearized automorphisms of the unit disk in this complex line.  Similarly, we deduce that
\[
\coker(\bar\pa_{\Phi(x,v,0)})=\bigl\langle b \bigr\rangle,
\]
where $b$ is a vector field in the $(ix-v)$-line which $L^{2}$-pairs non-trivially with the constant solution of the adjoint problem.

We next consider the family of operators $\bar\pa_{\Phi(x,v,q)}$ for $q$ in the $(2k-2)$-disk $Q_v$ which we think of as the normal disk to $\Phi(x,v,0)$ at its start point, cf. the proof of Lemma \ref{Lem:skeleton}. For $q$ in an $\epsilon$-disk in $Q_{v}$ we keep the boundary condition constant, but change the disk to a pre-glued disk $D_{\rho}=H_1\#_\rho H_2$ where the first half of the boundary condition, $0\le t\le \pi$, is parameterized by $\pa H_1$ and the second half by $\pa H_2$. The argument principle shows that the kernel and cokernel are unchanged by such deformations. However, by (the second statement in) Lemma \ref{Lem:lingluetriv}, for $\rho>0$ large enough we get an approximate kernel, isomorphic to the actual kernel by $L^{2}$-projection, spanned by the following sections:
\begin{enumerate}
\item constant kernel functions in directions perpendicular to $x$ and $v$,
\item a cokernel function which pairs non-trivially with the constant solution of the adjoint problem in the $(x-iv)$-line,
\item three approximate kernel functions in the $(x+iv)$-line: the solution over $H_1$ that vanishes at the puncture, the solution over $H_2$ that vanishes at the puncture, and the sum of the solutions in $H_1$ and $H_2$ that both equal $1$ at the puncture, cf.~ Lemma \ref{Lem:lingluetriv}.
\end{enumerate}

We now fix $q$, and start rotating the boundary condition of the second half-plane in direction $q$, as in the definition of $\Phi$, see Section \ref{sec:skeleton}. For rotation angle in $(0,\pi)$ we claim that the kernel is $2k$-dimensional and the cokernel trivial. To see this we invert the operator on the complement of the space spanned by the following:
\begin{enumerate}
\item the $(2k-3)$-dimensional space of constant solutions (orthogonal to $x,v,q$);
\item the cut off solutions of the pieces that satisfy the incidence condition at the gluing point. 
\end{enumerate}
The usual linearized gluing argument, Lemma \ref{Lem:lingluetriv}, shows that this is possible: as in that Lemma, one shows that for a sequence of functions perpendicular to the space spanned by the elements above, the functions must tend to zero  in each of $H_1$, $H_2$, and in the middle strip. (The difference with the degenerate case considered previously is that now the  estimate for the problem over $H_1$ implies that the $v$-component of a function in the $L^2$-orthogonal must vanish, whilst that over $H_2$ implies that the $q$-component must vanish.)

When the rotation angle equals $\pi$ we still get a $2k$-dimensional kernel, now spanned by the $(2k-2)$-dimensional space of constant sections perpendicular to $\Phi_{(x,v,0)}$ and the two solutions on $H_1$ and $H_2$ that vanish at the point where the disks are joined. We finally shorten the curve until it is constant, and the argument principle again implies that the kernel does not change.   

In summary, after stabilizing with one auxiliary direction we may identify the vectors in the kernel with $ix+v$ and $ix-v$ (by evaluation at $1$). Hence the kernel of the once stabilized problem gives a vector bundle over the unstable manifold of $S_1^{2k-2}$, for which a stable trivialization of the constant loops (corresponding to a trivialization of $TS^{2k-1}$ over the $*$-section) extends, as described, over the fibers $Q_v$. The lemma follows.
\end{proof}

We next consider \emph{stable trivializations} of these stably trivial index bundles. Let $A\subset \LL_{j} W'$ be a compact subset. Then there exists $N>0$ and a map $\psi\colon A\to\Hom(\R^{N},\tcfig)$, $\gamma\mapsto \psi_{\gamma}$ such that for any $\gamma\in A$,
\[
\bar\pa_{\gamma}\oplus\psi_\gamma\colon \SS_{\gamma}\oplus \R^{N}\to\tcfig
\] 
is surjective. Lemma \ref{Lem:indextrivial} implies that the bundle $\ker(\bar\pa\oplus\R^{N})$ over $A$ with fiber over $\gamma\in A$ given by $\ker(\bar\pa_{\gamma}\oplus \psi_\gamma)$ is trivial over any CW-complex $B \subset A$ of dimension at most $6k-7$. A \emph{stable trivialization} of $\mathbf{I}_{j\beta}$ over $B$ is the stable homotopy class of a trivialization of $\ker(\bar\pa \oplus \R^N)|_{B}$ where we stabilize by adding an arbitrary map $\psi'\colon B\to\Hom(\R^{M},\tcfig)$. To clarify, observe that if $\bar\pa_{\gamma}\oplus\psi_\gamma$ is surjective then for each $e\in\R^{M}$ there is $v\in \SS_{\gamma}\oplus\R^{N}$ such that $\bar\pa_{\gamma}(v)+\psi_\gamma(v)=-\psi'(e)$,  and such $v$ is unique up to addition of a vector  in $\ker(\bar\pa_{\gamma}\oplus\psi_\gamma)$. It follows that a trivialization $Z$ of $\ker(\bar\pa\oplus\R^{N})$ and the standard basis in $\R^{M}$ induces a trivialization of $\ker(\bar\pa\oplus\R^{N}\oplus\R^{M})$.

\begin{Remark}
Consider stabilizations $Z$ of  $\bar\pa\oplus\R^{N}$ and $Z'$ of $\bar\pa\oplus \R^{M}$, which both give everywhere surjective operators. To compare the trivializations on the kernel bundles, we consider the bundle $\ker(\bar\pa\oplus\R^{N}\oplus\R^{M})$, and compare the trivializations $Z\oplus \R^{M}$ and $Z'\oplus\R^{N}$.  \end{Remark}

\begin{Remark}
In the applications below we will only be concerned with compact subsets of the various  mapping spaces involved. For simplicity, we fix throughout a sufficiently large compact subset of the loop space which contains all the spaces (and homotopies) relevant to our problem. 
\end{Remark}

\begin{Remark}\label{Rem:stabletrivhmtpy}
If $B\subset\LL_{j}W'$, if $Z$ is a stable trivialization of $\mathbf{I}_{j\beta}$ over $B$, and if $g\colon C\to B$ is any map, then $Z$ induces a stable trivialization $g^{\ast}Z$ of $g^{\ast}\mathbf{I}_{j\beta}$, $g^{\ast}Z(c)=Z(g(c))$. In particular, if $\Sigma\subset B$, and if $g\colon C\to B$ is a map which is homotopic to a map into $\Sigma$ then the stable trivialization $g^{\ast}Z$ is determined by the restriction $Z|_{\Sigma}$. Indeed, fix a homotopy $g_t\colon C\to B$, $0\le t\le 1$ with $g_0=g$ and $g_{1}(C)\subset \Sigma$; then the homotopy of trivializations $Z(g_t(c))$ connects $g_0^{\ast}Z$ to $g_{1}^{\ast}Z$, and the latter is determined by $Z|_\Sigma$.  
\end{Remark}

\subsection{Pre-gluing and coherent stable trivializations}
We next focus on the loop space components relevant to our main problem: $\LL_{-1} W'$, $\LL_{1} W'$, and $\LL_{0} W'$. Write $\LL_{j}^{\ast}W'=\LL_{j}W'\times \pa D$ for the space of free loops with one marked point and let $\mathbf{I}_{j\beta}^{\ast}$ denote the pull-back of the index bundle $\mathbf{I}_{j\beta}$ under the natural projection map that forgets the marked point. We recall the notion of coherent trivializations from Section \ref{sec:cohertriv}.

Consider a compact CW-complex $N$ with maps $p\colon N\to \LL_{-1}^{\ast} W'$ and $q\colon N\to\LL_{1} W'$ such that $\ev\circ \,p=\ev_1\circ\, q$. Assume furthermore that the map $p$ factors as follows:
\[
\begin{CD}
N @>{p'}>> A\times S^{1} @>{a\times\id}>> \LL_{-1}^{\ast} W' = \LL_{-1} W'\times S^{1}
\end{CD},
\]
where $A$ is a compact CW-complex. Write $\Pre\colon N\to\LL_{0}W'$ for the map $\Pre\circ\, (p\times q)$ and consider the pull-back bundle $\Pre^{\ast}\mathbf{I}_{0\beta}$ over $N$. By Lemma \ref{Lem:lingluetriv} we find that there are two stable trivializations of this bundle: one given by $p^{\ast}Z_{-\beta}^{\ast}\oplus q^{\ast}Z_{\beta}$ and one given by $\Pre^{\ast}(Z_{0\beta}\oplus Z_{TW'})$, where $Z_{TW'}$ is a fixed trivialization of $TW'$.  The triple of trivializations $Z_{-\beta},Z_{\beta}, Z_{0\beta}$ were called $(d',d)$-coherent if the two stable trivializations $p^{\ast}Z_{-\beta}^{\ast}\oplus q^{\ast}Z_{\beta}$ and $\Pre^{\ast}(Z_{0\beta}\oplus Z_{TW'})$ are homotopic for all $N$ and $A$ as above with $\dim(A)\le d'$ and $\dim(N)\le d$. 

In the following Lemma, trivializations of $\mathbf{I}_{j\beta}$ refer to trivializations over $S_{j}^{1}\times\Phi(\widehat{Q})$, see Lemma \ref{Lem:indextrivial}. In the proof, we use notation as in the definition of stable trivializations in Section \ref{sec:cohertriv}.

\begin{Lemma}
For any stable trivialization $Z_{-\beta}^{\ast}$ of $\mathbf{I}^{\ast}_{-\beta}$, there are stable trivializations $Z_{\beta}$ of $\mathbf{I}_{\beta}$ and $Z_{0\beta}$ of $\mathbf{I}_{0\beta}$ such that $(Z_{-\beta}^{\ast},Z_{\beta},Z_{0\beta})$ is $(1,d)$-coherent for any $d \leq 6k-7$. 
\end{Lemma}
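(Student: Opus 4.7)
My plan is to construct $Z_1$ and $Z_0$ by starting at a base loop and extending cell-by-cell over the $(6k-7)$-skeleta, at each step using the flexibility of choice and Lemma~\ref{Lem:lingluetriv} to preserve coherence with the given $Z_{-1}^*$.

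First, I would reduce to working on skeleta. Because $d \leq 6k-7$, cellular approximation combined with Lemma~\ref{Lem:indextrivial} shows that any map $Q \to \mathcal{L}_j W'$ from a CW-complex of dimension at most $d$ is homotopic to one landing in the skeleton $S^1_j \times \Phi(\widehat{Q})$, and by Remark~\ref{Rem:stabletrivhmtpy} a stable trivialization over $\mathcal{L}_j W'$ is, up to stable homotopy on $d$-complexes, determined by its restriction to this skeleton. Thus it suffices to build $Z_1$ and $Z_0$ over their respective skeleta and verify coherence on maps into these skeleta.

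Second, I would exploit the factorization hypothesis on $p$. Since $\mathcal{L}_{-1} W' \simeq S^1_{-1} \times \mathcal{L} S^{2k-1}$ and $\mathcal{L} S^{2k-1}$ is simply connected, the $1$-skeleton of $\mathcal{L}_{-1} W'$ is just a copy of $S^1_{-1}$ times a point (the constant loop at a chosen base-point $x_0 \in S^{2k-1}$). Because $\dim A \leq 1$, the map $a\colon A \to \mathcal{L}_{-1} W'$ is homotopic to a map into $S^1_{-1} \times \{*_{x_0}\}$, so $p$ is homotopic to a map into the $2$-torus $S^1_{-1} \times S^1_{\mathrm{marked point}} \subset \mathcal{L}_{-1}^* W'$. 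This collapses the complexity of the $\mathbf{I}_{-1}^*$ factor in the coherence equation.

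Third, I would build the trivializations by obstruction theory. Fix a base-point $*_W = (1, x_0) \in W'$, a base loop $\gamma^-$ going once around the $S^1$ factor, and a base loop $\gamma^+ = \Phi[(x_0,v_0,0)]$ traversing a great circle in $S^{2k-1}$. Apply Lemma~\ref{Lem:lingluetriv} at the preglued loop $\Pre(\gamma^-, \gamma^+)$ to choose initial trivializations $Z_1(\gamma^+)$ and $Z_0(\Pre(\gamma^-,\gamma^+))$ such that at the base loop, $p^*Z_{-1}^* \oplus q^*Z_1 = \Pre^*(Z_0 \oplus Z_{TW'})$ on the nose. Then extend $Z_1$ over the cells of $S^1_1 \times \Phi(\widehat{Q})$, with each attaching map determining an extension obstruction in a group $\pi_i(O)$. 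At each cell of $\Phi(\widehat{Q})$ not hit by the pregluing (i.e.\ loops in $\mathcal{L}_1 W'$ not meeting the $-1$-component base loops at the appropriate evaluation point), we are free to pick any extension. Then define $Z_0$ on the $(6k-7)$-skeleton of $\mathcal{L}_0 W'$ in two stages: on the image of pregluing (parameterized by the torus $S^1_{-1} \times S^1$ crossed with a copy of the $\mathcal{L}_1$-skeleton), we let $Z_0$ be determined by the formula $\Pre^*(Z_0 \oplus Z_{TW'}) = p^*Z_{-1}^* \oplus q^*Z_1$ inverted via the canonical isomorphism from Lemma~\ref{Lem:lingluetriv}; on the complement, $Z_0$ is extended arbitrarily using the triviality of $\mathbf{I}_0$ over the skeleton.

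The main obstacle will be ensuring that the formula which defines $Z_0$ on the image of $\Pre$ actually gives a well-defined stable trivialization, i.e.\ that it is consistent on the overlaps with cells where $Z_0$ has already been extended arbitrarily, and that it depends only on the target loop and not on the chosen pregluing pair. The former is handled by the cell-by-cell construction: we extend $Z_0$ starting from the pregluing image, so there is no conflict. The latter amounts to checking that if $\Pre(\gamma^-_1,\gamma^+_1) = \Pre(\gamma^-_2,\gamma^+_2)$ in $\mathcal{L}_0 W'$ (up to homotopy within the relevant cells of the skeleton), then the two formulas give homotopic trivializations. Because $\Pre$ is essentially the concatenation of loops at a shared base point, two such pregluings yield the same $0$-class loop only when the pairs differ by a common reparameterization or base-point shift, and the compatibility in this case follows from the naturality of the splitting in Lemma~\ref{Lem:lingluetriv} under such reparameterizations. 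Once $Z_0$ and $Z_1$ are defined this way, coherence holds by construction for $(N,A,p,q)$ satisfying the hypotheses of Definition~\ref{Def:CoherTriv}: the two stable trivializations $p^*Z_{-1}^* \oplus q^*Z_1$ and $\Pre^*(Z_0 \oplus Z_{TW'})$ are homotopic because they agree under the canonical isomorphism from Lemma~\ref{Lem:lingluetriv}, which was used to define $Z_0$ on the image of $\Pre$ in the first place.
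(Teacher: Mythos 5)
There is a genuine gap at the step where you extend $Z_1$ ``freely'' over the cells of the skeleton $S^1_1\times\Phi(\widehat{Q})$ and then define $Z_0$ on the image of $\Pre$ by inverting the gluing isomorphism of Lemma \ref{Lem:lingluetriv}. Coherence is not just a pointwise compatibility that can be arranged at one base loop and propagated; it imposes a framing condition along a specific $1$-cycle which your construction never sees. Concretely, after the reductions you describe, the relevant fibered product is (up to cells of dimension $\le 6k-7$) a product $S^1\times S^{1\ast}\times S^{2k-2}_1$, where the first circle records the starting point of the $(-1)$-loop and $S^{1\ast}$ the position of the marked point. Under $\Pre$ the $S^{1\ast}$-circle maps to a \emph{contractible} loop in $\LL_0 W'$ (while the starting-point circle maps to a non-contractible one). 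Since $\Pre^{\ast}(Z_0\oplus Z_{TW'})$ restricted to any null-homotopic loop in $\LL_0 W'$ is the bounding framing for \emph{every} choice of $Z_0$, coherence forces the glued framing $Z_{-1}^{\ast}|_{S^{1\ast}}\oplus Z_1|_{S^1_1}$ on this circle to be null-cobordant. This is a nontrivial $\Z_2$-condition correlating $Z_1$ with the given $Z_{-1}^{\ast}$: the restriction of $Z_1$ to the circle factor $S^1_1$ cannot be chosen arbitrarily (and that cell is certainly ``hit by the pregluing''). Your normalization ``on the nose'' at a single preglued loop carries no information about framings around loops, so it does not enforce this condition.

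This is also exactly where your treatment of the ``main obstacle'' falls short: the failure mode is not two distinct pregluing pairs mapping to the same glued loop (which you propose to handle by naturality under reparametrization), but a one-parameter family of pregluing pairs whose glued curves sweep out a null-homotopic circle in $\LL_0 W'$; the data you prescribe on the image of $\Pre$ extends to a stable trivialization of $\mathbf{I}_0$ over the skeleton only if the framing on that circle is the bounding one, and whether it is depends on how $Z_1|_{S^1_1}$ was chosen relative to $Z_{-1}^{\ast}|_{S^{1\ast}}$. The paper's proof is organized around precisely this point: after homotoping $p$ and $q$ so that $\Pre$ factors through the fibered product above (using the skeleta of Lemma \ref{Lem:skeleton} and Remark \ref{Rem:stabletrivhmtpy}), one \emph{chooses} $Z_1$ so that its framing class matches that of $Z_{-1}^{\ast}$ (making the direct-sum framing on the contractible loop null-cobordant), sets the framing of $Z_0$ on the non-contractible base-point loop of $\LL_0 W'$ equal to $Z_{-1}^{\ast}|_{S^{1\ast}}\oplus Z_1|_{S^1}$, and only then extends over the remaining factor $S^{2k-2}_1$ and the rest of the skeleton. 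With that one constraint added, the remainder of your plan (reduction to skeleta, factorization of $p$ through the torus since $\dim A\le 1$, use of Lemma \ref{Lem:lingluetriv} and of the stable triviality from Lemma \ref{Lem:indextrivial}) is in line with the paper's argument; without it, the claimed free extension of $Z_1$ followed by ``define $Z_0$ on the image and extend arbitrarily'' can fail.
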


\begin{proof}
Remark \ref{Rem:stabletrivhmtpy} implies that the trivialization $p^{\ast}Z_{-\beta}^{\ast}$ is determined by the restriction of the trivialization $Z_{-\beta}^{\ast}$ to the preimage of the $1$-skeleton of $\LL_{-1}W'$. We take the $1$-skeleton as $S^{1}\times \{x\}$, where the $S^{1}$-factor determines the starting point of the geodesics which are constant curves at $x$ in the $S^{2k-1}$-factor. More explicitly, fix a homotopy $a_{t}$ of the map $a$ such that $a_{1}$ maps to the $1$-skeleton. Then $\ev\circ(a_t\times\id)$ gives a homotopy from $\ev_{1}\circ \,q$ to $\ev_{1}\circ \,q_1$, where $q_1$ is the map which conjugates the loop $q$ with the trace of the homotopy $\ev\circ(a_t\times\id)$. Therefore $\ev_1$ is homotopic to a map with second component mapping into the based loop space $\Omega(S^{2k-1},x)$. We can perform a further homotopy of $q_1$, without changing the start-points of loops in the image, so that the $S^{1}$-component of any one of its loops is also geodesic, and so that the loops in the image have the standard product form discussed previously (first following the $S^{1}$-component,  then the $S^{2k-1}$-component). 

It follows that, after a homotopy of $p$ and $q$ and the induced homotopy of $\Pre\circ(p\times q)$, the map  $\Pre\colon N\to\LL_{0}W'$  factors through the fibered product $\Xi$ of (the preimages of) the $1$-skeleton $S^{1}\times\{x\}\times S^{1}\subset \LL_{-1}^{\ast} W'$, and the $d$-skeleton of  $S^1 \times \Omega(S^{2k-1},x)\, \subset \LL_1(W')$.

We now use a skeleton of $\LL_{0}W'$ which differs from the one constructed previously as follows: every loop in the second factor is replaced by a negatively oriented geodesic with a positively oriented geodesic inserted at $-1$ (the antipodal point of the base point). It is then easy to see that the map $N \rightarrow \LL_0W'$ is homotopic to a map in which all $S^{1}$-components have the form of the loops in $\Xi$. 

Conveniently, the fibered product defining $\Xi$ is actually homeomorphic to a cell complex.  More explicitly, the fibered product of the lowest-dimensional cells, meaning those of dimension $\leq 6k-7$, is  homeomorphic to
\begin{equation} \label{Eq:FibreProductOfCells}
S^{1}\times S^{1\ast}\times S_1^{2k-2},
\end{equation}
where the first coordinate on the torus $S^{1}\times S^{1\ast}$ is the starting point of the loop, the second is  the point along the parameterizing $S^{1}$, and the final factor corresponds to the Bott manifold $S^{2k-2}_1$ of Lemma \ref{Lem:skeleton} (recall that the higher cells in the based loop space have index $\geq 6k-6$).  

From the construction of the skeleton, and the fact that we are dealing with the component $\LL_0 W'$ containing the constant loops, the starting point loop maps to a non-contractible loop in $\LL_{0}W'$, whilst the second factor $S^{1\ast}$ maps to a contractible loop. The condition that ``the trivialization of the pull-back agrees with the pullback of the trivialization" is that the loop which is mapped to a trivial loop under $\Pre$ gets the trivial (null-cobordant)  framing.  The framing of this loop is given by
\[
Z_{-\beta}^{\ast}|_{S^{1\ast}}\oplus Z_{\beta}|_{S^{1}},
\]  
so this condition  is fulfilled provided we take $Z_{\beta}$ to have the same framing class as $Z_{-\beta}^{\ast}$. The framing on the non-trivial base point loop in $\LL_{0}W'$ should then also be chosen to be
\[
Z_{-\beta}^{\ast}|_{S^{1\ast}}\oplus Z_{\beta}|_{S^{1}}.
\]
This framing can now be extended trivially over the third factor of \eqref{Eq:FibreProductOfCells}. The lemma follows.
\end{proof}



\bibliographystyle{alpha}

\begin{thebibliography}{10}

\bibitem{Abouzaid}
M.~Abouzaid.
\emph{Framed bordism and Lagrangian embeddings of exotic spheres.}
Ann.~of Math. \textbf{2}: 71--185, (2012).

\bibitem{Arnold:firststeps}
V.I.~Arnol'd.
\emph{First steps of symplectic topology.}
VIIIth International Congress of Mathematical Physics, World Scientific, (1986).

\bibitem{AtiyahHirzebruch}
M.~Atiyah and F.~Hirzebruch.
\emph{Bott periodicity and the parallelizability of the spheres.}
Proc.~Cambridge Philos.~Soc. \textbf{57}: 223--226 (1961).

\bibitem{BEE}
F.~Bourgeois, T.~Ekholm, and Y.~Eliashberg
\emph{Effect of Legendrian Surgery.} With an appendix by Sheel Ganatra and Maksim Maydanskiy. 
Geom. Topol. 16: 301--389 (2012).

\bibitem{Cerf}
J.~Cerf.
\emph{La stratification naturelle des espaces de fonctions diff\'erentiables et le th\'eor\`me de la pseudo-isotopie.}
Inst.~Hautes Etudes Sci.~Publ.~Math. \textbf{39}: 5--173 (1970).

\bibitem{Damian}
M.~Damian.
\emph{Floer homology on the universal cover, a proof of Audin's conjecture, and other constraints on Lagrangian submanifolds.}
Comment.~Math.~Helv. \textbf{87}: 433--462 (2012).

\bibitem{Donaldson}
S.~K.~Donaldson.
\emph{Connections, cohomology and the intersection forms of 4-manifolds.}
Jour. Diff. Geom. \textbf{24}:275--341, (1986).

\bibitem{EliashbergMurphy}
Y.~Eliashberg and E.~Murphy.
\emph{Lagrangian caps.}
Geom. Funct. Anal. \textbf{23}: 1483--1514 (2013).

\bibitem{YETI}
Y.~Eliashberg, T.~Ekholm, E.~Murphy and I.~Smith.
\emph{Constructing exact Lagrangian immersions with few double points.}
Geom. Funct. Anal. \textbf{23}: 1772--1803 (2013).


\bibitem{Ekholm}
T.~Ekholm.
\emph{Double points of exact Lagrangian immersions and Legendrian contact homology.}
In ``Floer homology, gauge theory and low-dimensional topology", Clay Math.~Proc. \textbf{5}: 181--191.
Amer.~Math.~Soc. (2006).

\bibitem{Ekholmtrees}
T.~Ekholm.
{\em Morse flow trees and Legendrian contact homology in 1-jet spaces.} Geom.~Topol. {\bf 11}: 1083--1224 (2007).


\bibitem{EESa}
T.~Ekholm, J.~Etnyre and J.~Sabloff,
\emph{A Duality Exact Sequence for Legendrian Contact Homology.}
Duke Math.~J.  {\bf 150}: 1--75 (2009).


\bibitem{EES1}
T.~Ekholm, J.~Etnyre and M.~Sullivan.
\emph{The contact homology of Legendrian submanifolds in $\R^{2n+1}$.} 
J.~Differential Geom. {\bf 71}: 177--305 (2005). 

\bibitem{EES2}
T.~Ekholm, J.~Etnyre and M.~Sullivan.
\emph{Non-isotopic Legendrian submanifolds in $\R^{2n+1}$.} 
J.~Differential Geom. {\bf 71}: 85--128  (2005).

\bibitem{EES}
T.~Ekholm, J.~Etnyre and M.~Sullivan.
\emph{Legendrian contact homology in $P\times \bR$.}
Trans.~Amer.~Math.~Soc. \textbf{359}: 3301--3335 (2007).

\bibitem{EESori}
T.~Ekholm, J.~Etnyre and M.~Sullivan.
\emph{Orientations in Legendrian contact homology and exact Lagrangian immersions.}
Internat.~J.~Math. {\bf 16}: 453--532 (2005). 

\bibitem{EkholmSmith-Sequel}
T.~Ekholm and I.~Smith.
\emph{Exact Lagrangian immersions with one double point revisited.}
Math. Ann. \textbf{358}: 195--240 (2014).

\bibitem{Floer:lagrangian}
A.~Floer.
\emph{Morse theory for Lagrangian intersections.}
J.~Differential Geom. {\bf 28}: 513--547 (1988).

\bibitem{Floer:mem}
A.~Floer.
\emph{Monopoles on asymptotically flat manifolds.} 
The Floer memorial volume, Progr.~Math., {\bf 133}: 3--41, Birkhäuser, Basel, (1995).



\bibitem{Frame}
M.~Frame.
\emph{On the inertia groups of fibre bundles.}
Proc.~Amer.~Math.~Soc. \textbf{85}: 289--292 (1982).

\bibitem{FO3}
K.~Fukaya, Y.-G.~Oh, K.~Ohta and K.~Ono.
\emph{Lagrangian intersection Floer theory -- Anomaly and obstruction, I \& II.}
AMS Studies in Advanced Mathematics Vol. 46, International Press, (2009).



\bibitem{Gromov}
M.~Gromov.
\emph{Pseudoholomorphic curves in symplectic manifolds.}
Invent.~Math. \textbf{82}: 307--347 (1985).

\bibitem{Gromov:h-principle}
M.~Gromov.
\emph{Partial differential relations.}
Springer, (1986).

\bibitem{HWZ}
H.~Hofer, K.~Wysocki and E.~Zehnder.
\emph{A general Fredholm theory. II. Implicit function theorems.}  
Geom.~Funct.~Anal. \textbf{19}: 206--293 (2009).


\bibitem{Hudson}
J.~F.~Hudson.  
\emph{Concordance, isotopy and diffeotopy.}
Ann.~of Math. \textbf{91}: 425--448 (1971).

\bibitem{HudsonZeeman}
J.~F.~Hudson and E.~C.~Zeeman.
\emph{On combinatorial isotopy.}
Inst.~Hautes Etudes Sci.~Publ.~Math. \textbf{19}: 69--94 (1964).

\bibitem{Kervaire}
M.~Kervaire.
\emph{A note on obstructions and characteristic classes.}
Amer.~J.~Math. \textbf{81}: 773--784 (1959).

\bibitem{KM}
M.~Kervaire and J.~Milnor.
\emph{Groups of homotopy spheres, I.}
Ann.~of Math. \textbf{77}: 504--537 (1967).


\bibitem{Kragh}
T.~Kragh.
\emph{Parametrized ring-spectra and the nearby Lagrangian conjecture.} With an appendix by Mohammed Abouzaid. 
Geom. Topol. \textbf{17}: 639--731, (2013).

\bibitem{Kwon-Oh}
D.~Kwon and Y.-G.~Oh.
\emph{Structure of the image of pseudo-holomorphic discs with totally real boundary condition.}
Comm.~Anal.~Geom. \textbf{1}: 31--82 (2000).

\bibitem{Lazzarini}
L.~Lazzarini.
\emph{Existence of a somewhere injective pseudo-holomorphic disc.}
Geom.~Funct.~Anal. \textbf{10}: 829--862 (2000).

\bibitem{Lee}
Y.P.~Lee.
\emph{Quantum K-theory I: Foundations.}
Duke Math.~J. \textbf{121}: 389--424 (2004).



\bibitem{McDuff:K-class}
D.~McDuff.
\emph{Examples of symplectic structures.}
Invent.~Math. \textbf{89}: 13--36 (1987).

\bibitem{McD-S}
D.~McDuff and D.~Salamon.
\emph{Introduction to symplectic topology,} Second edition.
Oxford University Press, (1998).


\bibitem{Oh}
Y.-G.~Oh.
\emph{Gromov-Floer theory and disjunction energy of compact Lagrangian submanifolds.}
Math.~Res.~Lett. \textbf{6}: 895--905 (1997).


\bibitem{Oh:bubble}
Y.-G.~Oh.
\emph{Floer cohomology of Lagrangian intersections and pseudo-holomorphic disks, I.}
Comm.~Pure Appl.~Math. \textbf{46}: 949--993 (1993).

\bibitem{Oh:Perturb}
Y.-G.~Oh.
\emph{Fredholm theory of holomorphic discs under the perturbation of boundary conditions.}
Math.~Zeit. \textbf{222}: 505--520 (1996).

\bibitem{Pazhitnov}
A.~Pazhitnov.
\emph{On the sharpness of inequalities of Novikov type for manifolds with a free abelian fundamental group.} 
Mat.~Sb. \textbf{180}: 1486--1523 (1989), translated in Math.~USSR-Sb. \textbf{68}: 351--389 (1991).

\bibitem{Polterovich}
L.~Polterovich.
\emph{The surgery of Lagrange submanifolds.}
Geom.~Func.~Anal. \textbf{2}: 198--210 (1991).

\bibitem{SchultzA}
R.~Schultz.
\emph{Smooth structures on $S^p \times S^q$.}
Ann.~of Math. \textbf{90}: 187--198 (1969).

\bibitem{Schultz}
R.~Schultz.
\emph{On the inertia group of a product of spheres.}
Trans.~Amer.~Math.~Soc. \textbf{156}: 137--153 (1971).

\bibitem{Seidel:graded}
P.~Seidel.
\emph{Graded Lagrangian submanifolds.}
Bull.~Soc.~Math.~France \textbf{128}: 103--146 (2000).


\bibitem{Smale}
S.~Smale.
\emph{The classification of immersions of spheres in Euclidean spaces.}
Ann.~of Math. \textbf{69}: 327--344 (1959).

\bibitem{SmalePoincare}
S.~Smale.
\emph{Generalized Poincar{\'e}'s conjecture in dimensions greater than four.}
Ann.~of Math. \textbf{74}: 391--406 (1961) .


\end{thebibliography}

\end{document}